\theoremstyle{plain} 
\newtheorem{theorem}{Theorem}[section] 
\newtheorem{lemma}[theorem]{Lemma}
\newtheorem{proposition}[theorem]{Proposition}
\newtheorem{corollary}[theorem]{Corollary}
\newtheorem{definition}[theorem]{Definition}
\newtheorem{example}[theorem]{Example}
\theoremstyle{remark}
\newtheorem{remark}[theorem]{Remark}
\newcommand{\cellX}[1]{\mathcal{C}_{#1}^X}
\newcommand{\match}[1]{\mathcal{#1}}
\newcommand{\anc}[2]{anc^{#2}(#1)}
\newcommand{\init}{\textup{init}}
\newcommand{\term}{\textup{term}}
\newcommand{\jbeg}{\mathcal{J}_{beg}}
\newcommand{\jend}{\mathcal{J}_{end}}
\newcommand{\jnot}{\mathcal{J}_{not}}
\title{Cell Closures for Two-Row Springer Fibers via Noncrossing Matchings}
\author{Talia Goldwasser \and Meera Nadeem \and Garcia Sun \and Julianna Tymoczko}
\begin{document}

\begin{abstract}
   Springer fibers are a family of subvarieties of the flag variety parametrized by nilpotent matrices that are important in geometric representation theory and whose geometry encodes deep combinatorics.  Two-row Springer fibers, which correspond to nilpotent matrices with two Jordan blocks, also arise in knot theory, in part because their components are indexed by noncrossing matchings.  Springer fibers are paved by affines by intersection with appropriately-chosen Schubert cells.  In the two-row case, we provide an elementary description of these \emph{Springer Schubert} cells in terms of \emph{standard} noncrossing matchings and describe closure relations  explicitly.  To do this, we define an operation called \emph{cutting arcs} in a matching, which successively unnests arcs while ``remembering" the arc originally on top.  We then prove that the boundary of the Springer Schubert cell corresponding to a matching consists of the affine subsets of cells corresponding to all ways of cutting arcs in that matching.   
\end{abstract}
\maketitle

\section{Introduction}


The Springer fiber of a linear operator $X$ is the fixed-point set of $X$ in the full flag variety (see below for a formal definition).  Discovered by Tonny Springer in 1976, Springer fibers are the seminal example of a geometric representation: geometric objects whose cohomology carries a symmetric group representation \cite{Spr76}.  Their geometry and topology have been extensively studied as a tool to analyze the underlying representations (e.g. proving geometric properties of Springer fibers in order to apply certain theorems to study the representation) \cite{Spa76, Spa77, Shi80, Shi85}), in the broader context of modern Schubert calculus \cite{BayHar12, DewHar12}, and because of intrinsically-interesting ways their geometry encodes relations between the combinatorics of partitions and permutations \cite{Fre09, Fre10, Fre11, Fre12, Fre16, FreMel10, FreMel11, FreMel13}.

It is well-known that Springer fibers can be paved by affines \cite{Spa76, DLP88}, and that in fact these affines can be chosen to be the intersection with Schubert cells \cite{Tym05, Fre16}. Compared to more familiar CW-decompositions, where the boundary of a cell is the union of smaller-dimensional cells, the subtlety of a \emph{paving} is that the boundary may consist of proper subsets of other cells, possibly even subsets of cells of the same dimension as the original.  This is part of why the geometry and topology of Springer fibers is not inherited from that of Schubert varieties, and in fact is more complicated than that of Schubert varieties.  Indeed, even determining whether the components of Springer fibers are singular is quite challenging \cite{Fre10, Fre11, FreMel10, FreMel11}, relying on both the combinatorics of permutations (from the Schubert cells) and of partitions (from the Jordan type of the linear operator $X$ defining the Springer fiber).

In this paper, we characterize closures of cells completely for two-row Springer fibers, namely the case when $X$ is nilpotent and has at most two Jordan blocks.  To do this, we use a different combinatorial representation of the cells than the more-commonly-used row-strict tableaux: a kind of noncrossing matching called \emph{standard}.  When we do this, the closures are naturally described as the union of certain \emph{labeled} standard noncrossing matchings.

More precisely, a \emph{noncrossing matching} is drawn as a set of noncrossing arcs above the $x$-axis that start and end at distinct integers within $\{1, 2, \ldots, N\}$.  A noncrossing matching is \emph{standard} if in addition all integers $\{i+1,\ldots, j-1\}$ under each arc $(i,j)$ are themselves on arcs.  It is well-known that when $X$ is nilpotent with two Jordan blocks of sizes $(n,N-n)$, then the Betti numbers of the Springer fiber $\mathcal{S}_X$ are bijective with standard noncrossing matchings with at most $N-n$ arc starts and $n$ arc ends \cite{Fun03, Kho04b, Var79, RusTym11} and indeed with many other combinatorial objects that are bijective with this set. Theorem~\ref{theorem: fM is a bijection onto Springer Schubert cell} shows that nesting within the standard noncrossing matchings provides a direct and explicit bijection to the cells in a paving by affines, and that the paving is the intersection with a particular set of Schubert cells.  For this reason, we call them \emph{Springer Schubert cells}.  Figure~\ref{figure: all closures n=4} shows all six Springer Schubert cells for the Springer fiber of type $(2,2)$ (and two affine subspaces of Springer Schubert cells); the free entries of each cell in the paving are just the labels on the arcs read from bottom to top.  

Furthermore, we define an iterated graph-theoretic unnesting that we call \emph{cutting arcs} in a matching.  Cutting an arc $(i,j)$ that is immediately below another arc $(i',j')$ creates two arcs $(i',i)$ and $(j,j')$ both of which are associated to the same free variable as $(i',j')$ in their Schubert cell.  Thus, cutting $k$ arcs in the matching $\match{M}$ identifies an $(|\match{M}|-k)$-dimensional subspace of the Springer Schubert cell $\cellX{\match{M}'}$ associated to another matching $\match{M}'$.  Figure~\ref{figure: all closures n=4} shows the closures of the two top-dimensional cells in the Springer fiber of Jordan type $(2,2)$.  Even though this is a small example, both the bottom cell in the left-hand closure and the right cell in the right-hand closure are strict subspaces of their corresponding Schubert cells.

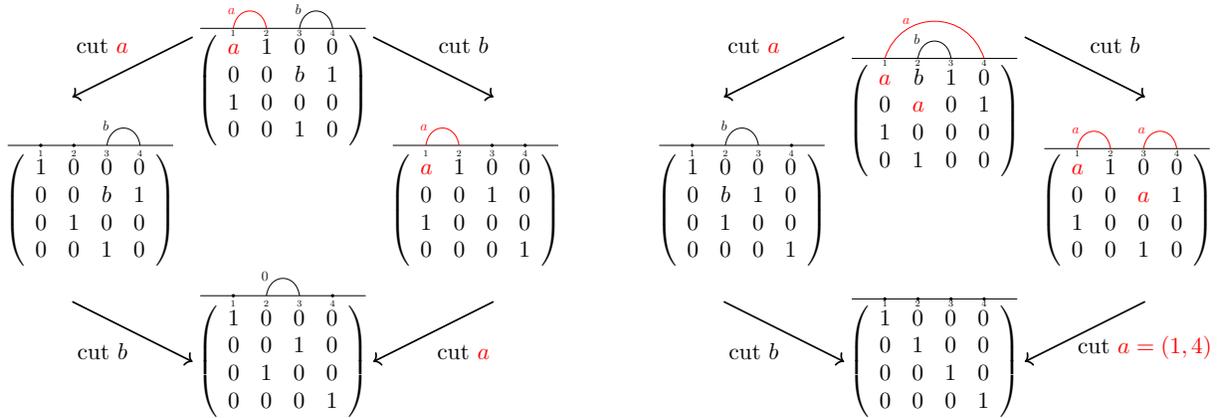
\begin{figure}[h]

\scalebox{0.8}{\begin{tikzpicture}
\draw(0,2.5) node[anchor=south] {\scalebox{0.55}{\begin{tikzpicture}
        \draw (0,0) -- (5,0);
         \draw[red] (1,0) .. controls (1.1,.7) and (1.9,.7) .. (2,0);
    \draw (3,0) .. controls (3.1,.7) and (3.9,.7) .. (4,0);
        \draw[red] (1.2,.3) node[anchor=south east] {$a$};
    \draw (3.2,.3) node[anchor=south east] {$b$};
        \draw (1,0.1) node[anchor=north] {\tiny $1$};
    \draw (2,0.1) node[anchor=north] {\tiny $2$};
    \draw (3,0.1) node[anchor=north] {\tiny $3$};
    \draw (4,0.1) node[anchor=north] {\tiny $4$};
\end{tikzpicture}}};
\draw(0,0.75) node[anchor=south] {\small $\left( \begin{array}{cccc} {\color{red} a} & 1 & 0 & 0 \\ 0 & 0 & b & 1 \\ 1 & 0 & 0 & 0 \\ 0 & 0 & 1 & 0 \end{array} \right) $};

\draw(-3.2,0.5) node[anchor=south] {\scalebox{0.55}{\begin{tikzpicture}
                \draw (0,0) -- (5,0);
                \filldraw[black] (1,0) circle (1pt);
    \filldraw[black] (2,0) circle (1pt);
    \draw (3,0) .. controls (3.1,.7) and (3.9,.7) .. (4,0);
        \draw (3.2,.3) node[anchor=south east] {$b$};
        \draw (1,0) node[anchor=north] {\tiny $1$};
    \draw (2,0) node[anchor=north] {\tiny $2$};
    \draw (3,0) node[anchor=north] {\tiny $3$};
    \draw (4,0) node[anchor=north] {\tiny $4$};
\end{tikzpicture}}};
\draw(-3.2,-1.25) node[anchor=south] {\small $\left( \begin{array}{cccc} 1 & 0 & 0 & 0 \\ 0 & 0 & b & 1 \\ 0 & 1 & 0 & 0 \\ 0 & 0 & 1 & 0 \end{array} \right) $};

\draw(3.2,0.5) node[anchor=south] {\scalebox{0.55}{\begin{tikzpicture}
    \draw (0,0) -- (5,0);
                \filldraw[black] (3,0) circle (1pt);
    \filldraw[black] (4,0) circle (1pt);
    \draw[red] (1,0) .. controls (1.1,.7) and (1.9,.7) .. (2,0);
        \draw[red] (1.2,.3) node[anchor=south east] {$a$};
        \draw (1,0) node[anchor=north] {\tiny $1$};
    \draw (2,0) node[anchor=north] {\tiny $2$};
    \draw (3,0) node[anchor=north] {\tiny $3$};
    \draw (4,0) node[anchor=north] {\tiny $4$};
\end{tikzpicture}}};
\draw(3.2,-1.25) node[anchor=south] {\small $\left( \begin{array}{cccc} {\color{red} a} & 1 & 0 & 0 \\ 0 & 0 & 1 & 0 \\ 1 & 0 & 0 & 0 \\ 0 & 0 & 0 & 1 \end{array} \right) $};

\draw(0,-2) node[anchor=south] {\scalebox{0.55}{\begin{tikzpicture}
      \draw (0,0) -- (5,0);
        \filldraw[black] (1,0) circle (1pt);
    \filldraw[black] (4,0) circle (1pt);
    \draw (2,0) .. controls (2.1,.7) and (2.9,.7) .. (3,0);
        \draw (2.2,.3) node[anchor=south east] {$0$};
    \draw (1,0) node[anchor=north] {\tiny $1$};
    \draw (2,0) node[anchor=north] {\tiny $2$};
    \draw (3,0) node[anchor=north] {\tiny $3$};
    \draw (4,0) node[anchor=north] {\tiny $4$};
\end{tikzpicture}}};
\draw(0,-3.75) node[anchor=south] {\small $\left( \begin{array}{cccc} 1 & 0 & 0 & 0 \\ 0 & 0 & 1 & 0 \\ 0 & 1 & 0 & 0 \\ 0 & 0 & 0 & 1 \end{array} \right) $};

\draw[thick,->] (-1.5,2.7) -- (-3.5,1.7);
\draw (-3,2.3) node[anchor=south] {\small cut {\color{red} $a$}};
\draw[thick,<-] (-1.5,-2.7) -- (-3.5,-1.7);
\draw (-3,-2.8) node[anchor=south] {\small cut $b$}; 
\draw[thick,->] (1.5,2.7) -- (3.5,1.7);
\draw (3,2.3) node[anchor=south] {\small cut $b$};
\draw[thick,<-] (1.5,-2.7) -- (3.5,-1.7);
\draw (3,-2.8) node[anchor=south] {\small cut {\color{red} $a$}};
\end{tikzpicture}} \hspace{0.3in}
\scalebox{0.8}{\begin{tikzpicture}
\draw(0,2) node[anchor=south] {\scalebox{0.55}{\begin{tikzpicture}
    \draw (0,0) -- (5,0);
      \draw[red] (1,0) .. controls (1.5,1.5) and (3.5,1.5) .. (4,0);
    \draw (2,0) .. controls (2.1,.7) and (2.9,.7) .. (3,0);
        \draw[red] (1.9,.9) node[anchor=south east] {$a$};
    \draw (2.2,.3) node[anchor=south east] {$b$};
    \draw (1,0.1) node[anchor=north] {\tiny $1$};
    \draw (2,0.1) node[anchor=north] {\tiny $2$};
    \draw (3,0.1) node[anchor=north] {\tiny $3$};
    \draw (4,0.1) node[anchor=north] {\tiny $4$};
\end{tikzpicture}}};
\draw(0,0.25) node[anchor=south] {\small $\left( \begin{array}{cccc} {\color{red} a} & b & 1 & 0 \\ 0 & {\color{red} a} & 0 & 1 \\ 1 & 0 & 0 & 0 \\ 0 & 1 & 0 & 0 \end{array} \right) $};

\draw(-3.2,0.5) node[anchor=south] {\scalebox{0.55}{\begin{tikzpicture}
    \draw (0,0) -- (5,0);
        \filldraw[black] (1,0) circle (1pt);
    \filldraw[black] (4,0) circle (1pt);
    \draw (2,0) .. controls (2.1,.7) and (2.9,.7) .. (3,0);
        \draw (2.2,.3) node[anchor=south east] {$b$};
    \draw (1,0) node[anchor=north] {\tiny $1$};
    \draw (2,0) node[anchor=north] {\tiny $2$};
    \draw (3,0) node[anchor=north] {\tiny $3$};
    \draw (4,0) node[anchor=north] {\tiny $4$};
\end{tikzpicture}}};
\draw(-3.2,-1.25) node[anchor=south] {\small $\left( \begin{array}{cccc} 1 & 0 & 0 & 0 \\ 0 & b & 1 & 0 \\ 0 & 1 & 0 & 0 \\ 0 & 0 & 0 & 1 \end{array} \right) $};

\draw(3.2,0.5) node[anchor=south] {\scalebox{0.55}{\begin{tikzpicture}
        \draw (0,0) -- (5,0);
         \draw[red] (1,0) .. controls (1.1,.7) and (1.9,.7) .. (2,0);
    \draw[red] (3,0) .. controls (3.1,.7) and (3.9,.7) .. (4,0);
        \draw[red] (1.2,.3) node[anchor=south east] {$a$};
    \draw[red] (3.2,.3) node[anchor=south east] {$a$};
        \draw (1,0.1) node[anchor=north] {\tiny $1$};
    \draw (2,0.1) node[anchor=north] {\tiny $2$};
    \draw (3,0.1) node[anchor=north] {\tiny $3$};
    \draw (4,0.1) node[anchor=north] {\tiny $4$};
\end{tikzpicture}}};
\draw(3.2,-1.25) node[anchor=south] {\small $\left( \begin{array}{cccc} {\color{red} a} & 1 & 0 & 0 \\ 0 & 0 & {\color{red} a} & 1 \\ 1 & 0 & 0 & 0 \\ 0 & 0 & 1 & 0 \end{array} \right) $};

\draw(0,-2) node[anchor=south] {\scalebox{0.55}{\begin{tikzpicture}
        \draw (0,0) -- (5,0);
            \filldraw[black] (1,0) circle (1pt);
      \filldraw[black] (2,0) circle (1pt);
      \filldraw[black] (3,0) circle (1pt);
      \filldraw[black] (4,0) circle (1pt);
        \draw (1,0.1) node[anchor=north] {\tiny $1$};
    \draw (2,0.1) node[anchor=north] {\tiny $2$};
    \draw (3,0.1) node[anchor=north] {\tiny $3$};
    \draw (4,0.1) node[anchor=north] {\tiny $4$};
\end{tikzpicture}}};
\draw(0,-3.75) node[anchor=south] {\small $\left( \begin{array}{cccc} 1 & 0 & 0 & 0 \\ 0 & 1 & 0 & 0 \\ 0 & 0 & 1 & 0 \\ 0 & 0 & 0 & 1 \end{array} \right) $};

\draw[thick,->] (-1.5,2.7) -- (-3.5,1.7);
\draw (-3,2.3) node[anchor=south] {\small cut {\color{red} $a$}};
\draw[thick,<-] (-1.5,-2.7) -- (-3.5,-1.7);
\draw (-3,-2.8) node[anchor=south] {\small cut $b$}; 
\draw[thick,->] (1.5,2.7) -- (3.5,1.7);
\draw (3,2.3) node[anchor=south] {\small cut $b$};
\draw[thick,<-] (1.5,-2.7) -- (3.5,-1.7);
\draw (3.5,-2.8) node[anchor=south] {\small cut {\color{red} $a = (1,4)$}};
\end{tikzpicture}}
\caption{The closures of the two top-dimensional cells for the Springer fiber of Jordan type $(2,2)$, with colors added for clarity} \label{figure: all closures n=4}
\end{figure}

Our main theorem, Theorem~\ref{theorem: closure main theorem}, says:

\medskip 

\noindent {\bf Theorem 6.9.}  \emph{Suppose that $\match{M}$ is a standard noncrossing matching with associated Springer Schubert cell $\cellX{\match{M}}$.  The closure $\overline{\cellX{\match{M}}}$ is the disjoint union
\[ \overline{\cellX{\match{M}}} = \bigcup_{\mathcal{A} \subseteq \match{M}} cut(\cellX{\match{M}}, \mathcal{A})\]
where $cut(\cellX{\match{M}},\mathcal{A})$ denotes the subspace of the Springer Schubert cell associated to the matching obtained by cutting the arcs $\mathcal{A}$ inside the matching $\match{M}$.}

\medskip 

Unnesting induces one of the standard partial orders on noncrossing matchings. This partial order is important in both the associated representation theory of the symmetric group (through Springer's representation) and of quantum $\mathfrak{sl}_2$ (through the web representation) \cite{Kup96, RusTym17}.  Recall that one key goal of \emph{modern Schubert calculus} is to describe geometry of flag varieties and their subvarieties in terms of natural combinatorial and representation-theoretic data., e.g. different properties of permutations determine closures and singularities within Schubert varieties. Thus Theorem~\ref{theorem: closure main theorem} can be considered a Springer example of modern Schubert calculus.
    
Moreover, data suggests our results can be generalized from noncrossing matchings to a larger family of graphs called \emph{webs}.  Spiders are a diagrammatic category encoding representations of $U_q(\mathfrak{sl}_n)$.  Webs index the functors in this category; for instance, noncrossing matchings are the (reduced) webs for $\mathfrak{sl}_2$.  Webs for $n \geq 3$ are significantly more complicated than noncrossing matchings, generally containing many interior trivalent vertices.  But webs for $\mathfrak{sl}_3$ also admit an intrinsic graph-theoretic notion of \emph{depth} that is intimately tied to the underlying representation theory \cite{KhoKup99, Tym12}.  This idea of depth associates a noncrossing matching to each $\mathfrak{sl}_3$-web for which the associated nesting/unnesting partial order has similar properties as its analogue for noncrossing matchings \cite{RusTym20}.  For this reason, we believe that our main theorem can be extended to larger $n$.

Finally, this theorem demonstrates the principle that different combinatorial representations of the same set can illuminate vastly different properties of the objects in that set.  For instance, it is more customary to index cells of the paving by affines with row-strict fillings of the Young diagram of shape $(N-n,n)$ \cite{RusTym11, Tym05}.  These tableaux illuminate linear algebraic properties of the matrix $X$, especially its Jordan blocks and the order in which basis vectors are chosen within the Jordan blocks.  But the geometry---which is depicted quite naturally by nesting/unnesting of arcs in the matching---is opaque using tableaux.

In the rest of this paper we do the following.  Section~\ref{section: background flags and springer} formally defines the geometric and linear-algebraic background, especially flags, Schubert cells, and Springer fibers.  Section~\ref{section: sncm and perms, def and first results} then gives combinatorial background, especially definitions and first results on noncrossing matchings.  Section~\ref{section: bijection matchings to cells} states and proves one of the two main results: an explicit bijection between standard noncrossing matchings and Springer Schubert cells for two-block Springer fibers.  Section~\ref{section: closures are indexed by arc cuts} describes some geometric properties that closures of Springer Schubert cells must satisfy, defines the process of cutting arcs in a matching, and proves that cutting arcs gives an upper bound on the closure of a Springer Schubert cell.  Finally, Section~\ref{section: closure section} uses induction to prove the main theorem, explicitly identifying the closure of a Springer Schubert cell in terms of cutting arcs in its matching.


\subsection{Acknowedgements} The second author was partially supported by the Smith College Center for Women in Mathematics. The last author was partially supported by NSF grants DMS-2054513 and DMS-1800773, and through an AWM MERP grant.  The authors gratefully acknowledge particularly useful conversations with Saraphina Forman, Rebecca Goldin, Heather Russell, Risa Vandergrift, Alia Tinsley, and Alex Woo, as well as many other students who have participated in the Springer fiber/web research group.

\section{Flags and Springer fibers} \label{section: background flags and springer}
We begin by describing background and notation for flags and the flag variety, its decomposition into Schubert cells, the subvarieties called Springer fibers, and the Springer Schubert cells.

The flag variety is the quotient $GL_{N}(\mathbb{C})/B$ of $N \times N$ invertible matrices by the Borel subgroup $B$ of upper-triangular invertible matrices.  Each flag can also be thought of as a sequence of nested subspaces $V_{\bullet}$ with
\[V_{\bullet} = V_1 \subseteq V_2 \subseteq \cdots V_{2n-1} \subseteq V_{N} = \mathbb{C}^{N}\]
where each subspace $V_i$ is $i$-dimensional.  

Going between these two descriptions is straightforward.  Given a flag $gB$, the first $i$ columns are linearly independent because $g$ is invertible, so span an $i$-dimensional subspace $V_i$. Varying $i$ gives a set $V_{\bullet}$ of nested subspaces. Conversely, given $V_{\bullet}$ we recover a matrix by successively choosing any vector in $V_i$ that's not in $V_{i-1}$ to be the $i^{th}$ column of $g$.  Together, the $N$ columns span $\mathbb{C}^{N}$ by construction so $g \in GL_{N}(\mathbb{C})$.  A small exercise proves all choices of $g$ differ by right-multiplication by a matrix in $B$ (see, e.g., \cite{Ful97} for more).

We can choose a unique coset representative for each $gB$ and use these coset representatives to partition the flag variety into Schubert cells.  For convenience, we give the following definition and facts; more details can be found in e.g. \cite{Ful97}.

\begin{definition} \label{definition: canonical representative and Schub cells}
Each flag $gB$ has a unique coset representative $g$ satisfying the following properties:
\begin{itemize}
    \item each column and row has a unique \emph{pivot} entry, which is $1$; 
    \item all entries below the pivot are zero; and
    \item all entries to the right of the pivot are zero.
\end{itemize}
We call this coset representative the \emph{canonical form} of $gB$.  

The pivot entries form a permutation matrix $w$.  The collection of all flags whose canonical form has permutation matrix $w$ is denoted $\mathcal{C}_w$ and is called a \emph{Schubert cell}.  The Schubert cells partition the flag variety, and in fact form a CW-decomposition of the flag variety.
\end{definition}

We use $\vec{e}_1, \vec{e}_2, \ldots, \vec{e}_{N}$ to denote the standard basis vectors in $\mathbb{C}^{N}$, namely the vectors that are zero in all but one entry, where they are $1$.

We now define Springer fibers and establish the notation we use in this paper.

\begin{definition} \label{definition: springer fiber and cell}
Given a $N \times N$ matrix $X$, the Springer fiber of $X$ is the subvariety of flags 
\[\begin{array}{ll} \mathcal{S}_X &= \{V_{\bullet}: XV_i \subseteq V_i \textup{ for all } i\} \\
&= \{gB: g^{-1}Xg \textup{ is upper-triangular}\} \end{array}\]
The Springer Schubert cell $\cellX{w}$ is the intersection
\[\mathcal{S}_X \cap \mathcal{C}_w.\]
In this paper, we assume that $X$ has two Jordan blocks and is in Jordan canonical form.  In other words, there exists $n$ with $2 \leq n \leq N-1$ so that the  $i^{th}$ column of $X$ is $\vec{e}_{i-1}$ for each $i$ except for $i=1, n+1$, which are both $\vec{0}$.  This means that the action of $X$ on the standard basis vectors satisfies
\begin{equation} \label{equation: action of N}
X \vec{e}_i = \vec{e}_{i-1}    
\end{equation}
for all $i$ except $i=1, n+1$, in which case $N\vec{e}_i=\vec{0}$. For convenience, we often refer to the first $n$ basis vectors as the \emph{top block}, denoted
\[\mathcal{T} = \{\vec{e}_1, \ldots, \vec{e}_n\},\]
and the last $N-n$ basis vectors as the \emph{bottom block}, denoted
\[\mathcal{B} = \{\vec{e}_{n+1}, \ldots, \vec{e}_N\}.\]
\end{definition}

\subsection{Springer Schubert cells for the Jordan canonical matrix} Note that we can determine the flags in the Springer Schubert cell $\cellX{w}$ by computing $Xg$ for each canonical form matrix $g \in \cellX{w}$ and then testing whether each column is in the span of the first $i$ columns of $g$. In this section, we give first results describing the matrices in canonical form that can appear in Springer fibers.

\begin{lemma} \label{lemma: combinatorial conditions on matrix in Springer}
    Let $X$ be in Jordan canonical form of Jordan type $(n, N-n)$. Suppose $g$ is a matrix representative of a flag in the Springer fiber of $X$, in the canonical form of Definition~\ref{definition: canonical representative and Schub cells}.  Let  $\vec{g}_k$ be its $k^{th}$ column vector and say the pivot is in row $piv(g_k)$, namely
    \[\vec{g}_k = \vec{e}_{piv(g_k)} + \vec{u}\]
    where $\vec{u}$ is zero in rows $piv(g_k), piv(g_k)+1, piv(g_k)+2,\ldots$. Then the following hold:
    \begin{enumerate}
        \item If $1 \leq piv(g_k) \leq n$ then $\vec{u} = \vec{0}$ and all of $\vec{e}_1, \vec{e}_2, \ldots, \vec{e}_{piv(g_k)-1}$ appear among columns $1, 2, \ldots, k-1$ of $g$.
        \item If $n+1 \leq piv(g_k) \leq N$ then columns $1, 2, \ldots, k-1$ of $g$ have pivots in rows $n+1, n+2,\ldots, piv(g_k)-1$.  
        \item If $n+2 \leq piv(g_k)$ and column $k'$ has pivot in row $piv(g_k)-1$ then 
        \[X(\vec{g}_k) - \vec{g}_{k'} \in sp \langle \vec{e}_1, \ldots, \vec{e}_n \rangle \cap sp \langle \vec{g}_1, \ldots \vec{g}_{k-1} \rangle\]
    \end{enumerate}
\end{lemma}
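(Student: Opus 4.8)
The plan is to pass to the flag $V_\bullet$ attached to $g$, write $V_k = sp\langle\vec{g}_1,\dots,\vec{g}_k\rangle$, and use that membership in $\mathcal{S}_X$ is equivalent to $X\vec{g}_k\in V_k$ for every $k$ (because $V_k = V_{k-1}+\mathbb{C}\vec{g}_k$ and $XV_{k-1}\subseteq V_{k-1}$ by induction on $k$). First I would record one piece of bookkeeping from the canonical (column-echelon) form: $\vec{g}_j = \vec{e}_{piv(g_j)} + (\text{terms in rows above } piv(g_j))$ and the pivots $piv(g_1),\dots,piv(g_m)$ are distinct, so any nonzero $\vec{v} = \sum_{j\le m}a_j\vec{g}_j$ has its bottommost nonzero entry (the largest row index carrying a nonzero coordinate) equal to $\max\{piv(g_j):a_j\ne 0\}$. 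From this I get two facts used throughout: the bottommost nonzero rows realized by nonzero vectors of $V_m$ are exactly $\{piv(g_1),\dots,piv(g_m)\}$; and a vector of $V_m$ supported entirely in rows above $piv(g_m)$ already lies in $V_{m-1}$. I also use the shape of $X$: on each Jordan chain $\vec{e}_1\leftarrow\cdots\leftarrow\vec{e}_n$ and $\vec{e}_{n+1}\leftarrow\cdots\leftarrow\vec{e}_N$ it lowers the index by one, while $X\vec{e}_1=X\vec{e}_{n+1}=\vec{0}$; in particular $X$ preserves the span of the top block $\mathcal{T}$.

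All three claims are then proved by induction on $k$. For (2), assume $piv(g_k)=r\ge n+2$ (otherwise the claim is vacuous). For $1\le j\le r-n-1$ the vector $X^j\vec{g}_k$ has bottommost nonzero row exactly $r-j$: the $\vec{e}_r$ summand contributes $\vec{e}_{r-j}$, still in the bottom chain since $r-j\ge n+1$, while the remaining terms of $\vec{g}_k$, being supported above row $r$, contribute only rows $<r-j$ after $X^j$. Since $X^j\vec{g}_k\in V_k$ is supported above row $r=piv(g_k)$, it lies in $V_{k-1}$, so by the echelon-form fact some column among $\vec{g}_1,\dots,\vec{g}_{k-1}$ has pivot in row $r-j$. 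Running $j$ from $1$ to $r-n-1$ gives pivots in rows $n+1,\dots,r-1$ among the first $k-1$ columns, which is exactly (2) (and, when $k$ is too small to accommodate that many pivots, this rules out the case $r\ge n+2$).

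For (1), assume $piv(g_k)=r\le n$, so $\vec{u}$ is supported in rows $1,\dots,r-1$, all in the top chain. Applying $X^{r-1},X^{r-2},\dots,X^0$ in turn to $\vec{g}_k=\vec{e}_r+\vec{u}$ shows successively that $\vec{e}_1,\vec{e}_2,\dots,\vec{e}_r\in V_k$, since $X^{r-m}\vec{g}_k=\vec{e}_m+(\text{a combination of }\vec{e}_1,\dots,\vec{e}_{m-1})$. A dimension count then forces $sp\langle\vec{e}_1,\dots,\vec{e}_{r-1}\rangle\subseteq V_{k-1}$: otherwise $V_{k-1}+sp\langle\vec{e}_1,\dots,\vec{e}_{r-1}\rangle$ would equal $V_k$, so $\vec{e}_r$ would agree with some vector of $V_{k-1}$ up to an element of $sp\langle\vec{e}_1,\dots,\vec{e}_{r-1}\rangle$, producing a nonzero vector of $V_{k-1}$ whose bottommost nonzero row is $r=piv(g_k)$---impossible since pivots are distinct. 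Hence rows $1,\dots,r-1$ are pivot rows of columns $1,\dots,k-1$; by the inductive hypothesis for (1) those columns are $\vec{e}_1,\dots,\vec{e}_{r-1}$, so $\vec{e}_1,\dots,\vec{e}_{r-1}$ occur among the first $k-1$ columns; and since each of those pivots has only zeros to its right, the matching entries of column $k$ vanish, i.e. $\vec{u}=\vec{0}$.

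For (3), assume $piv(g_k)=r\ge n+2$ and let $k'$ be the column with pivot in row $r-1$; by (2) applied to $k$ this column lies among $\vec{g}_1,\dots,\vec{g}_{k-1}$, so $k'<k$ and (2) applies to $k'$ as well. Part (2) for $k$ makes rows $n+1,\dots,r-1$ pivot rows of earlier columns, hence zeros in column $k$, so $\vec{g}_k=\vec{e}_r+\vec{u}^{\mathcal{T}}$ with $\vec{u}^{\mathcal{T}}$ supported in $\mathcal{T}$; likewise part (2) for $k'$ gives $\vec{g}_{k'}=\vec{e}_{r-1}+\vec{u}_{k'}^{\mathcal{T}}$. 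Since $r\ge n+2$, $X\vec{e}_r=\vec{e}_{r-1}$, so $X\vec{g}_k-\vec{g}_{k'}=X\vec{u}^{\mathcal{T}}-\vec{u}_{k'}^{\mathcal{T}}\in sp\langle\vec{e}_1,\dots,\vec{e}_n\rangle$ because $X$ preserves the top chain; and $X\vec{g}_k\in V_k$ has bottommost nonzero row $r-1<r=piv(g_k)$, so it lies in $V_{k-1}$, whence $X\vec{g}_k-\vec{g}_{k'}\in V_{k-1}=sp\langle\vec{g}_1,\dots,\vec{g}_{k-1}\rangle$ too. This is precisely the asserted intersection. I expect the only delicate point to be the bookkeeping pitting the two Jordan chains against the echelon form---remembering that $X$ collapses $\vec{e}_{n+1}$ to $\vec{0}$ rather than to $\vec{e}_n$, so a vector with support in both blocks can lose more than one row from its bottommost nonzero entry under $X$---while the step that keeps (3) from becoming a messy cross-term computation is the realization that condition (2), once known, already kills the free bottom-block entries of $\vec{g}_k$ and $\vec{g}_{k'}$.
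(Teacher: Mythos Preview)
Your proof is correct and follows essentially the same approach as the paper's: both apply powers of $X$ to $\vec{g}_k$ to produce vectors with prescribed bottommost nonzero rows, use the echelon structure to match these rows against pivots of earlier columns, and then invoke the canonical form to zero out entries to the right of those pivots. The only notable differences are organizational: you make the echelon-form bookkeeping and the induction on $k$ explicit, and you deduce $X\vec{g}_k\in V_{k-1}$ from the weaker containment $X\vec{g}_k\in V_k$ via your ``supported above $piv(g_k)$'' observation, whereas the paper asserts $X\vec{g}_k\in V_{k-1}$ directly (implicitly using that a nilpotent upper-triangular matrix is strictly upper-triangular).
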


\begin{proof}
By definition of the Springer fiber, we know that $X(g_k) \in sp \langle \vec{g}_1, \vec{g}_2, \ldots, \vec{g}_{k-1} \rangle$ for all $k$ and hence $X^{\ell}(\vec{g}_k) \in sp \langle \vec{g}_1, \vec{g}_2, \ldots, \vec{g}_{k-1} \rangle $ for all $k$ as well.  

By construction of $X$ we know that $X\vec{e}_\ell = \vec{e}_{\ell-1}$ unless $\ell=n+1$ or $1$ and that for $\ell \in \{1,n+1\}$ we have $X\vec{e}_{\ell} = \vec{0}$.  It follows that if $X^\ell(\vec{u}) = \vec{0}$ for some vector $\vec{u}$ and integer $\ell$ then $\vec{u}$ can only be nonzero in rows $1, 2, \ldots, \ell$ and rows $n+1, n+2, \ldots, n+\ell$.

Putting these facts together, if we write $\vec{g}_k = \vec{e}_{piv(g_k)} + \vec{u}$ then all of 
\[X^{1}\vec{g}_k, X^{2}\vec{g}_k, \ldots, X^{piv(g_k)-1}\vec{g}_k\]
are in $sp \langle \vec{g}_1, \vec{g}_2, \ldots, \vec{g}_{k-1} \rangle$.  We treat the cases $piv(g_k) \leq n$ and $piv(g_k) > n$ separately.

If $piv(g_k) \leq n$ then the vector $X^{\ell}\vec{g}_k$ has pivot in row $piv(g_k)-\ell$ for $1 \leq \ell \leq n$.  In other words, the vectors $\{X^{\ell}\vec{g}_k\}$ form an upper-triangular set and
\[sp \langle X^{1}\vec{g}_k, X^{2}\vec{g}_k, \ldots, X^{piv(g_k)-1}\vec{g}_k \rangle = sp \langle \vec{e}_{piv(g_k)-1}, \vec{e}_{piv(g_k)-2}, \vec{e}_{piv(g_k)-3}, \ldots,  \vec{e}_1 \rangle\]
is contained in the span of the first $k-1$ columns of $g$.  The matrix $g$ is in the canonical form of Definition~\ref{definition: canonical representative and Schub cells}. Since $\vec{g}_k$ has pivot in row $piv(g_k)$ then in fact $\vec{g}_k$ must be $\vec{e}_{piv(g_k)}$.  By the same reasoning, all of $\vec{e}_1, \vec{e}_2, \ldots, \vec{e}_{piv(g_k)-1}$ appear within the first $k-1$ columns of $g$.

Now suppose that $piv(g_k)>n$.  As in the previous argument, columns with pivots in rows $n+1, n+2, \ldots, piv(g_k)-1$ all appear in the first $k-1$ columns of $g$.  This means the column vector $\vec{g}_k$ is zero in rows $n+1, n+2, \ldots, N$ except for row $piv(g_k)$.  This proves claim (2).  If $piv(g_k)>n+1$ then consider the column vector $\vec{g}_{k'}$ whose pivot is in row $piv(g_k)-1$. The difference
\[X(\vec{g}_k)-\vec{g}_{k'} \in sp \langle \vec{e}_1, \ldots, \vec{e}_n \rangle\]
because both $\vec{g}_k$ and $\vec{g}_{k'}$ have exactly one nonzero entry in rows $n+1, \ldots, 2n$.  By definition of the Springer fiber, the difference also satisfies 
\[X(\vec{g}_k)-\vec{g}_{k'} \in sp \langle \vec{g}_1, \ldots, \vec{g}_{k-1}\rangle \]
This proves the final claim.
\end{proof}

The next corollary applies the previous result to the permutation flags contained in the Springer fiber for $X$.  The first two claims are merely specializations but the third shows that if a flag $gB$ is in the Springer fiber then the permutation flag $wB$ consisting of just the pivots of $gB$ is in the Springer fiber.  This result is known for other pavings of Springer fibers as a corollary of how those pavings are constructed \cite{Pre13, Tym06a}.

\begin{corollary} \label{corollary: perm matrix pivots go in order}
    Suppose that $w$ is a permutation matrix and recall that we denote the $k^{th}$ column of $w$ by $\vec{w}_k = \vec{e}_{w(k)}$. 

    The permutation flag $wB$ is in the Springer fiber of the Jordan canonical matrix $X$ if and only if the following two conditions hold.
\begin{enumerate}
\item The pivots in the first $n$ rows increase left-to-right, namely $w^{-1}(1) < w^{-1}(2) < \cdots < w^{-1}(n)$.
\item The pivots in the last $N-n$ rows increase left-to-right, namely $w^{-1}(n+1) < w^{-1}(n+2) < \cdots < w^{-1}(N)$.
\end{enumerate}
Moreover, if $gB$ is a flag in the Springer fiber for $X$ and $w$ is the permutation matrix with the same pivots as $gB$ then the permutation flag $wB$ is also in the Springer fiber for $X$.
\end{corollary}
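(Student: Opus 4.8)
The plan is to reduce everything to Lemma~\ref{lemma: combinatorial conditions on matrix in Springer}. I would first prove the biconditional for permutation flags, and then obtain the ``moreover'' clause by observing that it needs only the forward direction of that biconditional together with the lemma applied to a general flag $gB$. Concretely, a permutation matrix is already in the canonical form of Definition~\ref{definition: canonical representative and Schub cells}, so $w$ itself is a legitimate input to Lemma~\ref{lemma: combinatorial conditions on matrix in Springer}.

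First I would dispatch the backward implication by a direct check. Writing $V_i = sp\langle \vec{e}_{w(1)}, \ldots, \vec{e}_{w(i)} \rangle$, the flag $wB$ lies in $\mathcal{S}_X$ precisely when $X\vec{e}_{w(j)} \in V_i$ whenever $j \le i$. By~\eqref{equation: action of N}, $X\vec{e}_{w(j)}$ is either $\vec{0}$ (if $w(j) \in \{1, n+1\}$), in which case there is nothing to check, or it equals $\vec{e}_{w(j)-1}$; in the latter case $w(j)-1$ lies in the same block ($\mathcal{T}$ or $\mathcal{B}$) as $w(j)$, so hypothesis (1) or (2) gives $w^{-1}(w(j)-1) < w^{-1}(w(j)) = j \le i$, which says exactly that $\vec{e}_{w(j)-1} \in V_i$. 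Hence $XV_i \subseteq V_i$ for all $i$, so $wB \in \mathcal{S}_X$.

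For the forward implication and the ``moreover'' clause I would invoke Lemma~\ref{lemma: combinatorial conditions on matrix in Springer} for a canonical form $g$, taking $g=w$ in the first case and $g$ the canonical representative of a general flag $gB \in \mathcal{S}_X$ in the second. Part (1) of the lemma says that if $piv(g_k) \le n$ then $\vec{e}_1, \ldots, \vec{e}_{piv(g_k)-1}$ occur among the first $k-1$ columns of $g$; since the column carrying $\vec{e}_m$ has its pivot in row $m$, and the pivot positions of $g$ are recorded by $w$, this reads $w^{-1}(m) < k = w^{-1}(piv(g_k))$ for every such $m$, and letting $k$ range over all columns with pivot in $\mathcal{T}$ yields condition (1). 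Symmetrically, part (2) (which speaks directly of pivot rows) yields condition (2). Taking $g=w$ gives the forward implication of the biconditional; for a general $gB \in \mathcal{S}_X$ whose pivots are those of $w$, the same argument shows $w$ satisfies (1) and (2), and then the backward implication already proved gives $wB \in \mathcal{S}_X$, which is the ``moreover'' clause. I do not expect a genuine obstacle here, since all the content is front-loaded into Lemma~\ref{lemma: combinatorial conditions on matrix in Springer}; the only thing that needs care is the bookkeeping — keeping the column index $k$ distinct from the row $w(k) = piv(g_k)$ of its pivot, and rendering the lemma's ``$\vec{e}_m$ appears among the first $k-1$ columns'' (resp.\ ``row $m$ is a pivot row among the first $k-1$ columns'') as the inequality $w^{-1}(m) < k$.
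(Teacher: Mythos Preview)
Your proposal is correct and follows essentially the same approach as the paper: a direct verification for the backward implication via~\eqref{equation: action of N}, and an appeal to parts (1) and (2) of Lemma~\ref{lemma: combinatorial conditions on matrix in Springer} for the forward implication and the ``moreover'' clause. The only cosmetic difference is that the paper concludes the ``moreover'' clause by directly noting $X\vec{w}_k$ lands among the earlier columns of $w$, whereas you first extract conditions (1) and (2) for $w$ and then invoke the backward implication; these are the same argument phrased two ways.
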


\begin{proof}
    Suppose $w$ is a permutation matrix satisfying properties (1) and (2).  Then $X(\vec{w}_k)$ is the standard basis vector $\vec{e}_{w(k)-1}$.  By our conventions, this basis vector is located in column $w^{-1}(w(k)-1)$ of the matrix $w$. Properties (1) and (2) guarantee that this column is to the left of the $k^{th}$ column, namely $X(\vec{w}_k) \in sp \langle \vec{w}_1, \ldots, \vec{w}_{k-1} \rangle$.  Thus $wB$ is in the Springer fiber as desired.
    
    Suppose $g$ is a matrix in the Springer fiber and in the canonical form of Definition~\ref{definition: canonical representative and Schub cells}.  Apply Lemma~\ref{lemma: combinatorial conditions on matrix in Springer} successively to each column of $g$.  If the pivots in rows $1, 2, \ldots, n$ occur in columns $k_1, k_2, \ldots, k_n$ respectively, then we conclude $k_1 < k_2 < \cdots < k_n$.  Similarly, if the pivots in rows $n+1, n+2, \ldots, N$ occur in columns $k_1, k_2, \ldots, k_n$ respectively, then we conclude $k_1 < k_2 < \cdots < k_n$. Specializing to the case when $g$ is a permutation matrix proves the first claim.  
    
    Now suppose that $g$ is in the canonical form of Definition~\ref{definition: canonical representative and Schub cells}, that $gB$ is in the Springer fiber, and that $w$ is the permutation matrix whose nonzero entries are in the positions of the pivots of $g$.  By the previous argument, if $\vec{w}_k$ is the $k^{th}$ column of $w$, then either $X\vec{w}_k$ is among the first $k-1$ columns of $w$ or $X\vec{w}_k = \vec{0}$. In particular, we conclude $X\vec{w}_k$ is in the span of the first $k-1$ columns of $w$ for all $k$ and so the flag $wB$ is also in the Springer fiber of $X$.  This completes the proof.
    \end{proof}

\section{Standard noncrossing matchings and permutations} \label{section: sncm and perms, def and first results}

In this section, we define standard noncrossing matchings as well as some of their key properties.  We then describe a map from standard noncrossing matchings to certain permutations and give a bijection between these different combinatorial objects.

 \begin{definition} An \emph{arc} $\alpha$ is a pair of integers $\init(\alpha) < \term(\alpha)$ referred to as the beginning (or start) of the arc and the end (or endpoint) of the arc.  We often draw an arc as a semicircle above the $x$-axis starting at $\init(\alpha)$ and ending at $\term(\alpha)$. 
 
 A \emph{matching}  $\match{M} = (\alpha_1, \alpha_2, \ldots, \alpha_k)$ on $\{1, 2, \ldots, 2n\}$ is a set of arcs  whose beginning and endpoints form a subset of $\{1, 2, \ldots, 2n\}$ with cardinality $2k$.  We usually assume the arcs are ordered so that $1 \leq \init(\alpha_1) < \init(\alpha_2) < \init(\alpha_k) \leq 2n$ and refer to $\alpha_i$ as the $i^{th}$ arc in the matching. 
 
 Two arcs $\alpha, \alpha'$ \emph{cross} if $\init(\alpha')<\init(\alpha) < \term(\alpha')<\term(\alpha)$.  Arcs that are drawn using our conventions are crossing if and only if the corresponding semicircles cross.
 \end{definition}

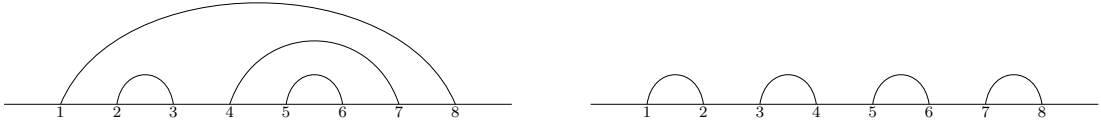
\begin{figure}[h]
\scalebox{0.75}{\begin{tikzpicture}
    \draw (0,0) -- (9,0);
    \draw (1,0) .. controls (2,2.4) and (7,2.4) .. (8,0);
    \draw (2,0) .. controls (2.1,.7) and (2.9,.7) .. (3,0);
    \draw (4,0) .. controls (4.5,1.5) and (6.5,1.5) .. (7,0);
    \draw (5,0) .. controls (5.1,.7) and (5.9,.7) .. (6,0);
    \draw (1,0.1) node[anchor=north] {\tiny $1$};
    \draw (2,0.1) node[anchor=north] {\tiny $2$};
    \draw (3,0.1) node[anchor=north] {\tiny $3$};
    \draw (4,0.1) node[anchor=north] {\tiny $4$};
    \draw (5,0.1) node[anchor=north] {\tiny $5$};
    \draw (6,0.1) node[anchor=north] {\tiny $6$};
    \draw (7,0.1) node[anchor=north] {\tiny $7$};
    \draw (8,0.1) node[anchor=north] {\tiny $8$};
\end{tikzpicture}}
\hspace{0.3in}
\scalebox{0.75}{\begin{tikzpicture}
    \draw (0,0) -- (9,0);
    \draw (1,0) .. controls (1.1,.7) and (1.9,.7) .. (2,0);
    \draw (3,0) .. controls (3.1,.7) and (3.9,.7) .. (4,0);
    \draw (5,0) .. controls (5.1,.7) and (5.9,.7) .. (6,0);
    \draw (7,0) .. controls (7.1,.7) and (7.9,.7) .. (8,0);

    \draw (1,0.1) node[anchor=north] {\tiny $1$};
    \draw (2,0.1) node[anchor=north] {\tiny $2$};
    \draw (3,0.1) node[anchor=north] {\tiny $3$};
    \draw (4,0.1) node[anchor=north] {\tiny $4$};
    \draw (5,0.1) node[anchor=north] {\tiny $5$};
    \draw (6,0.1) node[anchor=north] {\tiny $6$};
    \draw (7,0.1) node[anchor=north] {\tiny $7$};
    \draw (8,0.1) node[anchor=north] {\tiny $8$};
\end{tikzpicture}}
\caption{Examples of perfect noncrossing matchings} \label{figure: 2 perfect matchings}
\end{figure}

\begin{example} \label{example: matchings labeled by arcs} Figures~\ref{figure: 2 perfect matchings} and~\ref{figure: not perfect matching} show three examples of matchings.  In Figure~\ref{figure: 2 perfect matchings}, the matching on the left is $\match{M}_L = \{(1,8), (2,3), (4,7), (5,6)\}$ while the matching on the right is $\match{M}_R = \{(1,2), (3,4), (5,6), (7,8)\}$.  The matching in Figure~\ref{figure: not perfect matching} is $\match{M} = \{(1,4), (2,3), (7,8)\}$.
\end{example}

The next result is a lemma whose two cases simply restate the converse of the definition of crossing arcs.  

\begin{lemma} \label{lemma: nesting or adjacent}
Suppose that $\alpha$ and $\alpha'$ are two arcs that do not cross and suppose without loss of generality that $\init(\alpha')<\init(\alpha)$.  Then either:
\begin{itemize}
    \item $\term(\alpha)<\term(\alpha')$, in which case we say $\alpha$ and $\alpha'$ are \emph{nested} (and $\alpha'$ is nested above $\alpha$), or 
    \item $\term(\alpha')<\term(\alpha)$, in which case we say $\alpha$ and $\alpha'$ are \emph{unnested}.
\end{itemize}
\end{lemma}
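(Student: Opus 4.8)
The plan is a short case analysis built directly on the definition of crossing arcs together with the trichotomy of the integers. Since $\alpha$ and $\alpha'$ are arcs in a matching on $\{1,\dots,2n\}$, their four endpoints $\init(\alpha')$, $\init(\alpha)$, $\term(\alpha')$, $\term(\alpha)$ are pairwise distinct integers; by the definition of an arc we also have $\init(\alpha)<\term(\alpha)$ and $\init(\alpha')<\term(\alpha')$; and by hypothesis $\init(\alpha')<\init(\alpha)$. First I would enumerate the linear orders of these four numbers consistent with these three constraints. The smallest must be $\init(\alpha')$, and $\term(\alpha)$ must come after $\init(\alpha)$, so only three orders are possible: (i) $\init(\alpha')<\init(\alpha)<\term(\alpha)<\term(\alpha')$; (ii) $\init(\alpha')<\init(\alpha)<\term(\alpha')<\term(\alpha)$; (iii) $\init(\alpha')<\term(\alpha')<\init(\alpha)<\term(\alpha)$. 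Any other ordering would place $\term(\alpha')$ before $\init(\alpha')$, or $\term(\alpha)$ before $\init(\alpha)$, or $\init(\alpha)$ before $\init(\alpha')$, each of which is excluded.

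Next I would apply the non-crossing hypothesis. By definition $\alpha$ and $\alpha'$ cross precisely when $\init(\alpha')<\init(\alpha)<\term(\alpha')<\term(\alpha)$, which is exactly case (ii); so the hypothesis rules out (ii) and leaves (i) and (iii). In case (i) we have $\term(\alpha)<\term(\alpha')$ together with $\init(\alpha')<\init(\alpha)$, i.e.\ $\alpha'$ is nested above $\alpha$; in case (iii) we have $\term(\alpha')<\term(\alpha)$, i.e.\ $\alpha$ and $\alpha'$ are unnested. These are exactly the two alternatives in the statement, which completes the proof.

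There is no real obstacle here; as the sentence preceding the lemma indicates, the content is essentially a repackaging of the definition of crossing. The only points to handle with (minimal) care are that distinctness of the endpoints is what makes the dichotomy $\term(\alpha)<\term(\alpha')$ versus $\term(\alpha')<\term(\alpha)$ exhaustive with no equality case, and the bookkeeping that orders (i)--(iii) really are the only ones permitted by the arc and ordering constraints before non-crossing is invoked.
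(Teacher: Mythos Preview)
Your proof is correct and matches the paper's approach: the paper does not actually give a proof of this lemma, but introduces it by saying that its two cases ``simply restate the converse of the definition of crossing arcs,'' which is exactly the content of your case analysis. Your write-up just makes explicit the trichotomy and the use of distinctness of endpoints that the paper leaves implicit.
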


 For example, the matching on the right in Figure~\ref{figure: 2 perfect matchings} has no nesting; its arcs are fully unnested.  In the matching on the left, arcs $(2,3)$, $(4, 7)$, and $(5,6)$ are all nested under arc $(1,8)$.  However, arcs $(2,3)$ and $(4,7)$ are unnested.
 
 \begin{definition}
A matching $\mathcal{M}$ on $\{1, 2, \ldots, 2n\}$ is called...
\begin{itemize}
    \item ... \emph{perfect} if it has exactly $n$ arcs.
    \item ... \emph{noncrossing} if no pair of arcs in $\mathcal{M}$ cross each other.  
    \item ... \emph{standard} if every $i \in \{1, 2, \ldots, 2n\}$ that lies between start and endpoints of an arc is itself the start or endpoint of an arc, namely if $i$ satisfies $\init(\alpha)<i<\term(\alpha)$ for some arc $\alpha$ then $i = \init(\alpha')$ or $i = \term(\alpha')$ for some other arc $\alpha'$.
\end{itemize}
 \end{definition}

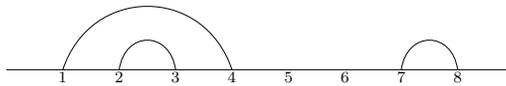
\begin{figure}[h]
    \scalebox{0.75}{\begin{tikzpicture}
    \draw (0,0) -- (9,0);
    \draw (1,0) .. controls (1.5,1.5) and (3.5,1.5) .. (4,0);
    \draw (2,0) .. controls (2.1,.7) and (2.9,.7) .. (3,0);
    \draw (7,0) .. controls (7.1,.7) and (7.9,.7) .. (8,0);

    \draw (1,0.1) node[anchor=north] {\tiny $1$};
    \draw (2,0.1) node[anchor=north] {\tiny $2$};
    \draw (3,0.1) node[anchor=north] {\tiny $3$};
    \draw (4,0.1) node[anchor=north] {\tiny $4$};
    \draw (5,0.1) node[anchor=north] {\tiny $5$};
    \draw (6,0.1) node[anchor=north] {\tiny $6$};
    \draw (7,0.1) node[anchor=north] {\tiny $7$};
    \draw (8,0.1) node[anchor=north] {\tiny $8$};
\end{tikzpicture}}
\caption{A standard noncrossing matching that is not perfect} \label{figure: not perfect matching}
\end{figure}

As examples, all of the matchings in Figures~\ref{figure: 2 perfect matchings} and~\ref{figure: not perfect matching} are standard and noncrossing.  The two matchings in Figure~\ref{figure: 2 perfect matchings} are perfect but the matching in Figure~\ref{figure: not perfect matching} is not perfect.

We now define vocabulary and a function to describe nesting more precisely.

\begin{definition} \label{definition: ancestors on init points}
Suppose that $\mathcal{M}$ is a noncrossing matching on $\{1, 2, \ldots, 2n\}$ and $\alpha$ is an arc in $\mathcal{M}$.  If $\alpha$ has at least one arc nested above it, the \emph{parent} of $\alpha$ is the arc $\alpha'$ nested immediately above $\alpha$, namely $\init(\alpha')<\init(\alpha)$ and $\term(\alpha) < \term(\alpha')$, and if $\alpha''$ is an arc with $\init(\alpha')<\init(\alpha'')<\init(\alpha)$ then also $\term(\alpha'')<\init(\alpha)$.  The \emph{ancestors} of $\alpha$ are all arcs nested at or above $\alpha$, namely $\alpha$, the parent of $\alpha$, the parent of the parent of $\alpha$, and so on (and typically listed in this order).

If $\beta$ is an ancestor of $\alpha$ then we write
\[\alpha \preceq \beta\]
and if $\beta \neq \alpha$ is an ancestor of $\alpha$ then we write $\alpha \prec \beta$.

Every noncrossing matching $\match{M} = \{\alpha_1, \ldots, \alpha_k\}$ defines a function 
\[anc: \{0, 1, 2, \ldots, 2n\} \rightarrow \{0, 1, 2, 3, \ldots, 2n\}\]
by the rule that $\anc{i}{}$ is 
\begin{itemize}
    \item $i'$ if there is an arc $(i',j')$ with $i' < i < j'$ and no arc $(i'',j'')$ has $i'<i''<i$, and
    \item $0$ if there is no such arc $(i',j')$.
\end{itemize}
We call this function the \emph{ancestor function}.  
\end{definition}

\begin{remark}
Note that if $i$ starts an arc $\alpha$ then $\anc{i}{}$ starts the parent of $\alpha$, which is the main point of the definition.  If you consider the subset of $\match{M}$ consisting of $\alpha$ and its ancestors, then this subset forms a shape like a rainbow.

Note also that $\anc{i}{}<i$ by definition and in fact the image of the ancestor function is contained in $\{0, \init(\alpha_1), \init(\alpha_2), \ldots, \init(\alpha_k)\}$.  
\end{remark}

\begin{lemma} \label{lemma: defining ancestors on arcs}
Suppose $\match{M}$ is a standard noncrossing matching and $i \in \{1, 2, \ldots, 2n\}$ satisfies $\anc{i}{}=i' \neq 0$.  Then $i$ is the start or end point of an arc $\alpha$ and 
\[\anc{\init(\alpha)}{} = \anc{\term(\alpha)}{} = i'.\]
We obtain a function on arcs that we also denote
\[anc{}{}: \{0\} \cup \match{M} \rightarrow \{0\} \cup \match{M}\]
and define by
\[\anc{\alpha}{} = \begin{array}{rl}
\beta & \textup{ if the parent of $\alpha$ is } \beta, \\
0 & \textup{ if $\alpha$ has no parent or }\alpha = 0. \end{array}\]
\end{lemma}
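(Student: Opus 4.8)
The plan is to prove the two claims in the lemma one after the other and then deduce the last sentence. The first claim is immediate: $\anc{i}{} = i' \neq 0$ means, by definition of the ancestor function, that there is an arc $(i',j')$ with $i' < i < j'$, so $i$ lies strictly between the two endpoints of an arc; since $\match{M}$ is standard, $i$ is therefore itself the start or endpoint of an arc, and this arc $\alpha$ is unique because the arc endpoints of $\match{M}$ are pairwise distinct. Thus $i \in \{\init(\alpha), \term(\alpha)\}$.

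The substance of the lemma is the equality $\anc{\init(\alpha)}{} = \anc{\term(\alpha)}{}$, which I would extract from the following symmetry between the two endpoints of $\alpha$: in a noncrossing matching, an arc $\beta$ satisfies $\init(\beta) < \init(\alpha) < \term(\beta)$ if and only if $\init(\beta) < \term(\alpha) < \term(\beta)$, and both are equivalent to $\beta$ being nested above $\alpha$, i.e.\ $\init(\beta) < \init(\alpha) < \term(\alpha) < \term(\beta)$. This is a short case-check: being nested above $\alpha$ trivially gives the other two conditions, and conversely each of them, together with the noncrossing hypothesis, forces $\beta$ to be nested above $\alpha$ --- since otherwise the relevant one of the two patterns $\init(\beta) < \init(\alpha) < \term(\beta) < \term(\alpha)$ or $\init(\alpha) < \init(\beta) < \term(\alpha) < \term(\beta)$ occurs, which is a crossing of $\beta$ with $\alpha$; the distinctness of arc endpoints is what lets one pass from the weak inequalities to these strict ones. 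Granting this, the set of arcs covering $\init(\alpha)$ equals the set of arcs covering $\term(\alpha)$ --- in both cases it is exactly the set of arcs nested above $\alpha$. Since the point-level value $\anc{p}{}$ depends only on which arcs cover $p$ (it is the start of the innermost such arc, equivalently the covering arc of largest start, or $0$ if no arc covers $p$), we conclude $\anc{\init(\alpha)}{} = \anc{\term(\alpha)}{}$, and as $i$ is one of these two points and $\anc{i}{} = i'$ this common value is $i'$.

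For the final sentence: when $i' \neq 0$ it is the start of an arc $\beta$ (the image of the point-level ancestor function lies in $\{0, \init(\alpha_1), \ldots, \init(\alpha_k)\}$ by the Remark following the definition), and $\beta$ is the arc of largest start among those nested above $\alpha$; since in a noncrossing matching the arcs nested above a given arc are totally ordered by nesting, $\beta$ is the innermost one, which is the parent of $\alpha$. If no arc is nested above $\alpha$, the equivalence above shows $\anc{\init(\alpha)}{} = \anc{\term(\alpha)}{} = 0$. Either way, declaring $\anc{\alpha}{}$ to be the parent of $\alpha$ --- or $0$ when $\alpha$ has no parent --- is consistent with evaluating the point-level ancestor function at either endpoint of $\alpha$, which is exactly the well-definedness the lemma asserts. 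I expect the only genuine obstacle to be the symmetry case-check in the middle paragraph: although each implication is a one-liner, they must be handled separately and each relies on both the noncrossing condition and the distinctness of arc endpoints.
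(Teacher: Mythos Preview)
Your proof is correct and follows essentially the same approach as the paper's: use standardness to place $i$ on an arc $\alpha$, then use the noncrossing condition to show that the arcs covering $\init(\alpha)$ coincide with those covering $\term(\alpha)$ (both being exactly the arcs nested above $\alpha$), whence the ancestor values agree. Your treatment is more thorough---you spell out both directions of the equivalence and make the connection to the parent explicit---whereas the paper's proof is a terse two-paragraph version of the same argument.
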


\begin{proof}
Suppose $\anc{i}{}=i' \neq 0$.  The matching $\match{M}$ is standard so if $i$ is not on an arc then there is no arc $(i',j')$ with $i'<i<j$.  Hence $\anc{i}{}=0$ which is a contradiction.  

Thus $i$ is on an arc, either at the start or end.  Since $\match{M}$ is noncrossing, any arc $(i',j')$ that satisfies $i'<i<j'$ also satisfies $i'<i<j<j'$.  This means $\anc{i}{}=\anc{j}{}$ for both endpoints of each arc in $\match{M}$.  In other words, the function $\anc{}{}$ is well-defined on arcs as well as on integers.
\end{proof}

\begin{example} \label{example: ancestors and function}
Consider arc $(5,6)$ in the matching on the left in Figure~\ref{figure: 2 perfect matchings}.  Its parent is $(4,7)$ and its ancestors are $(5,6)$, $(4,7)$, and $(1,8)$.  The matching has ancestor function
\[\begin{array}{c|c|c|c|c|c|c|c|c}
i & 1 & 2 & 3 & 4 & 5 & 6 & 7 & 8 \\
\cline{1-9} \anc{i}{} & 0 & (1,8) & (1,8) & (1,8) & (4,7) & (4,7) & (1,8) & 0
\end{array}\]
By contrast, no arc has a parent in the matching on the right in Figure~\ref{figure: 2 perfect matchings}, so the ancestor function in this case is identically zero.

The matching in Figure~\ref{figure: not perfect matching} has ancestor function
\[\begin{array}{c|c|c|c|c|c|c|c|c}
i & 1 & 2 & 3 & 4 & 5 & 6 & 7 & 8 \\
\cline{1-9} \anc{i}{} & 0 & (1,4) & (1,4) & 0 & 0 & 0 & 0 & 0
\end{array}\]

\end{example}

Note our convention that $\alpha$ is an ancestor of itself.  We will repeat this explicitly (and pedantically) to avoid confusion.

\begin{lemma}\label{lemma: number of ancestors}
Suppose that $\match{M}$ is a noncrossing matching and that $\alpha$ is an arc in $\match{M}$.   If $i$ arcs start and $j$ arcs end in $\{1, 2, \ldots, \init(\alpha)\}$ then $\alpha$ has $i-j$ ancestors (including itself). If $\match{M}$ is also a perfect matching, then $\alpha$ has $\init(\alpha)-j$ ancestors (including itself).
\end{lemma}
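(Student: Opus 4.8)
The plan is to recast the number of ancestors of $\alpha$ as a count of arc endpoints lying in the interval $\{1, 2, \ldots, \init(\alpha)\}$, using the noncrossing hypothesis to pin down exactly which arcs of $\match{M}$ are ancestors of $\alpha$.

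The first step is to prove a combinatorial characterization: in a noncrossing matching, an arc $\beta \neq \alpha$ is an ancestor of $\alpha$ if and only if $\init(\beta) < \init(\alpha) < \term(\beta)$. The forward implication is immediate from Definition~\ref{definition: ancestors on init points}, since passing from an arc to its parent strictly enlarges the interval it spans, so any strict ancestor $\beta$ of $\alpha$ satisfies $\init(\beta) < \init(\alpha) < \term(\alpha) < \term(\beta)$. For the converse, suppose $\init(\beta) < \init(\alpha) < \term(\beta)$. Since $\alpha$ and $\beta$ do not cross, Lemma~\ref{lemma: nesting or adjacent} forces $\term(\alpha) < \term(\beta)$ --- the alternative $\term(\beta) < \term(\alpha)$ would produce the crossing $\init(\beta) < \init(\alpha) < \term(\beta) < \term(\alpha)$ --- so $\beta$ is nested above $\alpha$. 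I would then argue by induction on the number of arcs whose start lies strictly between $\init(\beta)$ and $\init(\alpha)$ that $\beta$ lies on the chain consisting of $\alpha$, its parent, its grandparent, and so on: if $\beta$ is not already the parent of $\alpha$, let $\gamma$ be that parent; then $\init(\beta) < \init(\gamma)$, and the noncrossing condition again forces $\init(\beta) < \init(\gamma) < \term(\gamma) < \term(\beta)$, so $\beta$ is an ancestor of $\gamma$ by the inductive hypothesis, hence of $\alpha$.

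Granting the characterization, the count is bookkeeping. Exactly $i$ arcs of $\match{M}$ have their start in $\{1, \ldots, \init(\alpha)\}$; one of them is $\alpha$, the unique arc whose start is $\init(\alpha)$, so $i - 1$ arcs $\beta$ satisfy $\init(\beta) < \init(\alpha)$. Of these, exactly $j$ satisfy $\term(\beta) < \init(\alpha)$: an arc ending in $\{1, \ldots, \init(\alpha)\}$ in fact ends strictly before $\init(\alpha)$, since the endpoints of a matching are distinct and $\init(\alpha)$ is already a start, and such an arc then necessarily starts before $\init(\alpha)$ as well; conversely every such arc is counted by $j$. Hence $i - 1 - j$ of the arcs with $\init(\beta) < \init(\alpha)$ satisfy $\init(\beta) < \init(\alpha) < \term(\beta)$, i.e.\ are exactly the strict ancestors of $\alpha$ by the first step, and adding $\alpha$ itself yields $i - j$ ancestors in all. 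When $\match{M}$ is perfect, every element of $\{1, \ldots, \init(\alpha)\}$ is either the start or the end of an arc, and never both, so $i + j = \init(\alpha)$; substituting this relation into the count $i - j$ gives the stated formula for the perfect case.

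The one genuinely nontrivial point is the converse direction of the characterization: verifying that every arc nested above $\alpha$ really occurs on the parent chain, which is the \emph{rainbow} picture noted after Definition~\ref{definition: ancestors on init points}. Everything after that is a direct count, and the only subtlety to watch is the distinctness of the endpoints of a matching, which is exactly what prevents an arc from ending at $\init(\alpha)$ and throwing off the tally.
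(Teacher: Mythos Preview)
Your proposal is correct and follows essentially the same route as the paper: both count the $i-1$ arcs starting before $\init(\alpha)$, subtract the $j$ that also end before $\init(\alpha)$, and invoke Lemma~\ref{lemma: nesting or adjacent} to identify the remainder as the arcs nested strictly above $\alpha$. The extra work you put into showing that every arc nested above $\alpha$ lies on the parent chain is not strictly needed here, since Definition~\ref{definition: ancestors on init points} already \emph{defines} the ancestors of $\alpha$ to be all arcs nested at or above $\alpha$ (with the parent-chain description given as an equivalent formulation), and the paper's proof accordingly uses that identification without further comment.
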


\begin{proof}
Every arc that starts before $\alpha$ either ends before $\alpha$ or is nested above $\alpha$ by Lemma~\ref{lemma: nesting or adjacent}. Thus the total number of arcs nested strictly above $\alpha$ is $i-j-1$.  This is precisely the set of ancestors of $\alpha$ other than $\alpha$. If $\match{M}$ is perfect then every integer is either a start or an endpoint, so $i+j = \init(\alpha)$.  This proves the claim.
\end{proof}

\begin{definition}
 We denote the $k$-fold composition of the ancestor function with itself by $anc^k$. For notational convenience, we use $anc^0$ to refer to the identity function.
\end{definition}

In the language of the ancestor function, the previous lemma says that if $i$ arcs start and $j$ arcs end before $\alpha$ we have
\begin{equation} \label{equation: bounds on ancestor function}
    \anc{\alpha}{k} \neq 0  \textup{ if and only if } k \leq i-j
\end{equation}
and $\anc{\alpha}{k} = 0$ if $k > i-j$.  

\begin{lemma} \label{lemma: shifting the ancestor function}
Suppose that $\match{M}$ is a standard noncrossing matching, that $\alpha_i$ is an arc in $\match{M}$, and that $\alpha_{i-1}$ is the previous arc (ordered by start points). Let 
\[r = \init(\alpha_i) - \init(\alpha_{i-1}) - 2\]

If $\alpha_i$ has a parent then for all $j \geq 1$ it satisfies
\[\anc{\alpha_i}{j} = \anc{\alpha_{i-1}}{j+r}.\]

If $\anc{\alpha_{i-1}}{r}=0$ then $\alpha_i$ has no ancestors other than itself.
\end{lemma}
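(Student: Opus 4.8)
The plan is to count arcs that start and end before $\alpha_i$ and before $\alpha_{i-1}$, and compare using Equation~\eqref{equation: bounds on ancestor function} together with the well-definedness of $anc$ on arcs from Lemma~\ref{lemma: defining ancestors on arcs}. Write $i'_{i} = \init(\alpha_i)$ and $i'_{i-1}=\init(\alpha_{i-1})$. Since $\match{M}$ is standard and $\alpha_{i-1},\alpha_i$ are consecutive arcs, every integer strictly between $i'_{i-1}$ and $i'_i$ must be the endpoint of an arc (it lies under some arc, hence is on an arc, and it cannot start an arc since $\alpha_{i-1}$ and $\alpha_i$ are consecutive by start point). There are exactly $r = i'_i - i'_{i-1} - 2$ such integers when we also account for $i'_{i-1}$ itself being a start (the integers strictly between are $i'_{i-1}+1,\dots,i'_i-1$, of which one, namely $i'_{i-1}$... ) — more carefully: in the half-open range $\{i'_{i-1}+1,\dots,i'_i-1\}$ there are $i'_i - i'_{i-1} - 1$ integers, and exactly one of those could be disallowed; I will verify that the number of arc-\emph{ends} among $\{1,\dots,i'_i\}$ exceeds the number among $\{1,\dots,i'_{i-1}\}$ by precisely $r+1$, while the number of arc-\emph{starts} increases by exactly $1$ (only $\alpha_i$ itself starts in $\{i'_{i-1}+1,\dots,i'_i\}$).

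Granting that bookkeeping, let $a$ be the number of arc starts and $b$ the number of arc ends in $\{1,\dots,i'_{i-1}\}$, so $\alpha_{i-1}$ has $a-b$ ancestors including itself by Lemma~\ref{lemma: number of ancestors}. Then in $\{1,\dots,i'_i\}$ there are $a+1$ starts and $b+(r+1)$ ends, so $\alpha_i$ has $(a+1)-(b+r+1) = (a-b)-r$ ancestors including itself. If $\alpha_i$ has a parent, i.e. $(a-b)-r \geq 2$, then in particular $a - b \geq r+2 > r$, so by \eqref{equation: bounds on ancestor function} $\anc{\alpha_{i-1}}{r}\neq 0$ and is a genuine arc $\beta$; I claim $\beta = \anc{\alpha_i}{}$, which gives the desired identity after shifting indices. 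To see this, observe $\beta$ is the unique arc with $\init(\beta)$ being the $r$-th iterated ancestor start-point of $i'_{i-1}$; because all the intervening integers between $i'_{i-1}$ and $i'_i$ are arc endpoints nested below $\beta$ (their ancestors stay at or above $\beta$ — this is where I use that they end before $i'_i$, hence are nested below any arc covering $i'_{i-1}$ that also covers $i'_i$), the arc $\beta$ is precisely the innermost arc whose interval strictly contains both $i'_{i-1}$ and $i'_i$, which is the parent of $\alpha_i$. Then for $j\geq 1$, $\anc{\alpha_i}{j}$ is the $(j-1)$-fold ancestor of $\anc{\alpha_i}{} = \beta = \anc{\alpha_{i-1}}{r}$, i.e. $\anc{\alpha_{i-1}}{r + j}$, as claimed. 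For the last sentence: if $\anc{\alpha_{i-1}}{r}=0$ then by \eqref{equation: bounds on ancestor function} applied to $\alpha_{i-1}$ we get $a-b < r$, hence the ancestor count $(a-b)-r$ of $\alpha_i$ is $\leq 0 < 1$; but $\alpha_i$ is always its own ancestor, forcing the count to be exactly... this needs care, so instead I argue directly that $a-b \leq r$ gives at most $1$ ancestor for $\alpha_i$ (namely itself) via the same formula, hence $\alpha_i$ has no parent.

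The main obstacle is the off-by-one combinatorics: getting the exact relationship between $r$, the number of arc endpoints strictly between $i'_{i-1}$ and $i'_i$, and the ancestor counts, while keeping straight which half-open intervals are being counted. A clean way to organize this is to phrase everything in terms of the "height" function $h(m) = (\#\text{ starts in }\{1,\dots,m\}) - (\#\text{ ends in }\{1,\dots,m\})$, note $h(i'_i) = h(i'_{i-1}-1) + 1$ by standardness plus consecutiveness (every step from $i'_{i-1}-1$ to $i'_i - 1$ passes only through endpoints, net $-r-1+1$... ) — I would nail down $h(i'_i) = h(i'_{i-1}) - r$ and then invoke Lemma~\ref{lemma: number of ancestors} in the form "$\alpha$ has $h(\init(\alpha))$ ancestors including itself." Everything else is a routine unwinding of the definition of $anc$ and of \eqref{equation: bounds on ancestor function}.
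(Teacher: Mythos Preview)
Your approach is the same as the paper's: count ancestors of $\alpha_{i-1}$ and $\alpha_i$ via arc-start/arc-end differences, and relate the two ancestor chains through the integers lying strictly between the two start points. Two concrete issues keep the sketch from going through as written.

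\textbf{Off-by-one in identifying the parent.} There are $r+1$ integers strictly between $\init(\alpha_{i-1})$ and $\init(\alpha_i)$, and when $\alpha_i$ has a parent (so every such integer is an arc-end) each of them is the terminus of a distinct ancestor of $\alpha_{i-1}$. Those $r+1$ ancestors are exactly $\anc{\alpha_{i-1}}{0},\anc{\alpha_{i-1}}{1},\ldots,\anc{\alpha_{i-1}}{r}$, listed from inside out. Hence it is the \emph{next} one, $\anc{\alpha_{i-1}}{r+1}$, that first covers $\init(\alpha_i)$ and is therefore the parent of $\alpha_i$. Your $\beta=\anc{\alpha_{i-1}}{r}$ is one level too shallow (take $\match{M}=\{(1,6),(2,3),(4,5)\}$ with $\alpha_{i-1}=(2,3)$, $\alpha_i=(4,5)$, $r=0$: then $\anc{\alpha_{i-1}}{0}=(2,3)$ while the parent of $\alpha_i$ is $(1,6)=\anc{\alpha_{i-1}}{1}$). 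With the correct $\beta=\anc{\alpha_{i-1}}{r+1}$ the desired identity follows cleanly as $\anc{\alpha_i}{j}=\anc{}{j-1}(\beta)=\anc{\alpha_{i-1}}{r+j}$; your final line reaches the right formula only because a second arithmetic slip cancels the first.

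\textbf{The second claim needs a separate case.} Your formula $(a-b)-r$ for the number of ancestors of $\alpha_i$ (equivalently $h(\init(\alpha_i))=h(\init(\alpha_{i-1}))-r$ in your height language) presupposes that every integer in the gap is an arc-end. But that hypothesis is precisely what can fail when $\anc{\alpha_{i-1}}{r}=0$: if $\alpha_{i-1}$ has at most $r$ ancestors and all $r+1$ gap integers were arc-ends, you would get $(a-b)-r\le 0$ ancestors for $\alpha_i$, contradicting that $\alpha_i$ is its own ancestor. So in this situation some gap integer is not on any arc, and the counting formula no longer applies. The paper closes the gap by arguing directly from standardness: any proper ancestor of $\alpha_i$ must start at or before $\init(\alpha_{i-1})$ (no starts lie in the gap) and end after $\term(\alpha_i)$, hence covers the entire gap and in particular covers a point not on an arc, which standardness forbids. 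That case split is not bypassed by the height reformulation; you need it either way.
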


\begin{proof}
First note that if the arcs are ordered by start point, then between $\init(\alpha_i)$ and $\init(\alpha_{i-1})$ lie only points that are either endpoints of arcs or (if the matching is not perfect) points not on any arc.  Thus every arc that ends in $\{\init(\alpha_{i-1})+1, \ldots, \init(\alpha_i)-1\}$ started before $\alpha_{i-1}$ and is an ancestor of $\alpha_{i-1}$ (including possibly $\alpha_{i-1}$ itself).  

In addition, any arc that is an ancestor of $\alpha_i$ must end after $\term(\alpha_i)$ and start at or before $\init(\alpha_{i-1})$ since it cannot start at any of $\{\init(\alpha_{i-1})+1,\ldots, \init(\alpha_i)-1\}$.  So all \emph{proper} ancestors of $\alpha_i$ itself are also ancestors of $\alpha_{i-1}$ (including possibly $\alpha_{i-1}$ itself).

Putting these two facts together, we conclude that the quantity $r$ defined as
\[r = \init(\alpha_i) - \init(\alpha_{i-1}) - 2 \]
counts two disjoint sets:
\begin{itemize}
    \item the number of arcs that are ancestors of $\alpha_{i-1}$ but not $\alpha_i$ 
    \item together with any points between $\init(\alpha_{i-1})$ and $\init(\alpha_i)$ that are not on any arc.
\end{itemize}
In a standard matching, no arc lies over an integer point not on an arc.  Thus if there is a point between $\init(\alpha_{i-1})$ and $\init(\alpha_i)$ that is not on any arc, there cannot be any common ancestor of $\alpha_i$ and $\alpha_{i-1}$.  In particular, arc $\alpha_i$ has no parent and $\alpha_{i-1}$ has fewer than $r$ ancestors.  This means $\anc{\alpha_{i-1}}{r}=0$ as desired.

Assume instead that $\alpha_i$ has an ancestor.  Then $\alpha_{i-1}$ has precisely $r$ ancestors that are not ancestors of $\alpha_i$.  The next ancestor of $\alpha_{i-1}$ must be the first common ancestor with $\alpha_i$ and hence the parent of $\alpha_i$.  In other words
\[\anc{\alpha_{i-1}}{r+1} = \anc{\alpha_i}{}.\]
Composing with the ancestor function on each side proves the rest of the claim.
\end{proof}

\subsection{Defining a map from standard noncrossing matchings to permutations}

Our goal is to describe Springer Schubert cells explicitly using standard noncrossing matchings, in which each arc is essentially a free variable and the nesting of the arcs indicates where in the matrix those variables are found.  To do this, we need to map arcs and matchings to matrices.  The following three functions formalize the ideas in Lemma~\ref{lemma: shifting the ancestor function}.

\begin{definition}
Suppose that $\match{M}$ is a standard noncrossing matching.  Then $\match{M}$ defines the following three functions from $\{1, 2, \ldots, N\}$ to $\{0, 1, 2, \ldots, N\}$:
    \[\begin{array}{ll}
    \jbeg(i) &= |\{j: 1 \leq j \leq i-1 \textup{ and } j \textup{ starts an arc}\}| \\
        \jend(i) &= |\{j: 1 \leq j \leq i-1 \textup{ and } j \textup{ ends an arc}\}| \\
    \jnot(i) &= |\{j: 1 \leq j \leq i-1 \textup{ and } j \textup{ is not on an arc}\}|
    \end{array}\]
\end{definition}

\begin{example} \label{example: jbeg jend jnot for example matchings}
We compute these functions for the three matchings in Figures~\ref{figure: 2 perfect matchings} and~\ref{figure: not perfect matching}.  For the matching on the left in Figure~\ref{figure: 2 perfect matchings}, we have:
\[\begin{array}{c|c|c|c|c|c|c|c|c}
i & 1 & 2 & 3 & 4 & 5 & 6 & 7 & 8 \\
\cline{1-9} \jbeg(i) & 0 & 1 & 2 & 2 & 3 & 4 & 4 & 4 \\
\cline{1-9} \jend(i) & 0 & 0 & 0 & 1 & 1 & 1 & 2 & 3 \\
\cline{1-9} \jnot(i) & 0 & 0 & 0 & 0 & 0 & 0 & 0 & 0 \\
\end{array}\]
while for the matching on the right, we have:
\[\begin{array}{c|c|c|c|c|c|c|c|c}
i & 1 & 2 & 3 & 4 & 5 & 6 & 7 & 8 \\
\cline{1-9} \jbeg(i) & 0 & 1 & 1 & 2 & 2 & 3 & 3 & 4 \\
\cline{1-9} \jend(i) & 0 & 0 & 1 & 1 & 2 & 2 & 3 & 3 \\
\cline{1-9} \jnot(i) & 0 & 0 & 0 & 0 & 0 & 0 & 0 & 0 \\
\end{array}\]
The matching in Figure~\ref{figure: not perfect matching} has
\[\begin{array}{c|c|c|c|c|c|c|c|c}
i & 1 & 2 & 3 & 4 & 5 & 6 & 7 & 8 \\
\cline{1-9} \jbeg(i) & 0 & 1 & 2 & 2 & 2 & 2 & 2 & 3 \\
\cline{1-9} \jend(i) & 0 & 0 & 0 & 1 & 2 & 2 & 2 & 2 \\
\cline{1-9} \jnot(i) & 0 & 0 & 0 & 0 & 0 & 1 & 2 & 2 \\
\end{array}\]
\end{example}

We use the previous functions to define a permutation matrix.  Lemma~\ref{lemma: pivots of matching perm increase in rows} characterizes the main properties and ideas of this permutation, though we need the technical descriptions here for our proofs.

\begin{definition} \label{definition: matching perm}
    If $\match{M}$ is a standard noncrossing matching with $k$ arcs then define a permutation $w_{\match{M}}$ of $\{1, 2, \ldots, N\}$ as follows:
    \[w_{\match{M}}(i) = \left\{ \begin{array}{ll}
    \jend(i) + \jnot(i)+1 & \textup{ if $i$ is on no arc and $j_{not} \leq n-k$,} \\
    n+\jbeg(i)+(\jnot(i)-(n-k))+1 & \textup{ if $i$ is on no arc and $j_{not} > n-k$,} \\
     n+  \jbeg(i) + \min\{0,\jnot(i)-(n-k)\} +1 & \textup{ if $i$ starts an arc, and} \\
     \jend(i)+\max\{\jnot(i),n-k\}+1 & \textup{ if $i$ ends an arc.} 
    \end{array} \right. \]
We identify this permutation with the $N \times N$ permutation matrix $w_{\match{M}}$ whose columns satisfy
\[w_{\match{M}} \vec{e}_i = \vec{e}_{w_{\match{M}}(i)}\]
\end{definition}

\begin{example} \label{example: perm matrix for three matchings}
This permutation is easier to define when the matching is perfect, in which case the columns simply indicate whether an arc starts or ends by placing nonzero entry in top half or bottom half, respectively. The idea is similar for all standard noncrossing matchings, but incorporates a shift from integers not on arcs. Below, we give from left to right the permutation matrix for the matching on the left in Figure~\ref{figure: 2 perfect matchings}, the matching on the right in Figure~\ref{figure: 2 perfect matchings}, and the matching in Figure~\ref{figure: not perfect matching}.
\[
\left(\begin{array}{cccccccc}  
0 & 0 & 1 & 0 & 0 & 0 & 0 & 0 \\
0 & 0 & 0 & 0 & 0 & 1 & 0 & 0 \\
0 & 0 & 0 & 0 & 0 & 0 & 1 & 0 \\
0 & 0 & 0 & 0 & 0 & 0 & 0 & 1 \\
\cdashline{1-8}
1 & 0 & 0 & 0 & 0 & 0 & 0 & 0 \\
0 & 1 & 0 & 0 & 0 & 0 & 0 & 0 \\
0 & 0 & 0 & 1 & 0 & 0 & 0 & 0 \\
0 & 0 & 0 & 0 & 1 & 0 & 0 & 0 \\
\end{array}\right) \hspace{1em}
\left(\begin{array}{cccccccc}  
0 & 1 & 0 & 0 & 0 & 0 & 0 & 0 \\
0 & 0 & 0 & 1 & 0 & 0 & 0 & 0 \\
0 & 0 & 0 & 0 & 0 & 1 & 0 & 0 \\
0 & 0 & 0 & 0 & 0 & 0 & 0 & 1 \\
\cdashline{1-8}
1 & 0 & 0 & 0 & 0 & 0 & 0 & 0 \\
0 & 0 & 1 & 0 & 0 & 0 & 0 & 0 \\
0 & 0 & 0 & 0 & 1 & 0 & 0 & 0 \\
0 & 0 & 0 & 0 & 0 & 0 & 1 & 0 \\
\end{array}\right)  \hspace{1em}
\left(\begin{array}{cccccccc}  
0 & 0 & 1 & 0 & 0 & 0 & 0 & 0 \\
0 & 0 & 0 & 1 & 0 & 0 & 0 & 0 \\
0 & 0 & 0 & 0 & 1 & 0 & 0 & 0 \\
0 & 0 & 0 & 0 & 0 & 0 & 0 & 1 \\
\cdashline{1-8}
1 & 0 & 0 & 0 & 0 & 0 & 0 & 0 \\
0 & 1 & 0 & 0 & 0 & 0 & 0 & 0 \\
0 & 0 & 0 & 0 & 0 & 1 & 0 & 0 \\
0 & 0 & 0 & 0 & 0 & 0 & 1 & 0 \\
\end{array}\right)
\]
\end{example}

We now confirm that this function $w_{\match{M}}$ is in fact a permutation and give a more intuitive description of it.

\begin{lemma} \label{lemma: pivots of matching perm increase in rows}
    The matrix $w_{\match{M}}$ is in fact a permutation matrix.  Moreover the pivots in the first $n$ rows occur in increasing order left to right, and similarly for the pivots in the last $N-n$ rows.  In other words:
    \[w_{\match{M}}^{-1}(1) <w_{\match{M}}^{-1}(2)< \cdots < w_{\match{M}}^{-1}(n) \hspace{0.3in} \textup{ and } \hspace{0.3in} w_{\match{M}}^{-1}(n+1) <w_{\match{M}}^{-1}(n+2)< \cdots < w_{\match{M}}^{-1}(N) \]
\end{lemma}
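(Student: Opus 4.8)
The plan is to verify the two required properties directly from the explicit formula for $w_{\match{M}}$ in Definition~\ref{definition: matching perm}, organizing the argument by the four cases ($i$ on no arc, $i$ starts an arc, $i$ ends an arc, and within the ``on no arc'' case the subcases $\jnot(i) \leq n-k$ and $\jnot(i) > n-k$). First I would observe that each of the four formulas always outputs a value in the correct half: the first case gives a value in $\{1,\ldots,n\}$ (using $\jend(i)+\jnot(i) \leq n-k + (\text{number of ended arcs}) $ — but in the regime $\jnot(i)\leq n-k$ one needs $\jend(i)+\jnot(i)+1 \leq n$, which follows since at most $k$ arcs end and $\jnot$ is bounded by $n-k$ in this subcase), a value starting an arc or an integer not on an arc with $\jnot>n-k$ lands in rows $n+1,\ldots,N$, and an arc-end with the $\max$ term also lands in the top $n$ rows. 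Making these four boundedness estimates precise is the first block of the proof.

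Next, to see $w_{\match{M}}$ is a bijection it suffices (since domain and codomain are both $\{1,\ldots,N\}$) to show injectivity, and in fact it is cleanest to simultaneously prove the monotonicity statements, since those immediately give injectivity. So the core of the argument is: as $i$ runs $1,2,\ldots,N$, the subsequence of those $i$ landing in rows $1,\ldots,n$ has strictly increasing images, and likewise for rows $n+1,\ldots,N$. The $i$'s mapping to the top block are exactly the arc-ends together with the not-on-arc points with $\jnot(i)\leq n-k$; I would track how the quantity $\jend(i)+\max\{\jnot(i),n-k\}$ (for arc-ends) versus $\jend(i)+\jnot(i)$ (for the relevant not-on-arc points) changes as $i$ increments past a point of each type — each step increases $\jend$ or $\jnot$ by exactly one, and I would check that the combined expression used for whichever case applies strictly increases by one each time we pass a top-block index, using standardness (no arc lies over an off-arc point) to control when the $\max$ switches from $n-k$ to $\jnot(i)$. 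The bottom block is handled symmetrically with $\jbeg$ and $\min$ in place of $\jend$ and $\max$.

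I expect the main obstacle to be the bookkeeping at the transition $\jnot(i)=n-k$, where the ``on no arc'' formula switches branches and the $\max/\min$ in the arc formulas become active; I want to confirm the two branches agree (i.e. the formula is genuinely well-defined and the image is an unbroken run of integers with no repeats or gaps at that seam), and that standardness forces all off-arc points with small $\jnot$ to occur before any nested structure that would otherwise break monotonicity. Once the per-step increments are pinned down, strict monotonicity on each block follows, injectivity is immediate, and combined with the boundedness estimates of the first block the map is a bijection of $\{1,\ldots,N\}$ with itself, hence $w_{\match{M}}$ is a permutation matrix with the stated pivot ordering. As a sanity check I would confirm the formula reproduces the three matrices in Example~\ref{example: perm matrix for three matchings}.
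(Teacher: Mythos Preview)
Your plan is correct and matches the paper's approach: verify that $w_{\match{M}}$ restricted to the indices landing in each block is strictly increasing, hence a bijection onto that block. The paper compresses your increment-tracking into the single observation that on the set $\mathcal{S}$ of indices mapped to one block the formula reads $w_{\match{M}}(i)=|\{j\in\mathcal{S}:j\le i\}|$ (plus $n$ for the bottom block), a rank function that is manifestly a strictly increasing bijection onto the correct range and absorbs the $\jnot=n-k$ transition you flag into a single uniform statement.
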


\begin{proof}
     Consider the set $\mathcal{S}$ of just the first $n-k$ integers not on any arc together with the integers that start arcs. If $i$ is in this set, then the function sends
     \[w_{\match{M}}(i) = \jbeg(i)+\jnot(i) + 1\]
     In other words, when $i \in \mathcal{S}$ then the image of $w_{\match{M}}$ counts how many elements of $\mathcal{S}$ have been encountered so far:
     \[w_{\match{M}}(i) = |\{j \in \mathcal{S}: j \leq i\}.\]
     There are $k$ arc endpoints and there are $n-k$ integers not on arcs in $\mathcal{S}$ so the image $w_{\match{M}}(\mathcal{S}) = \{1, 2, \ldots, n\}$. A similar statement holds for the next $N-n$ rows, except we consider arc ends and the last $N-n-k$ integers not on any arc, and the image is now $\{n+1, \ldots, N\}$.  This proves the result.
\end{proof}

In fact, there are classical bijections between the set of standard noncrossing matchings, these permutations, and strings of $N$-letter words in the alphabet $\{B, T\}$ with exactly $n$ occurrences of $T$.  We describe these bijections here.  

\begin{proposition} \label{proposition: bijection perm matching BT}
Consider the set $\mathcal{W}$ of all words $m_1 m_2 \cdots m_{N}$ consisting of $N$ letters in the alphabet $\{B, T\}$ with exactly $n$ occurrences of $T$.  There is a bijection between $\mathcal{W}$ and the set of standard noncrossing matchings on $\{1, 2, \ldots, N\}$ with at most $n$ arc starts and at most $N-n$ arc ends.  This bijection is defined by
\[\match{M} \mapsto m_1 m_2 \cdots m_{N}\]
with $m_i = B$ if $w_{\match{M}}(i) > n$ and $m_i = T$ if $w_{\match{M}}(i) \leq n$.  If $m_1m_2 \cdots m_{N}$ then the inverse map can be defined inductively as follows:
\begin{enumerate}
    \item If $n=N$ or $n=0$ the only matching is that with no arcs, which corresponds to $B^N$ or $T^N$ respectively.
    \item If $N=2$ and $n=1$ the word $TB$ is associated to the matching on $\{1, 2\}$ with no arcs and the word $BT$ is associated to the matching on $\{1, 2\}$ with arc $(1,2)$.  
    \item Suppose $1<n<N$.  
    \begin{enumerate}
        \item If the word $m_1m_2 \cdots m_{N}$ has no $i$ with $m_im_{i+1} = BT$ then the word is $T^n B^{N-n}$ and the associated matching has no arcs. 
        \item Otherwise, let $i$ have $m_im_{i+1} = BT$.  Then $\match{M} = \match{M}' \cup \{(i,i+1)\}$ where $\match{M}'$ is the matching defined inductively from the $2(n-1)$ integers $m_1m_2 \ldots m_{i-1} m_{i+2} \cdots m_n$ in $\{1, 2, \ldots, 2n\} \cap \{i, i+1\}^c$.
    \end{enumerate} 
\end{enumerate}  
\end{proposition}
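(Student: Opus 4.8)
The plan is to establish the bijection by checking the two maps described are mutually inverse, and that each is well-defined on the stated domains and codomains. The forward map $\match{M} \mapsto m_1 \cdots m_N$ is unambiguously defined once we know $w_{\match{M}}$ is a genuine permutation, which is Lemma~\ref{lemma: pivots of matching perm increase in rows}; that same lemma tells us exactly $n$ of the values $w_{\match{M}}(i)$ lie in $\{1,\ldots,n\}$, so the output word indeed has exactly $n$ occurrences of $T$ and lands in $\mathcal{W}$. So the only real content is: (i) the inductive recipe actually produces a standard noncrossing matching with at most $n$ arc starts and at most $N-n$ arc ends; (ii) the two maps compose to the identity in both directions.

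First I would verify that the inductive construction in (3)(b) is sound: when $m_i m_{i+1} = BT$, deleting positions $i, i+1$ leaves a word of length $N-2$ with exactly $n-1$ occurrences of $T$, so the induction hypothesis applies (with $n-1$ in place of $n$, $N-2$ in place of $N$); then one must check that re-inserting the arc $(i,i+1)$ on top of the relabeled matching $\match{M}'$ preserves the noncrossing property (immediate: $(i,i+1)$ spans only consecutive integers, so it nests or is disjoint from every other arc), preserves standardness (the new arc covers no integer at all, and relabeling does not create covered-but-unmatched integers), and keeps the arc-start/arc-end counts within bounds (we add exactly one start and one end, matching the change from $n-1, N-2$ back to $n, N$). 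One subtlety worth spelling out: the recipe says ``let $i$ have $m_i m_{i+1} = BT$'' without specifying which such $i$, so I would either pin down a canonical choice (say the leftmost) or — cleaner — argue the resulting matching is independent of the choice, which follows because distinct innermost $BT$-patterns correspond to arcs that are pairwise disjoint, hence can be peeled off in any order.

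Next I would show the composition is the identity. Going $\match{M} \to \text{word} \to \match{M}$: the key observation is that in the word produced from $\match{M}$, a pattern $m_i m_{i+1} = BT$ occurs exactly when $i, i+1$ both lie on arcs with $i$ ending an arc and $i+1$ starting an arc in such a way that $w_{\match{M}}(i) > n \geq w_{\match{M}}(i+1)$; using the explicit formulas in Definition~\ref{definition: matching perm} together with Lemma~\ref{lemma: pivots of matching perm increase in rows}, this forces $i$ to end an arc whose partner start is $i$'s... more precisely, it forces $(i,i+1)$ itself to be an arc of $\match{M}$ (an innermost arc, covering nothing). Peeling it off and invoking that deleting the two endpoints of an innermost arc from $\match{M}$ yields precisely the matching whose word is $m_1\cdots m_{i-1}m_{i+2}\cdots m_N$ — which needs a short computation showing the functions $\jbeg, \jend, \jnot$ and hence $w_{\match{M}}$ behave correctly under this deletion — lets the induction close. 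The reverse composition $\text{word} \to \match{M} \to \text{word}$ runs along the same lines: the arc $(i,i+1)$ inserted at the top step of the recipe produces letters $B$ at position $i$ and $T$ at position $i+1$ in the word recovered from $\match{M}$, again by the explicit formula for $w_{\match{M}}$ on arc endpoints, and the remaining letters come from $\match{M}'$ by induction.

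The main obstacle, and the step I would spend the most care on, is the compatibility of the permutation $w_{\match{M}}$ (equivalently of $\jbeg, \jend, \jnot$ and the four-case formula) with the peeling-off operation ``delete the two endpoints of an innermost arc.'' Everything else is structural bookkeeping about noncrossing matchings, but this step is where the somewhat intricate case analysis in Definition~\ref{definition: matching perm} — with its $n-k$ threshold and the $\min/\max$ corrections for integers not on arcs — has to be shown to shift cleanly by $1$ in each block when one start and one adjacent end are removed. I would handle it by fixing an innermost arc $(i,i+1)$, noting that for $j \le i-1$ the three counting functions are unchanged while for $j \ge i+2$ each drops by the appropriate amount, and then checking the four cases of the formula one at a time; the perfect-matching case (where $\jnot \equiv 0$ and the formula collapses to ``start $\mapsto$ bottom block, end $\mapsto$ top block'') is the clean model, and the general case is a controlled perturbation of it.
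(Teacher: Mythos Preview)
Your overall plan is sound and would succeed, but it diverges from the paper's argument in a way that creates extra work you correctly anticipate. The paper does \emph{not} induct on the matching side at all: to prove the algorithm's output is standard and noncrossing it argues directly from the erasure structure (an arc $(i,j)$ only forms once every letter strictly between $i$ and $j$ has already been erased, hence already lies on an arc; and two arcs $(i,j)$, $(i',j')$ with $i<i'<j$ cannot cross because $m_{i'}$ must be erased before $m_i m_j$ become adjacent, forcing $j'<j$). For the inverse, the paper checks only the composition word~$\to$~matching~$\to$~word, observing that in the output matching every arc start carries the letter $B$, every arc end carries $T$, and the leftover positions form the block $T^{n-k}B^{N-n-k}$, which matches the residual word when the algorithm terminates. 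Your inductive route through both compositions is fine, but it is exactly what forces you into the ``main obstacle'' you flag: tracking the four-case formula for $w_{\match{M}}$ through deletion of an innermost arc. The paper never needs that computation.

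Two local issues worth fixing. First, you have the roles of $B$ and $T$ reversed in your key sentence: $m_i=B$ means $w_{\match{M}}(i)>n$, which by Definition~\ref{definition: matching perm} says $i$ \emph{starts} an arc (or is a late non-arc point), not ends one; likewise $m_{i+1}=T$ forces $i+1$ to \emph{end} an arc. Your conclusion that $(i,i+1)$ is an arc is still correct, but the sentence as written is backwards. Second, ``exactly when $i,i+1$ both lie on arcs'' is too quick: $m_i=B$ or $m_{i+1}=T$ can a priori also arise from positions not on any arc, and you need a short standardness-and-monotonicity argument (e.g.\ if $i$ is a non-arc $B$ then $\jnot(i)>n-k$, forcing $\jnot(i+1)>n-k$ as well, so $i+1$ cannot be a non-arc $T$; and a non-arc position cannot sit immediately inside an arc endpoint) to exclude those cases before concluding $(i,i+1)\in\match{M}$.
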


\begin{proof}
Suppose that $m_1 m_2 \cdots m_N$ is a $\{B, T\}$-word and $\match{M}$ the output of this algorithm.  First note that letters in $m_1 \cdots m_{N}$ are only erased during Step (3b), and then only if they are placed on arcs in $\match{M}$.  Thus $(i,j) \in \match{M}$ is an arc only if in some iteration of the algorithm, the letters $m_im_j$ were adjacent in a substring. This means that all letters in the substring $m_{i+1} m_{i+2} \cdots m_{j-1}$ were erased in earlier iterations of the algorithm, so all integers $i+1, i+2, \ldots, j-1$ are also on arcs in $\match{M}$. In particular, the matching $\match{M}$ is standard.  

Now we show that $\match{M}$ is noncrossing.  Note that only adjacent letters can be erased.  Thus if $(i,j)$ is an arc then at some point in the inductive construction of the matching, letters $m_im_j$ were adjacent.  If $(i',j')$ is another arc with $i<i'<j$ then also $m_{i'}m_{j'}$ were adjacent.  Since $i<i'<j$ we know $m_{i'}$ must be erased before $i$ so that $m_im_j$ can be adjacent.  But if $i'<j<j'$ then $m_j$ must be erased before $i'$ so that $m_{i'}m_{j'}$ can be adjacent.  Both endpoints of an arc are erased from the word at the same time, so these cannot both be true.  Hence $i<i'<j$ implies that also $i'<j'<j$ and the matching produced by this process is standard noncrossing, as desired.  

 Finally, we confirm that this inductive process inverts the original map from matching to $\{B, T\}$-word.  Every arc start is associated to the letter $B$, and thus a pivot in the last $n$ rows in $w_{\match{M}}$, and similarly for arc ends and $T$.  The process ends when $k$ arcs have been created and the remaining word is $T^{n-k}B^{N-n-k}$, all of which correspond to integers not on any arc of $\match{M}$.  By construction, the associated columns of $w_{\match{M}}$ are assigned the lowest available pivot, which means the first $n-k$ columns associated to integers not on arcs have pivots in the $n-k$ remaining rows in the top block, and the final $N-n-k$ columns have pivots in the last $N-n$ rows.  This corresponds exactly to the desired $\{B, T\}$-string.
\end{proof}

The last paragraph proves the following corollary, which we state on its own for later convenience.

\begin{corollary} \label{corollary: standard means perfect plus T then B}
    Suppose $m_1 m_2 \cdots m_N$ is the $\{B, T\}$-word of a standard noncrossing matching $\match{M}$ on $\{1, 2, \ldots, N\}$ with $k$ arcs and with $t$ Ts and $b$ Bs. The subword obtained by erasing all $m_i$ for which $i$ is on an arc is $T^{t-k}B^{b-k}$.
\end{corollary}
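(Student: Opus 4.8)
The plan is to extract the statement directly from the last paragraph of the proof of Proposition~\ref{proposition: bijection perm matching BT}, which already tracks exactly what is needed. Recall that the inverse map there reconstructs $\match{M}$ from $m_1 m_2 \cdots m_N$ by repeatedly locating an index $i$ with $m_i m_{i+1} = BT$, recording the arc on those two positions, and deleting the two letters $m_i, m_{i+1}$ (keeping the original labels on the surviving letters). I would first record two bookkeeping facts about a single iteration: it deletes exactly one letter $T$ and exactly one letter $B$, and the two deleted positions are precisely the endpoints of the arc it creates, hence positions that are ``on an arc'' of $\match{M}$. Conversely, by the bijectivity already established in Proposition~\ref{proposition: bijection perm matching BT}, running the algorithm to completion recreates all $k$ arcs of $\match{M}$ and nothing else; so after $k$ iterations the positions that have been deleted are exactly the positions lying on arcs of $\match{M}$, and the letters that survive are exactly the $m_i$ with $i$ on no arc, read in their original left-to-right order.

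Next I would invoke the termination condition. The algorithm halts when the surviving word has no index $i$ with $m_i m_{i+1} = BT$, and the only $\{B,T\}$-words with no $BT$ substring are those of the form $T^a B^c$ (scanning left to right, the first $T$ occurring after some $B$ would have to be preceded either by a $B$, giving a forbidden substring, or by an earlier $T$ that itself occurs after that $B$, contradicting minimality). Since the original word has $t$ letters $T$ and $b$ letters $B$, and each of the $k$ iterations removes one of each, the surviving word is $T^{t-k} B^{b-k}$. Because that surviving word is precisely the subword of $m_1 \cdots m_N$ supported on the non-arc positions, it is exactly the subword obtained by erasing all $m_i$ for which $i$ is on an arc, which is the claim.

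The only point requiring care — and the likeliest place for a sloppy argument to slip — is the identification ``positions deleted by the algorithm $=$ positions on an arc of $\match{M}$,'' together with the fact that deletion preserves the relative order of the surviving letters; both follow from the explicit description of the inverse map in Step (3b) of Proposition~\ref{proposition: bijection perm matching BT}, where the recursive call is performed on the original labels with $\{i,i+1\}$ removed, so no new argument is needed. As an alternative, one could instead argue straight from Definition~\ref{definition: matching perm}: for $i$ not on an arc, the first two clauses show $m_i = T$ exactly when $\jnot(i) \le n-k$, i.e.\ exactly when $i$ is among the first $n-k$ integers not on any arc, and $m_i = B$ otherwise, so the non-arc subword is $T^{n-k}B^{N-n-k} = T^{t-k}B^{b-k}$ at once. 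Either route is short; I would present the algorithmic one, since it reuses machinery already in hand.
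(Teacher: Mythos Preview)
Your proposal is correct and takes essentially the same approach as the paper, which simply points to the last paragraph of the proof of Proposition~\ref{proposition: bijection perm matching BT} and notes that it already establishes the claim. You have unpacked that paragraph more explicitly (and added a second route via Definition~\ref{definition: matching perm}), but the core argument---each step of the algorithm removes one $B$ and one $T$ at arc endpoints, and the algorithm terminates at a word with no $BT$ substring, hence of the form $T^{t-k}B^{b-k}$---is the same.
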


\begin{example}
Consider the permutations in Example~\ref{example: perm matrix for three matchings}.  From left to right, their  $\{B, T\}$-words are BBTBBTTT, BTBTBTBT, and BBTTTBBT.  Using the algorithm on BBTBBTTT gives the following:
\begin{enumerate}
    \item Choosing the pair B{\bf BT}BBTTT we add arc $(2,3)$ to $\match{M}$ and then repeat with subword BBBTTT.
    \item Choosing the pair BB{\bf BT}TT we add arc $(5,6)$ with the original indexing of letters and then repeat with subword BBTT.
    \item Choosing the pair B{\bf BT}T we add arc $(4,7)$ with the original indexing and then repeat with subword BT.
    \item Finally we add arc $(1,8)$ so the final matching is $\match{M} = \{(1,8), (2,3), (4,7), (5,6)\}$.
\end{enumerate}
Similarly, the matching associated to BTBTBTBT is $\{(1,2),(3,4),(5,6),(7,8)\}$ and the matching associated to BBTTTBBT is $\{(1,4), (2,3), (7,8)\}$.
\end{example}

Proposition~\ref{proposition: bijection perm matching BT} gave a bijection between standard noncrossing matchings and $\{B, T\}$-words using the permutations $w_{\match{M}}$ as an intermediate step.  The next result fleshes out the bijection to permutations.

\begin{corollary} \label{corollary: direct map to perms}
 The set of $\{B, T\}$-words on $N$ letters with exactly $n$ instances of $B$ is bijective to the set of permutations $w$ with the property that 
 \begin{itemize} 
 \item $w^{-1}(1) < w^{-1}(2) < \cdots < w^{-1}(n)$ and
\item  $w^{-1}(n+1) < w^{-1}(n+2) < \cdots < w^{-1}(N)$. 
 \end{itemize}
 Explicitly, suppose $m_1 m_2 \cdots m_N$ is a $\{B, T\}$-word with exactly $n$ instances of $B$ and suppose $i_1, i_2, \ldots, i_n$ all satisfy 
    \[m_{i_1} = m_{i_2} = \cdots = m_{i_n}=B.\]  
    Associate to $m_1 \cdots m_N$ the permutation with $w(i_j)=j$ and with the complement of $\{i_1, \ldots, i_n\}$ associated successively to $n+1, n+2, \ldots, N$. This map is a bijection and inverts the map from Proposition~\ref{proposition: bijection perm matching BT}.
\end{corollary}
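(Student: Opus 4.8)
The plan is to observe that both sides of the claimed bijection are just two copies of the same indexing set --- the $n$-element subsets of $\{1,2,\ldots,N\}$ --- and that the explicit map in the statement is the identity map under these two parametrizations. Identifying this map as the inverse of the correspondence built in Proposition~\ref{proposition: bijection perm matching BT} is then a short consistency check, so I expect no substantive difficulty; the only thing that needs care is bookkeeping.

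First I would record the parametrization of the permutation side. A permutation $w$ of $\{1,\ldots,N\}$ satisfies both conditions $w^{-1}(1)<\cdots<w^{-1}(n)$ and $w^{-1}(n+1)<\cdots<w^{-1}(N)$ if and only if it is completely determined by the $n$-element set $A = w^{-1}(\{1,\ldots,n\})$: the values $w^{-1}(1),\ldots,w^{-1}(n)$ must then be the elements of $A$ listed in increasing order, and $w^{-1}(n+1),\ldots,w^{-1}(N)$ the elements of the complement of $A$ in increasing order. Conversely, every $n$-element subset $A$ yields exactly one such $w$. Hence $w\mapsto w^{-1}(\{1,\ldots,n\})$ is a bijection from this set of permutations onto the set of $n$-element subsets of $\{1,\ldots,N\}$. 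On the word side, a $\{B,T\}$-word on $N$ letters is plainly determined by, and determines, the $n$-element set of positions carrying the letter that occurs $n$ times. The map of the statement sends such a word, with those positions $i_1<\cdots<i_n$, to the permutation with $w(i_j)=j$ and with the complement of $\{i_1,\ldots,i_n\}$ sent in increasing order to $n+1,\ldots,N$; by the previous sentence this is precisely the permutation attached by the first bijection to that same $n$-subset. So the explicit map is the composite of the two subset parametrizations, and is therefore a bijection.

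It remains to check that this map inverts the map from Proposition~\ref{proposition: bijection perm matching BT}. That map factors through $w_{\match{M}}$, which by Lemma~\ref{lemma: pivots of matching perm increase in rows} lies in the permutation set just described, and the assignment $\match{M}\mapsto w_{\match{M}}$ is injective because the Proposition shows $\match{M}\mapsto m_1\cdots m_N$ is a bijection and the word is read off directly from $w_{\match{M}}$. Since the set of standard noncrossing matchings, the set $\mathcal{W}$, and the permutation set all have cardinality $\binom{N}{n}$, the assignment $\match{M}\mapsto w_{\match{M}}$ is a bijection onto the permutation set, and consequently $w\mapsto(\text{its }\{B,T\}\text{-word})$ is a bijection between the permutation set and $\mathcal{W}$. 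Finally, applying the explicit map to the word of a permutation $w$ returns $w$: the positions carrying the $n$-times letter are exactly $w^{-1}(\{1,\ldots,n\})$, and listed in increasing order these are $w^{-1}(1),\ldots,w^{-1}(n)$, so the explicit map reassigns those positions the values $1,\ldots,n$ and reassigns the complement the values $n+1,\ldots,N$ in increasing order --- which is exactly what $w$ already does.

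There is no genuinely hard step here. The one subtlety, and the only place an error could creep in, is keeping the two naming conventions straight --- which letter is the one occurring $n$ times, and which of the two index blocks $\{1,\ldots,n\}$ and $\{n+1,\ldots,N\}$ the word of Proposition~\ref{proposition: bijection perm matching BT} associates to that letter --- so that the composite of the explicit map with $w\mapsto(\text{its }\{B,T\}\text{-word})$ is the identity and not the complementation $A\mapsto A^{c}$ on $n$-subsets.
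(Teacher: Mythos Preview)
Your proposal is correct and follows essentially the same line as the paper's proof: both recognize that the explicit map and the map $w_{\match{M}}\mapsto(\text{its }\{B,T\}\text{-word})$ from Proposition~\ref{proposition: bijection perm matching BT} are mutually inverse by construction, with the permutation set and the word set each parametrized by the choice of an $n$-element subset of $\{1,\ldots,N\}$. The paper's argument is terser --- it appeals directly to the inverse construction in Proposition~\ref{proposition: bijection perm matching BT} rather than your cardinality count via $\binom{N}{n}$ --- but the substance is the same, and your closing caveat about the $B$/$T$ convention is well placed.
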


\begin{proof}
If $w_{\match{M}}$ is the permutation associated to a matching $\match{M}$ then Proposition~\ref{proposition: bijection perm matching BT} associates it to the word $m_1m_2\cdots m_{N}$ with $T$ in the letters $i_1, i_2, \ldots, i_n$ that have pivots in the first $n$ rows and $B$ in the other letters. The map of Corollary~\ref{corollary: direct map to perms}s inverts this by construction. The map from $\{B, T\}$-words to matchings confirms that every $w$ constructed in Corollary~\ref{corollary: direct map to perms} is in fact $w_{\match{M}}$ for some matching.
\end{proof}

\section{Defining a map from labeled matchings to Springer Schubert cells} \label{section: bijection matchings to cells}
We now construct a map from standard noncrossing matchings to Springer Schubert cells and prove that it is bijective.  To define the map, we use the same ideas as when constructing $w_{\match{M}}$ to reindex vectors according to arc starts and ends, and keep track of the arcs above each integer $i$. Intuitively, we think of each arc as being labeled by a free variable and insert the variable associated to an arc in different entries of the canonical-form matrix depending on how the arcs are nested.  We then analyze properties of arcs and Springer fibers to prove this map is a bijection.

To begin, we give a map from noncrossing matchings to vectors.  Recall that Lemma~\ref{lemma: number of ancestors} proved the number of ancestors of an arc $\alpha$ including $\alpha$ itself is $\jbeg(\init(\alpha)) - \jend(\init(\alpha))$ in our current notation.

\begin{definition} \label{definition: ancestor labels and matrix}
If $\match{M}$ is a noncrossing matching on $\{1, 2, \ldots, N\}$ with arc $\alpha$ then define a map $\iota_{\alpha, \match{M}}: \mathbb{C}^{|\match{M}|} \rightarrow \mathbb{C}^{N}$ according to the rule
    \[ \iota_{\alpha, \match{M}}(\vec{v}) = \sum_{j \geq 1} v_{\anc{\alpha}{j-1}} \vec{e}_{r_0+j}\]
    where we label the entries of each vector in $\mathbb{C}^{|\match{M}|}$ by the arcs in $\match{M}$, where $v_0=0$, and where
    \[r_0 = \jend(\init(\alpha))+\max\{\jnot(\init(\alpha)), n-k\}.\]
Let $\Im(\vec{v})$ be the $N \times N$ matrix that is zero except column $\init(\alpha)$ is $\iota_{\alpha, \match{M}}(\vec{v})$ for each $\alpha \in \match{M}$.
\end{definition}

The following observation is immediate from the definitions but useful later.

\begin{lemma} \label{lemma: number of nonzero entries in each column}
Suppose that $\match{M}$ is a standard noncrossing matching with $k$ arcs.  For each arc $\alpha \in \match{M}$ the vector $\iota_{\alpha, \match{M}}(\vec{v})$ is zero after entry
\[\jbeg(\init(\alpha))+\max\{\jnot(\init(\alpha)),n-k\}.\] 
\end{lemma}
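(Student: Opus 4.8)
The plan is to unwind Definition~\ref{definition: ancestor labels and matrix} directly and then read the answer off the bound on the ancestor function already established in Lemma~\ref{lemma: number of ancestors}. By definition,
\[\iota_{\alpha,\match{M}}(\vec{v}) = \sum_{j \geq 1} v_{\anc{\alpha}{j-1}}\,\vec{e}_{r_0+j}, \qquad r_0 = \jend(\init(\alpha)) + \max\{\jnot(\init(\alpha)), n-k\},\]
so the entry of $\iota_{\alpha,\match{M}}(\vec{v})$ in row $r_0+j$ is exactly $v_{\anc{\alpha}{j-1}}$, and every row not of this form is zero. Since the convention is $v_0 = 0$, that entry automatically vanishes whenever $\anc{\alpha}{j-1} = 0$. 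Hence $\iota_{\alpha,\match{M}}(\vec{v})$ is zero past row $r_0 + j^\star$, where $j^\star$ is the largest $j \geq 1$ with $\anc{\alpha}{j-1} \neq 0$.

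To pin down $j^\star$ I would invoke Lemma~\ref{lemma: number of ancestors} (equivalently, the bound~\eqref{equation: bounds on ancestor function}): writing $a$ for the number of ancestors of $\alpha$ including $\alpha$ itself, we have $\anc{\alpha}{m} \neq 0$ precisely for $0 \leq m \leq a-1$, so $j^\star = a$. Lemma~\ref{lemma: number of ancestors}, rewritten in the $\jbeg/\jend$ notation, identifies $a = \jbeg(\init(\alpha)) - \jend(\init(\alpha))$; substituting this into $r_0 + a$ and cancelling the two $\jend(\init(\alpha))$ terms leaves exactly $\jbeg(\init(\alpha)) + \max\{\jnot(\init(\alpha)), n-k\}$, which is the asserted cutoff.

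I do not expect a genuine obstacle: the statement is pure bookkeeping, and all of its content is already packaged in Lemma~\ref{lemma: number of ancestors} together with the explicit value of $r_0$. The one place to be careful is in matching up indexing conventions --- in particular the fact that $\alpha$ counts as one of its own ancestors (so that the $j = 1$ summand $v_\alpha\,\vec{e}_{r_0+1}$ is included in the count) and whether the bound on $\anc{\alpha}{m}$ is read inclusively or exclusively, since a slip there would shift the cutoff row by one.
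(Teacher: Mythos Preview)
Your proposal is correct and matches the paper's own proof exactly: both invoke Lemma~\ref{lemma: number of ancestors} to obtain the ancestor count, identify this as the maximal $j$ contributing a nonzero summand, and then add $r_0$. Your caution about indexing is in fact warranted --- since $\jbeg(\init(\alpha))$ by definition counts arc-starts \emph{strictly} before $\init(\alpha)$, the ancestor count from Lemma~\ref{lemma: number of ancestors} (which includes $\alpha$'s own start) is actually $\jbeg(\init(\alpha)) + 1 - \jend(\init(\alpha))$, so the true cutoff is one row higher than the bound stated here and in the paper's remark preceding Definition~\ref{definition: ancestor labels and matrix}; this is a shared off-by-one that does not affect the downstream arguments.
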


\begin{proof}
Lemma~\ref{lemma: number of ancestors} showed the arc $\alpha$ has $\jbeg(\init(\alpha))-\jend(\init(\alpha))$ ancestors, giving the maximal $j$ with nonzero coefficient.  Summing $r_0$ in the definition of $\iota_{\alpha, \match{M}}$ gives the result.
\end{proof}

We are now in a position to define a map to each Springer Schubert cell.

\begin{definition}
    Given a standard noncrossing matching $\match{M}$ on $\{1,2,\ldots,N\}$ with $k$ arcs, define $f_{\match{M}}: \mathbb{C}^{|\match{M}|} \rightarrow M_{N}$ to be 
    \[f_{\match{M}}(\vec{v}) = w(\match{M})+ \Im(\vec{v})\] 
    where $w_{\match{M}}$ is as in Definition~\ref{definition: matching perm} and $\Im(\vec{v})$ is matrix of ancestor labels from Definition~\ref{definition: ancestor labels and matrix}.
\end{definition}

\begin{example}\label{example: image of function}
Continuing Example~\ref{example: perm matrix for three matchings}, we give the image of $f_{\match{M}}$ for the three matchings in Figures~\ref{figure: 2 perfect matchings} and~\ref{figure: not perfect matching}.  Our functions are notationally complex, but for perfect noncrossing matchings, the process they describe is simple: label the arcs with independent variables; if $i$ is an arc start then  then column $i$ has the labels of the ancestors of $i$ shifted down by as many zeroes as there are arcs that end before $i$. 
\[\begin{array}{ccc}
    \scalebox{0.55}{\begin{tikzpicture}
    \draw (0,0) -- (9,0);
    \draw[red] (1,0) .. controls (2,2.4) and (7,2.4) .. (8,0);
    \draw[blue] (2,0) .. controls (2.1,.7) and (2.9,.7) .. (3,0);
    \draw[green] (4,0) .. controls (4.5,1.5) and (6.5,1.5) .. (7,0);
    \draw (5,0) .. controls (5.1,.7) and (5.9,.7) .. (6,0);
    \draw[red] (2,1) node[anchor=south east] {$a$};
    \draw[blue] (2.3,.5) node[anchor=south] {$b$};
    \draw[green] (4.9,.9) node[anchor=south east] {$c$};
    \draw (5.2,.3) node[anchor=south east] {$d$};
\end{tikzpicture}} & \scalebox{0.55}{
\begin{tikzpicture}
    \draw (0,0) -- (9,0);
    \draw[red] (1,0) .. controls (1.1,.7) and (1.9,.7) .. (2,0);
    \draw[blue] (3,0) .. controls (3.1,.7) and (3.9,.7) .. (4,0);
    \draw[green] (5,0) .. controls (5.1,.7) and (5.9,.7) .. (6,0);
    \draw (7,0) .. controls (7.1,.7) and (7.9,.7) .. (8,0);
    \draw[red] (1.3,.5) node[anchor=south] {$a$};
    \draw[blue] (3.3,.5) node[anchor=south] {$b$};
    \draw[green] (5.3,.5) node[anchor=south] {$c$};
    \draw (7.3,.5) node[anchor=south] {$d$};
    \end{tikzpicture}} & \scalebox{0.55}{\begin{tikzpicture}
    \draw (0,0) -- (9,0);
    \draw[red] (1,0) .. controls (1.5,1.5) and (3.5,1.5) .. (4,0);
    \draw[blue] (2,0) .. controls (2.1,.7) and (2.9,.7) .. (3,0);
    \draw (7,0) .. controls (7.1,.7) and (7.9,.7) .. (8,0); 
     \draw[red] (1.9,.9) node[anchor=south east] {$a$};
    \draw[blue] (2.2,.3) node[anchor=south east] {$b$};
    \draw (7.3,.5) node[anchor=south] {$c$};
    \end{tikzpicture}} \\ \left(\begin{array}{cccccccc}  
{\color{red} a} & {\color{blue} b} & 1 & 0 & 0 & 0 & 0 & 0 \\
0 & {\color{red} a}& 0 & {\color{green} c} & d & 1 & 0 & 0 \\
0 & 0 & 0 & {\color{red} a} &  {\color{green} c} & 0  & 1 & 0 \\
0 & 0 & 0 & 0 & {\color{red} a} & 0 & 0 & 1 \\
\cdashline{1-8}
1 & 0 & 0 & 0 & 0 & 0 & 0 & 0 \\
0 & 1 & 0 & 0 & 0 & 0 & 0 & 0 \\
0 & 0 & 0 & 1 & 0 & 0 & 0 & 0 \\
0 & 0 & 0 & 0 & 1 & 0 & 0 & 0 \\
\end{array}\right) & \left(\begin{array}{cccccccc}  
{\color{red} a} & 1 & 0 & 0 & 0 & 0 & 0 & 0 \\
0 & 0 & {\color{blue} b} & 1 & 0 & 0 & 0 & 0 \\
0 & 0 & 0 & 0 & {\color{green} c} & 1 & 0 & 0 \\
0 & 0 & 0 & 0 & 0 & 0 & d & 1 \\
\cdashline{1-8}
1 & 0 & 0 & 0 & 0 & 0 & 0 & 0 \\
0 & 0 & 1 & 0 & 0 & 0 & 0 & 0 \\
0 & 0 & 0 & 0 & 1 & 0 & 0 & 0 \\
0 & 0 & 0 & 0 & 0 & 0 & 1 & 0 \\
\end{array}\right)&
\left(\begin{array}{cccccccc}  
{\color{red} a} & {\color{blue} b} & 1 & 0 & 0 & 0 & 0 & 0 \\
0 & {\color{red} a} & 0 & 1 & 0 & 0 & 0 & 0 \\
0 & 0 & 0 & 0 & 1 & 0 & 0 & 0 \\
0 & 0 & 0 & 0 & 0 & 0 & c & 1 \\
\cdashline{1-8}
1 & 0 & 0 & 0 & 0 & 0 & 0 & 0 \\
0 & 1 & 0 & 0 & 0 & 0 & 0 & 0 \\
0 & 0 & 0 & 0 & 0 & 1 & 0 & 0 \\
0 & 0 & 0 & 0 & 0 & 0 & 1 & 0 \\
\end{array}\right)
\end{array} \]
 The third example shows a phenomenon that can happen in matchings that are not perfect: a priori, based on the canonical form of Springer cells, we expect the sixth column to have another nonzero entry.  However, for the seventh column to satisfy the Springer condition, the sixth column must be zero except for its pivot entry. 
\end{example}

We begin with two lemmas relating the span of columns in the image of $f_{\match{M}}$ to the structure of the matching $\match{M}$ itself.

\begin{lemma} \label{lemma: image of columns under arbitrary arc}
Suppose $\match{M}$ is a standard noncrossing matching with arc $\alpha$ and that there are $k$ arcs in the interval $\{\init(\alpha), \ldots, \term(\alpha)\}$, which we denote $\alpha_1, \ldots, \alpha_{k-1}$ ordered by start point $\init(\alpha)<\init(\alpha_1)<\cdots \init(\alpha_{k-1})$.  Fix $\vec{v} \in \mathbb{C}^{|\match{M}|}$ and for notational convenience denote the columns of $f_{\match{M}}(\vec{v})$ by $\vec{c}_1, \vec{c}_2, \ldots, \vec{c}_{N}$.

The vectors $\vec{e}_{r_0+1}, \vec{e}_{r_0+2}, \ldots, \vec{e}_{r_0+k}$ are all contained in $\vec{c}_{\init(\alpha)}, \vec{c}_{\init(\alpha)+1}, \ldots, \vec{c}_{\term(\alpha)}$.

For each $j$ we may rewrite column $\init(\alpha_j)$ as 
\[\vec{c}_{\init(\alpha_j)} = \vec{e}_{w_{\match{M}}(\init(\alpha))+j} + \left( \sum_{1 \leq i \leq \ell} v_{\anc{\alpha_j}{i-1}} \vec{e}_{r_0+r_1+i}\right) + \left( \sum_{1 \leq i} v_{\anc{\alpha}{i-1}}\vec{e}_{r_0+j+i}\right)\]
where $\ell$ is the number of ancestors of $\alpha_j$ that are strictly below $\alpha$ (including $\alpha_j$ itself) and $r_1$ is the number of arcs nested under $\alpha$ that end before $\init(\alpha_j)$. Then $r_1+\ell = j$ and 
\[X^j \vec{c}_{\init(\alpha_j)} - \vec{c}_{\init(\alpha)} \in sp \langle \vec{e}_1, \ldots, \vec{e}_{r_0} \rangle \subseteq sp \langle \vec{c}_1, \ldots, \vec{c}_{\init(\alpha)-1} \rangle.\]
\end{lemma}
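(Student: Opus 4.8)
The plan is to unwind the definitions of $w_{\match{M}}$, $\iota_{\alpha,\match{M}}$, and $\Im$, and track how the ancestor function behaves as we move from the arc $\alpha$ to the nested arcs $\alpha_j$, using Lemma~\ref{lemma: shifting the ancestor function} as the combinatorial engine. First I would establish the explicit form of column $\init(\alpha)$: by Definition~\ref{definition: ancestor labels and matrix} it is $\vec{e}_{w_{\match{M}}(\init(\alpha))} + \sum_{j \geq 1} v_{\anc{\alpha}{j-1}} \vec{e}_{r_0+j}$ where $r_0 = \jend(\init(\alpha)) + \max\{\jnot(\init(\alpha)), n-k\}$, and note that the leading term $\vec{e}_{w_{\match{M}}(\init(\alpha))}$ is exactly $\vec{e}_{r_0+1}$ by the ``arc start'' case of Definition~\ref{definition: matching perm} (checking the $\min\{0,\cdot\}$ term matches the $\max\{\cdot, n-k\}$ term). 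For the first assertion, I would then argue inductively on $j$: column $\init(\alpha_j)$ has pivot $\vec{e}_{w_{\match{M}}(\init(\alpha_j))}$, and among the arc-start columns $\init(\alpha), \init(\alpha_1), \ldots, \init(\alpha_{k-1})$ nested in $\{\init(\alpha), \term(\alpha)\}$, the pivots occupy consecutive rows $r_0+1, r_0+2, \ldots, r_0+k$ because $\jbeg$ increments by exactly one at each arc start while $\jnot$ is constant across that interval (no integer under an arc fails to be on an arc, since $\match{M}$ is standard) — hence the $\vec{e}_{r_0+i}$ are literally pivots among these columns.

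Next I would verify the displayed formula for $\vec{c}_{\init(\alpha_j)}$. Apply Definition~\ref{definition: ancestor labels and matrix} to $\alpha_j$: the column is $\vec{e}_{w_{\match{M}}(\init(\alpha_j))} + \sum_{i \geq 1} v_{\anc{\alpha_j}{i-1}} \vec{e}_{r_0' + i}$ where $r_0' = \jend(\init(\alpha_j)) + \max\{\jnot(\init(\alpha_j)), n-k\}$. The key point is to split the ancestor chain of $\alpha_j$ at $\alpha$: the first $\ell$ ancestors $\alpha_j, \anc{\alpha_j}{}, \ldots$ are the ones strictly below $\alpha$, and then $\anc{\alpha_j}{\ell} = \alpha$, after which the chain continues up through the ancestors of $\alpha$. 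So $\anc{\alpha_j}{\ell + i - 1} = \anc{\alpha}{i-1}$ for all $i \geq 1$, which is precisely the content of the ``shifting'' phenomenon in Lemma~\ref{lemma: shifting the ancestor function} applied iteratively. Thus the sum $\sum_{i \geq 1} v_{\anc{\alpha_j}{i-1}} \vec{e}_{r_0'+i}$ breaks into $\sum_{1 \leq i \leq \ell} v_{\anc{\alpha_j}{i-1}} \vec{e}_{r_0'+i} + \sum_{i \geq 1} v_{\anc{\alpha}{i-1}} \vec{e}_{r_0'+\ell+i}$. It then remains to identify the index offsets: I claim $r_0' = r_0 + r_1$ and $r_0' + \ell = r_0 + j$, which together force $r_1 + \ell = j$. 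For this I would compute that $\jend(\init(\alpha_j)) - \jend(\init(\alpha)) = r_1$ (the arcs ending strictly between $\init(\alpha)$ and $\init(\alpha_j)$ are exactly those nested under $\alpha$ ending before $\init(\alpha_j)$, again using standardness so no stray non-arc points interfere) while the $\max$-terms agree (this needs a small case check on whether $\jnot \geq n-k$, noting $\jnot$ is constant on the interval). Separately, $j$ counts all arc starts strictly between $\init(\alpha)$ and $\init(\alpha_j)$ inclusive of $\alpha_j$... i.e. $\alpha_1, \ldots, \alpha_j$, so $\jbeg(\init(\alpha_j)) - \jbeg(\init(\alpha)) = j$; combining with $r_1 + \ell = \jbeg$-difference $= j$ via the ancestor-counting Lemma~\ref{lemma: number of ancestors} closes this.

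For the final containment, I would apply $X^j$ to the displayed formula. By Equation~\eqref{equation: action of N}, $X$ shifts $\vec{e}_m \mapsto \vec{e}_{m-1}$ except it kills $\vec{e}_1$ and $\vec{e}_{n+1}$; since all the relevant indices $r_0 + 1, \ldots$ lie in the top block (by Lemma~\ref{lemma: number of nonzero entries in each column}, the nonzero entries of these columns are confined to rows $\leq \jbeg(\init(\alpha)) + \max\{\cdots\} \leq n$), $X^j$ simply lowers each index by $j$. The pivot term $\vec{e}_{w_{\match{M}}(\init(\alpha))+j} = \vec{e}_{r_0+1+j}$ maps to $\vec{e}_{r_0+1}$, the middle sum $\sum_{1\leq i \leq \ell} v_{\anc{\alpha_j}{i-1}} \vec{e}_{r_0+r_1+i}$ maps into $sp\langle \vec{e}_1, \ldots, \vec{e}_{r_0} \rangle$ since $r_1 + \ell = j$ means $r_0 + r_1 + i - j \leq r_0$ for $i \leq \ell$ (and we'd note the $i$ where the index would go nonpositive simply get annihilated, consistent with the conclusion), and the tail sum $\sum_{i \geq 1} v_{\anc{\alpha}{i-1}} \vec{e}_{r_0+j+i}$ maps to $\sum_{i \geq 1} v_{\anc{\alpha}{i-1}} \vec{e}_{r_0+i}$. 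That last sum plus the recovered $\vec{e}_{r_0+1}$ reassemble exactly to $\vec{c}_{\init(\alpha)}$, so $X^j \vec{c}_{\init(\alpha_j)} - \vec{c}_{\init(\alpha)} \in sp\langle \vec{e}_1, \ldots, \vec{e}_{r_0}\rangle$. Finally, $sp\langle \vec{e}_1, \ldots, \vec{e}_{r_0}\rangle \subseteq sp\langle \vec{c}_1, \ldots, \vec{c}_{\init(\alpha)-1}\rangle$ follows because the first $r_0$ pivots of $w_{\match{M}}$ — the top-block rows $1, \ldots, \jend(\init(\alpha)) + \max\{\jnot, n-k\}$ — all occur in columns indexed below $\init(\alpha)$, which I would read off from Definition~\ref{definition: matching perm} and Lemma~\ref{lemma: pivots of matching perm increase in rows}, plus the observation that the added labels $\Im(\vec{v})$ only ever fill in entries \emph{above} the pivot rows of those earlier columns and so do not disturb their span. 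The main obstacle I anticipate is the bookkeeping around the $\max\{\jnot, n-k\}$ correction terms in the non-perfect case: one must carefully confirm that $\jnot$ is constant on the interval $[\init(\alpha), \init(\alpha_j)]$ (true because $\match{M}$ is standard, so no integer strictly under $\alpha$ is off an arc) and that the $\min/\max$ expressions in $w_{\match{M}}$ collapse consistently, so that all the index arithmetic $r_0' = r_0 + r_1$, $r_1 + \ell = j$ genuinely holds rather than merely holding for perfect matchings.
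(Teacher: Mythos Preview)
Your overall strategy matches the paper's, but there is a genuine error that runs through the argument: you conflate $w_{\match{M}}(\init(\alpha))$ with $r_0+1$. These are not equal. By the arc-start case of Definition~\ref{definition: matching perm}, $w_{\match{M}}(\init(\alpha)) = n + \jbeg(\init(\alpha)) + \min\{0,\jnot-(n-k)\} + 1 > n$, so the pivot of column $\init(\alpha)$ sits in the \emph{bottom} block, whereas $r_0+1 \leq n$ lies in the top block. As a result your proof of the first assertion fails: the arc-start columns $\vec{c}_{\init(\alpha)}, \vec{c}_{\init(\alpha_1)}, \ldots$ have pivots in bottom-block rows $w_{\match{M}}(\init(\alpha)), w_{\match{M}}(\init(\alpha))+1,\ldots$ and are never standard basis vectors (they carry the $\iota$-terms). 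The vectors $\vec{e}_{r_0+1},\ldots,\vec{e}_{r_0+k}$ are instead the arc-\emph{end} columns $\vec{c}_{\term(\alpha_1)},\ldots,\vec{c}_{\term(\alpha_{k-1})},\vec{c}_{\term(\alpha)}$: by the arc-end case of Definition~\ref{definition: matching perm} those columns are pure basis vectors with pivot in row $\jend(\cdot)+\max\{\jnot,n-k\}+1$, and since $\jend$ increments by one at each arc end in the interval while $\jnot$ is constant, these pivots fill out exactly rows $r_0+1,\ldots,r_0+k$. That is the paper's argument.

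The remainder of your plan is correct once you stop writing $r_0+1$ for $w_{\match{M}}(\init(\alpha))$. In particular, when you apply $X^j$ to the pivot term $\vec{e}_{w_{\match{M}}(\init(\alpha))+j}$, the image is $\vec{e}_{w_{\match{M}}(\init(\alpha))}$ (the shift takes place entirely in the bottom block since $w_{\match{M}}(\init(\alpha)) \geq n+1$), not $\vec{e}_{r_0+1}$; this recovered pivot then recombines with the shifted tail sum to give $\vec{c}_{\init(\alpha)}$ exactly as you describe. Your handling of the formula for $\vec{c}_{\init(\alpha_j)}$, the identity $r_0' = r_0+r_1$, the relation $r_1+\ell=j$, and the final containment $sp\langle\vec{e}_1,\ldots,\vec{e}_{r_0}\rangle \subseteq sp\langle\vec{c}_1,\ldots,\vec{c}_{\init(\alpha)-1}\rangle$ are all in line with the paper's proof.
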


\begin{proof}
For notational convenience, write $I_{\alpha} = \{\init(\alpha), \ldots, \term(\alpha)\}$.  
First note that since $\match{M}$ is standard, every integer in $I_{\alpha}$ lies on an arc so $\jnot$ is constant when restricted to $I_{\alpha}$.  Thus the pivots of $w_{\match{M}}$ increase by exactly one at each successive arc start in $I_{\alpha}$ so
\[w_{\match{M}}(\init(\alpha_j)) = w_{\match{M}}(\init(\alpha))+j\]
for all $j$ as claimed. The analogous statement for arc ends tells us that 
\[\{\vec{c}_{\term(\alpha_1)}, \ldots, \vec{c}_{\term(\alpha_{k-1})}, \vec{c}_{\term(\alpha)}\} = \{\vec{e}_{r_0+1}, \vec{e}_{r_0+2}, \ldots, \vec{e}_{r_0+k}\}\]
proving the first part of the claim.

Since $\match{M}$ is noncrossing, every arc $\alpha_1, \ldots, \alpha_{j-1}$ satisfies one of two properties: it ends before $\init(\alpha_j)$ or it ends after $\term(\alpha_j)$.  In the first case, the arc is counted in $\jbeg(\init(\alpha_j))$ but not $\jbeg(\init(\alpha))$ so contributes one to $r_1$. In the second case, the arc is an ancestor of $\alpha_j$ but not $\alpha$ so appears as a term in the second summand.  Together, this means every arc that starts in the interval $\init(\alpha)+1, \ldots, \init(\alpha_j)$ contributes either one to $r_1$ or one to $\ell$ and so $r_1 + \ell=j$ as claimed.  

The column vector $\vec{c}_{\init(\alpha_j)}$ is defined as
\[\vec{c}_{\init(\alpha_j)} = \vec{e}_{w_{\match{M}}(\init(\alpha_j)} + \sum_{1 \leq i} v_{\anc{\alpha_j}{i-1}} \vec{e}_{r_0+r_1+i}.\]
Our analysis of the vector from $w_{\match{M}}$ allows us to rewrite the first term and Lemma~\ref{lemma: shifting the ancestor function} allows us to break the second sum into the part that is below $\alpha$ and the part that is nested above $\alpha$, proving the next formula in the claim.

Now apply $X^{d}$ using the formula for $X$ in Equation~\eqref{equation: action of N} and linearity:
\[\begin{array}{ll} X^{d} &\left(\vec{e}_{w_{\match{M}}(\init(\alpha))+j} + \left( \sum_{1 \leq i \leq \ell} v_{\anc{\alpha_j}{i-1}} \vec{e}_{r_0+r_1+i}\right) + \left( \sum_{1 \leq i} v_{\anc{\alpha}{i-1}}\vec{e}_{r_0+j+i}\right) \right) \\ &= \vec{e}_{w_{\match{M}}(\init(\alpha))+j-d} + \left( \sum_{1 \leq i \leq \ell} v_{\anc{\alpha_j}{i-1}} \vec{e}_{r_0+r_1+i-d}\right) + \left( \sum_{1 \leq i} v_{\anc{\alpha}{i-1}}\vec{e}_{r_0+j+i-d}\right)\end{array}\]
where our convention is that any vectors $\vec{e}_{d'}$ with $d' \leq 0$ are zero. Note that when $d=j$ the first and third terms in this sum add to $\vec{c}_{\init(\alpha)}$ while  if $d=j$ and $i \leq \ell$ then 
\[r_0+r_1+i-j \leq r_0+(r_1+\ell)-j = r_0.\]
In other words, we have
\[X^j(\vec{c}_{\init(\alpha_{j})}) - \vec{c}_{\init(\alpha)}) \in sp \langle \vec{e}_1, \ldots, \vec{e}_{r_0} \rangle\]
for each $j \leq k-1$. By definition of $r_0$, there are $r_0$ ends of arcs and integers not on arcs in $\{1, 2, \ldots, \init(\alpha)-1\}$ so by construction of $w_{\match{M}}$ and $f_{\match{M}}$, all of $\vec{e}_1, \ldots, \vec{e}_{r_0}$ occur within the set $\{\vec{c}_1, \ldots, \vec{c}_{\init(\alpha)-1}\}$.  This proves the claim.
\end{proof}

The next two results follow from the previous and will show that all matrices in the image of $f_{\match{M}}$ satisfy the Springer condition.

\begin{corollary} \label{corollary: Springer condition arc with parent}
Assume the hypotheses and notation of Lemma~\ref{lemma: image of columns under arbitrary arc} and suppose $\alpha' \in \match{M}$ has parent $\alpha \in \match{M}$. Then for all choices of $\vec{v} \in \mathbb{C}^{N}$ we have
\[X\vec{c}_{\init(\alpha')} \in sp \langle \vec{c}_1, \ldots, \vec{c}_{\init(\alpha')-1} \rangle.\]
\end{corollary}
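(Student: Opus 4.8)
The plan is to specialize Lemma~\ref{lemma: image of columns under arbitrary arc} to the present situation, taking the parent $\alpha$ to play the role of the outer arc there. Then $\alpha'$ is one of the interior arcs, say $\alpha' = \alpha_j$ with $1 \le j \le k-1$, and the hypothesis that $\alpha$ is the \emph{parent} of $\alpha'$ --- namely that no arc of $\match{M}$ is nested strictly between them --- means precisely that the only ancestor of $\alpha_j$ lying strictly below $\alpha$ (counting $\alpha_j$ itself) is $\alpha_j$. In the notation of that lemma this says $\ell = 1$, hence $r_1 = j-1$, and the expansion of $\vec{c}_{\init(\alpha')}$ collapses to
\[\vec{c}_{\init(\alpha')} = \vec{e}_{w_{\match{M}}(\init(\alpha))+j} + v_{\alpha'}\,\vec{e}_{r_0+j} + \sum_{1 \le i} v_{\anc{\alpha}{i-1}}\,\vec{e}_{r_0+j+i}.\]

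Applying $X$ once, via Equation~\eqref{equation: action of N} and the computation of $X^d$ carried out inside the proof of Lemma~\ref{lemma: image of columns under arbitrary arc} (with the convention $\vec{e}_m = 0$ for $m \le 0$), I would obtain
\[X\vec{c}_{\init(\alpha')} = \vec{e}_{w_{\match{M}}(\init(\alpha))+j-1} + v_{\alpha'}\,\vec{e}_{r_0+j-1} + \sum_{1 \le i} v_{\anc{\alpha}{i-1}}\,\vec{e}_{r_0+(j-1)+i}.\]
Setting $\alpha_0 := \alpha$ and expanding the column $\vec{c}_{\init(\alpha_{j-1})}$ by the lemma (when $j=1$ reading $r_1' = \ell' = 0$, so that the middle sum below is empty and $\vec{c}_{\init(\alpha_0)} = \vec{c}_{\init(\alpha)}$), its leading term is again $\vec{e}_{w_{\match{M}}(\init(\alpha))+j-1}$ and its last summand is again $\sum_{1\le i} v_{\anc{\alpha}{i-1}}\,\vec{e}_{r_0+(j-1)+i}$; subtracting,
\[X\vec{c}_{\init(\alpha')} - \vec{c}_{\init(\alpha_{j-1})} = v_{\alpha'}\,\vec{e}_{r_0+j-1} - \sum_{1 \le i \le \ell'} v_{\anc{\alpha_{j-1}}{i-1}}\,\vec{e}_{r_0+r_1'+i}.\]
Since $r_0 + r_1' + i \le r_0 + (r_1' + \ell') = r_0 + j - 1$ for $i \le \ell'$, the right-hand side lies in $sp \langle \vec{e}_1, \ldots, \vec{e}_{r_0+j-1} \rangle$.

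It then remains to check that $\vec{c}_{\init(\alpha_{j-1})}$ and all of $\vec{e}_1, \ldots, \vec{e}_{r_0+j-1}$ appear among the columns $\vec{c}_1, \ldots, \vec{c}_{\init(\alpha')-1}$. For $\vec{c}_{\init(\alpha_{j-1})}$ this is immediate from $\init(\alpha_{j-1}) < \init(\alpha_j) = \init(\alpha')$. That $\vec{e}_1, \ldots, \vec{e}_{r_0}$ occur before column $\init(\alpha)$ is the last assertion of Lemma~\ref{lemma: image of columns under arbitrary arc}, and $\init(\alpha) < \init(\alpha')$. Finally $\vec{e}_{r_0+1}, \ldots, \vec{e}_{r_0+j-1}$ are exactly the columns $\vec{c}_{\term(\beta)}$ as $\beta$ runs over $\alpha_1, \ldots, \alpha_{j-1}$, and this is the single place where the hypothesis that $\alpha$ is the parent --- not merely an ancestor --- of $\alpha'$ enters: an arc among $\alpha_1, \ldots, \alpha_{j-1}$ that did not end before $\init(\alpha')$ would be nested over $\alpha'$ and strictly below $\alpha$, contradicting that $\alpha$ is the parent of $\alpha'$; so each of them ends before $\init(\alpha')$ and these columns have index $< \init(\alpha')$. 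Combining the three displays yields $X\vec{c}_{\init(\alpha')} \in sp \langle \vec{c}_1, \ldots, \vec{c}_{\init(\alpha')-1} \rangle$. I expect no genuine obstacle here: the statement is the ``$d = 1$, $\ell = 1$'' specialization of Lemma~\ref{lemma: image of columns under arbitrary arc}, and the only care needed is bookkeeping --- verifying the index inequality $r_0 + r_1' + i \le r_0 + j - 1$ and identifying $\vec{e}_{r_0+1}, \ldots, \vec{e}_{r_0+j-1}$ with the arc-end columns sitting to the left of $\init(\alpha')$; one could alternatively dispose of $j = 1$ separately, where the cleaner identity $X\vec{c}_{\init(\alpha')} = \vec{c}_{\init(\alpha)} + v_{\alpha'}\vec{e}_{r_0}$ holds.
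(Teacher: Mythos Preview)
Your proof is correct and takes the same route as the paper --- specializing Lemma~\ref{lemma: image of columns under arbitrary arc} --- but is in fact more complete. The paper's two-line argument asserts that ``the arc $\alpha' = \alpha_1$'' and then invokes the $j=1$ conclusion of the lemma directly; however, the parent hypothesis only guarantees $\alpha' = \alpha_j$ for some $j$, with $\alpha_1,\ldots,\alpha_{j-1}$ all ending before $\init(\alpha')$ (for instance, in $\match{M} = \{(1,8),(2,3),(4,7),(5,6)\}$ with $\alpha' = (4,7)$ and $\alpha = (1,8)$ one has $j = 2$). Your computation --- taking $\ell = 1$, applying $X$ once, subtracting $\vec c_{\init(\alpha_{j-1})}$, and then identifying $\vec e_{r_0+1},\ldots,\vec e_{r_0+j-1}$ with the arc-end columns $\vec c_{\term(\alpha_s)}$ for $s<j$, which indeed lie to the left of $\init(\alpha')$ --- is precisely the bookkeeping needed for general $j$, and it is sound.
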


\begin{proof}
Matching the notation of Lemma~\ref{lemma: image of columns under arbitrary arc} to these hypotheses, the arc $\alpha' = \alpha_1$ while $\init(\alpha)<\init(\alpha')$ and $r_0 < \init(\alpha')$.  Applying Lemma~\ref{lemma: image of columns under arbitrary arc} proves the claim.
\end{proof}

\begin{corollary} \label{corollary: span cols arc wo parent and Springer cond no parent}
Assume the hypotheses and notation of Lemma~\ref{lemma: image of columns under arbitrary arc} and suppose $\alpha$ has no parent.  Then 
\[X \vec{c}_{\init(\alpha)} \in sp \langle \vec{c}_1, \ldots, \vec{c}_{\init(\alpha)-1} \rangle\]
and also
\[\begin{array}{ll}sp & \langle \vec{c}_{\init(\alpha)}, \vec{c}_{\init(\alpha)+1}, \ldots, \vec{c}_{\term(\alpha)} \rangle \\ &= sp \langle \vec{e}_{r_0+1}, \ldots, \vec{e}_{r_0+k}, \vec{e}_{w_{\match{M}}(\init(\alpha))}, \vec{e}_{w_{\match{M}}(\init(\alpha))+1}, \ldots, \vec{e}_{w_{\match{M}}(\init(\alpha))+k-1} \rangle\end{array}\]
regardless of choice of $\vec{v} \in \mathbb{C}^{n}$.
\end{corollary}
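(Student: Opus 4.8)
The plan is to read both statements off the explicit column formulas of Lemma~\ref{lemma: image of columns under arbitrary arc}, using the hypothesis that $\alpha$ has no parent only to simplify those formulas. Since $\alpha$ has no parent, $\anc{\alpha}{}=0$, hence $\anc{\alpha}{i}=0$ for all $i\geq 1$; as $v_0=0$, every sum of the form $\sum_{1\leq i}v_{\anc{\alpha}{i-1}}\vec{e}_{r_0+j+i}$ appearing in Lemma~\ref{lemma: image of columns under arbitrary arc} collapses to its single surviving term $v_\alpha\vec{e}_{r_0+j+1}$. In particular $\vec{c}_{\init(\alpha)}=\vec{e}_{w_{\match{M}}(\init(\alpha))}+v_\alpha\vec{e}_{r_0+1}$, and for $1\leq j\leq k-1$ the column $\vec{c}_{\init(\alpha_j)}$ equals $\vec{e}_{w_{\match{M}}(\init(\alpha))+j}$ plus a linear combination of $\vec{e}_{r_0+1},\ldots,\vec{e}_{r_0+j+1}$; since $j+1\leq k$, all of those correction vectors lie among $\vec{e}_{r_0+1},\ldots,\vec{e}_{r_0+k}$.

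For the first assertion I would apply $X$ termwise to $\vec{c}_{\init(\alpha)}=\vec{e}_{w_{\match{M}}(\init(\alpha))}+v_\alpha\vec{e}_{r_0+1}$. The term $v_\alpha X\vec{e}_{r_0+1}$ is either $\vec{0}$ (if $r_0=0$) or $v_\alpha\vec{e}_{r_0}$, so it lies in $sp\langle\vec{e}_1,\ldots,\vec{e}_{r_0}\rangle\subseteq sp\langle\vec{c}_1,\ldots,\vec{c}_{\init(\alpha)-1}\rangle$ by Lemma~\ref{lemma: image of columns under arbitrary arc}. For $X\vec{e}_{w_{\match{M}}(\init(\alpha))}$: since $\init(\alpha)$ starts an arc, $w_{\match{M}}(\init(\alpha))>n$, so this is $\vec{0}$ if $w_{\match{M}}(\init(\alpha))=n+1$ and otherwise $\vec{e}_{w_{\match{M}}(\init(\alpha))-1}$, a bottom-block standard vector; by Lemma~\ref{lemma: pivots of matching perm increase in rows} the column $\vec{c}_{i^\ast}$ carrying that pivot has $i^\ast<\init(\alpha)$. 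Writing $\vec{c}_{i^\ast}=\vec{e}_{w_{\match{M}}(\init(\alpha))-1}+(\text{its }\iota\text{-vector, if any})$ and using that — because $\alpha$ has no parent, so every arc starting before $\init(\alpha)$ ends before it and $\jbeg(\init(\alpha))=\jend(\init(\alpha))$ — Lemmas~\ref{lemma: number of ancestors} and~\ref{lemma: number of nonzero entries in each column} bound that $\iota$-vector's support by row $\jbeg(i^\ast)+\max\{\jnot(i^\ast),n-k\}\leq r_0$, we get $\vec{e}_{w_{\match{M}}(\init(\alpha))-1}\in sp\langle\vec{c}_{i^\ast},\vec{e}_1,\ldots,\vec{e}_{r_0}\rangle\subseteq sp\langle\vec{c}_1,\ldots,\vec{c}_{\init(\alpha)-1}\rangle$. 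Adding the two pieces gives the claim; this is the no-parent analogue of Corollary~\ref{corollary: Springer condition arc with parent}.

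For the second assertion I would prove the two containments directly. For ``$\subseteq$'', every column $\vec{c}_i$ with $\init(\alpha)\leq i\leq\term(\alpha)$ is either an arc-end column — hence one of $\vec{e}_{r_0+1},\ldots,\vec{e}_{r_0+k}$ by the first part of Lemma~\ref{lemma: image of columns under arbitrary arc} — or an arc-start column $\vec{c}_{\init(\alpha_j)}$ (with $\alpha_0:=\alpha$, $0\leq j\leq k-1$), which by the collapsed formula above lies in $sp\langle\vec{e}_{r_0+1},\ldots,\vec{e}_{r_0+k},\vec{e}_{w_{\match{M}}(\init(\alpha))+j}\rangle$; in every case $\vec{c}_i$ lies in the claimed coordinate subspace. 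For ``$\supseteq$'', the vectors $\vec{e}_{r_0+1},\ldots,\vec{e}_{r_0+k}$ already occur among $\vec{c}_{\init(\alpha)},\ldots,\vec{c}_{\term(\alpha)}$, and then rearranging the collapsed formula yields $\vec{e}_{w_{\match{M}}(\init(\alpha))+j}=\vec{c}_{\init(\alpha_j)}-(\text{a combination of }\vec{e}_{r_0+1},\ldots,\vec{e}_{r_0+k})$, so those basis vectors lie in the span of the columns as well. (Equivalently, ``$\subseteq$'' plus a dimension count finishes things, since the $2k$ columns are linearly independent inside the invertible matrix $f_{\match{M}}(\vec{v})$ and the $2k$ listed basis vectors are distinct: $\vec{e}_{r_0+1},\ldots,\vec{e}_{r_0+k}$ lie in the top block and $\vec{e}_{w_{\match{M}}(\init(\alpha))},\ldots,\vec{e}_{w_{\match{M}}(\init(\alpha))+k-1}$ in the bottom.)

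I do not anticipate a real obstacle, since everything reduces to the already-proved Lemma~\ref{lemma: image of columns under arbitrary arc}; the only delicate point is bookkeeping — keeping the interval count $k$ of that lemma distinct from the total arc count entering $r_0$ and $w_{\match{M}}$, justifying the support bounds on the $\iota$-vectors of columns to the left of $\init(\alpha)$, and clearing the degenerate cases ($\init(\alpha)=1$, $r_0=0$, $w_{\match{M}}(\init(\alpha))=n+1$) where the relevant image under $X$ is simply $\vec{0}$.
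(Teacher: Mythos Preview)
Your proposal is correct and follows essentially the same path as the paper: collapse the formulas of Lemma~\ref{lemma: image of columns under arbitrary arc} using $\anc{\alpha}{}=0$, then read off both claims. The one substantive difference is in the sub-case of the first assertion where the column $\vec{c}_{i^\ast}$ with pivot in row $w_{\match{M}}(\init(\alpha))-1$ starts an arc. The paper proves the span equality first and then, for this sub-case, applies that span equality to the topmost (parentless) ancestor $\alpha'$ of the arc at $i^\ast$ to conclude that $\vec{e}_{w_{\match{M}}(\init(\alpha))-1}$ lies in the span of columns $\init(\alpha'),\ldots,\term(\alpha')$, all of which precede $\init(\alpha)$. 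You instead bound the support of the $\iota$-vector at $i^\ast$ directly via Lemma~\ref{lemma: number of nonzero entries in each column} and the observation $\jbeg(\init(\alpha))=\jend(\init(\alpha))$. Your route is slightly more self-contained (it avoids the forward reference to the span equality), while the paper's route avoids re-deriving the support bound; both are short and valid, and your bookkeeping on the two meanings of $k$ is handled correctly.
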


\begin{proof}
If $\alpha$ has no parent then by Lemma~\ref{lemma: image of columns under arbitrary arc} we have
\[\vec{c}_{\init(\alpha)} = \vec{e}_{w_{\match{M}}(\init(\alpha))} + v_{\alpha}\vec{e}_{r_0+1} \]
and for each $j < k$ we can write
\[\vec{c}_{\init(\alpha_j)} = \vec{e}_{w_{\match{M}}(\init(\alpha))+j} + \left( \sum_{1 \leq i \leq \ell} v_{\anc{\alpha_j}{i-1}} \vec{e}_{r_0+r_1+i}\right) + v_{\alpha}\vec{e}_{r_0+j+1}\]
with the same notation as Lemma~\ref{lemma: image of columns under arbitrary arc} and with $j+1  \leq k$.  Also by Lemma~\ref{lemma: image of columns under arbitrary arc} we know that $\vec{e}_{r_0+1}, \ldots, \vec{e}_{r_0+k}$ are all amongst columns $\vec{c}_{\init(\alpha)}, \ldots, \vec{c}_{\term(\alpha)}$. Thus
\[\vec{c}_{\init(\alpha_j)}-\vec{e}_{w_{\match{M}}(\init(\alpha))+j} \in sp \langle \vec{c}_{\init(\alpha)}, \ldots, \vec{c}_{\term(\alpha)} \rangle\]
for each $j < k$ as well as $\alpha$ itself. We conclude the second part of the claim, that
\[sp \langle \vec{c}_{\init(\alpha)}, \ldots, \vec{c}_{\term(\alpha)} \rangle = sp \langle \vec{e}_{r_0+1}, \ldots, \vec{e}_{r_0+k}, \vec{e}_{w_{\match{M}}(\init(\alpha))}, \ldots, \vec{e}_{w_{\match{M}}(\init(\alpha))+k-1}\rangle. \]
Now consider $X\vec{c}_{\init(\alpha)}$.  If $w_{\match{M}}(\init(\alpha)) = n+1$ then $X\vec{c}_{\init(\alpha)} = v_{\alpha} \vec{e}_{r_0}$ which is by definition in $\vec{c}_1, \ldots, \vec{c}_{\init(\alpha)-1}$.  Otherwise $X\vec{c}_{\init(\alpha)}-\vec{e}_{w_{\match{M}}(\init(\alpha))-1}$ is a scalar multiple of $\vec{e}_{r_0}$.  

Thus we must show $\vec{e}_{w_{\match{M}}(\init(\alpha))-1}$ is within the span of the first $\init(\alpha)-1$ columns.  Suppose $i'$ indexes the column with pivot in row $w_{\match{M}}(\init(\alpha))-1$.  Note $i'<\init(\alpha)$ by Lemma~\ref{lemma: pivots of matching perm increase in rows}.  If $i'$ is not on an arc then $\vec{c}_{i'} = \vec{e}_{w_{\match{M}}(\init(\alpha))-1}$ so the claim holds.  If instead $i'$ starts an arc, then that arc has an ancestor $\alpha'$ without a parent.  According to the previous part of this proof, the span of columns $\vec{c}_{\init(\alpha')}, \ldots, \vec{c}_{\term(\alpha')}$ contains $\vec{e}_{w_{\match{M}}(\init(\alpha))-1}$ (as well as the basis vectors for all other pivots in those columns).  So in this case, too, the claim holds.  No part of this argument depended on the choice of $\vec{v} \in \mathbb{C}^{|\match{M}|}$ so this completes the proof.
\end{proof}

We now show that the matrices in the image of $f_{\match{M}}$ are in canonical form.  This will help us show that the map $f_{\match{M}}$ injects into the flag variety.

\begin{lemma}\label{lemma: correct form of matrix image}
Suppose that $\match{M}$ is a standard noncrossing matching. Each matrix in the image of $f_{\match{M}}$ is in the canonical form described in Definition~\ref{definition: canonical representative and Schub cells}.  
\end{lemma}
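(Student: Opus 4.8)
The plan is to work with the decomposition $f_{\match M}(\vec v) = w_{\match M} + \Im(\vec v)$ and to check the three conditions of Definition~\ref{definition: canonical representative and Schub cells} against the pivot pattern already present in $w_{\match M}$. Since $w_{\match M}$ is a permutation matrix (Lemma~\ref{lemma: pivots of matching perm increase in rows}), its $1$'s satisfy all three conditions on their own. The matrix $\Im(\vec v)$ vanishes outside the columns $\init(\alpha)$, $\alpha\in\match M$, and inside column $\init(\alpha)$ it equals $\iota_{\alpha,\match M}(\vec v)$, whose nonzero entries sit in the rows $r_0+1,\dots,r_0+a$ of Definition~\ref{definition: ancestor labels and matrix}, where $a$ is the number of ancestors of $\alpha$ including $\alpha$ (Lemma~\ref{lemma: number of ancestors}). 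So the statement reduces to two claims about these inserted entries: (a) in column $\init(\alpha)$ every inserted entry lies strictly above the row $w_{\match M}(\init(\alpha))$ of the pivot of that column; and (b) every inserted entry lies strictly to the left of the pivot of $w_{\match M}$ in its own row. Granting these, the pivot set of $f_{\match M}(\vec v)$ is exactly that of $w_{\match M}$ (no inserted entry can itself be a pivot, since its column still carries the $1$ of $w_{\match M}$ below it), so $f_{\match M}(\vec v)$ has one pivot per row and column, each equal to $1$, with zeros below it in its column by (a), and with zeros to its right in its row by (b) together with the analogous property of $w_{\match M}$.

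The single quantitative ingredient is that the block of rows $r_0+1,\dots,r_0+a$ carrying the inserted entries of column $\init(\alpha)$ lies entirely in $\{1,2,\dots,n\}$; this is a short consequence of Lemma~\ref{lemma: number of nonzero entries in each column}, using that at most $k-1$ of the $k$ arcs start before $\init(\alpha)$. Claim (a) then follows at once: $\init(\alpha)$ starts an arc, so by the bijection of Proposition~\ref{proposition: bijection perm matching BT}---arc starts correspond to the letter $B$, equivalently to a pivot in rows $n+1,\dots,N$---we have $w_{\match M}(\init(\alpha))>n$, which lies strictly below the inserted block. For (b), I would first identify the columns to the left of $\init(\alpha)$ whose $w_{\match M}$-pivot falls in the top $n$ rows: by Corollary~\ref{corollary: standard means perfect plus T then B} these are precisely the arc ends and the ``initial'' non-arc points occurring before $\init(\alpha)$, and a direct count shows there are exactly $r_0$ of them. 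Since $w_{\match M}$ assigns the top $n$ rows to all such columns in left-to-right order (Lemma~\ref{lemma: pivots of matching perm increase in rows}), these columns occupy exactly rows $1,\dots,r_0$, while every remaining column to the left of $\init(\alpha)$ has its pivot in a row $>n$. Consequently no column to the left of $\init(\alpha)$ has its pivot in the block $\{r_0+1,\dots,n\}$ that contains the inserted entries of column $\init(\alpha)$; equivalently, in each of those rows the pivot of $w_{\match M}$ lies in a column $\geq\init(\alpha)$, and in fact strictly greater, since the pivot of column $\init(\alpha)$ is in a row $>n$ while the inserted entries lie in rows $\leq n$. This is exactly claim (b).

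I expect the main obstacle to be the bookkeeping in (b), not a genuinely new idea: one must split the columns $1,\dots,\init(\alpha)-1$ according to whether their $w_{\match M}$-image lands in the first $n$ or last $N-n$ rows and then match the count of the ``top'' ones against the index $r_0$, and the non-perfect case---where non-arc points are present and contribute the $\min\{\jnot(\init(\alpha)),\,n-k\}$ term to $r_0$---is where this identification is least transparent. Everything else is formal, coming from the structure of $w_{\match M}$ in Lemma~\ref{lemma: pivots of matching perm increase in rows} and the support statement of Lemma~\ref{lemma: number of nonzero entries in each column}; Lemma~\ref{lemma: image of columns under arbitrary arc} is not needed for this argument.
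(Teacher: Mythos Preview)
Your proof is correct and follows essentially the same approach as the paper's: both verify the canonical-form conditions by showing that the entries $\Im(\vec v)$ inserts into column $\init(\alpha)$ lie in rows $r_0+1,\dots,r_0+a\subseteq\{1,\dots,n\}$, whose $w_{\match M}$-pivots all occur in columns strictly to the right of $\init(\alpha)$. The paper organizes this row-by-row (fixing a top-block row $j$ and bounding $r_0$ from below for any arc starting to the right of the row-$j$ pivot column), while you organize it column-by-column (fixing $\init(\alpha)$ and counting the top-block pivots to its left); these are dual framings of the same counting inequality.
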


\begin{proof}
Suppose $M$ is a matrix in the image of $f_{\match{M}}$.  By construction, each of the last $N-n$ rows of $M$ has exactly one nonzero entry and that entry is one, so in particular, all entries to the right of the pivot are zero.  Now consider the $j^{th}$ row for $1 \leq j \leq n$. Denote the column of $w_{\match{M}}$ that is one in row $j$ by $i = w_{\match{M}}^{-1}(j)$.  By construction, if row $j$ is one in column $i$ of $w_{\match{M}}$ then at least $j$ of the integers $\{1, 2, \ldots, i\}$ are \emph{not} arc starts.  Thus at least $j$ of the integers $\{1, 2, \ldots, i'\}$ are not arc starts for all $i' > i$, namely for all columns to the right of column $i$.  If $\alpha$ is any arc that starts to the right of $i$, then the lowest-indexed basis vector in the sum $\iota_{\alpha, \match{M}}$ is $\vec{e}_{r_0}$ for some $r_0 > j$.  Thus the column $\init(\alpha)$ of $\Im(\vec{v})$ must also be zero in row $j$.  So the matrix $f_{\match{M}}(\vec{v}) = w_{\match{M}}+\Im(\vec{v})$ is in canonical form, as claimed.
\end{proof}

Finally, we show that the image of $f_{\match{M}}$ is in the Springer Schubert cell associated to $w_{\match{M}}$, in other words that each $f_{\match{M}}(\vec{v})$ satisfies the Springer condition $XV_i \subseteq V_i$ for all $i$.

\begin{lemma} \label{lemma: matrix image is in Springer fiber}
Fix $n < N$.  Suppose that $\match{M}$ is a standard noncrossing matching on $\{1, 2, \ldots, N\}$ with $k$ arcs such that $k \leq n$ and $k \leq N-n$. Each matrix $w_{\match{M}}+\Im(\vec{v})$ in the image of $f_{\match{M}}$ gives rise to a flag $(w_{\match{M}}+\Im(\vec{v}))B$ in the Springer fiber of the Jordan canonical form $X$ for Jordan type $(n, N-n)$.
\end{lemma}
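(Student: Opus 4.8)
The goal is to show each matrix $g = w_{\match{M}} + \Im(\vec{v})$ satisfies the Springer condition $X V_i \subseteq V_i$ for the flag $gB$, which (since $V_i = sp\langle \vec{c}_1, \ldots, \vec{c}_i\rangle$) amounts to proving
\[ X \vec{c}_k \in sp\langle \vec{c}_1, \ldots, \vec{c}_{k-1} \rangle \qquad \text{for every column } k. \]
I would organize the argument by cases according to whether column $k$ corresponds to an integer that starts an arc, ends an arc, or lies on no arc. These three cases exhaust $\{1,\ldots,N\}$, and the machinery from the earlier lemmas handles each.

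\textbf{Case 1: $k = \init(\alpha)$ for an arc $\alpha$.} Here I split into two sub-cases. If $\alpha$ has a parent, then Corollary~\ref{corollary: Springer condition arc with parent} says directly that $X\vec{c}_{\init(\alpha)} \in sp\langle \vec{c}_1, \ldots, \vec{c}_{\init(\alpha)-1}\rangle$, so we are done. If $\alpha$ has no parent, then Corollary~\ref{corollary: span cols arc wo parent and Springer cond no parent} gives exactly the needed containment $X\vec{c}_{\init(\alpha)} \in sp\langle \vec{c}_1, \ldots, \vec{c}_{\init(\alpha)-1}\rangle$. So Case 1 is entirely a matter of citing the two corollaries; essentially no new work.

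\textbf{Case 2: $k$ lies on no arc.} Then $\vec{c}_k = \vec{e}_{w_{\match{M}}(k)}$ by construction of $f_{\match{M}}$ (the matrix $\Im(\vec{v})$ is supported only on arc-start columns). If $w_{\match{M}}(k) \in \{1, n+1\}$ then $X\vec{c}_k = \vec{0}$ and we are done. Otherwise $X\vec{c}_k = \vec{e}_{w_{\match{M}}(k)-1}$, and I must check that $\vec{e}_{w_{\match{M}}(k)-1}$ lies in the span of the earlier columns. By Lemma~\ref{lemma: pivots of matching perm increase in rows} the column $i'$ with pivot in row $w_{\match{M}}(k)-1$ satisfies $i' < k$; if $i'$ is on no arc, then $\vec{c}_{i'} = \vec{e}_{w_{\match{M}}(k)-1}$ and we're done, while if $i'$ is on an arc, I can run the same argument as in the final paragraph of the proof of Corollary~\ref{corollary: span cols arc wo parent and Springer cond no parent}: climb to a topmost ancestor $\alpha'$ (no parent), and invoke Corollary~\ref{corollary: span cols arc wo parent and Springer cond no parent} to see that $sp\langle \vec{c}_{\init(\alpha')}, \ldots, \vec{c}_{\term(\alpha')}\rangle$ contains $\vec{e}_{w_{\match{M}}(k)-1}$. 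All of those columns have index $< k$ by Lemma~\ref{lemma: pivots of matching perm increase in rows}, so we're done.

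\textbf{Case 3: $k = \term(\alpha)$ for an arc $\alpha$.} Then $\vec{c}_k = \vec{e}_{w_{\match{M}}(k)}$ (again $\Im(\vec{v})$ is supported on arc starts only). If $w_{\match{M}}(k) \in \{1, n+1\}$ we are done; otherwise $X\vec{c}_k = \vec{e}_{w_{\match{M}}(k)-1}$ and the argument is verbatim the same as in Case 2: identify the column $i'$ carrying pivot row $w_{\match{M}}(k)-1$, note $i' < k$ by Lemma~\ref{lemma: pivots of matching perm increase in rows}, and if $i'$ is on an arc, pass to a parentless ancestor and apply Corollary~\ref{corollary: span cols arc wo parent and Springer cond no parent}.

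\textbf{Conclusion and the main obstacle.} Combining the three cases, $X\vec{c}_k \in sp\langle \vec{c}_1, \ldots, \vec{c}_{k-1}\rangle$ for all $k$, hence $XV_i \subseteq V_i$ for all $i$, and since Lemma~\ref{lemma: correct form of matrix image} already tells us $g$ is in canonical form, the flag $gB$ lies in $\mathcal{S}_X \cap \mathcal{C}_{w_{\match{M}}} = \cellX{w_{\match{M}}}$. The proof is really just bookkeeping: the genuine content lives in Lemma~\ref{lemma: image of columns under arbitrary arc} and its two corollaries, which have already been established. The only mild subtlety I anticipate is making sure that in Cases 2 and 3 the basis vector $\vec{e}_{w_{\match{M}}(k)-1}$ is genuinely captured by \emph{columns strictly to the left of $k$} — this is where one must be careful to invoke Lemma~\ref{lemma: pivots of matching perm increase in rows} (pivots in each block increase left-to-right) to guarantee that the whole ancestral rainbow of columns $\vec{c}_{\init(\alpha')}, \ldots, \vec{c}_{\term(\alpha')}$ sits to the left of column $k$, rather than straddling it. Once that ordering is pinned down, everything closes up.
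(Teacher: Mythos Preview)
Your argument is correct and follows the same case split as the paper: Corollaries~\ref{corollary: Springer condition arc with parent} and~\ref{corollary: span cols arc wo parent and Springer cond no parent} dispose of arc-start columns, and for the remaining columns you locate $\vec{e}_{w_{\match{M}}(k)-1}$ among the earlier $\vec{c}_j$. Two small sharpenings: in Cases~2--3 the relevant dichotomy is whether $i'$ \emph{starts} an arc, since if $i'$ ends an arc then $\vec{c}_{i'}$ is already the basis vector you want; and the inequality $\term(\alpha') < k$ comes not from Lemma~\ref{lemma: pivots of matching perm increase in rows} but from the fact that $\match{M}$ is standard (in the only sub-case where $i'$ starts an arc, one checks $w_{\match{M}}(k)>n$ forces $k$ off every arc, hence $k$ cannot lie in the interval $[\init(\alpha'),\term(\alpha')]$).
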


\begin{proof}
Let $\vec{c}_1, \ldots, \vec{c}_N$ denote the columns of $w_{\match{M}}+\Im(\vec{v})$. We prove the Springer condition  
\[X \vec{c}_i \in sp \langle \vec{c}_1, \ldots, \vec{c}_{i-1} \rangle\]
holds for each column $\vec{c}_i$ using cases.

{\bf Case 1: $i$ starts an arc.} If $i$ starts an arc without a parent then Corollary~\ref{corollary: span cols arc wo parent and Springer cond no parent} proves the Springer condition holds for column $i$. If $i$ starts an arc with a parent then Corollary~\ref{corollary: Springer condition arc with parent} proves the Springer condition holds for column $i$.  

{\bf Case 2: $i$ does not start an arc.} In this case, column $i$ of the matrix $\Im(\vec{v})$ is zero so $\vec{c}_i = \vec{e}_{w_{\match{M}}(i)}$. Lemma~\ref{lemma: pivots of matching perm increase in rows} showed that $\vec{e}_{w_{\match{M}}(i) -1}$ is one of the first $i-1$ columns of $w_{\match{M}}$.  We need to show that it is also among the span of $\vec{c}_1, \ldots, \vec{c}_{i-1}$.  

If the nonzero entry of $\vec{c}_i$ is within the first $n$ rows then $i$ must be an arc end or an integer not on an arc.  In this case $\Im(\vec{v})$ is zero in column $i$ and so $\vec{c}_i=\vec{e}_{w_{\match{M}}(i)}$ proving the claim.

Otherwise, $i$ is not on an arc. If no arc starts to the left of $i$ then the first $i-1$ columns of $w_{\match{M}}$ agree with $\vec{c}_1, \vec{c}_2, \ldots, \vec{c}_{i-1}$ so the claim holds for $\vec{c}_i$ because it holds for all columns of $w_{\match{M}}$ by Lemma~\ref{lemma: pivots of matching perm increase in rows}.

Otherwise, no arc starts to the left of $i$ and ends to the right of $i$ because $\match{M}$ is standard.  Let $\alpha$ be the last arc to the left of $i$, namely with the property that no arcs start or end between $\term(\alpha)+1$ and $i$.  By definition, the arc $\alpha$ has no ancestor. Lemma~\ref{lemma: if no ancestor then span of basis vectors} thus proves that the span of the first $\term(\alpha)$ columns of $f_{\match{M}}(\vec{v})$ is the same as the span of the first $\term(\alpha)$ columns of $w_{\match{M}}$.  The next $i-\term(\alpha)$ columns of $f_{\match{M}}(\vec{v})$ agree with those of $w_{\match{M}}$ since none correspond to arc-starts.  Thus the span of the first $i-1$ columns of $w_{\match{M}}$ is precisely the span of $\vec{c}_1, \ldots, \vec{c}_{i-1}$.  Since the Springer condition holds for column $i$ of $w_{\match{M}}$ it also holds for $\vec{c}_i$. This completes the proof.
\end{proof}

We are now in a position to prove the main result of this section.

\begin{theorem} \label{theorem: fM is a bijection onto Springer Schubert cell}
Suppose that $X$ is in Jordan canonical form of Jordan type $(n,N)$ and consider the Springer fiber $\mathcal{S}_X$.  Then the Springer Schubert cells $\cellX{w}$ that are nonempty are exactly those for permutations $w=w_{\match{M}}$ corresponding to the standard noncrossing matchings $\match{M}$ on $\{1, 2, \ldots, N\}$ with at most $n$ arc starts and at most $N-n$ arc ends.  Moreover, for each nonempty Springer Schubert cell, the map
\[f_{\match{M}}: \mathbb{C}^{|\match{M}|} \rightarrow \cellX{w_{\match{M}}}\]
is a bijection.
\end{theorem}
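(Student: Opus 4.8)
The plan is to verify four things: that $f_{\match{M}}$ maps into $\cellX{w_{\match{M}}}$, that it is injective, that it is surjective, and that no other $\cellX{w}$ is nonempty. Well-definedness is essentially already established: Lemma~\ref{lemma: correct form of matrix image} shows every matrix $f_{\match{M}}(\vec{v})$ is in the canonical form of Definition~\ref{definition: canonical representative and Schub cells} with pivots exactly those of $w_{\match{M}}$ (the nonzero entries of $\Im(\vec{v})$ all lie strictly above pivots), so $f_{\match{M}}(\vec{v})B \in \mathcal{C}_{w_{\match{M}}}$, while Lemma~\ref{lemma: matrix image is in Springer fiber} places $f_{\match{M}}(\vec{v})B$ in $\mathcal{S}_X$; hence $f_{\match{M}}(\vec{v})B \in \cellX{w_{\match{M}}}$, and in particular $\cellX{w_{\match{M}}}$ is nonempty since it contains $f_{\match{M}}(\vec{0})B = w_{\match{M}}B$. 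For the classification, suppose $\cellX{w}$ is nonempty and pick a flag in it: by Corollary~\ref{corollary: perm matrix pivots go in order} the permutation $w$ has its pivots increasing left to right within rows $1,\dots,n$ and within rows $n+1,\dots,N$, and by Corollary~\ref{corollary: direct map to perms} together with Proposition~\ref{proposition: bijection perm matching BT} these are precisely the permutations $w_{\match{M}}$ coming from standard noncrossing matchings; since $w_{\match{M}}$ is a genuine permutation, $\match{M}$ has at most $\min\{n,N-n\}$ arcs, that is, at most $n$ arc starts and at most $N-n$ arc ends. So it remains only to prove each $f_{\match{M}}$ is a bijection onto $\cellX{w_{\match{M}}}$.

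Injectivity is easy: $f_{\match{M}}(\vec{v}) = w_{\match{M}} + \Im(\vec{v})$ with $w_{\match{M}}$ fixed, so it suffices to recover $\vec{v}$ from $\Im(\vec{v})$. By Definition~\ref{definition: ancestor labels and matrix} column $\init(\alpha)$ of $\Im(\vec{v})$ is $\iota_{\alpha,\match{M}}(\vec{v}) = \sum_{j\ge 1} v_{\anc{\alpha}{j-1}}\vec{e}_{r_0+j}$, and because $\anc{\alpha}{0}=\alpha$ its coefficient of $\vec{e}_{r_0+1}$ equals $v_\alpha$; by Lemma~\ref{lemma: correct form of matrix image} this entry lies strictly above the pivot of column $\init(\alpha)$, so it is unaffected by $w_{\match{M}}$ and equals the $(r_0+1,\init(\alpha))$-entry of $f_{\match{M}}(\vec{v})$. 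Hence $\vec{v}$ is read off the matrix, $f_{\match{M}}$ is injective on matrices, and since the images are distinct canonical forms the induced map on flags is injective.

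Surjectivity is the heart of the argument. Given a flag in $\cellX{w_{\match{M}}}$ with canonical form $g$, define $v_\alpha$ to be the $(r_0(\alpha)+1,\init(\alpha))$-entry of $g$ for each arc $\alpha$, set $\vec{v}=(v_\alpha)$, and prove $g = f_{\match{M}}(\vec{v})$ by induction on the column index $i$, carrying the stronger hypothesis that both column $i$ of $g$ and the span $sp\langle \vec{g}_1,\dots,\vec{g}_i\rangle$ agree with those of $w_{\match{M}}+\Im(\vec{v})$. If $i$ is not an arc start, Lemma~\ref{lemma: combinatorial conditions on matrix in Springer}(1) handles a pivot in rows $1,\dots,n$ (the column is a standard basis vector) and Lemma~\ref{lemma: combinatorial conditions on matrix in Springer}(2) writes a column with pivot in rows $n+1,\dots,N$ as a standard basis vector plus a vector supported in rows $1,\dots,n$; the canonical form zeroes out the rows already carrying pivots, and applying Lemma~\ref{lemma: combinatorial conditions on matrix in Springer}(3) to the column $k'$ whose pivot is one row higher (known by induction), together with the inductive description of $sp\langle \vec{g}_1,\dots,\vec{g}_{i-1}\rangle$, forces the remaining top-block entries to vanish --- this is exactly the forcing shown in the third matrix of Example~\ref{example: image of function}. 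If $i=\init(\alpha)$ starts an arc, Lemma~\ref{lemma: combinatorial conditions on matrix in Springer}(2) and the canonical form write $\vec{g}_i = \vec{e}_{w_{\match{M}}(i)}+\vec{u}$ with $\vec{u}$ supported on the rows of $1,\dots,n$ not yet carrying pivots, and then the Springer conditions $X^\ell\vec{g}_i \in sp\langle \vec{g}_1,\dots,\vec{g}_{i-1}\rangle$, combined with the inductively known forms of the columns of the ancestors of $\alpha$ and the span bookkeeping behind Lemma~\ref{lemma: image of columns under arbitrary arc} and Corollaries~\ref{corollary: Springer condition arc with parent} and~\ref{corollary: span cols arc wo parent and Springer cond no parent}, pin $\vec{u}$ to be exactly the ``ancestor stack'' $\iota_{\alpha,\match{M}}(\vec{v})$, namely $v_\alpha, v_{\anc{\alpha}{}}, v_{\anc{\alpha}{2}},\dots$ in consecutive rows from $r_0+1$ on and zero elsewhere. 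Updating the span of the first $i$ columns keeps the induction going, and at the end $g = f_{\match{M}}(\vec{v})$.

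I expect the arc-start case of the surjectivity induction to be the main obstacle. There one must exclude every ``extra'' free entry in the top-block part of an arc-start column and tie its surviving entries to the ancestor values $v_{\anc{\alpha}{j}}$, using only the Springer condition and the combinatorics of $w_{\match{M}}$. This is where standardness and the noncrossing property are essential: standardness forces only arc ends between consecutive arc starts, so pivots of $w_{\match{M}}$ advance by one at each arc start (the mechanism of Lemma~\ref{lemma: shifting the ancestor function}), and the noncrossing property makes the ancestors of an arc form a rainbow, so that successive applications of $X$ to an arc-start column descend precisely through the ancestor entries. A related subtlety, visible already in Example~\ref{example: image of function}, is that the Springer condition on a later column can force an otherwise free entry of an earlier column to be zero; this is why the induction must carry the span of the earlier columns as part of its hypothesis rather than only their individual forms.
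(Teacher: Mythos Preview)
Your overall plan matches the paper's: classification of nonempty cells via Corollary~\ref{corollary: perm matrix pivots go in order} and Corollary~\ref{corollary: direct map to perms}, well-definedness via Lemmas~\ref{lemma: correct form of matrix image} and~\ref{lemma: matrix image is in Springer fiber}, injectivity from the canonical form, and surjectivity by a column-by-column induction using Lemma~\ref{lemma: combinatorial conditions on matrix in Springer}. The one substantive divergence is in how the induction handles a bottom-block column. You propose to reconstruct the ``ancestor stack'' form of $\vec g_{i}$ directly from the Springer conditions $X^\ell\vec g_i\in V_{i-1}$, and you rightly flag this as the main obstacle without carrying it out.

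The paper avoids that obstacle with a comparison trick you are missing. Instead of deriving the shape of $\vec g_{i+1}$ from scratch, it compares $\vec g_{i+1}$ to $\vec c_{i+1}$, the $(i{+}1)$-st column of a candidate $f_{\match{M}}(\vec v)$ whose first $i$ columns already agree with $g$. Since both $g$ and $f_{\match{M}}(\vec v)$ lie in the Springer fiber (the latter by Lemma~\ref{lemma: matrix image is in Springer fiber}), Lemma~\ref{lemma: combinatorial conditions on matrix in Springer}(3) applies to each, and subtracting gives $X(\vec g_{i+1}-\vec c_{i+1})\in\langle\vec e_1,\dots,\vec e_r\rangle$ where $r$ counts top-block pivots among the first $i$ columns. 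Canonical form then forces $\vec g_{i+1}-\vec c_{i+1}\in\langle\vec e_{r+1}\rangle$. When $i{+}1$ starts an arc $\alpha$, the $(r{+}1)$-entry of $\vec c_{i+1}$ is precisely $v_\alpha$, which has not appeared in any earlier column, so one simply adjusts $v_\alpha$ to match. This collapses your anticipated ancestor bookkeeping to a single scalar; the structural work you point to in Lemma~\ref{lemma: image of columns under arbitrary arc} and Corollaries~\ref{corollary: Springer condition arc with parent}--\ref{corollary: span cols arc wo parent and Springer cond no parent} has already been spent proving $f_{\match{M}}(\vec v)$ lies in the fiber, and the surjectivity step just reuses that.

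One further warning about the subtlety you raise at the end. For a bottom-block column $i$ that does \emph{not} start an arc (an integer not on an arc with $\jnot(i)\ge n-k$), the same reasoning only shows $\vec g_i-\vec c_i\in\langle\vec e_{r+1}\rangle$; it does not by itself force that entry to zero, and there is no $v_\alpha$ to absorb it. The vanishing is genuine but comes from the Springer condition on the \emph{next} bottom-block column, exactly as in your reading of Example~\ref{example: image of function}. Your induction as phrased (``column $i$ and the span through $i$ agree'') cannot see that later constraint at step $i$. If you pursue the direct route, you should either look one bottom-block column ahead at each step, or carry a weaker hypothesis allowing a pending one-dimensional discrepancy in the most recent bottom-block column and discharge it at the following one.
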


\begin{proof}
Recall that the Springer Schubert cell $\cellX{w}$ is the intersection $\mathcal{S}_X \cap \mathcal{C}_w$ of Springer fiber with Schubert cell.  Corollary~\ref{corollary: perm matrix pivots go in order} says that the Springer Schubert cell $\cellX{w}$ is nonempty if and only if the permutation flag $wB \in \mathcal{S}_X$.  Moreover Corollary~\ref{corollary: perm matrix pivots go in order} also shows that the permutation flag $wB \in \mathcal{S}_X$ if and only if the pivots of the permutation matrix $w$ increase in both top and bottom blocks, namely $w^{-1}(1) < w^{-1}(2) < \cdots < w^{-1}(n)$ and similarly $w^{-1}(n+1) < \cdots < w^{-1}(N)$.  Corollary~\ref{corollary: direct map to perms} showed that these are exactly the set of permutations $\{w_{\match{M}}\}$ for standard noncrossing matchings $\match{M}$ on $\{1, \ldots, N\}$ with at most $n$ arc starts and at most $N-n$ arc ends.   
Lemma~\ref{lemma: correct form of matrix image} shows that $f_{\match{M}}: \mathbb{C}^{|\match{M}|} \rightarrow \cellX{w_{\match{M}}}$ in fact induces a map into the Schubert cell for $w_{\match{M}}$ and that this map is injective, since each $\vec{v} \in \mathbb{C}^{|\match{M}|}$ is sent to a distinct canonical matrix representative $f_{\match{M}}(\vec{v})$.  Lemma~\ref{lemma: matrix image is in Springer fiber} confirms that the image of $f_{\match{M}}$ is contained in the Springer Schubert cell.

All that remains is to prove that the map $f_{\match{M}}$ is surjective.  Suppose that $g \in \cellX{w_{\match{M}}}$ is a matrix in canonical form that is not in the image of $f_{\match{M}}$. The first column of each matrix in $\cellX{w_{\match{M}}}$ is in the kernel of $X$ since the Springer condition says $XV_1 \subseteq \{0\}$ in this case.  Depending on the pivots of $w_{\match{M}}$, the first column of $g$ either is $\vec{e}_1$ or  has the form $\vec{e}_{n+1}+a\vec{e}_1$ for some $a \in \mathbb{C}$. In the first case, all matrices in the image of $f_{\match{M}}$ also have first column $\vec{e}_1$ since the pivot is in the first row. In the second case, if $\alpha_1$ denotes the first arc in $\match{M}$ then setting $v_{\alpha_1} =a$ creates at least one matrix in the image of $f_{\match{M}}$ that agrees with the first column of $g$.  

So assume there is at least one matrix in the image of $f_{\match{M}}$ that agrees with the first $i$ columns of $g$ and consider the $(i+1)^{th}$ column.   All of the columns of $g$ with pivots in the first $n$ rows in fact are standard basis vectors $\vec{e}_j$ by Lemma~\ref{lemma: combinatorial conditions on matrix in Springer}.  By construction, all matrices in the image of $f_{\match{M}}$ also have $\vec{e}_j$ in these columns.  

So assume the column has pivot in the bottom $N-n$ rows.  Let $\vec{g}_1, \vec{g}_2, \ldots$ denote the columns of $g$.  Suppose that $f_{\match{M}}(\vec{v})$ is a matrix whose first $i$ columns agree with the first $i$ columns of $g$ and denote the columns of $f_{\match{M}}(\vec{v})$ by $\vec{c}_1, \vec{c}_2, \ldots$.  Lemma~\ref{lemma: combinatorial conditions on matrix in Springer} says that for some column $k \leq i$ both
\[X\vec{g}_{i+1} - \vec{c}_{k},  X\vec{c}_{i+1} - \vec{c}_{k} \in \langle \vec{e}_1, \ldots, \vec{e}_n \rangle \cap \langle \vec{c}_1, \ldots, \vec{c}_i \rangle.\]
In other words, we have
\[X\left(\vec{g}_{i+1} - \vec{c}_{i+1} \right) \in \langle \vec{e}_1, \ldots, \vec{e}_n \rangle  \cap \langle \vec{c}_1, \ldots, \vec{c}_i \rangle.\]
Since both $g$ and $f_{\match{M}}(\vec{v})$ are in canonical form, we know their columns are zero in every row to the right of a pivot.  In this case, we know $\{\vec{c}_1, \ldots, \vec{c}_i\}$ is a subset of $\{vec{e}_1, \ldots, \vec{e}_n\}$ of the form $\vec{e}_1, \vec{e}_2, \ldots, \vec{e}_r$ for some $r$. So both $\vec{g}_{i+1}$ and $\vec{c}_{i+1}$ are zero in their first $r$ rows.  Since $X\vec{e}_j = \vec{e}_{j-1}$ for all vectors $j \neq 1, n+1$ we conclude that $\vec{g}_{i+1}-\vec{c}_{i+1} \in \langle \vec{e}_{r+1} \rangle$.  An arc $\alpha \in \match{M}$ must start at $i+1$ because $\vec{c}_{i+1}$ has pivot in the last $N-n$ rows.  By definition of $f_{\match{M}}$ we know that entry in row  $r+1$ of $\vec{c}_{i+1}$ is $v_{\alpha}$ and that $v_{\alpha}$ does not appear in any earlier columns.  Since $v_{\alpha} \in \mathbb{C}$ is free, we may choose it to agree with row $r+1$ of $\vec{g}_{i+1}$.  Thus there is a matrix in the image of $f_{\match{M}}$ that agrees with the first $i+1$ columns of $g$.  By induction, we conclude $g$ is in the image of $f_{\match{M}}$ and thus $f_{\match{M}}$ is surjective.  This proves the claim.
\end{proof}

\section{The geometry and combinatorics of closures of Springer Schubert cells} \label{section: closures are indexed by arc cuts}

In this section, we describe some necessary conditions for a flag to be in the closure of a given Springer Schubert cell.  We begin with some geometric conditions on the flags and then interpret the conditions combinatorially.  In particular, we will define a combinatorial operation on matchings called \emph{cutting arcs}, which is a particular kind of unnesting operation.

\subsection{Geometric properties of flags in the boundary of a Springer Schubert cell}
We now prove two characteristics of Springer Schubert cells that are inherited by flags in the boundary: 1) containing the span of a particular set of basis vectors and 2) containing the image under a linear transformation.  In Section~\ref{section: cutting arcs}, we connect these to conditions on matchings.

We first recall a fact about flag varieties, see e.g. \cite{Ful97} for more.

\begin{proposition} \label{proposition: projection facts}
Suppose that $GL_N(\mathbb{C})/B$ is the flag variety and $P \supseteq B$ is a parabolic subgroup of $GL_N(\mathbb{C})$.  Then the natural projection
\[\pi_P: GL_N(\mathbb{C})/B \rightarrow GL_N(\mathbb{C})/P\]
is a closed continuous map. In particular, this applies to the following parabolics:
\begin{enumerate}
    \item The subgroup $P_i$ consisting of invertible matrices that are zero below the diagonal in the first $i$ columns. Geometrically, the flags in $GL_N(\mathbb{C})/P_i$ consist of partial flags
    \[V_1 \subseteq V_2 \subseteq \cdots \subseteq V_i\]
    obtained by forgetting the last $N-i$ subspaces in the original flag.
    \item The subgroup $P_i^c$ consisting of invertible matrices that are zero to the left of the diagonal in the last $N-i$ rows.  Geometrically, the flags in $GL_N(\mathbb{C})/P_i^c$ consist of partial flags
    \[V_{i+1} \subseteq V_{i+2} \subseteq \cdots \subseteq V_N\]
    obtained by forgetting the first $i$ subspaces in the original flag.
    \item The subgroup $P_i^{c,r}$ consisting of invertible matrices that are zero in the bottom-left $(N-i) \times i$ block. Geometrically, the flags in $GL_N(\mathbb{C})/P_i^{c,r}$ consist of the $i$-dimensional subspaces $V_i \subseteq \mathbb{C}^N$ obtained by forgetting almost all of the original flag. In other words, $GL_N(\mathbb{C})/P_i^{r,c}$ is the Grassmannian of $i$-planes in $\mathbb{C}^N$.
\end{enumerate}
\end{proposition}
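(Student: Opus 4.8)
The plan is to obtain the general closedness statement from standard point-set topology and then read off the three special cases by identifying the target $GL_N(\mathbb{C})/P$ concretely. First I would record that $GL_N(\mathbb{C})/B$ is compact: the unitary group $U(N)$ acts transitively on the flag variety by Gram--Schmidt, so the orbit map $U(N)\to GL_N(\mathbb{C})/B$ is continuous and surjective and $GL_N(\mathbb{C})/B$ is the continuous image of a compact group (equivalently, one may simply cite that $GL_N(\mathbb{C})/B$ is a projective variety).

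Next I would check that $\pi_P$ is continuous with Hausdorff target. Continuity is the universal property of the quotient topology: the composite $GL_N(\mathbb{C}) \to GL_N(\mathbb{C})/B \to GL_N(\mathbb{C})/P$, with the second map $\pi_P$, equals the canonical projection onto $GL_N(\mathbb{C})/P$ and is therefore continuous, while $GL_N(\mathbb{C}) \to GL_N(\mathbb{C})/B$ is a quotient map. For Hausdorffness I would note that any parabolic $P \supseteq B$ is a \emph{standard} parabolic, i.e.\ block upper-triangular for some composition of $N$, hence closed in $GL_N(\mathbb{C})$ (it is the common zero locus of a fixed set of matrix entries); a homogeneous space by a closed subgroup is Hausdorff, and indeed $GL_N(\mathbb{C})/P \cong U(N)/(U(N)\cap P)$ is again compact. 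The conclusion is then the elementary fact that a continuous map from a compact space to a Hausdorff space is closed: a closed subset of $GL_N(\mathbb{C})/B$ is compact, its image is compact, and a compact subset of a Hausdorff space is closed. This disposes of the general assertion; it is essentially folklore, and the only ``obstacle'' is invoking the correct off-the-shelf fact at each step.

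For the three enumerated parabolics I would, in each case, (a) confirm the subgroup is parabolic and contains $B$, and (b) identify $GL_N(\mathbb{C})/P$ with the stated space of partial flags. Each of $P_i$, $P_i^c$, $P_i^{c,r}$ is block upper-triangular (for the compositions $(1^i, N-i)$, $(i, 1^{N-i})$, and $(i, N-i)$ respectively), hence parabolic, and each visibly contains the upper-triangular subgroup $B$. For (b), fix $g \in GL_N(\mathbb{C})$ with columns $\vec{g}_1, \ldots, \vec{g}_N$ and determine which of the subspaces $\langle \vec{g}_1, \ldots, \vec{g}_j \rangle$ remain unchanged when $g$ is replaced by $gp$ for $p \in P$: right multiplication by $p$ recombines columns only within the column-blocks of $p$ and the diagonal blocks of $p$ are invertible, so the preserved subspaces are exactly the $V_j = \langle \vec{g}_1, \ldots, \vec{g}_j\rangle$ whose index $j$ is a block boundary of $P$, and conversely two matrices inducing the same collection of such $V_j$ differ on the right by an element of $P$. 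Reading off the block boundaries then produces precisely the partial flags of parts (1)--(2) and the single $i$-plane $V_i$ of part (3), so $GL_N(\mathbb{C})/P_i^{c,r}$ is the Grassmannian of $i$-planes in $\mathbb{C}^N$. I expect the only mildly delicate step to be the bookkeeping that matches the defining conditions (``zero below the diagonal in the first $i$ columns'', ``zero to the left of the diagonal in the last $N-i$ rows'') to the correct composition of $N$ and hence to the correct truncation of the flag --- routine, but easy to shift by one.
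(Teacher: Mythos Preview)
Your proof is correct. The paper does not actually prove this proposition: it is stated as a recalled fact with a citation to Fulton's \emph{Young Tableaux} and no argument is given. Your approach---compactness of $GL_N(\mathbb{C})/B$, Hausdorffness of $GL_N(\mathbb{C})/P$, and the standard compact-to-Hausdorff lemma---is a clean and complete justification, and your identification of the three parabolics with the indicated partial flag varieties is accurate. In short, you have supplied a proof where the paper only supplies a reference.
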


We now prove the first characteristic, which is actually a property of limits in the flag variety.

\begin{lemma}\label{lemma: if cell contains vectors then closure does too}
Suppose that $V_{\bullet}(t)$ is a continuous one-parameter family of flags with limit $V_{\bullet}(t) \rightarrow V_{\bullet}'$ and that for every $t \in \mathbb{C}$ the subspace $V_i(t)$ contains a specific set of basis vectors $\{e_{i_1}, e_{i_2}, \ldots, e_{i_j}\}$.  Then in the limit, the subspace $V_i'$ also contains $\{e_{i_1}, e_{i_2}, \ldots, e_{i_j}\}$.

In particular if every flag $V_{\bullet} \in \cellX{w}$ in a Springer Schubert cell satisfies $V_i \supseteq \{e_{i_1}, e_{i_2}, \ldots, e_{i_j}\}$ then every flag $V_{\bullet}'$ in the closure $\overline{\cellX{w}}$ does too.
\end{lemma}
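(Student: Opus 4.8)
The plan is to reduce both assertions to one elementary fact: for any fixed index $m$, the locus $Y_m = \{V_\bullet \in GL_N(\mathbb{C})/B : \vec{e}_m \in V_i\}$ is a \emph{closed} subset of the flag variety. Granting this, the set of flags with $V_i \supseteq \{\vec{e}_{i_1}, \ldots, \vec{e}_{i_j}\}$ is the finite intersection $Y := Y_{i_1} \cap \cdots \cap Y_{i_j}$ and is therefore closed. The first statement is then immediate: if $V_\bullet(t) \to V_\bullet'$ with $V_\bullet(t) \in Y$ for all $t$, then $V_\bullet' \in \overline{Y} = Y$, i.e.\ $V_i' \supseteq \{\vec{e}_{i_1}, \ldots, \vec{e}_{i_j}\}$. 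The second statement is equally immediate: since every flag of $\cellX{w}$ lies in $Y$ we have $\cellX{w} \subseteq Y$, so $\overline{\cellX{w}} \subseteq \overline{Y} = Y$; and because $Y$ is genuinely closed, this conclusion is insensitive to whether the closure is taken in the Zariski or the analytic topology, which conveniently removes any need to compare the two or to invoke a curve-selection description of $\overline{\cellX{w}}$.

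It remains to check that $Y_m$ is closed. Here I would apply Proposition~\ref{proposition: projection facts}(3): the projection $\pi = \pi_{P_i^{c,r}} \colon GL_N(\mathbb{C})/B \to GL_N(\mathbb{C})/P_i^{c,r}$ onto the Grassmannian of $i$-planes in $\mathbb{C}^N$ is continuous, and by construction $Y_m = \pi^{-1}\big(\{V \in GL_N(\mathbb{C})/P_i^{c,r} : \vec{e}_m \in V\}\big)$. So it suffices to show that $Z_m := \{V : \vec{e}_m \in V\}$ is closed in the Grassmannian. Via the Plücker embedding $V = sp\langle \vec{v}_1, \ldots, \vec{v}_i\rangle \mapsto [\vec{v}_1 \wedge \cdots \wedge \vec{v}_i] \in \mathbb{P}\big(\bigwedge^i \mathbb{C}^N\big)$, the condition $\vec{e}_m \in V$ is equivalent to $\vec{e}_m \wedge \vec{v}_1 \wedge \cdots \wedge \vec{v}_i = 0$ in $\bigwedge^{i+1}\mathbb{C}^N$, which is the simultaneous vanishing of finitely many Plücker coordinates and hence defines a closed subvariety. (Alternatively: on each standard affine chart of the Grassmannian, $V$ is the column span of an $N \times i$ matrix $A$ whose entries are regular functions of the chart coordinates, and $Z_m$ is cut out there by the vanishing of all $(i+1)\times(i+1)$ minors of the $N \times (i+1)$ matrix $[\,A \mid \vec{e}_m\,]$; or one simply invokes upper semicontinuity of $\dim(V \cap sp\langle \vec{e}_m\rangle)$.)

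The only point carrying any mathematical content is the closedness of $Z_m$ inside the Grassmannian, and even that is routine. The step I would be most careful about is purely organizational: making sure the passage ``every flag of $\cellX{w}$ satisfies a closed condition'' $\Rightarrow$ ``every flag of $\overline{\cellX{w}}$ satisfies it'' is justified by exhibiting the condition as cut out by an honestly closed set $Y$, rather than by arguing with limits flag by flag; once that is done, the one-parameter-family formulation in the statement is subsumed with no extra work.
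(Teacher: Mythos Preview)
Your proof is correct and takes a genuinely different route from the paper's. The paper argues as follows: it passes from $V_{\bullet}(t) \to V_{\bullet}'$ to $V_i(t) \to V_i'$ via continuity of the projection $\pi_i$, then considers the quotient map $q\colon \mathbb{C}^N \to \mathbb{C}^N/V^j$ (where $V^j$ is the span of the fixed basis vectors), observes that each $q(V_i(t))$ has dimension exactly $i-j$, and invokes lower semicontinuity of rank to bound $\dim q(V_i') \le i-j$; comparing with the trivial lower bound $\dim q(V_i') \ge i-j$ forces $V^j \subseteq V_i'$. Your argument instead identifies the containment condition as a closed locus $Y$ in the flag variety by pulling back from the Grassmannian via $\pi_{P_i^{c,r}}$ and exhibiting $Z_m$ as cut out by Pl\"ucker equations (or minors). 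What you gain is uniformity and economy: once $Y$ is closed, both conclusions of the lemma are immediate, and you never need to track a specific one-parameter family or a rank argument. What the paper's approach buys is that it avoids Pl\"ucker coordinates entirely, using only the quotient map and an elementary semicontinuity fact; this may feel more self-contained to a reader unfamiliar with the Grassmannian's projective embedding. Both are short and both rest on Proposition~\ref{proposition: projection facts}, just different parts of it.
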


\begin{proof}
The flag variety is a complex manifold so if $V_{\bullet}'$ is in the boundary of a Springer Schubert cell then there is a one-parameter family of flags $V_{\bullet}(t) \in \cellX{w}$ with limit $V_{\bullet}(t) \rightarrow V_{\bullet}'$. Thus the second claim follows immediately from the first. 

For each $i$ the subspaces also have limit $V_i(t) \rightarrow V_i'$ since the projection map $\pi_i$ in Proposition~\ref{proposition: projection facts} is continuous, meaning 
\[\begin{array}{ll} \lim_t V_i(t) & = \lim_t \pi_i\left(V_{\bullet}(t)\right) \\
&= \pi_i \left(\lim_t V_{\bullet}(t) \right) \\ 
&= \pi_i(V_{\bullet}') = V_i'. \end{array}\]

Now let $V^j$ be the subspace spanned by the vectors $\{e_{i_1}, e_{i_2},\ldots,e_{i_j}\}$ and consider the quotient map $q: \mathbb{C}^N \rightarrow \mathbb{C}^N/V^j$.  The map $q$ is continuous and induces a map on $i$-dimensional subspaces of $\mathbb{C}^N$ that we also denote $q$. If $U$ is any $i$-dimensional subspace of $\mathbb{C}^N$ then the image $q(U)$ is at least $(i-j)$-dimensional, with equality if and only if $U$ contains $V^j$.

Since $V_i(t) \supseteq V^j$ for each $t$ it follows that $q(V_i(t))$ is an $(i-j)$-dimensional subspace of $\mathbb{C}^n/V^j$ for each $t$.   The rank of a matrix is a lower semicontinuous function so the limit of a sequence of $(i-j)$-dimensional subspaces has dimension at most $i-j$. In particular, the image $q(V_i')$ has dimension at most $i-j$.  At the same time, the subspace $V_i'$ is $i$-dimensional by hypothesis; thus, the image $q(V_i')$ is at least $(i-j)$-dimensional by above.  So $q(V_i')$ has dimension exactly $i-j$ which means by above $V_i' \supseteq V^j$. We conclude $V_i' \supseteq V^j$ as desired.
\end{proof}

We now show a property about how limits of flags interact with a linear operator $X$. This property is again inherited by flags in the boundary of Springer Schubert cells.

\begin{lemma} \label{lemma: limit point satisfies image condition} 
Suppose that $V_{\bullet}(t) \rightarrow V_{\bullet}'$ is a continuous one-parameter family and that for some $k,i,j$ and all $t$ we have 
\[X^kV_i(t) \subseteq V_j(t).\]
Then also $X^kV_i' \subseteq V_j'$.

In particular, if $X^k V_i \subseteq V_j$ for all $V_{\bullet} \in \cellX{w}$ in a Springer Schubert cell then also $X^k V_i \subseteq V_j$ for all flags $V_{\bullet} \in \overline{\cellX{w}}$ in the closure of the Springer Schubert cell.
\end{lemma}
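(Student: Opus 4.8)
The plan is to reduce the claim to the same kind of semicontinuity argument used in Lemma~\ref{lemma: if cell contains vectors then closure does too}, now applied to the linear map $X^k$ rather than to a quotient by a fixed subspace. As in the previous lemma, the second assertion follows from the first: since the flag variety is a complex manifold, any $V_\bullet' \in \overline{\cellX{w}}$ is the limit of a one-parameter family $V_\bullet(t) \in \cellX{w}$, and the Springer condition $X^k V_i \subseteq V_j$ holds for every such flag by hypothesis (for Springer cells with $i=j$ this is literally the defining condition, but stating it for general $k,i,j$ costs nothing and is what we use inductively in later sections). So it suffices to treat a single continuous family $V_\bullet(t) \to V_\bullet'$ with $X^k V_i(t) \subseteq V_j(t)$ for all $t$.

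First I would pass to the relevant partial flags: by Proposition~\ref{proposition: projection facts} the projections $\pi_i$ and $\pi_j$ are continuous, so $V_i(t) \to V_i'$ and $V_j(t) \to V_j'$ as points of the Grassmannians $Gr(i,N)$ and $Gr(j,N)$. Now consider the fixed subspace $W = X^k(\mathbb{C}^N)$ together with the linear map $X^k \colon \mathbb{C}^N \to \mathbb{C}^N$. The hypothesis $X^k V_i(t) \subseteq V_j(t)$ says that the $i$-plane $V_i(t)$, when mapped forward by $X^k$, lands inside the $j$-plane $V_j(t)$. I want to phrase "$X^k V_i(t) \subseteq V_j(t)$" as the vanishing of a continuous quantity so that it persists in the limit. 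The cleanest way: let $q_t \colon \mathbb{C}^N \to \mathbb{C}^N / V_j(t)$ be the quotient, and observe that $X^k V_i(t) \subseteq V_j(t)$ is equivalent to the composite $V_i(t) \hookrightarrow \mathbb{C}^N \xrightarrow{X^k} \mathbb{C}^N \to \mathbb{C}^N/V_j(t)$ being the zero map, i.e. to the rank of this composite being $0$. Rank is lower semicontinuous, but a rank that is identically $0$ along the family forces rank $0$ in the limit (a lower semicontinuous integer-valued function bounded above by its limiting value, combined with nonnegativity, pins it to $0$), giving $X^k V_i' \subseteq V_j'$.

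To make the "composite depends continuously on $t$" step rigorous I would work on a chart: near $V_\bullet'$ choose $N\times N$ matrices $g(t)$ in canonical form (or any continuously varying frame) whose first $i$ columns span $V_i(t)$ and whose first $j$ columns span $V_j(t)$; then the condition is that $X^k$ applied to the first $i$ columns of $g(t)$ lies in the column span of the first $j$ columns of $g(t)$, equivalently that a certain matrix of "failure coefficients," obtained by projecting $X^k \vec{g}_\ell(t)$ ($\ell \le i$) onto a complement of $\mathrm{sp}\langle \vec{g}_1(t),\dots,\vec{g}_j(t)\rangle$, is identically zero. That matrix is a continuous (indeed polynomial) function of $g(t)$, hence has the limiting value $0$ at $t = t_0$, which is exactly $X^k V_i' \subseteq V_j'$. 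Either packaging works; I expect the main obstacle to be purely bookkeeping — choosing the frames so that the projection onto the complement of $V_j(t)$ varies continuously through the limit point — rather than anything conceptual. An alternative that sidesteps frames entirely is to note that $\{(U,U') \in Gr(i,N)\times Gr(j,N) : X^k U \subseteq U'\}$ is a closed subvariety (it is cut out by the vanishing of the induced map to $\mathrm{Hom}(U, \mathbb{C}^N/U')$, a section of a vector bundle on the product), and closedness of this incidence locus is precisely the statement; I would likely present the elementary frame-based version and remark on this.
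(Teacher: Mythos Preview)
Your proposal is correct and follows essentially the same approach as the paper: both reduce the second claim to the first via a one-parameter family, use the continuity of the projections $\pi_i,\pi_j$ from Proposition~\ref{proposition: projection facts} to get $V_i(t)\to V_i'$ and $V_j(t)\to V_j'$, and then pass the containment $X^kV_i(t)\subseteq V_j(t)$ to the limit. The paper does this last step more informally by invoking continuity of $X^k$ directly (writing $\lim X^kV_i(t)=X^k\lim V_i(t)$ and $\lim X^kV_i(t)\subseteq \lim V_j(t)$), whereas you package it more carefully via lower semicontinuity of the rank of the composite $V_i(t)\hookrightarrow\mathbb{C}^N\xrightarrow{X^k}\mathbb{C}^N\to\mathbb{C}^N/V_j(t)$ or, equivalently, closedness of the incidence locus in $Gr(i,N)\times Gr(j,N)$; your version is a rigorous unpacking of the paper's shorthand rather than a genuinely different argument.
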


\begin{proof}
Again, the forgetful map $\pi_i: V_{\bullet} \rightarrow V_i$ is continuous because it is the projection from the flag variety to a Grassmannian, as in Proposition~\ref{proposition: projection facts}.  Thus we have $V_i(t) \rightarrow V_i'$ and by the same reasoning $V_j(t) \rightarrow V_j'$.  

The linear map $X^k$ is continuous so 
\[\lim X^k V_i(t) = X^k \lim V_i(t) = X^k V_i'\]  
At the same time, we also have 
\[\lim \left(X^k V_i(t)\right) \subseteq \lim V_j(t) = V_j'\]  
Thus we conclude $X^k V_i' \subseteq V_j'$ as desired. As before, if $V_{\bullet}'$ is in the boundary of a Springer Schubert cell then there is a one-parameter family $V_{\bullet}(t)$ contained in the Springer Schubert cell with $V_{\bullet}(t) \rightarrow V_{\bullet}'$ so this result applies to the boundary of Springer Schubert cells.
\end{proof}

Now we give combinatorial conditions on matchings that imply the previous two lemmas.  

\begin{lemma} \label{lemma: if no ancestor then span of basis vectors}
Let $X$ be a nilpotent matrix in Jordan canonical form of Jordan type $(n,N-n)$.  Suppose $\match{M}$ is a standard noncrossing matching with $k$ arcs and that $i \in \{1,  \ldots, N\}$ either ends an arc that has no parent or is not on an arc. The span of the first $i$ columns of each matrix in the image of $f_{\match{M}}$ is the same subspace $V_i$.  Moreover
\[V_i = sp \langle e_1, e_2, \ldots, e_t, e_{n+1}, e_{n+2}, \ldots, e_{n+b} \rangle \]
where $t = \jend(i)+\max\{n-k,\jnot(i)\}$ and $b = \jbeg(i)+\min\{0, \jnot(i)-(n-k)\}$.  
\end{lemma}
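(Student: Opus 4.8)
The plan is to compute $V_i$ column by column from the description $f_{\match{M}}(\vec v)=w_{\match{M}}+\Im(\vec v)$, using Corollary~\ref{corollary: span cols arc wo parent and Springer cond no parent} to absorb whole blocks of columns coming from nested families of arcs, and then to read off which standard basis vectors survive. The first step would be the structural observation that, under the hypothesis on $i$, no arc of $\match{M}$ straddles position $i$: there is no $\beta\in\match{M}$ with $\init(\beta)\le i<\term(\beta)$. If $i$ lies on no arc this is immediate from standardness of $\match{M}$. If $i$ ends a parentless arc $\alpha$, then such a $\beta$ would either start before $\alpha$ — forcing, by noncrossing and $\term(\beta)>i=\term(\alpha)$, that $\beta$ is nested above $\alpha$ and hence an ancestor of $\alpha$, contradicting that $\alpha$ has no parent — or else start strictly between $\init(\alpha)$ and $\term(\alpha)=i$, forcing $\beta$ to cross $\alpha$. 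Either way we get a contradiction. Consequently $\{1,\dots,i\}$ decomposes into the integers on no arc together with complete families of nested arcs: the parentless ancestor of any arc with endpoint at most $i$ also ends at or before $i$, and such a parentless arc $\alpha$, together with everything nested below it, fills a contiguous block $\{\init(\alpha),\dots,\term(\alpha)\}\subseteq\{1,\dots,i\}$.

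Next I would assemble $V_i=sp\langle\vec c_1,\dots,\vec c_i\rangle$, where $\vec c_1,\dots,\vec c_N$ are the columns of $f_{\match{M}}(\vec v)$. For an integer $j\le i$ on no arc, column $j$ of $\Im(\vec v)$ vanishes, so $\vec c_j=\vec e_{w_{\match{M}}(j)}$ independently of $\vec v$. For a parentless arc $\alpha$ with $\term(\alpha)\le i$, Corollary~\ref{corollary: span cols arc wo parent and Springer cond no parent} gives that $sp\langle\vec c_{\init(\alpha)},\dots,\vec c_{\term(\alpha)}\rangle$ is a fixed coordinate subspace, independent of $\vec v$: it is spanned by the basis vectors occupying the arc-end columns of the family together with the (consecutive, by Lemma~\ref{lemma: pivots of matching perm increase in rows}) pivots of $w_{\match{M}}$ in the arc-start columns of the family. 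Summing these coordinate subspaces over the on-no-arc integers and the families inside $\{1,\dots,i\}$ exhibits $V_i$ as a sum of coordinate subspaces, which already proves $V_i$ does not depend on $\vec v$. Moreover each of these spanning basis vectors is $\vec e_{w_{\match{M}}(j)}$ for some $j\le i$, and conversely $\vec e_{w_{\match{M}}(j)}\in V_i$ for every $j\le i$ (if $j$ is an arc end or on no arc this is $\vec c_j$ itself; if $j$ is an arc start it is one of the family pivots just listed). Hence $V_i=sp\langle\vec e_{w_{\match{M}}(j)}:1\le j\le i\rangle$.

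Finally I would make this coordinate subspace explicit. By Lemma~\ref{lemma: pivots of matching perm increase in rows} the values $w_{\match{M}}(j)$ with $j\le i$ that are $\le n$ form an initial segment $\{1,\dots,t\}$ of the top block and those that are $>n$ form an initial segment $\{n+1,\dots,n+b\}$ of the bottom block, where $t$ (resp.\ $b$) counts the columns among the first $i$ whose pivot lies in the top (resp.\ bottom) block, so that $V_i=sp\langle e_1,\dots,e_t,e_{n+1},\dots,e_{n+b}\rangle$. Separating $\{1,\dots,i\}$ into arc starts, arc ends, and integers on no arc, and substituting the four cases of Definition~\ref{definition: matching perm}, one evaluates $t$ and $b$ to the claimed $t=\jend(i)+\max\{n-k,\jnot(i)\}$ and $b=\jbeg(i)+\min\{0,\jnot(i)-(n-k)\}$.

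I expect the crux to be the structural claim that no arc straddles position $i$, since it is exactly that claim which licenses the decomposition into complete arc families and therefore the appeal to Corollary~\ref{corollary: span cols arc wo parent and Springer cond no parent}; everything afterward is bookkeeping. The only genuinely delicate bookkeeping is the final tabulation of $t$ and $b$, where one must track carefully whether the integers on no arc (and $i$ itself, when it is such an integer) contribute to the top or to the bottom block — which is precisely the information encoded by the comparison of $\jnot$ with $n-k$ inside the $\max$ and $\min$.
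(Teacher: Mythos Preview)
Your argument is correct and follows essentially the same strategy as the paper: both proofs hinge on the observation that no arc straddles position $i$, then use Corollary~\ref{corollary: span cols arc wo parent and Springer cond no parent} to identify the span of each complete nested family under a parentless arc as a fixed coordinate subspace, and handle non-arc columns directly as standard basis vectors. The only organizational difference is that the paper packages this as an induction along the successive breakpoints $i_1<i_2<\cdots$ (integers that end a parentless arc or lie on no arc), peeling off one block at a time via $V_{i_j}=V_{i_{j-1}}\oplus U$, whereas you decompose $\{1,\dots,i\}$ all at once; the content is the same.
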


\begin{proof}
Suppose that $i_1 < i_2 < \cdots < i_{\ell}$ are all integers that either end an arc without a parent or are not on any arc.   We prove the claim by inducting on the subscript $j$ of $i_j$.  

Our base case is $i_1$.  If $i_1$ ends an arc without a parent then there are $i_1/2$ arcs in $\{1, 2, \ldots, i_1\}$.  By Corollary~\ref{corollary: span cols arc wo parent and Springer cond no parent} we know 
\[V_{i_1} = sp \langle \vec{e}_1, \ldots, \vec{e}_{i_1/2}, \vec{e}_{n+1}, \ldots, \vec{e}_{n+(i_1/2)} \rangle.\]
Since $\jnot(i_1)=0$ by hypothesis, this proves the claim.  If instead $i_1$ is not on an arc then 
\[V_{i_1} = V_1 = sp \langle \vec{e}_1 \rangle\] 
which also satisfies the claim.

Now suppose that the claim holds for $i_{j-1}$ and consider $i_j$.  If $i_j$ ends an arc $\alpha$ without a parent then $\init(\alpha)-1$ either is not on an arc or ends a parentless arc to the left of $\alpha$. So the inductive hypothesis applies to $V_{\init(\alpha)-1}$.   Corollary~\ref{corollary: span cols arc wo parent and Springer cond no parent} applies to the span $U$ of columns $\init(\alpha), \ldots, \term(\alpha)$ of $f_{\match{M}}(\vec{v})$.  Since we can decompose each subspace as
\[V_{i_j} = V_{\init(\alpha)-1} \oplus U\]
we obtain 
\[V_{i_j} = sp \langle \vec{e}_1, \ldots, \vec{e}_{t}, \vec{e}_{n+1}, \ldots, \vec{e}_{b} \rangle\]
as desired.  Similarly, if $i_j$ is not on an arc then since $\match{M}$ is standard, all arcs that start before $i_j$ end before $i_j$.  Hence $i_j-1$ either is not on an arc or is the end of an arc with no parent.  In either case, by the inductive hypothesis together with the analogous decomposition 
\[V_{i_j} = V_{i_j-1} \oplus sp \langle \vec{e}_{w_{\match{M}}(i_j)} \rangle\]
we have the desired result. By induction, the claim is proven.
\end{proof}

\begin{example}
For example, consider the third matrix and matching in Example~\ref{example: image of function}. Any choice of $i \in \{4, 5, 6\}$ satisfies the hypotheses of Lemma~\ref{lemma: if no ancestor then span of basis vectors} on the matching.  By inspection of the matrix, we see that the span of the first four, five, and six columns are all the span of basis vectors. If $i=4$ then $t=b=2$.  If $i=5$ then $t=3$ and $b=2$.  If $i=6$ then $t=b=3$.  

This does not hold for all $i$.  For instance, if we pick $i=2$ or $i=3$ note that the span of the first two (respectively three) column vectors is not the span of two basis vectors.
\end{example}

\begin{lemma} \label{lemma: closure condition under arcs}
Suppose that $\match{M}$ is a standard noncrossing matching on $\{1, 2, \ldots, N\}$ with at most $n$ arc starts and at most $N-n$ arc ends, that  $\alpha$ is an arc in $\match{M}$, and that there are $k$ arcs with start or endpoints in $\{\init(\alpha), \ldots, \term(\alpha)\}$.  Then 
\[X^k V_{\term(\alpha)} \subseteq V_{\init(\alpha)-1}\]
for each flag $V_{\bullet}$ in the image of $f_{\match{M}}$.
\end{lemma}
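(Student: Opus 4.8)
The plan is to argue with a matrix representative $f_{\match{M}}(\vec{v}) = w_{\match{M}} + \Im(\vec{v})$, writing its columns $\vec{c}_1, \ldots, \vec{c}_N$, so that $V_i = sp\langle \vec{c}_1, \ldots, \vec{c}_i\rangle$. By Lemma~\ref{lemma: matrix image is in Springer fiber} the flag $V_{\bullet}$ lies in $\mathcal{S}_X$, and I will use two consequences throughout: each $V_i$ is $X$-invariant, and $X\vec{c}_i \in V_{i-1}$ for every $i$. Since $X^k V_{\term(\alpha)}$ is spanned by the vectors $X^k\vec{c}_i$ with $i \le \term(\alpha)$, it is enough to prove $X^k\vec{c}_i \in V_{\init(\alpha)-1}$ for each such $i$. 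For $i \le \init(\alpha)-1$ this is immediate, since $\vec{c}_i \in V_i \subseteq V_{\init(\alpha)-1}$ and $V_{\init(\alpha)-1}$ is $X$-invariant, so the content lies entirely in the range $\init(\alpha) \le i \le \term(\alpha)$. Because $\match{M}$ is standard, each such $i$ is a start point or an endpoint of one of the $k$ arcs lying in $\{\init(\alpha), \ldots, \term(\alpha)\}$ — noncrossingness forces any arc with a start or endpoint in this interval to be contained in it — and following Lemma~\ref{lemma: image of columns under arbitrary arc} I denote these arcs $\alpha, \alpha_1, \ldots, \alpha_{k-1}$, ordered by start point.

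For a column starting an arc I will quote Lemma~\ref{lemma: image of columns under arbitrary arc}: for $1 \le j \le k-1$,
\[ X^{j}\vec{c}_{\init(\alpha_j)} - \vec{c}_{\init(\alpha)} \in sp\langle \vec{e}_1, \ldots, \vec{e}_{r_0}\rangle \subseteq V_{\init(\alpha)-1}, \]
and the same holds trivially for $j=0$, with difference $\vec{0}$, covering the column $\vec{c}_{\init(\alpha)}$ itself. Applying $X^{k-j}$ and noting $k-j \ge 1$: the $sp\langle \vec{e}_1, \ldots, \vec{e}_{r_0}\rangle$ part stays in the $X$-invariant subspace $V_{\init(\alpha)-1}$, while $X^{k-j}\vec{c}_{\init(\alpha)} = X^{k-j-1}\bigl(X\vec{c}_{\init(\alpha)}\bigr) \in X^{k-j-1}\bigl(V_{\init(\alpha)-1}\bigr) \subseteq V_{\init(\alpha)-1}$ by the Springer condition at column $\init(\alpha)$ together with $X$-invariance. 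Hence $X^k\vec{c}_{\init(\alpha_j)} \in V_{\init(\alpha)-1}$.

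For a column $\vec{c}_i$ ending an arc, Lemma~\ref{lemma: image of columns under arbitrary arc} identifies the arc-end columns in this interval as exactly $\vec{e}_{r_0+1}, \ldots, \vec{e}_{r_0+k}$, and since arc-end columns of $w_{\match{M}}$ have pivots among the first $n$ rows we have $r_0 + k \le n$. Thus $\vec{c}_i = \vec{e}_{r_0+m}$ with $1 \le m \le k$ and $r_0+m \le n$, and by Equation~\eqref{equation: action of N} the vector $X^k\vec{e}_{r_0+m}$ is $\vec{0}$ if $k \ge r_0+m$, and otherwise equals $\vec{e}_{r_0+m-k}$; in the latter case $r_0+m-k \le r_0$ because $m \le k$, so $X^k\vec{e}_{r_0+m} \in sp\langle \vec{e}_1, \ldots, \vec{e}_{r_0}\rangle \subseteq V_{\init(\alpha)-1}$. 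In all cases $X^k\vec{c}_i \in V_{\init(\alpha)-1}$, and combining this with the trivial columns gives $X^kV_{\term(\alpha)} \subseteq V_{\init(\alpha)-1}$.

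I do not anticipate a real obstacle; the statement is essentially bookkeeping on top of Lemma~\ref{lemma: image of columns under arbitrary arc} and the Springer condition. The one step deserving a little care is the exponent accounting — after using $X^j$ to pull $\vec{c}_{\init(\alpha_j)}$ down to $\vec{c}_{\init(\alpha)}$ there must remain $k-j \ge 1$ applications of $X$ available to push $\vec{c}_{\init(\alpha)}$ one more step into $V_{\init(\alpha)-1}$, and symmetrically the shift $\vec{e}_{r_0+m} \mapsto \vec{e}_{r_0+m-k}$ must not land at a positive index below $1$ before reaching $\vec{0}$. Both reduce to the bound $m \le k$ for arcs nested in $\{\init(\alpha), \ldots, \term(\alpha)\}$, which is immediate from the definition of $k$.
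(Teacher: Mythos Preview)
Your proof is correct and follows essentially the same approach as the paper's: both arguments split the columns in $\{\init(\alpha),\ldots,\term(\alpha)\}$ into arc-end columns (handled via $\vec{e}_{r_0+m}\mapsto \vec{e}_{r_0+m-k}$) and arc-start columns (handled via Lemma~\ref{lemma: image of columns under arbitrary arc} to reduce to $\vec{c}_{\init(\alpha)}$, then one more application of $X$ via the Springer condition), and both finish by invoking $X$-invariance of $V_{\init(\alpha)-1}$. Your exponent accounting and your explicit check that $r_0+k\le n$ are fine and match what the paper uses implicitly.
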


\begin{proof}
With the notation of Lemma~\ref{lemma: image of columns under arbitrary arc},  the basis vectors $\vec{e}_1, \ldots, \vec{e}_{r_0}$ are all in $V_{\init(\alpha)-1}$.  Among the next $\term(\alpha)-\init(\alpha)+1$ columns, Lemma~\ref{lemma: image of columns under arbitrary arc} showed that the basis vectors $\vec{e}_{r_0+1}, \ldots, \vec{e}_{r_0+k}$ are the only ones whose pivots are in the first $n$ rows.  For every $j \in \{1, \ldots, k\}$ the image $X^k(\vec{e}_{r_0+j})$ is either $\vec{0}$ or $\vec{e}_{r_0+j-k}$ where $r_0+j-k \leq r_0$.  In both cases, we conclude $X^k(\vec{e}_{r_0+j})$ is in $V_{\init(\alpha)-1}$.

Any other column indexed by $\init(\alpha), \ldots, \term(\alpha)$ starts an arc since $\match{M}$ is standard. Label the arcs under $\alpha$ by $\alpha_1, \ldots, \alpha_{k-1}$ in order of increasing start point.  Lemma~\ref{lemma: image of columns under arbitrary arc} says
\[X^j \vec{c}_{\init(\alpha_j)} - \vec{c}_{\init(\alpha)} \in V_{\init(\alpha)-1}.\]
Lemma~\ref{lemma: matrix image is in Springer fiber} showed that the image of $f_{\match{M}}$ is in the Springer fiber $\mathcal{S}_X$ so in particular
\[X\vec{c}_{\init(\alpha)} \in V_{\init(\alpha)-1}.\]
Putting these two facts together, we conclude that for all $j<k$ we have
\[X^{j+1} \vec{c}_{\init(\alpha_j)} \in V_{\init(\alpha)-1}.\]
The Springer condition says $XV_i \subseteq V_{i-1}$ for all $i$ while the definition of flags says $V_{i-1} \subseteq V_i$.  Thus the subspace $V_{\init(\alpha)-1}$ is invariant under the action of $X$.  Consequently
\[X^k \vec{c}_{\init(\alpha_j)} \in V_{\init(\alpha)-1}\]
which completes the proof.
\end{proof}

The previous result implies another condition relating the end points of arcs that are parents and children.

\begin{corollary} \label{corollary: parent of arc to end of child}
Suppose $\match{M}$ is a standard noncrossing matching on $\{1, 2, \ldots, N\}$ with at most $n$ arc starts and at most $N-n$ arc ends, and suppose $\match{M}$ contains an arc $\beta_1$ whose parent is $\beta_2$.  Then $\term(\beta_2)-\term(\beta_1)=2k$ and 
\[X^{k+1} V_{\term(\beta_2)} \subseteq V_{\term(\beta_1)}\]
\end{corollary}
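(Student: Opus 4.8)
The statement packages two claims: the numerical identity $\term(\beta_2)-\term(\beta_1)=2k$ and the image condition $X^{k+1}V_{\term(\beta_2)}\subseteq V_{\term(\beta_1)}$, the latter understood (as in Lemma~\ref{lemma: closure condition under arcs}) for every flag $V_\bullet$ in the image of $f_{\match{M}}$; Lemma~\ref{lemma: limit point satisfies image condition} then propagates it to every flag of the closure $\overline{\cellX{\match{M}}}$. The plan is to settle the identity by a short combinatorial argument about the points between $\term(\beta_1)$ and $\term(\beta_2)$, and then to verify the image condition column by column on $f_{\match{M}}(\vec{v})$.

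For the identity: since $\beta_2$ is the parent of $\beta_1$ we have $\init(\beta_2)<\init(\beta_1)<\term(\beta_1)<\term(\beta_2)$, and I would show that every integer $m$ with $\term(\beta_1)<m<\term(\beta_2)$ lies on an arc contained entirely in $(\term(\beta_1),\term(\beta_2))$. Indeed $\init(\beta_2)<m<\term(\beta_2)$, so by standardness $m$ lies on some arc $\gamma$; as $\match{M}$ is noncrossing $\gamma$ crosses neither $\beta_1$ nor $\beta_2$, the definition of ``parent'' (Definition~\ref{definition: ancestors on init points}) forbids $\gamma$ from nesting strictly between $\beta_1$ and $\beta_2$, and any ancestor of $\beta_2$ must end at or after $\term(\beta_2)$, so the only remaining possibility is $\term(\beta_1)<\init(\gamma)<\term(\gamma)<\term(\beta_2)$. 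Hence the points strictly between $\term(\beta_1)$ and $\term(\beta_2)$ split into the endpoint pairs of some arcs $\gamma_1,\dots,\gamma_q$ (ordered by start point), which pins down $\term(\beta_2)-\term(\beta_1)$ and identifies $k$ with the number of arcs in this window.

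For the image condition, fix $\vec{v}$ and write $\vec{c}_1,\dots,\vec{c}_N$ for the columns of $f_{\match{M}}(\vec{v})$; by Lemma~\ref{lemma: matrix image is in Springer fiber} the associated flag lies in $\mathcal{S}_X$, so each $V_i=\langle\vec{c}_1,\dots,\vec{c}_i\rangle$ is $X$-invariant. Write $V_{\term(\beta_2)}=V_{\term(\beta_1)}+\langle\vec{c}_i:\term(\beta_1)<i\le\term(\beta_2)\rangle$; since $X^{k+1}V_{\term(\beta_1)}\subseteq V_{\term(\beta_1)}$ it is enough to prove $X^{k+1}\vec{c}_i\in V_{\term(\beta_1)}$ for each $i$ in this range. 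By the combinatorial step these columns are exactly the start- and endpoint-columns of $\gamma_1,\dots,\gamma_q$ together with $\vec{c}_{\term(\beta_2)}$. Each endpoint column, and $\vec{c}_{\term(\beta_2)}$, is a standard basis vector: Lemma~\ref{lemma: image of columns under arbitrary arc} applied to $\beta_2$ identifies these as basis vectors $\vec{e}_{r_0+1},\vec{e}_{r_0+2},\dots$ lying just above the $r_0$-th coordinate (with $r_0$ the offset of Definition~\ref{definition: ancestor labels and matrix}) and shows at the same time that $V_{\term(\beta_1)}$ contains $\vec{e}_1,\dots,\vec{e}_{r_0}$ together with the basis vectors attached to every arc ending at or before $\term(\beta_1)$; counting how far past $\term(\beta_1)$ the arc $\gamma_\ell$ (or $\beta_2$) ends then shows $k+1$ shifts by $X$ carry the relevant basis vector into that subspace. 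For a start-point column $\vec{c}_{\init(\gamma_\ell)}$ I would again use the explicit expansion of Lemma~\ref{lemma: image of columns under arbitrary arc}: the ancestor-label terms of $\vec{c}_{\init(\gamma_\ell)}$ coming from arcs nested below $\beta_2$ are moved into $V_{\term(\beta_1)}$ by $X^{k+1}$ by the same count, while the terms from $\beta_2$ and its own ancestors agree, after a shift, with the corresponding terms of $\vec{c}_{\init(\beta_2)}$, and since $\vec{c}_{\init(\beta_2)}\in V_{\term(\beta_1)}$ the Springer condition $X\vec{c}_{\init(\beta_2)}\in V_{\init(\beta_2)-1}\subseteq V_{\term(\beta_1)}$ and the $X$-invariance of $V_{\term(\beta_1)}$ close the case.

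The main obstacle is the exponent count in this last step. Applying Lemma~\ref{lemma: closure condition under arcs} directly to $\beta_2$ only gives the containment with $X$ raised to the total number of arcs lying below $\beta_2$, which is usually far larger than $k+1$; the point that makes $k+1$ suffice is that everything sitting weakly to the left of $\term(\beta_1)$ — columns, pivots, and ancestor-label contributions alike — is already inside $V_{\term(\beta_1)}$ and so costs nothing to ``unwind'', leaving only the $q$ arcs strictly inside the window, plus a single step for $\beta_2$, to contribute to the exponent. Turning this into precise inequalities among $\jbeg$, $\jend$, $\jnot$ and the $r_0$-offsets of Definition~\ref{definition: ancestor labels and matrix}, perhaps via induction on the nesting depth inside $(\term(\beta_1),\term(\beta_2))$, is the delicate bookkeeping I expect to occupy the bulk of the proof.
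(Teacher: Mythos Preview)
Your combinatorial argument for the parity identity is the same as the paper's, and your overall strategy would work, but you are making the image condition far harder than it needs to be. The paper does not touch individual columns at all in this corollary; instead it exploits Lemma~\ref{lemma: closure condition under arcs} as a black box and chains subspace inclusions.

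The key move you are missing is to single out, among your arcs $\gamma_1,\dots,\gamma_q$, only those whose \emph{parent} is $\beta_2$; call these $\alpha_1,\dots,\alpha_j$ in order of start point. Because they are the top-level arcs in the window $(\term(\beta_1),\term(\beta_2))$, they tile it: $\init(\alpha_1)=\term(\beta_1)+1$, each $\init(\alpha_{\ell})=\term(\alpha_{\ell-1})+1$, and $\term(\alpha_j)=\term(\beta_2)-1$. If $k_\ell$ is the number of arcs nested at or below $\alpha_\ell$, then $k=k_1+\cdots+k_j$. Now Lemma~\ref{lemma: closure condition under arcs} applied to $\alpha_\ell$ gives $X^{k_\ell}V_{\term(\alpha_\ell)}\subseteq V_{\init(\alpha_\ell)-1}=V_{\term(\alpha_{\ell-1})}$, and chaining these yields $X^{k}V_{\term(\alpha_j)}\subseteq V_{\term(\beta_1)}$. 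One more application of the Springer condition, $XV_{\term(\beta_2)}\subseteq V_{\term(\beta_2)-1}=V_{\term(\alpha_j)}$, supplies the missing power of $X$.

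So the ``obstacle'' you flag — that Lemma~\ref{lemma: closure condition under arcs} applied to $\beta_2$ gives an exponent much larger than $k+1$ — dissolves once you apply the lemma not to $\beta_2$ but to each of its children inside the window. Your column-by-column analysis via Lemma~\ref{lemma: image of columns under arbitrary arc}, with careful tracking of $r_0$-offsets and ancestor labels, could be pushed through, but it is reproving the content of Lemma~\ref{lemma: closure condition under arcs} in a special case rather than invoking it.
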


\begin{proof}
Since $\match{M}$ is standard, all integers in the interval $[\term(\beta_2)+1, \term(\beta_1)-1]$ are on arcs.   Since $\match{M}$ is noncrossing, all arcs that start in $[\term(\beta_2)+1, \term(\beta_1)-1]$ end in $[\term(\beta_2)+1, \term(\beta_1)-1]$, else they would cross $\beta_2$.  Since $\beta_2$ is the parent of $\beta_1$ we know that all arcs that end in $[\term(\beta_2)+1, \term(\beta_1)-1]$ also start in the same interval, else they would be nested between $\beta_1$ and $\beta_2$. So $\term(\beta_2)-\term(\beta_1)$ is even.

Suppose that $\alpha_1, \alpha_2, \ldots, \alpha_j$ are all arcs in $[\term(\beta_2)+1, \term(\beta_1)-1]$ whose parent is $\beta_2$ listed in increasing order of start point.  Thus we have $\term(\alpha_{\ell-1})=\init(\alpha_{\ell})$ for each $2 \leq \ell \leq j$ since none of the $\alpha_\ell$ can be nested over each other.  For each $1 \leq \ell \leq j$ let $k_\ell$ denote the number of arcs that start and end in the interval $[\init(\alpha_{\ell}), \term(\alpha_{\ell})]$.  By construction $k=k_1+k_2+\cdots +k_j$.  By Lemma~\ref{lemma: closure condition under arcs} we know 
\[X^{k_{\ell}}V_{\term(\alpha_\ell)} \subseteq V_{\init(\alpha_\ell)-1}\]
which by definition of the $\alpha_\ell$ satisfies
\[V_{\init(\alpha_{\ell})-1} = V_{\term(\alpha_{\ell-1})}\]
using the notational convention that $\alpha_0$ is $\beta_1$.  Combining these, we have
\[\begin{array}{rl}
X^kV_{\term(\alpha_j)} &\subseteq X^{k-k_j} V_{\init(\alpha_j)-1} =  \\ X^{k-k_j} V_{\term(\alpha_{j-1})} &\subseteq X^{k-k_j-k_{j-1}}V_{\init(\alpha_{j-1})-1} = \\ X^{k-k_j-k_{j-2}}V_{\term(\alpha_{j-2})} &\subseteq  \cdots  \\ X^{k-k_j-k_{j-2}-\cdots -k_2}V_{\term(\alpha_{1})} = X^{k_1}V_{\term(\alpha_1)} & \subseteq V_{\init(\alpha_1)-1} = V_{\term(\beta_1)}.\end{array}\]
The Springer condition says $XV_{\term(\beta_2)} \subseteq V_{\term(\alpha_j)}$.  Hence as desired
\[X^{k+1} V_{\term(\beta_2)} \subseteq X^k V_{\term(\alpha_j)} \subseteq V_{\term(\beta_1)}.\]
\end{proof}

We now obtain an upper bound on the number of cells that intersect the closure of $\cellX{\match{M}}$.

\begin{lemma} \label{lemma: closure can only swap B and T}
    Suppose that $\match{M}$ and $\match{M}'$ are standard noncrossing matchings on $\{1, 2, \ldots, N\}$ with at most $n$ arc starts and at most $N-n$ arc ends, and denote their $\{B,T\}$-words $m_1m_2\cdots m_{2n}$ and $m_1'm_2'\cdots m_{2n}'$ respectively.  Suppose that $\cellX{\match{M}'}$ contains at least one flag in the closure $\overline{\cellX{\match{M}}}$.  If $i$ is not on an arc in $\match{M}$ then $m_i = m_i'$ and if $\alpha \in \match{M}$ then 
    \[\{m_{\init(\alpha)}', m_{\term(\alpha)}'\} = \{B, T\}.\]
\end{lemma}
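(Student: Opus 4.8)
The plan is to combine the two geometric "inheritance" lemmas (Lemma~\ref{lemma: if cell contains vectors then closure does too} and Lemma~\ref{lemma: limit point satisfies image condition}) with the combinatorial consequences worked out for matrices in the image of $f_{\match{M}}$ (Lemma~\ref{lemma: if no ancestor then span of basis vectors} and Lemma~\ref{lemma: closure condition under arcs}), translating everything through the dictionary of Proposition~\ref{proposition: bijection perm matching BT} and Corollary~\ref{corollary: direct map to perms} between $\{B,T\}$-words, permutations $w_{\match{M}}$, and standard noncrossing matchings. Recall that $m_i = T$ exactly when $w_{\match{M}}(i) \leq n$, i.e.\ when column $i$ has its pivot in the top block, and $m_i = B$ when the pivot is in the bottom block; so statements about $\{B,T\}$-words are statements about where pivots lie, and those are dimension counts of $V_i \cap \operatorname{sp}\langle \vec e_1,\ldots,\vec e_n\rangle$ that are forced by the lemmas just cited.

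First I would handle the integers $i$ not on an arc in $\match{M}$. By Lemma~\ref{lemma: if no ancestor then span of basis vectors}, for every such $i$ (and in fact for every $i$ that ends a parentless arc) the subspace $V_i$ of any flag in the image of $f_{\match{M}}$ equals a fixed coordinate subspace $\operatorname{sp}\langle \vec e_1,\ldots,\vec e_t,\vec e_{n+1},\ldots,\vec e_{n+b}\rangle$; in particular $V_{i-1}$ and $V_i$ are both coordinate subspaces, and comparing them shows the new basis vector in $V_i \setminus V_{i-1}$ is $\vec e_{w_{\match{M}}(i)}$, which lies in the top block iff $m_i = T$. Applying Lemma~\ref{lemma: if cell contains vectors then closure does too} to the specific basis vectors spanning $V_{i-1}$ and $V_i$, every flag $V_\bullet'$ in $\overline{\cellX{\match{M}}}$ also satisfies $V_{i-1}' \supseteq \operatorname{sp}\langle\ldots\rangle$ and $V_i' \supseteq \operatorname{sp}\langle\ldots,\vec e_{w_{\match{M}}(i)}\rangle$; since $\dim V_i' = i$ and $\dim V_{i-1}' = i-1$, the new direction added between them is forced and $V_i'$ contains $\vec e_{w_{\match{M}}(i)}$ but $V_{i-1}'$ does not. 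Reading off pivots: if $V_\bullet' \in \cellX{\match{M}'}$, its $i$-th pivot must be $w_{\match{M}'}(i) = w_{\match{M}}(i)$ as far as being in the top or bottom block is concerned, so $m_i' = m_i$. (A small amount of care is needed to run this as an induction on the $i$ not on an arc or ending parentless arcs, exactly as in Lemma~\ref{lemma: if no ancestor then span of basis vectors}, so that one controls the full coordinate subspace $V_i'$, not just one basis vector at a time.)

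Next I would handle the two endpoints of an arc $\alpha \in \match{M}$. The claim is that exactly one of $m_{\init(\alpha)}', m_{\term(\alpha)}'$ is $B$ and the other is $T$ for any $\match{M}'$ meeting $\overline{\cellX{\match{M}}}$. The containments already proved pin down $V_{\init(\alpha)-1}'$ and $V_{\term(\alpha)}'$ as specific coordinate subspaces (since $\init(\alpha)-1$ and $\term(\alpha)$ fall into the "not on an arc or ending a parentless arc" regime of Lemma~\ref{lemma: if no ancestor then span of basis vectors} after peeling off $\alpha$ — more precisely $\init(\alpha)-1$ does, and $\term(\alpha)$ does once $\alpha$ has no parent; when $\alpha$ has a parent one argues using $\init$ of the topmost parentless ancestor instead). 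The real input here is Lemma~\ref{lemma: closure condition under arcs}: if there are $k$ arcs with an endpoint in $\{\init(\alpha),\ldots,\term(\alpha)\}$ then $X^k V_{\term(\alpha)} \subseteq V_{\init(\alpha)-1}$ for every flag in the image of $f_{\match{M}}$, hence by Lemma~\ref{lemma: limit point satisfies image condition} the same holds for every $V_\bullet'$ in the closure. Now count dimensions in the top block: $V_{\term(\alpha)}'$ has some number $t'$ of its basis-vector "slots" in rows $1,\ldots,n$, determined by $|\{j \le \term(\alpha): m_j' = T\}|$, and $V_{\init(\alpha)-1}'$ has $t''$ of them; the two coordinate subspaces we already know they contain force $t'' = t - \text{(number of $T$'s among $m'$ strictly between)} $, and the interval $\{\init(\alpha),\ldots,\term(\alpha)\}$ contains $k$ starts and $k$ ends in $\match{M}$, hence $2k$ positions, of which — by the part already proved plus the constraint that $\match{M}'$ is a matching with the right total number of $T$'s — the number of $T$'s is some value $s$. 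The operator condition $X^k V_{\term(\alpha)}' \subseteq V_{\init(\alpha)-1}'$ says that after shifting the top block down by $k$, the top part of $V_{\term(\alpha)}'$ lands inside $V_{\init(\alpha)-1}'$; since $V_{\term(\alpha)}'$ already determines $\operatorname{sp}\langle\vec e_1,\dots,\vec e_{t'}\rangle$ and $X^k$ sends $\vec e_{r}$ to $\vec e_{r-k}$ (or $0$), this is only possible if the "excess" top-block dimension $t' - t''$ is at most $k$; combined with the fact that the $2k$ interior positions split into equally many starts and ends in $\match{M}$ and that $\match{M}'$ must still be standard noncrossing with at most $n$ $T$'s, one deduces that among the $2k$ positions of $\match{M}'$ indexed by $\{\init(\alpha),\ldots,\term(\alpha)\}$ exactly $k$ are $T$ and $k$ are $B$, and moreover that $m_{\init(\alpha)}'$ and $m_{\term(\alpha)}'$ cannot be equal: if both were $T$ (resp.\ both $B$) the top-block dimension would jump by the wrong amount relative to what $X^k V_{\term(\alpha)}' \subseteq V_{\init(\alpha)-1}'$ permits, applied inductively over the nested arcs inside $\alpha$ (this is exactly the mechanism that makes Corollary~\ref{corollary: parent of arc to end of child} work, and I would invoke that corollary to handle the recursion on parents and children cleanly).

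The main obstacle I expect is the bookkeeping in the second part: turning the single containment $X^k V_{\term(\alpha)}' \subseteq V_{\init(\alpha)-1}'$ into the precise statement that $\{m_{\init(\alpha)}',m_{\term(\alpha)}'\} = \{B,T\}$ rather than merely "$k$ of the $2k$ interior letters are $T$." The clean way to do this is to induct on the arcs of $\match{M}$ ordered by nesting depth (innermost first), using Corollary~\ref{corollary: parent of arc to end of child} to relate $V_{\term(\beta_2)}'$ and $V_{\term(\beta_1)}'$ for an arc $\beta_1$ with parent $\beta_2$: once one knows the endpoint letters of every arc strictly inside $\alpha$, the dimension of $V_{\term(\alpha)}' \cap \operatorname{sp}\langle\vec e_1,\ldots,\vec e_n\rangle$ is determined up to the contribution of $\{m_{\init(\alpha)}',m_{\term(\alpha)}'\}$ itself, and the operator inequality leaves $\{B,T\}$ as the only option. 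I would write this induction out carefully; everything else is a direct appeal to the four lemmas cited above together with the $\{B,T\}$-word dictionary.
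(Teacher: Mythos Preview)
Your proposal is correct and follows the paper's proof closely: both use Lemmas~\ref{lemma: if cell contains vectors then closure does too} and~\ref{lemma: if no ancestor then span of basis vectors} to handle the integers not on arcs, and Lemmas~\ref{lemma: limit point satisfies image condition} and~\ref{lemma: closure condition under arcs} together with an innermost-first induction on nesting for the arc endpoints. The one place where the paper is cleaner than your outline is the treatment of arcs with a parent. You propose to reduce to the topmost parentless ancestor so that $V_{\init(\alpha)-1}'$ becomes a coordinate subspace; the paper instead proves directly, for \emph{every} arc $\alpha$, that the substring $m_{\init(\alpha)}'\cdots m_{\term(\alpha)}'$ has exactly $k$ $B$s and $k$ $T$s. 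The point is that even when $\alpha$ has a parent, Lemma~\ref{lemma: image of columns under arbitrary arc} shows every flag in $\cellX{\match{M}}$ has $V_{\init(\alpha)-1}$ containing $\vec e_1,\ldots,\vec e_{r_0}$ and contained in $\operatorname{sp}\langle \vec e_1,\ldots,\vec e_n,\vec e_{n+1},\ldots,\vec e_{n+b}\rangle$; both of these containments pass to the closure, and that is already enough to run the contradiction ``if there were $k+1$ $T$s then $X^k \vec e_{r_0+k+1}=\vec e_{r_0+1}\notin V_{\init(\alpha)-1}'$'' (and the symmetric bottom-block version) without ever needing $V_{\init(\alpha)-1}'$ to be a full coordinate subspace. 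Once this intermediate claim holds for every arc, the induction is pure counting: the $k-1$ arcs strictly under $\alpha$ contribute $k-1$ of each letter by induction, leaving exactly one $B$ and one $T$ for $\{m_{\init(\alpha)}',m_{\term(\alpha)}'\}$. In particular, Corollary~\ref{corollary: parent of arc to end of child} is not needed here.
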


\begin{proof}
If $i$ is not on an arc in $\match{M}$ then Lemma~\ref{lemma: if no ancestor then span of basis vectors} says that all flags $V_{\bullet} \in \overline{\cellX{\match{M}}}$ satisfy $V_i = sp \langle \vec{e}_1, \ldots, \vec{e}_t, \vec{e}_{n+1}, \ldots, \vec{e}_{n+b}\rangle$ for some $t, b$.  If $i=1$ then $V_1$ is either the span of $\vec{e}_1$ or $\vec{e}_{n+1}$ for all flags in the closure, and $m_1'=m_1$ is T or B respectively.  Otherwise, if $i$ is not on an arc then $i-1$ is either on no arc or the end of an arc without a parent.  Again Lemma~\ref{lemma: if no ancestor then span of basis vectors} applies and says that $V_{i-1}$ is constant among all flags $V_{\bullet} \in \overline{\cellX{\match{M}}}$ as well.  We can write $V_i = V_{i-1} \oplus sp \langle \vec{e}_\ell \rangle$ and since both $V_i$ and $V_{i-1}$ are constant, we have $m_i'=m_i$ is either B or T, depending on whether the $i^{th}$ column of $f_{\match{M}}$ is $\vec{e}_b$ or $\vec{e}_t$.  

Now suppose $\alpha \in \match{M}$ is an arc. As in Corollary~\ref{corollary: parent of arc to end of child}, 
we have $\term(\alpha)-\init(\alpha)+1 =2k$ where $k$ is the number of arcs in the interval $\init(\alpha)+1, \ldots, \term(\alpha)-1$.  First we show the substring $m_{\init(\alpha)}'m_{\init(\alpha)+1}' \cdots m_{\term(\alpha)-1}'m_{\term(\alpha)}'$ has exactly $k$ instances of $B$ and exactly $k$ instances of $T$.  Lemma~\ref{lemma: closure condition under arcs} says that
\[X^{-k} V_{\init(\alpha)-1} \subseteq V_{\term(\alpha)}\]
for all flags $V_{\bullet}$ in the closure of $\cellX{\match{M}}$. Lemma~\ref{lemma: image of columns under arbitrary arc} states that there are $t, b \leq n$ so that $V_{\init(\alpha)-1}$ is generated by vectors of the form 
\[\vec{e}_1, \ldots, \vec{e}_{t}, \vec{e}_{n+1}+\vec{u}_{1}, \vec{e}_{n+2}+\vec{u}_2, \ldots, \vec{e}_{n+b}+\vec{u}_{b}\]
where each $\vec{u}_{\ell} \in sp \langle \vec{e}_1, \ldots, \vec{e}_n\rangle$.  If the sequence $m_{\init(\alpha)}' m_{\init(\alpha)+1}' \cdots m_{\term(\alpha)}'$ has at least $k+1$ instances of $T$ then $V_{\term(\alpha)}$ contains the vector $\vec{e}_{t+k+1}$.  By construction of $X$ the image
\[X^k(\vec{e}_{t+k+1}) = \vec{e}_{t+1}\]
which cannot be in $V_{\init(\alpha)-1}$.  If instead the sequence $m_{\init(\alpha)}' m_{\init(\alpha)+1}' \cdots m_{\term(\alpha)}'$ has at least $k+1$ instances of $B$ then $V_{\term(\alpha)}$ contains the vector $\vec{e}_{n+b+k+1}+\vec{u}_{k+1}$ where $\vec{u}_{k+1}$ is zero in the last $N-n$ entries. In this case, we have 
\[X^k(\vec{e}_{n+b+k+1}+\vec{u}_{k+1}) = \vec{e}_{n+b+1}+\vec{u}'\]
where $\vec{u}'$ is also in $sp \langle \vec{e}_1, \ldots, \vec{e}_n\rangle$.  From our description of $V_{\init(\alpha)-1}$ we know that all vectors in $V_{\init(\alpha)-1}$ have at least $N-n-b$ terminal zeros.  So this case, too, is impossible. We conclude that the substring $m_{\init(\alpha)}' m_{\init(\alpha)+1}' \cdots m_{\term(\alpha)}'$ has exactly $k$ $B$s and exactly $k$ $T$s.

Finally, we induct on $k$ to complete the proof.  If a standard matching $\match{M}$ contains an arc then it contains at least one arc with $k=0$.  Indeed, if $\alpha$ has $k>0$ then there is at least one arc $\beta$ in $\init(\alpha)+1, \ldots, \term(\alpha)-1$ and by construction 
\[\term(\beta)-\init(\beta)+1 \leq (\term(\alpha) - 1) - (\init(\alpha)+1) + 1.\]
The rightmost expression simplifies to $\term(\alpha) - \init(\alpha) - 1 = 2k-2$ so by continuing this process, we eventually find an arc as desired.  If an arc $\alpha$ has $k=0$ then $\term(\alpha)=\init(\alpha)+1$ so by the previous paragraph, the matching $\match{M}'$ has
\[\{m_{\init(\alpha)}', m_{\init(\alpha)+1}'\} = \{B,T\}\]
as claimed.  As inductive hypothesis, suppose $\alpha$ has $k>0$ and for every arc $\beta$ nested below in $\match{M}$, the matching $\match{M}'$ satisfies $\{m_{\init(\beta)}', m_{\term(\beta)}'\} = \{B, T\}$.  By above, we also know that $m_{\init(\alpha)}' m_{\init(\alpha)+1}' \cdots m_{\term(\alpha)}'$ has exactly $k$ $B$s and exactly $k$ $T$s.  The $k-1$ arcs properly nested below $\alpha$ contribute $k-1$ $B$s and $k-1$ $T$s by the inductive hypothesis.  Thus there is exactly one $B$ and one $T$ remaining for $\{m_{\init(\alpha)}', m_{\term(\alpha)}'\}$.  By induction, the result holds.
\end{proof}

Thus we obtain an upper bound on the number of Springer Schubert cells that could intersect the closure $\overline{\cellX{\match{M}}}$ --- as well as a \emph{maximal set} of Springer Schubert cells $\cellX{\match{M}'}$ that could intersect $\overline{\cellX{\match{M}}}$.  In Section~\ref{section: closure section}, we'll show that in fact all these cells do intersect $\overline{\cellX{\match{M}}}$ and identify explicitly the boundary of ${\cellX{\match{M}}}$.

\begin{corollary} \label{corollary: max number of cells}
Suppose $\match{M}$ is a standard noncrossing matching on $\{1, 2, \ldots, N\}$ with $k$ arcs, where $k \leq n$ and $k \leq N-n$. Then there are at most $2^k$ standard noncrossing matchings $\match{M}'$ for which $\cellX{\match{M}'} \cap \overline{\cellX{\match{M}}}$ is nonempty.
\end{corollary}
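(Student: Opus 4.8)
The plan is to read off the bound directly from Lemma~\ref{lemma: closure can only swap B and T} together with the bijection of Proposition~\ref{proposition: bijection perm matching BT}. Write $m_1 m_2 \cdots m_N$ for the $\{B,T\}$-word of $\match{M}$, and let $\match{M}'$ be any standard noncrossing matching with $\cellX{\match{M}'} \cap \overline{\cellX{\match{M}}} \neq \emptyset$, with word $m_1' m_2' \cdots m_N'$. By Lemma~\ref{lemma: closure can only swap B and T}, the word of $\match{M}'$ agrees with that of $\match{M}$ at every position $i$ not lying on an arc of $\match{M}$, and satisfies $\{m_{\init(\alpha)}', m_{\term(\alpha)}'\} = \{B, T\}$ for each of the $k$ arcs $\alpha \in \match{M}$.

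Next I would count the words $m_1' m_2' \cdots m_N'$ that can arise in this way. The letters at the non-arc positions of $\match{M}$ are completely pinned down, so the only freedom is, for each arc $\alpha$, the choice of whether $m_{\init(\alpha)}' = B$ and $m_{\term(\alpha)}' = T$ or $m_{\init(\alpha)}' = T$ and $m_{\term(\alpha)}' = B$ --- precisely two options per arc. Since distinct arcs have disjoint endpoint sets, these choices are independent, so there are at most $2^k$ possible words. Each such word has exactly $n$ occurrences of $T$: the non-arc positions reproduce those of $m_1 \cdots m_N$, and each arc contributes exactly one $T$ no matter which of the two options is taken. Hence every such word belongs to the set $\mathcal{W}$ of Proposition~\ref{proposition: bijection perm matching BT}.

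Finally I would invoke the bijection of Proposition~\ref{proposition: bijection perm matching BT} between $\mathcal{W}$ and the standard noncrossing matchings on $\{1, \ldots, N\}$ with at most $n$ arc starts and at most $N-n$ arc ends. In particular $\match{M} \mapsto (m_1 \cdots m_N)$ is injective, so distinct eligible matchings $\match{M}'$ have distinct words; therefore the number of eligible $\match{M}'$ is bounded by the number of eligible words, which is at most $2^k$.

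The argument is essentially bookkeeping: all the geometric and combinatorial content has already been extracted in Lemma~\ref{lemma: closure can only swap B and T} and the bijections of Section~\ref{section: sncm and perms, def and first results}. The only point requiring any care is confirming that Lemma~\ref{lemma: closure can only swap B and T} genuinely determines $m'$ on all non-arc positions and leaves exactly the two-fold choice on each arc --- but that is precisely its statement --- so I do not anticipate a real obstacle here.
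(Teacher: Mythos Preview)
Your proposal is correct and follows essentially the same approach as the paper's own proof: both read off the $2^k$ bound directly from Lemma~\ref{lemma: closure can only swap B and T} by observing that the $\{B,T\}$-word of any eligible $\match{M}'$ is pinned down off the arcs and has exactly two possibilities on each arc, with the arc endpoints disjoint so the choices are independent. Your version is slightly more explicit in checking that each resulting word still lies in $\mathcal{W}$ and in invoking the bijection of Proposition~\ref{proposition: bijection perm matching BT} to pass back to matchings, but the argument is the same.
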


\begin{proof}
Lemma~\ref{lemma: closure can only swap B and T} says that if $\cellX{\match{M}'}$ intersects the closure of $\cellX{\match{M}}$ then the word for $m_1'm_2'\cdots m_{2n}'$ is obtained from that of $m_1m_2 \cdots m_{2n}$ by swapping $m_{\init(\alpha)}', m_{\term(\alpha)}'$ for some number of $\alpha \in \match{M}$. Since the start and endpoints of the arcs in $\match{M}$ are disjoint, the transpositions that exchange $\init(\alpha)$ and $\term(\alpha)$ for different $\alpha \in \match{M}$ all commute.  This gives at most $2^k$ possible strings $m_1' m_2\ \cdots m_{2n}'$ that can be obtained in this way, as desired.
\end{proof}

\subsection{Cutting arcs in a matching} \label{section: cutting arcs}
We end by defining a combinatorial operation on matchings that comes from transposing the endpoints in the $\{B, T\}$-word of an arc in a matching.

\begin{definition} \label{definition: cutting arc}
Let $\match{M}$ be a standard noncrossing matching with $\{B, T\}$-word $m_1 m_2 \cdots m_N$ and $\alpha \subseteq \match{M}$. \emph{Cutting the arc $\alpha$ in $\match{M}$} gives the unique matching $cut(\match{M}, \alpha)$ associated to the $\{B, T\}$-word obtained from $m_1 m_2 \cdots m_N$ by switching letters $m_{\init(\alpha)}$ and $m_{\term(\alpha)}$.

If $\mathcal{A} \subseteq \match{M}$ then $cut(\match{M}, \mathcal{A})$ is the unique matching associated to the $\{B, T\}$-word obtained from $m_1 \cdots m_N$ by switching all pairs $(m_{\init(\alpha)}, m_{\term(\alpha)})$ for $\alpha \in \mathcal{A}$.
\end{definition}

\begin{remark} The pairs $\init(\alpha), \term(\alpha)$ are disjoint for distinct $\alpha$ since $\match{M}$ is a matching.  Thus cutting distinct arcs produces the same $\{B, T\}$-string and hence the same matching regardless of the order we cut, in the following sense.  Suppose $\mathcal{A} \subseteq \match{M}$ and $\sigma$ is any ordering $\alpha_1, \alpha_2, \ldots, \alpha_k$ of the arcs in $\mathcal{A}$.  Define $M_{\sigma} = M_{\sigma, k}$ by the recursive process
\[\match{M}_{\sigma, 1}= cut(\match{M}, \alpha_1) \hspace{0.25in} \textup{ and } \hspace{0.25in} \match{M}_{\sigma, i} = cut(\match{M}_{\sigma, i-1}, \alpha_i).\]
If $\sigma, \sigma'$ are two different orderings of the arcs in $\mathcal{A}$ we have $\match{M}_{\sigma} = \match{M}_{\sigma'}$.  So we may define
\[cut(\match{M}, \mathcal{A}) = \match{M}_{\sigma}\]
for any ordering $\sigma$ of the arcs in $\mathcal{A}$. 

As a consequence, if $\mathcal{A} = \mathcal{A}_1 \cup \mathcal{A}_2$ we say we \emph{first cut the arcs in $\mathcal{A}_1$ then the arcs in $\mathcal{A}_2$} to refer to the arc $\match{M}_{\sigma}$ obtained from any ordering of $\mathcal{A}$ in which all arcs in $\mathcal{A}_1$ are cut before any arc in $\mathcal{A}_2$.  This is well-defined even though $cut(\match{M}, \mathcal{A}_1)$ may not have all the arcs of $\mathcal{A}_2$ so $cut(cut(\match{M}, \mathcal{A}_1), \mathcal{A}_2)$ may not be well-defined.
\end{remark}

\begin{example} {\label: example of cutting arcs}
Consider the matchings in Example~\ref{example: perm matrix for three matchings}.  Let $\match{M} = \{(1,8), (2,3), (4,7), (5,6)\}$ be the matching on the far left in this example.  Its $\{B, T\}$-word is $BBTBBTTT$ so the $\{B, T\}$-word of $cut(\match{M},(4,7))$ is $BBTTBTBT$.  In other words, we have
\[cut(\match{M}, (4,7)) = \{(1,4), (2,3), (5,6), (7,8)\}\]
which we can compute via the recursive algorithm in Proposition~\ref{proposition: bijection perm matching BT}.  If we cut arc $(5,6)$ from this matching, we obtain $\{B, T\}$-word $BBTTTBBT$ and conclude
\[cut(\match{M}, \{(4,7), (5,6)\}) = \{(1,4), (2,3), (7,8)\}. \]
However if we instead compute $cut(\match{M},(5,6))$ we get the $\{B, T\}$-string $BBTBTBTT$, which corresponds to
\[cut(\match{M}, (5,6)) = \{(1,8), (2,3), (4,5), (6,7)\}.\]
This does not have an arc $(4,7)$ to cut, though the operation of exchanging the fourth and seventh letters of the $\{B, T\}$-word still makes sense.  Indeed, it produces the word $BBTTTBBT$, which is what we found before.
\end{example}

In the previous example, whenever the arc $\alpha$ exists in $\match{M}$ and has a parent, cutting $\alpha$ (considered as an operation on $\{B, T\}$-strings) produced the same matching as the operation of unnesting $\alpha$ and its parent.  This is true in general, as the following lemma establishes.

\begin{lemma} \label{lemma: cutting an arc is unnesting}
Suppose $\match{M}$ is a standard noncrossing matching.  If $\alpha$ has a parent $\beta$ then $cut(\match{M}, \alpha)$ is the matching in which $\alpha$ and $\beta$ are \emph{unnested}, namely
\[cut(\match{M}, \alpha) = \{(\init(\beta), \init(\alpha)), (\term(\alpha), \term(\beta))\} \cup \{\gamma \in \match{M}: \gamma \neq \alpha, \beta \}.\]
\end{lemma}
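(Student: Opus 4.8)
The plan is to exhibit the right-hand side as an explicit candidate matching $\match{M}'$, check that $\match{M}'$ is standard and noncrossing, compute its $\{B,T\}$-word, and then invoke the uniqueness half of the bijection in Proposition~\ref{proposition: bijection perm matching BT}. Write $a=\init(\beta)$, $b=\init(\alpha)$, $c=\term(\alpha)$, $d=\term(\beta)$, so that $a<b<c<d$ since $\beta$ is the parent of $\alpha$, and set
\[\match{M}' = \{(a,b),(c,d)\} \cup \{\gamma \in \match{M}: \gamma \neq \alpha, \beta\}.\]
First note $\match{M}'$ is a genuine matching on $\{1,\ldots,N\}$: its arcs have the same collection of start- and endpoints as those of $\match{M}$ (the four points $a,b,c,d$ are merely regrouped), so $\match{M}'$ and $\match{M}$ cover exactly the same integers and have the same number of arcs $k$.

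Next I would check that $\match{M}'$ is standard and noncrossing. For the noncrossing condition the only new pairs to test are $\{(a,b),(c,d)\}$ --- disjoint intervals since $b<c$ --- and pairs consisting of a new arc and some $\gamma\neq\alpha,\beta$. Since $\gamma$ does not cross $\beta$ in $\match{M}$, its interval is either disjoint from $[a,d]$, contains $[a,d]$, or is contained in $(a,d)$; in the first two cases $\gamma$ cannot cross an arc lying inside $[a,d]$, so assume $\gamma \subseteq (a,d)$. Then $\gamma$ also does not cross $\alpha$, so its interval is contained in $(b,c)$ (hence disjoint from $(a,b)$ and $(c,d)$), or is contained in $(a,b)$ or in $(c,d)$ (using that no arc other than $\alpha$ has an endpoint at $b$ or $c$, so $\gamma$ cannot abut those points), or contains $(b,c)$; the last case would make $\gamma$ an ancestor of $\alpha$ strictly nested inside $\beta$, contradicting that $\beta$ is the \emph{parent} of $\alpha$. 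In each surviving case $\gamma$ crosses neither $(a,b)$ nor $(c,d)$. For standardness, observe that the set of integers lying strictly between the endpoints of some arc can only shrink when passing from $\match{M}$ to $\match{M}'$: replacing $\beta=(a,d)$ and $\alpha=(b,c)$ by $(a,b)$ and $(c,d)$ replaces the covered interior $\{a+1,\ldots,d-1\}$ by its subset $\{a+1,\ldots,b-1\}\cup\{c+1,\ldots,d-1\}$, while no other arc changes. Since every integer in that set is on an arc of $\match{M}$ and $\match{M}'$ is on the same integers, $\match{M}'$ is standard.

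Finally I would compute the $\{B,T\}$-word of $\match{M}'$. Comparing $\match{M}$ and $\match{M}'$ integer by integer, the only positions whose arc-status changes are $b=\init(\alpha)$, which goes from an arc-start (letter $B$) to an arc-end (letter $T$), and $c=\term(\alpha)$, which goes from an arc-end ($T$) to an arc-start ($B$); every other arc-start stays an arc-start, every other arc-end stays an arc-end, and every non-arc integer stays a non-arc integer. For an integer $i$ not on any arc, Definition~\ref{definition: matching perm} shows its letter depends only on whether $\jnot(i)\le n-k$, and $\jnot$, $n$, and $k$ all agree for $\match{M}$ and $\match{M}'$, so that letter is unchanged as well. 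Hence the word of $\match{M}'$ is obtained from the word of $\match{M}$ by interchanging the letters in positions $\init(\alpha)$ and $\term(\alpha)$, which is exactly the word of $cut(\match{M},\alpha)$ by Definition~\ref{definition: cutting arc}. Since $\match{M}'$ is a standard noncrossing matching with this word, the bijection of Proposition~\ref{proposition: bijection perm matching BT} forces $\match{M}'=cut(\match{M},\alpha)$. I expect the main obstacle to be bookkeeping: making the noncrossing case analysis airtight (in particular genuinely using that $\beta$ is the parent, not merely an ancestor) and recalling precisely why the letter attached to a non-arc integer is insensitive to the swap.
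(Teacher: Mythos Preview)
Your argument is correct. Both your proof and the paper's hinge on the bijection of Proposition~\ref{proposition: bijection perm matching BT}, but you traverse it in the opposite direction: the paper starts from the swapped $\{B,T\}$-word and runs the recursive matching algorithm forward, observing that the letters in $[\init(\beta)+1,\init(\alpha)-1]$ (and symmetrically on the right) pair off among themselves so that $m'_{\init(\beta)}m'_{\init(\alpha)}$ and $m'_{\term(\alpha)}m'_{\term(\beta)}$ eventually become adjacent $BT$ pairs; you instead posit the unnested matching $\match{M}'$, verify directly that it is standard and noncrossing, read off its word, and invoke uniqueness. Your route trades the algorithm-tracking for an explicit noncrossing case analysis (where the parent hypothesis is genuinely used to rule out an intermediate ancestor) and a standardness check; the paper's route gets ``standard noncrossing'' for free from the algorithm but must argue carefully about which letters get erased first. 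Both are short and either would serve.

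One small remark on your final step: rather than appealing to Definition~\ref{definition: matching perm} to see that the letter at a non-arc integer is unchanged, it is slightly cleaner to cite Corollary~\ref{corollary: standard means perfect plus T then B}, which says directly that the non-arc subword is $T^{n-k}B^{N-n-k}$; since $\match{M}$ and $\match{M}'$ have the same non-arc integers in the same positions and the same $k$, the letters at those positions agree.
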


\begin{proof}
If $\alpha$ has parent $\beta$ then all integers in $[\init(\beta), \term(\beta)]$ are on arcs (because $\match{M}$ is standard) and all integers in $[\init(\beta)+1, \ldots, \init(\alpha)-1]$ are matched with themselves (because $\match{M}$ is noncrossing and $\beta$ is the parent of $\alpha$), and similarly for $[\term(\alpha)+1,\init(\beta)-1]$.  

Denote the $\{B, T\}$-words for $\match{M}$ and $cut(\match{M}, \alpha)$ by $m_1 \cdots m_N$ and $m_1' \cdots m_N'$ respectively.  Using the bijection of Proposition~\ref{proposition: bijection perm matching BT}, we have  
\[m_{\init(\alpha)}=m_{\init(\beta)}=B \hspace{0.2in} \textup{ and } \hspace{0.2in} m_{\term(\alpha)}=m_{\term(\beta)}=T.\]  
By definition of cutting arcs, we know $m_{\init(\alpha)}'=T$ and $m_{\term(\alpha)}'=B$ while $m_i'=m_i$ for all other letters in the $\{B, T\}$-word.  We construct the matching $cut(\match{M},\alpha)$ from the word $m_1' \cdots m_N$ using the recursive algorithm of Proposition~\ref{proposition: bijection perm matching BT}.  We just showed that all letters $m_{\init(\beta)+1}', \ldots, m_{\init(\alpha)-1}'$ are paired by arcs amongst themselves and then erased, and similarly for $m_{\term(\alpha)+1}',\ldots, m_{\term(\beta)-1}'$. So $m_{\init(\beta)}'m_{\init(\alpha)}'$ and $m_{\term(\alpha)}'m_{\term(\beta)}'$ are eventually adjacent BT subwords, meaning that arcs $(\init(\beta), \init(\alpha))$ and $(\term(\alpha), \term(\beta))$ are in the matching $cut(\match{M}, \alpha)$.  The $\{B, T\}$-strings of $\match{M}$ and $cut(\match{M}, \alpha)$ agree everywhere else so the algorithm outputs the same matching except for those two arcs. This proves the claim.
\end{proof}

\begin{corollary} \label{corollary: top to bottom gives unnesting}
Suppose $\match{M}$ is a standard noncrossing matching on $\{1, \ldots, N\}$ with fixed subset $\mathcal{A} \subseteq \match{M}$ and an ordering $\sigma$ of the arcs in $\mathcal{A}$ that is contravariant to the partial order of ancestors, in the sense that if $\alpha_i$ is nested over $\alpha_j$ then $i < j$. Let $\match{M}_{\sigma} = cut(\match{M}, \mathcal{A})$ be the matching constructed by cutting the arcs in order $\sigma$.  Let $\match{M}'$ be the matching defined successively for each $j=1, 2, \ldots, |\mathcal{A}|$ by
\begin{enumerate}
    \item unnesting arc $\alpha_j$ if it has a parent, or
    \item erasing arc $\alpha_j$ if all ancestors of $\alpha_j$ are in the set $\{\alpha_1, \ldots, \alpha_{j-1}\}$.
\end{enumerate}
Then $\match{M}'$ and $\match{M}_\sigma$ can only differ at $\init(\alpha_j), \term(\alpha_j)$ for arcs $\alpha_j$ of type (2) above. Moreover, if $\match{M}'$ differs from $\match{M}_{\sigma}$ at $\init(\alpha_j)$ then  $\match{M}_{\sigma}$ has either
\begin{itemize}
    \item an arc $(\term(\alpha_k), \init(\alpha_j))$ for an arc $\alpha_k$ to the left of $\alpha_j$ with $k<j$ whose ancestors are all in the set $\{\alpha_1, \ldots, \alpha_{k-1}\}$ 
    \item or an arc $(k, \init(\alpha_j))$ where $k$ is an integer not on any arc in $\match{M}$
\end{itemize} 
and similarly for $\term(\alpha_j)$.
\end{corollary}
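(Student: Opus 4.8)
The plan is to realize both $\match{M}_{\sigma}=cut(\match{M},\mathcal{A})$ and $\match{M}'$ as the output of a sequence of one‑arc moves indexed by $\sigma$, and to compare the two sequences step by step. By the remark following Definition~\ref{definition: cutting arc}, $\match{M}_{\sigma}=\match{N}_{|\mathcal{A}|}$ where $\match{N}_{0}=\match{M}$ and $\match{N}_{j}=cut(\match{N}_{j-1},\alpha_{j})$; likewise $\match{M}'=\match{P}_{|\mathcal{A}|}$ where $\match{P}_{0}=\match{M}$ and $\match{P}_{j}$ is obtained from $\match{P}_{j-1}$ by unnesting $\alpha_{j}$ and its parent when $\alpha_{j}$ has one there (type (1)), and by erasing $\alpha_{j}$ otherwise (type (2) --- and one checks that ``$\alpha_{j}$ has no parent in $\match{P}_{j-1}$'' is exactly the ancestor condition in the statement). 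By Lemma~\ref{lemma: cutting an arc is unnesting}, at a type (1) step the move $cut(\cdot,\alpha_{j})$ and the move ``unnest $\alpha_{j}$ and its parent'' agree, so the two sequences can only diverge at type (2) steps, and there the discrepancy is precisely the extra re‑pairing that $cut$ performs on the $\{B,T\}$‑word --- flipping the letters at $\init(\alpha_{j})$ and $\term(\alpha_{j})$ and re‑running the matching algorithm of Proposition~\ref{proposition: bijection perm matching BT} --- while ``erase'' does not.

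First I would record the book‑keeping. Because $\sigma$ is contravariant to the ancestor order, at step $j$ every arc of $\mathcal{A}$ nested under $\alpha_{j}$ is still untouched and every arc of $\mathcal{A}$ nested over $\alpha_{j}$ has been processed; with $\match{M}$ standard and noncrossing this forces the $\{B,T\}$‑word on $[\init(\alpha_{j}),\term(\alpha_{j})]$ in $\match{N}_{j-1}$ (and in $\match{P}_{j-1}$) to equal its value in $\match{M}$, hence to have the shape ``( [balanced block] )''. Each $\match{N}_{j}$ is a standard noncrossing matching (being the matching of a $\{B,T\}$‑word) and each $\match{P}_{j}$ is too (unnesting and erasing preserve both properties), which keeps Lemma~\ref{lemma: cutting an arc is unnesting} applicable and keeps ``$\alpha_{j}$ is an arc of $\match{P}_{j-1}$'' true throughout. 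Finally, only endpoints of arcs of $\mathcal{A}$ ever change their $\{B,T\}$‑letter, each flipped exactly once at the step it is processed; in particular integers off every arc of $\match{M}$, and endpoints of arcs of $\match{M}\setminus\mathcal{A}$, keep their original letters.

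The core is an induction on $j$ with hypothesis: $\match{P}_{j}$ and $\match{N}_{j}$ agree as matchings except that $\match{N}_{j}$ carries, for each type (2) index $k\le j$ at which a re‑pairing occurred, an extra arc incident to $\init(\alpha_{k})$ --- namely $(\term(\alpha_{k'}),\init(\alpha_{k}))$ with $\alpha_{k'}$ a type (2) arc to the left and $k'<k$, or $(\ell,\init(\alpha_{k}))$ with $\ell$ off every arc of $\match{M}$ --- and symmetrically one incident to $\term(\alpha_{k})$, while $\init(\alpha_{k})$ and $\term(\alpha_{k})$ are off an arc in $\match{P}_{j}$. The type (1) step is immediate from Lemma~\ref{lemma: cutting an arc is unnesting} and the third book‑keeping fact: $cut(\cdot,\alpha_{j})$ merely flips the letters at $\init(\alpha_{j}),\term(\alpha_{j})$, which are distinct from all ``extra‑arc'' positions, and by the interval fact this flip cannot disturb the nested‑parenthesis matching of those small extra arcs, so the relation persists and no new extra arc appears. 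For the type (2) step, ``erase $\alpha_{j}$'' deletes the arc, whereas $cut(\match{N}_{j-1},\alpha_{j})$ flips the ``('' at $\init(\alpha_{j})$ to ``)'' and the ``)'' at $\term(\alpha_{j})$ to ``(''; since the interior block is balanced, re‑running the matching algorithm leaves everything but these two positions unchanged, the flipped ``)'' at $\init(\alpha_{j})$ pairing with the nearest unmatched ``('' to its left in $\match{N}_{j-1}$ and the flipped ``('' at $\term(\alpha_{j})$ with the nearest unmatched ``)'' to its right (or neither pairs, if no such partner exists). An unmatched ``('' to the left of $\init(\alpha_{j})$ is either an integer off every arc of $\match{M}$ or some $\term(\alpha_{k'})$ whose letter was flipped to ``('' when $\alpha_{k'}$ was cut; the induction hypothesis forces this $\alpha_{k'}$ to have been type (2) with $k'<j$, and Corollary~\ref{corollary: standard means perfect plus T then B} --- the off‑arc letters of $\match{M}$ read $T^{*}B^{*}$ --- pins down that these are all the candidates and which is nearest. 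Symmetrically for $\term(\alpha_{j})$. This is exactly the extra arc in the statement, so the hypothesis is maintained with $k=j$ adjoined.

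Taking $j=|\mathcal{A}|$ yields the corollary, since $\match{M}_{\sigma}=\match{N}_{|\mathcal{A}|}$ and $\match{M}'=\match{P}_{|\mathcal{A}|}$. The real obstacle is the type (2) step: one must show simultaneously that flipping the two endpoint letters and re‑matching changes nothing outside those two positions (using the ``( [balanced] )'' shape of the interval under $\alpha_{j}$ together with the induction hypothesis controlling $\match{N}_{j-1}$ away from it), identify the new partner correctly via the $T^{*}B^{*}$ structure of the unmatched letters, and reconcile the statement's ``type (2)'' condition (all ancestors of $\alpha_{j}$ among $\alpha_{1},\dots,\alpha_{j-1}$) with the operative condition ``$\alpha_{j}$ has no parent in $\match{P}_{j-1}$''. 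A minor point to dispatch along the way is that each $\alpha_{j}$ really is still an arc of $\match{P}_{j-1}$, so that ``unnest or erase $\alpha_{j}$'' is well‑posed.
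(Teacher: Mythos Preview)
Your proposal is correct and follows essentially the same route as the paper: induction on the number of arcs cut, invoking Lemma~\ref{lemma: cutting an arc is unnesting} at type~(1) steps and analyzing the $\{B,T\}$-word re-pairing at type~(2) steps to identify the possible new partner of $\init(\alpha_j)$ (respectively $\term(\alpha_j)$) as either a non-arc integer of $\match{M}$ or the endpoint $\term(\alpha_{k'})$ of an earlier type~(2) arc. The paper's proof is terser---it treats the base case in detail and then says ``the inductive step is the same except\ldots''---while you make the bookkeeping explicit by carrying the parallel sequences $\match{N}_j$ and $\match{P}_j$ and a running invariant; but the substance is the same.
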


\begin{proof}
We prove this by induction on the number of arcs in $\mathcal{A}$.  Lemma~\ref{lemma: cutting an arc is unnesting} shows that if $\alpha_1$ has a parent in $\match{M}$ then $cut(\match{M}, \alpha_1)$ is the unnesting of $\alpha_1$ and its parent.  

If $\alpha_1$ has no parent then cutting $\alpha_1$ exchanges the letters $m_{\init(\alpha_1)}=B$ and $m_{\term(\alpha_1)}=T$.  For $m_{\init(\alpha_1)}'=T$ to be on an arc in the matching $cut(\match{M}, \alpha_1)$ it must be matched with an instance of $B$ to the left, say in position $k$.  Suppose that this $B$ begins an arc in $\match{M}$.  Then the arc must be $(k,k')$ for $k' > \init(\alpha_j)$ else the recursive algorithm constructing matchings from $\{B, T\}$-words would not pair $(k,\init(\alpha_j))$.  But then also $k' > \term(\alpha_j)$ because $\match{M}$ is noncrossing.  So $(k,k')$ is an ancestor of $\alpha_j$ which contradicts our hypothesis.  We conclude in this case that $k$ is not on an arc in $\match{M}$.

The inductive step is the same except that $B$ could be in position $\term(\alpha_k)$ for some arc $\alpha_k \in \mathcal{A}$ with $k<j$.  Since $k<j$ our hypothesis on the ordering $\sigma$ guarantees that either $\alpha_k \succ \alpha_j$ in $\match{M}$ or $\alpha_k$ and $\alpha_j$ are unnested in $\match{M}$.  In the former case, the endpoint $\term(\alpha_k)$ must be to the right of $\alpha_j$ so there cannot be an arc $(\term(\alpha_k),\init(\alpha_j))$.  In the latter case, $\term(\alpha_k)$ must be to the left of $\init(\alpha_j)$ by hypothesis and all ancestors of $\alpha_k$ must be in $\{\alpha_1, \ldots, \alpha_{k-1}\}$ else Lemma~\ref{lemma: cutting an arc is unnesting} applies.  By induction, the claim holds.
\end{proof}

The previous result means we can index the arcs of $cut(\match{M}, \mathcal{A})$ by arcs in $\match{M}$.  If arcs are cut from top to bottom, as described in Corollary~\ref{corollary: top to bottom gives unnesting}, we can think of this as a \emph{relabeling} process all but the newly-created arcs preserve their labels, and newly-created arcs are labeled zero unless they start or end at the parent of the cut arc.

\begin{corollary} \label{corollary: function labeling via unnesting}
Let $\match{M}$ be a standard noncrossing matching on $\{1, 2, \ldots, N\}$ with $\alpha \in \match{M}$.  Define a map $\lambda_{\alpha, \match{M}}: cut(\match{M}, \alpha) \rightarrow (\match{M}-\{\alpha\}) \cup \{0\}$ by the rule that 
\[\lambda_{\alpha, \match{M}}(\beta) = \left\{ \begin{array}{ll} \beta & \textup{ if } \beta \in \match{M} - \mathcal{A}, \\
0 & \textup{ if } \beta \not \in \match{M}, \textup{ and } \\
\anc{\alpha}{} & \textup{ if }\init(\beta)=\init(\anc{\alpha}{}) \textup{ or }\term(\beta) = \term(\anc{\alpha}{})\end{array} \right.\]
Then $\lambda_{\alpha, \match{M}}$ is a well-defined function.  

Now suppose $j>1$ and $\alpha_1, \alpha_2, \ldots, \alpha_j$ is an ordering of arcs in $\match{M}$ that is contravariant with respect to nesting in the sense of Corollary~\ref{corollary: top to bottom gives unnesting}.  Define 
\[\lambda_{\alpha_1, \ldots, \alpha_j, \match{M}}: cut(\match{M}, \{\alpha_1, \ldots, \alpha_j\}) \rightarrow \match{M} \cup \{0\}\]
recursively by the rule that 
\[\lambda_{\alpha_1, \ldots, \alpha_j, \match{M}}(\beta) = \left\{ \begin{array}{ll}\lambda_{\alpha_1, \alpha_2, \ldots, \alpha_{j-1}, \match{M}}(\beta) & \textup{ if } \beta \in cut(\match{M}, \{\alpha_1, \ldots, \alpha_{j-1}\}) - \{\alpha_j\}\\
\lambda_{\alpha_1, \alpha_2, \ldots, \alpha_{j-1}, \match{M}}(\anc{\alpha_j}{}) & \textup{ if } \init(\beta)=\init(\alpha_j) \textup{ or } \term(\beta) = \term(\alpha_j) \textup{ and }\\
0 & \textup{ otherwise.}\end{array} \right.\]
Then $\lambda_{\alpha_1, \ldots, \alpha_j, \match{M}}$ is a function with image in $(\match{M} - \mathcal{A}) \cup \{0\}$ and containing $\match{M}-\mathcal{A}$.
\end{corollary}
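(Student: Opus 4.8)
The plan is to prove both assertions by induction on the number of arcs cut, with the single-arc statement serving as the base case. For the single-arc map $\lambda_{\alpha,\match{M}}$, the key input is Lemma~\ref{lemma: cutting an arc is unnesting}: when $\alpha$ has a parent $\anc{\alpha}{}$, that lemma identifies the arcs of $cut(\match{M},\alpha)$ exactly — namely all arcs of $\match{M}$ other than $\alpha$ and $\anc{\alpha}{}$, together with the two new arcs $(\init(\anc{\alpha}{}),\init(\alpha))$ and $(\term(\alpha),\term(\anc{\alpha}{}))$. On this domain I would check that the three clauses defining $\lambda_{\alpha,\match{M}}$ are exhaustive and non-overlapping: a surviving arc $\beta\neq\alpha$ of $\match{M}$ falls under the first clause and, because arc endpoints in a matching are disjoint, cannot also satisfy the endpoint condition of the third clause; each of the two new arcs meets exactly one instance of the third clause (so is labeled $\anc{\alpha}{}$) and meets neither of the others. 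When $\alpha$ has no parent, $\anc{\alpha}{}=0$ makes the third clause vacuous, and Corollary~\ref{corollary: top to bottom gives unnesting} with $\mathcal{A}=\{\alpha\}$ shows $cut(\match{M},\alpha)$ agrees with $\match{M}-\{\alpha\}$ away from $\init(\alpha),\term(\alpha)$, so surviving arcs fall under the first clause and any newly created arc at those two positions falls under the second clause with label $0$. The image claim at this level is then immediate: each $\beta\in\match{M}-\{\alpha\}$ is either a surviving arc (with $\lambda_{\alpha,\match{M}}(\beta)=\beta$) or equals $\anc{\alpha}{}$, which is hit by the new arc $(\init(\anc{\alpha}{}),\init(\alpha))$.

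For the recursive map I would induct on $j$, setting $\match{M}_{j-1}=cut(\match{M},\{\alpha_1,\dots,\alpha_{j-1}\})$ and assuming $\lambda_{\alpha_1,\dots,\alpha_{j-1},\match{M}}$ is well defined on $\match{M}_{j-1}$ with image contained in $(\match{M}-\{\alpha_1,\dots,\alpha_{j-1}\})\cup\{0\}$ and containing $\match{M}-\{\alpha_1,\dots,\alpha_{j-1}\}$. Contravariance of the ordering is used first to show $\alpha_j$ is still an arc of $\match{M}_{j-1}$: every proper $\match{M}$-ancestor of $\alpha_j$ that lies in $\{\alpha_1,\dots,\alpha_{j-1}\}$ has already been cut, and by Lemma~\ref{lemma: cutting an arc is unnesting} unnesting an ancestor never disturbs $\init(\alpha_j)$ or $\term(\alpha_j)$; since unnesting only destroys nestings, never creates them, $\alpha_j$'s parent in $\match{M}_{j-1}$ is the lowest $\match{M}$-ancestor of $\alpha_j$ outside $\{\alpha_1,\dots,\alpha_{j-1}\}$ when one exists, which is well inside the domain of the inductive hypothesis. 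Cutting $\alpha_j$ then either unnests it from that parent — and by the base-case analysis the two new arcs acquire the label $\lambda_{\alpha_1,\dots,\alpha_{j-1},\match{M}}$ assigns that parent — or, if $\alpha_j$ has no current parent, produces new arcs at $\init(\alpha_j),\term(\alpha_j)$ whose other endpoints are described by Corollary~\ref{corollary: top to bottom gives unnesting}: an integer off all arcs of $\match{M}$ (label $0$, matching the final clause) or $\term(\alpha_k)$ for an earlier arc $\alpha_k$ of type~(2). In each case, disjointness of arc endpoints shows the three clauses partition the arcs of $cut(\match{M},\{\alpha_1,\dots,\alpha_j\})$, so $\lambda_{\alpha_1,\dots,\alpha_j,\match{M}}$ is well defined. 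For the image claim I would note that the label of a surviving arc is never $\alpha_j$ (that would force $\alpha_j$ to have been the parent of some earlier $\alpha_i$ with $i<j$, contradicting contravariance), so the image stays inside $(\match{M}-\{\alpha_1,\dots,\alpha_j\})\cup\{0\}$; conversely every $\delta\in\match{M}-\{\alpha_1,\dots,\alpha_j\}$ either survives all $j$ cuts keeping label $\delta$, or becomes at some step the current parent of a cut arc and hence the $\lambda$-label of a newly created arc, a label that later cuts — which relabel only arcs incident to some $\init(\alpha_\ell),\term(\alpha_\ell)$ — never overwrite.

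The main obstacle I anticipate is the inductive bookkeeping in the recursive step, and in particular two linked points: that cutting higher ancestors genuinely promotes the lowest surviving $\match{M}$-ancestor to become $\alpha_j$'s parent in $\match{M}_{j-1}$ (so the recursion's reference to $\anc{\alpha_j}{}$, read as that current parent, lands in the domain of the previous $\lambda$), and that the three-way case split in the recursive definition matches exactly the three shapes a new arc can take according to Corollary~\ref{corollary: top to bottom gives unnesting}. Both are essentially organizational once the contravariant-ordering hypothesis and Corollary~\ref{corollary: top to bottom gives unnesting} are in hand, but they demand care; the surjectivity statement then follows from the short ``labels are created, never overwritten'' observation above.
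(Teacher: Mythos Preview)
Your approach is correct and is essentially the same as the paper's: both proceed by induction on the number of cut arcs, use Lemma~\ref{lemma: cutting an arc is unnesting} and Corollary~\ref{corollary: top to bottom gives unnesting} for the base case, and use contravariance of the ordering to show $\alpha_j$ cannot be the parent of any earlier $\alpha_i$ (so its label occurs only once and is removed at step $j$). The one organizational difference is that the paper explicitly folds the auxiliary claim ``if the fiber of $\lambda_{\alpha_1,\ldots,\alpha_{j-1},\match{M}}$ over $\beta$ has more than one element then $\beta$ is the parent of some already-cut arc'' into the inductive hypothesis, which streamlines the argument that $\alpha_j$ leaves the image; your direct argument (``a surviving label equal to $\alpha_j$ would force $\alpha_j$ to have been some earlier $\alpha_i$'s parent'') reaches the same conclusion but the paper's formulation makes the surjectivity step cleaner, since it immediately gives that removing $\alpha_j$ from the domain removes exactly the label $\alpha_j$ from the image while the parent's label is transferred to the new arcs.
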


\begin{proof}
Corollary~\ref{corollary: top to bottom gives unnesting} shows that the function $\lambda_{\alpha, \match{M}}$ is well-defined with image in $\match{M}-\{\alpha\} \cup \{0\}$. The map $\lambda_{\alpha, \match{M}}$ surjects onto $\match{M}-\{\alpha\}$ by construction. We include the following in our inductive hypothesis:  if the fiber over $\beta$ has more than one element then it must be the parent of a cut arc.  

Inducting on $j$ we conclude that $\lambda_{\alpha_1, \ldots, \alpha_j, \match{M}}$ is well-defined and that nonzero elements in its image are contained in the the image of $\lambda_{\alpha_1, \ldots, \alpha_{j-1}, \match{M}}$.  No arc nested under $\alpha_j$ could be among the $\alpha_1, \ldots, \alpha_{j-1}$ by hypothesis.  By the additional inductive hypothesis, we know $\alpha_j$ appears only once in the image of $\lambda_{\alpha_1, \ldots, \alpha_{j-1}, \match{M}}$.  Thus $\alpha_j$ does not appear in the image of $\lambda_{\alpha_1,\ldots, \alpha_j, \match{M}}$. Every other arc does, by the inductive hypothesis.  Additionally, the parent of $\alpha_j$ is the only additional arc that can appear repeatedly, so the claim holds by induction.
\end{proof}

We use the function $\lambda_{\alpha_1, \ldots, \alpha_j,\ldots, \match{M}}$ to define a particular subset of the cell $\cellX{cut(\match{M}, \mathcal{A})}$.

\begin{definition} \label{definition: cell for cut matching}
Given a matching $\match{M}$ and $\mathcal{A} \subseteq \match{M}$ define a subset $cut(\cellX{\match{M}}, \mathcal{A})$ of the cell $\cellX{cut(\match{M}, \mathcal{A})}$ as follows. Fix any ordering $\alpha_1, \ldots, \alpha_j$ of the arcs in $\mathcal{A}$ contravariant with respect to nesting as in Corollary~\ref{corollary: top to bottom gives unnesting}. Then $cut(\cellX{\match{M}}, \mathcal{A}) = f_{cut(\match{M}, \mathcal{A})}(\vec{v})$ where
\[v_{\beta} = \left\{ \begin{array}{ll} 
0 & \textup{ if } \lambda_{\alpha_1, \ldots, \alpha_j, \match{M}}(\beta) = 0 \textup{ and } \\
v_{\lambda_{\alpha_1, \ldots, \alpha_j, \match{M}}(\beta)} & \textup{ otherwise.}
\end{array} \right.\]
\end{definition}

\section{The closure of the Springer Schubert cell for $\match{M}$ is the union of cells obtained by cutting the arcs in $\match{M}$} \label{section: closure section}

We now show that for every one of the matchings $\match{M}'$ obtained by swapping $B$s and $T$s at endpoints of arcs in $\match{M}$(as in Lemma~\ref{lemma: closure can only swap B and T}), the Springer Schubert cell for $\match{M}'$ intersects the closure of the Springer Schubert cell for $\match{M}$.  Moreover, we will identify the intersection explicitly by cutting arcs as in Section~\ref{section: cutting arcs}. Our proof will proceed by induction, for which we need some technical language. 

\subsection{Restricting nilpotents to sub- and quotient spaces.} \label{section: subspace and quotient maps}
We establish some notation to describe certain maps induced by $X$.  Let $\mathcal{T} = \{\vec{e}_1, \ldots, \vec{e}_n\}$ and $\mathcal{B} = \{\vec{e}_{n+1}, \ldots, \vec{e}_N\}$.  Suppose that $\mathcal{T} \cup \mathcal{B}$ denotes an (ordered) basis with respect to which $X$ is in Jordan form of Jordan type $(n, N-n)$, namely with two Jordan blocks satisfying
\begin{equation} \label{equation: nilpotent formula}
X\vec{e}_1 = X \vec{e}_{n+1} = \vec{0} \textup{ while } X\vec{e}_j = \vec{e}_{j-1} \textup{ for all other } j.
\end{equation}
We refer to $\mathcal{T}$ as the \emph{top block} and $\mathcal{B}$ as the \emph{bottom block}, respectively.  

Now choose $0 \leq t \leq n$ and $0 \leq b \leq N-n$ and let $\mathcal{T}_L = \{\vec{e}_1, \ldots, \vec{e}_t\}$ and $\mathcal{B}_L = \{\vec{e}_{n+1}, \ldots, \vec{e}_{n+b}\}$.  Define $U_{b,t}$ to be the subspace spanned by $\mathcal{T}_L \cup \mathcal{B}_L$.  

Observe that $U_{b,t}$ is $X$-invariant by construction so $X$ induces a nilpotent operator on $U_{b,t}$ that we also denote $X$.  Moreover, with respect to the basis $\mathcal{T}_L \cup \mathcal{B}_L$ of $U_{b,t}$, the induced matrix $X$ is in Jordan form of Jordan type $(b,t) = (|\mathcal{T}_L|, |\mathcal{B}_L|)$ and Equation~\eqref{equation: nilpotent formula} still holds.

Similarly, the linear operator $X$ induces a well-defined nilpotent linear operator on the quotient $\mathbb{C}^N/U_{b,t}$ and we also denote this linear operator $X$.  The basis vectors 
\[\mathcal{T}_R = \{\vec{e}_{t+1}, \ldots, \vec{e}_n\} = \mathcal{T} - \mathcal{T}_L \hspace{0.2in} \textup{ and } \hspace{0.2in} \mathcal{B}_R = \{\vec{e}_{n+b+1}, \ldots, \vec{e}_N\} = \mathcal{B} - \mathcal{B}_R\]
induce a basis for the quotient space $\mathbb{C}^N/U_{b,t}$ with respect to which $X$ is in Jordan form of Jordan type $(n-t,N-b-n)=(|\mathcal{T}_R|, |\mathcal{B}_R|)$ and for which Equation~\eqref{equation: nilpotent formula} still holds.

\subsection{Restricting matchings to left and right sides} \label{section: restricting matchings to left and right}
Suppose that $\match{M}$ is a standard noncrossing matching on $\{1, 2, \ldots, N\}$ and $i$ satisfies $1 \leq i \leq N$.  Write $\match{M}_L$ for the subset $\{\alpha \in \match{M}: \term(\alpha) \leq i\}$ consisting of arcs that end at or before $i$.  Similarly, write $\match{M}_R$ for the subset $\{\alpha \in \match{M}: \init(\alpha) > i\}$ consisting of arcs that start after $i$. Note that $\match{M}$ is the disjoint union
\[\match{M} = \match{M}_L \cup \match{M}_R \cup \match{M}_i\]
where $\match{M}_i$ consists of arcs in $\match{M}$ with $\init(\alpha) \leq i < \term(\alpha)$.  

In particular, the matchings $\match{M}_L$ and $\match{M}_R$ are complementary if and only if $i$ is either the end of an arc without parent or not on an arc at all, which itself is true if and only if $|\match{M}_L| + |\match{M}_R| = |\match{M}|$.  We denote $|\match{M}| =k$ as well as $|\match{M}_L|=k_L$ and $|\match{M}_R|=k_R$. 

Define the two sets 
\[\mathcal{T}_L =  \{w_{\match{M}}(\vec{e}_{i'}) : 1 \leq i' \leq i\} \cap \mathcal{T} \hspace{0.25in} \textup{ and } \hspace{0.25in} \mathcal{B}_L =  \{w_{\match{M}}(\vec{e}_{i'}) : 1 \leq i' \leq i\} \cap \mathcal{B} \]
and let $\mathcal{T}_R = \mathcal{T} - \mathcal{T}_L$ and $\mathcal{B}_R = \mathcal{B} - \mathcal{B}_L$ to be the complements.

\begin{example} \label{example: restricting blocks isn't same as submatching blocks}
The same matching can represent two different matrices under the map $f_{\match{M}}$ of Definition~\ref{definition: springer fiber and cell} depending on the Jordan type of $X$, or equivalently the partition $\mathcal{T} \cup \mathcal{B}$.  For instance, consider the cell corresponding to the matching $\{(3,4)\}$ with respect to Jordan type $(2,2)$ and $(3,1)$ respectively.  The partition $\mathcal{T} \cup \mathcal{B}$ on the left is $\{\vec{e}_1, \vec{e}_2\} \cup \{\vec{e}_3, \vec{e}_4\}$ while on the right it is $\{\vec{e}_1, \vec{e}_2, \vec{e}_3\} \cup \{\vec{e}_4\}$. 
\[\begin{array}{ccc}
\scalebox{0.55}{
\begin{tikzpicture}
    \draw (0,0) -- (5,0);
    \filldraw[black] (1,0) circle (2pt);
    \filldraw[black] (2,0) circle (2pt);
    \draw[red] (3,0) .. controls (3.1,.7) and (3.9,.7) .. (4,0);
    \draw[red] (3.3,.5) node[anchor=south] {$a$};
    \end{tikzpicture}} & \hspace{0.5in} & \scalebox{0.55}{
\begin{tikzpicture}
    \draw (0,0) -- (5,0);
    \filldraw[black] (1,0) circle (2pt);
    \filldraw[black] (2,0) circle (2pt);
    \draw[red] (3,0) .. controls (3.1,.7) and (3.9,.7) .. (4,0);
    \draw[red] (3.3,.5) node[anchor=south] {$a$};
    \end{tikzpicture}} \\
    \left(\begin{array}{cccc}  
1 & 0 & 0 & 0 \\
0 & 0 & {\color{red} a} & 1 \\
\cdashline{1-4}
0 & 1 & 0 & 0  \\
0 & 0 & 1 & 0 \\
\end{array}\right) & &
\left(\begin{array}{cccc}  
1 & 0 & 0 & 0 \\
0 & 1 & 0 & 0 \\
0 & 0 & {\color{red} a} & 1  \\
\cdashline{1-4}
0 & 0 & 1 & 0 \\
\end{array}\right)
\end{array}\]
Note that this affects the bijection of Proposition~\ref{proposition: bijection perm matching BT}: the $\{B, T\}$-word on the left is $TBBT$ while on the right it is $TTBT$.

The set $\mathcal{T}_L$ is not the same as the set of basis vectors indexed by all $i'\leq i$ that either start arcs or are not on arcs in $\match{M}$.  For instance, suppose $\match{M}= \{(5,6), (7,8)\}$ and $i=6$.  Then $\match{M}_L = \{(5,6)\}$ and we have $\mathcal{T}_L = \{\vec{e}_1, \vec{e}_2, \vec{e}_3\}$ and $\mathcal{B}_L = \{\vec{e}_5, \vec{e}_6, \vec{e}_7\}$.  However, there are five integers starting arcs or not on arcs in $\match{M}$.  Nonetheless, the $\{B, T\}$-word corresponding to $\match{M}_L$ is the first $i$ letters of the $\{B, T\}$-word corresponding to $\match{M}$, namely $TTBBBT$ and $TTBBBBTBT$ respectively.
\end{example}

\subsection{Decomposing Springer Schubert cells when $\match{M}$ does not have the arc $(1,N)$}
In this section and in the next, our main goal is to prove for certain $i$ that the projection $pi_i$ of Proposition~\ref{proposition: projection facts} is in fact a fiber bundle when restricted to the closure of a Springer Schubert cell. This will allow us to set up an induction, which we do in the final section of the paper.  

The key step of this proof is to analyze explicit maps on matrices to confirm that the projection $\pi_i$ is a fiber bundle.

\begin{lemma} \label{lemma: block matrix maps work}
For each $j$ let $M_j$ denote the collection of $j \times j$ square matrices.  Let $i=t+b$ and define the map $\chi: M_{i} \times M_{N-i} \rightarrow M_N$ that creates a block $N \times N$ matrix $g$ out of a pair of submatrices $(g_L, g_R)$ according to the rule:
\[g = \left( \begin{array}{c|c} \textup{ first $t$ rows of }g_L & 0 \\
\cdashline{1-2} 0 & \textup{ first $n-t$ rows of }g_R \\ 
\cdashline{1-2} \textup{ last $b$ rows of }g_L & 0 \\ 
\cdashline{1-2} 0 & \textup{ last $N-n-b$ rows of }g_R\end{array} \right)\]
Then the rank of $\chi(g_L, g_R)$ is the sum of the ranks $rk(g_L) + rk(g_R)$.  Define $U$ to be
\[U = sp \langle \vec{e}_1, \vec{e}_2, \ldots, \vec{e}_t, \vec{e}_{n+1}, \vec{e}_{n+2}, \ldots, \vec{e}_{n+b} \rangle\]
and let $\mathcal{Y}$ be the subvariety of flags given by
\[\mathcal{Y} = \{gB=V_{\bullet} \in GL_N(\mathbb{C})/B : V_i = U\}.\]
Then $\chi$ induces a homeomorphism
\[\chi: GL_i(\mathbb{C})/B \times GL_{N-i}(\mathbb{C})/B \rightarrow \mathcal{Y}\]
in which the first factor $GL_i(\mathbb{C})/B$ is identified with the full flags in $U$ and $GL_{N-i}(\mathbb{C})/B$ is identified with the full flags in $\mathbb{C}^N/U$ with respect to the quotient basis induced from $\{\vec{e}_{t+1}, \ldots, \vec{e}_n, \vec{e}_{n+b+1}, \ldots, \vec{e}_N\}$.  Moreover the map $\chi$ on flags restricts to Schubert cells with:
\[\chi(\mathcal{C}_{w_L}, \mathcal{C}_{w_R}) \subseteq \mathcal{C}_{\chi(w_L, w_R)}.\]
\end{lemma}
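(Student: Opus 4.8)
The plan is to treat $\chi$ as, up to a fixed permutation of rows, the block-diagonal embedding $(g_L,g_R)\mapsto \mathrm{diag}(g_L,g_R)$, and then verify the three assertions (rank, homeomorphism, Schubert cells) in turn. First I would fix notation for the two order-preserving bijections $\rho\colon\{1,\dots,i\}\to\{1,\dots,t\}\cup\{n+1,\dots,n+b\}$ and $\rho'\colon\{1,\dots,N-i\}\to\{t+1,\dots,n\}\cup\{n+b+1,\dots,N\}$ recording into which rows of $\chi(g_L,g_R)$ the entries of $g_L$, respectively of $g_R$, are placed: the $k$th column of $g_L$ becomes column $k$ of the output supported on rows $\rho(1),\dots,\rho(i)$, and the $\ell$th column of $g_R$ becomes column $i+\ell$ supported on rows $\rho'(1),\dots,\rho'(N-i)$. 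For the rank claim I would observe that permuting the rows of $\chi(g_L,g_R)$ into the order $\rho(1),\dots,\rho(i),\rho'(1),\dots,\rho'(N-i)$, leaving columns fixed, produces $\mathrm{diag}(g_L,g_R)$, so $\mathrm{rk}\,\chi(g_L,g_R)=\mathrm{rk}(g_L)+\mathrm{rk}(g_R)$; in particular $\chi$ maps $GL_i(\mathbb{C})\times GL_{N-i}(\mathbb{C})$ into $GL_N(\mathbb{C})$.

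For the homeomorphism I would note that when $g_L$ is invertible the first $i$ columns of $\chi(g_L,g_R)$ are $i$ independent vectors supported on the coordinates of $U$, hence span $U$; so the image lands in $\mathcal{Y}$, the partial spans of these first $i$ columns give exactly the flag in $U$ encoded by $g_L$ in the basis $\vec{e}_1,\dots,\vec{e}_t,\vec{e}_{n+1},\dots,\vec{e}_{n+b}$, and the partial spans of the remaining columns, reduced modulo $U$, give the flag in $\mathbb{C}^N/U$ encoded by $g_R$. Right multiplication of $(g_L,g_R)$ by $(b_L,b_R)\in B_i\times B_{N-i}$ is carried by $\chi$ to right multiplication of $\chi(g_L,g_R)$ by $\mathrm{diag}(b_L,b_R)\in B_N$, since spreading columns is linear, so $\chi$ descends to a well-defined map on flags; the two reconstructions above show this map is a bijection onto $\mathcal{Y}$. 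Finally, $\chi$ is polynomial, hence continuous, between compact Hausdorff spaces --- both flag varieties are projective, and $\mathcal{Y}$ is the preimage of the single point $U$ under the continuous projection to the Grassmannian of Proposition~\ref{proposition: projection facts}, hence closed --- so a continuous bijection is automatically a homeomorphism.

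For the Schubert-cell inclusion I would choose canonical-form representatives $g_L,g_R$ as in Definition~\ref{definition: canonical representative and Schub cells}, with pivot permutations $w_L,w_R$, and check directly that $\chi(g_L,g_R)$ is again in canonical form. Column $k\le i$ has its pivot in row $\rho(w_L(k))$ and column $i+\ell$ in row $\rho'(w_R(\ell))$; because $\rho,\rho'$ are order-preserving and the $g_L$-rows and $g_R$-rows are complementary, the conditions ``all entries below the pivot vanish'' and ``all entries to the right of the pivot vanish'' for $\chi(g_L,g_R)$ reduce to the same conditions for $g_L$ (respectively $g_R$). The resulting pivot permutation is $\chi(w_L,w_R)$ by construction --- and the rank claim together with the one-pivot-per-column count confirms that $\chi(w_L,w_R)$ is genuinely a permutation matrix --- so $\chi(g_L,g_R)B\in\mathcal{C}_{\chi(w_L,w_R)}$, and since $\chi$ descends to flags this gives $\chi(\mathcal{C}_{w_L},\mathcal{C}_{w_R})\subseteq\mathcal{C}_{\chi(w_L,w_R)}$. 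The main obstacle is bookkeeping rather than conceptual: one must verify that the rows lying strictly between successive pivots of $\chi(g_L,g_R)$ --- in particular the rows belonging to the opposite block --- are indeed zero in the relevant column, so that the interleaving $\rho,\rho'$ does not spoil the canonical-form requirements.
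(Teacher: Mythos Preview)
Your proof is correct and follows the same overall architecture as the paper's: both identify $\chi$ (up to a row permutation) with a block-diagonal embedding to get the rank claim, verify compatibility with right $B$-multiplication so that $\chi$ descends to flags, establish bijectivity onto $\mathcal{Y}$, and check that canonical form is preserved to obtain the Schubert-cell inclusion. The one genuine difference is in the homeomorphism step. You argue that $\chi$ is a continuous bijection from a compact space (a product of projective flag varieties) to a Hausdorff space, hence automatically a homeomorphism. The paper instead identifies the inverse of $\chi$ explicitly as the restriction to $\mathcal{Y}$ of the product of projections $\pi_i \times \pi_i^c$ from Proposition~\ref{proposition: projection facts}, observes that this product is closed and continuous, and concludes that a bijective closed continuous map is a homeomorphism. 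Your compactness argument is cleaner and avoids having to name the inverse; the paper's approach has the mild advantage of exhibiting the inverse concretely as a pair of forgetful projections, which aligns with how $\pi_i$ is used elsewhere in the closure arguments.
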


\begin{proof}
Considered as a map on all matrices, the linear transformation $\chi$ is an isomorphism onto its image, which is precisely the collection of $N \times N$ block matrices with zeroes in the indicated blocks.  The rank of $\chi$ is additive because these are block matrices.  Thus the image of $\chi$ when restricted to pairs of invertible matrices $GL_i(\mathbb{C}) \times GL_{N-i}(\mathbb{C})$ is entirely contained within $GL_N(\mathbb{C})$.  Moreover, the map $\chi$ is compatible with multiplication by elements of $B$ in the sense that if $b_L \in GL_i(\mathbb{C})$ and $b_R \in GL_{N-i}(\mathbb{C})$ are both upper-triangular then for all $(g_L, g_R)$ we have
\[\chi(g_Lb_L, g_Rb_R) = \chi(g_L, g_R) \left( \begin{array}{c|c} b_L & 0 \\ \cline{1-2} 0 & b_R \end{array} \right)\]
where the matrix on the right is upper-triangular in each diagonal block. Thus $\chi$ descends to a map on flags.  

Suppose $g_L, g_R$ are both in the canonical form of Definition~\ref{definition: canonical representative and Schub cells} within $GL_i(\mathbb{C}), GL_{N-i}(\mathbb{C})$ respectively.  By definition of canonical form, the image $\chi(g_L, g_R)$ is also in canonical form in $GL_N(\mathbb{C})$.  We conclude that the map $\chi$ is injective on the product $\mathcal{C}_{w_L} \times \mathcal{C}_{w_R}$ of Schubert cells in $GL_i(\mathbb{C})/B \times GL_{N-i}(\mathbb{C})/B$ and moreover the image is a subset of the Schubert cell for $\chi(w_L, w_R)$ in $GL_N(\mathbb{C})/B$ --- possibly but not necessarily a proper subset. In particular, the map $\chi$ is injective on $GL_i(\mathbb{C})/B \times GL_{N-i}(\mathbb{C})/B$.

We now show that the image of $\chi$ is precisely the collection of flags with $V_i=U$. First choose an arbitrary element $(g_L, g_R) \in GL_i(\mathbb{C})/B \times GL_{N-i}(\mathbb{C})/B$.  We will show the image $\chi(g_L, g_R)$ is in $\mathcal{Y}$.  By construction, we know that the first $i=t+b$ columns of $\chi(g_L, g_R)$ lie in the span of $U$.  Also by construction, the subspace $U$ is $i$-dimensional so in fact the span of the first $i$ columns of $\chi(g_L, g_R)$ is precisely $U$ as desired.  Conversely, if $V_{\bullet} =gB \in GL_N(\mathbb{C})/B$ is a flag with $V_i=U$ then the first $i$ columns of every matrix representative of the coset $gB$ are zero in rows $t+1, \ldots, n$ and $n+b+1, \ldots, N$.  Suppose $g$ is any such matrix.  Then since $g$ is invertible, there is an upper-triangular invertible matrix $b$ that eliminates all entries in the last $N-i$ columns in rows whose pivots are in the first $i$ columns.  In other words, the matrix $gb$ is in the image of $\chi$.  This shows that $\chi$ surjects onto $\mathcal{Y}$.

Finally we show that $\chi$ is a homeomorphism onto $\mathcal{Y}$.  Note that the space of full flags in $U$ is homeomorphic to $GL_i(\mathbb{C})/B$.  Both of these are homeomorphic to the image $\pi_i(\mathcal{Y})$ under the projection $\pi_i: GL_N(\mathbb{C})/B \rightarrow GL_N(\mathbb{C})/P_i$ where $P_i$ consists of the $N \times N$ invertible matrices that are zero below the diagonal in the first $i$ columns.  Similarly, the space of full flags in $sp \langle \vec{e}_{t+1}, \ldots, \vec{e}_n, \vec{e}_{n+b+1}, \ldots, \vec{e}_N\rangle$ is homeomorphic to both the space of full flags in $\mathbb{C}^N/U$ and to $GL_{N-i}(\mathbb{C})/B$.  All of these are homeomorphic to the image of $\mathcal{Y}$ under the projection $\pi_i^c: GL_N(\mathbb{C})/B \rightarrow GL_N(\mathbb{C})/P_i^c$ where $P_i^c$ consists of $N \times N$ invertible matrices that are zero to the left of the diagonal in the last $N-i$ rows.  Both $\pi_i$ and $\pi_i^c$ are closed continuous maps on $GL_N(\mathbb{C})/B$ by Proposition~\ref{proposition: projection facts} and hence are also closed and continuous on the closed subvariety $\mathcal{Y}$.  The map $\chi$ is inverse to $\pi_i \times \pi_i^c$ restricted to $\mathcal{Y}$.  A bijective closed continuous map is a homeomorphism, so $\chi$ is a homeomorphism.  This proves the claim.
\end{proof}

\begin{example}
For instance, suppose $\match{M} = \{(3,4)\}$ considered as a matching on $\{1, 2, \ldots, 8\}$ with $|\mathcal{T}|=|\mathcal{B}|=4$.  When $i=4$ we get the following matchings $\match{M}_L$ and $\match{M}_R$ which we show together with the associated Springer Schubert cells $\cellX{\match{M}_L}, \cellX{\match{M}_R},$ and $\cellX{\match{M}}$.  
\[\begin{array}{cccc}
\scalebox{0.55}{
\begin{tikzpicture}
    \draw (0,0) -- (5,0);
    \filldraw[black] (1,0) circle (2pt);
    \filldraw[black] (2,0) circle (2pt);
    \draw[red] (3,0) .. controls (3.1,.7) and (3.9,.7) .. (4,0);
    \draw[red] (3.3,.5) node[anchor=south] {$a$};
    \end{tikzpicture}} & 
     \scalebox{0.55}{
\begin{tikzpicture}
    \draw (0,0) -- (5,0);
    \filldraw[black] (1,0) circle (2pt);
    \filldraw[black] (2,0) circle (2pt);
    \filldraw[black] (3,0) circle (2pt);
    \filldraw[black] (4,0) circle (2pt);
    \end{tikzpicture}} & \hspace{0.5in} &
 \scalebox{0.55}{
\begin{tikzpicture}
    \draw (0,0) -- (9,0);
    \filldraw[black] (1,0) circle (2pt);
    \filldraw[black] (2,0) circle (2pt);
    \filldraw[black] (5,0) circle (2pt);
    \filldraw[black] (6,0) circle (2pt);
    \filldraw[black] (7,0) circle (2pt);
    \filldraw[black] (8,0) circle (2pt);
    \draw[red] (3,0) .. controls (3.1,.7) and (3.9,.7) .. (4,0);
    \draw[red] (3.3,.5) node[anchor=south] {$a$};
    \end{tikzpicture}} \\
    \left(\begin{array}{cccc}  
1 & 0 & 0 & 0 \\
0 & 1 & 0 & 0 \\
0 & 0 & {\color{red} a} & 1 \\
\cdashline{1-4}
0 & 0 & 1 & 0 \\
\end{array}\right)
    &  \left(\begin{array}{cccc}  
1 & 0 & 0 & 0 \\
\cdashline{1-4}
0 & 1 & 0 & 0 \\
0 & 0 & 1 & 0  \\
0 & 0 & 0 & 1 \\
\end{array}\right)
     &  &  \left(\begin{array}{cccccccc}  
1 & 0 & 0 & 0 & 0 & 0 & 0 & 0 \\
0 & 1 & 0 & 0 & 0 & 0 & 0 & 0 \\
0 & 0 & {\color{red} a} & 1 & 0 & 0 & 0 & 0 \\
0 & 0 & 0 & 0 & 1 & 0 & 0 & 0 \\
\cdashline{1-8}
0 & 0 & 1 & 0 & 0 & 0 & 0 & 0 \\
0 & 0 & 0 & 0 & 0 & 1 & 0 & 0 \\
0 & 0 & 0 & 0 & 0 & 0 & 1 & 0 \\
0 & 0 & 0 & 0 & 0 & 0 & 0 & 1 \\
\end{array}\right)
    \end{array}
    \]
    In this case, the map $\chi$ inserts the data from $\cellX{\match{M}_L}$ into the first three and fifth rows of $\cellX{\match{M}}$ and the data from $\cellX{\match{M}_R}$ into the fourth and last three rows.
\end{example}

Next, we analyze the map $\chi$ on permutation matrices and identify the Springer Schubert cell in the image of $\chi$.

\begin{corollary} \label{corollary: restricting to left partition gives same BT sequence}
Fix $i=t+b$ for $t \leq n$ and $b \leq N-n$.  Let $X$ be a nilpotent matrix in Jordan canonical form of Jordan type $(n, N-n)$.  

Suppose $w_{\match{M}}B \in \mathcal{S}_X$ and $\match{M}_L, \match{M}_R$ are constructed from $\match{M}$ as in Section~\ref{section: restricting matchings to left and right}.  Then 
\[\chi(w_{\match{M}_L},w_{\match{M}_R}) = w_{\match{M}}.\]
In particular, if the $\{B, T\}$-word for $\match{M}$ is $m_1 m_2 \cdots m_n$ then the $\{B, T\}$-word for $\match{M}_L$ is $m_1 m_2 \cdots m_i$ and the $\{B, T\}$-word for $\match{M}_R$ is $m_{i+1} m_{i+2} \cdots m_N$. 

Conversely, if $w_{\match{M}_L}B \in \mathcal{S}_X^i, w_{\match{M}_R}B \in \mathcal{S}_X^{N-i}$ are permutations in the Springer fibers of Jordan type $(t,b)$ and $(n-t, N-n-b)$ respectively, with $\{B, T\}$-words denoted $m_1m_2 \cdots m_i$ and $m_{i+1}m_{i+2}\cdots m_N$ respectively, then the $\{B, T\}$-word of $\chi(w_{\match{M}_L},w_{\match{M}_R})$ is $m_1 \cdots m_im_{i+1} \cdots m_N$.  The matching $\match{M}$ contains all arcs in $\match{M}_L \cup \match{M}_R$ plus possibly additional arcs over $i$ created by matching some of the last integers not on arcs in $\match{M}_L$ with some of the first integers not on arcs in $\match{M}_R$.
\end{corollary}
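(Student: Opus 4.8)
The plan is to reduce both directions to a single computation with $\{B,T\}$-words, using the bijections of Proposition~\ref{proposition: bijection perm matching BT} and Corollary~\ref{corollary: direct map to perms}. Write $m_1 m_2 \cdots m_N$ for the $\{B,T\}$-word of $\match{M}$. By Lemma~\ref{lemma: pivots of matching perm increase in rows} the permutation $w_{\match{M}}$ has increasing pivots in the top and in the bottom block, and $\chi(w_{\match{M}_L},w_{\match{M}_R})$ does too: the top pivots of $w_{\match{M}_L}$ occupy rows $1,\dots,t$ of the big matrix and those of $w_{\match{M}_R}$ occupy rows $t+1,\dots,n$, strictly to their right, and similarly in the bottom block. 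Since Corollary~\ref{corollary: direct map to perms} makes the word map a bijection on this set of permutations, it suffices to show $\chi(w_{\match{M}_L},w_{\match{M}_R})$ and $w_{\match{M}}$ have the same $\{B,T\}$-word; this will also give the ``in particular'' clause and set up the converse.

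First I would record that $\chi$ concatenates words. Tracing the block decomposition in Lemma~\ref{lemma: block matrix maps work}: if $j\le i$, the pivot of column $j$ of $\chi(w_{\match{M}_L},w_{\match{M}_R})$ lies in $\mathcal{T}$ precisely when the pivot of column $j$ of $w_{\match{M}_L}$ lies in rows $1,\dots,t$ (its top block), and if $j>i$ it lies in $\mathcal{T}$ precisely when the pivot of column $j-i$ of $w_{\match{M}_R}$ lies in rows $1,\dots,n-t$. Hence the word of $\chi(w_{\match{M}_L},w_{\match{M}_R})$ for Jordan type $(n,N-n)$ is the concatenation of the word of $w_{\match{M}_L}$ for type $(t,b)$ and the word of $w_{\match{M}_R}$ for type $(n-t,N-n-b)$. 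So the task reduces to the identity: the word of $\match{M}_L$ is $m_1\cdots m_i$, and the word of $\match{M}_R$ is $m_{i+1}\cdots m_N$.

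The bulk of the proof, and the main obstacle, is this last identity, which I would check position by position. A position $j\le i$ on an arc of $\match{M}_L\subseteq\match{M}$ gets $B$ or $T$ according as $j$ starts or ends that arc, exactly as in $\match{M}$. A position $j\le i$ \emph{not} on an arc of $\match{M}_L$ is either off all arcs of $\match{M}$, or is the start of an arc of $\match{M}$ straddling $i$ (standardness forces such a straddling arc to have its endpoint past $i$, so only its start can lie in $[1,i]$, and every non-endpoint under it is itself on an arc). Corollary~\ref{corollary: standard means perfect plus T then B} applied to $\match{M}_L$ says these positions carry, left to right, the word $T^{t-k_L}B^{b-k_L}$ with $k_L=|\match{M}_L|$; and $t-k_L$ is exactly the number of them receiving $T$ in $m_1\cdots m_N$, because the arc-ends of $\match{M}$ lying in $[1,i]$ are precisely the arc-ends of $\match{M}_L$, so $t = k_L + \#\{\,j\le i:\ j\text{ off all arcs of }\match{M},\ m_j=T\,\}$. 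The one thing left to verify is that in $m_1\cdots m_N$ these $T$'s form a \emph{prefix} of the not-on-$\match{M}_L$-arc positions, i.e. every straddler-start $s\le i$ comes after every position $\le i$ that is off all arcs of $\match{M}$; this is where standardness is essential, for if $u>s=\init(\alpha)$ with $u\le i<\term(\alpha)$ then $\init(\alpha)<u<\term(\alpha)$, so $u$ lies under $\alpha$ and must be on an arc — a contradiction. Comparing this with the global pattern $T^{n-k}B^{\cdots}$ from Corollary~\ref{corollary: standard means perfect plus T then B} for $\match{M}$ then shows the two words agree on $[1,i]$; the argument for $\match{M}_R$ versus $m_{i+1}\cdots m_N$ is the mirror image, with straddler-\emph{ends} in place of straddler-starts.

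For the converse and the description of $\match{M}$: the concatenation computation shows $\chi(w_{\match{M}_L},w_{\match{M}_R})$ has word $m_1\cdots m_i m_{i+1}\cdots m_N$ with exactly $n$ letters $T$, so by Corollary~\ref{corollary: direct map to perms} it equals $w_{\match{M}}$ for a unique standard noncrossing matching $\match{M}$. To see $\match{M}\supseteq\match{M}_L\cup\match{M}_R$ with only additional arcs over $i$, I would run the recursive algorithm of Proposition~\ref{proposition: bijection perm matching BT} on $m_1\cdots m_N$, first exhausting all $BT$-pairings internal to $[1,i]$ and to $[i+1,N]$; this is legitimate since the output is independent of the order of pairings and erasures in one block never create cross-block adjacencies, and these internal pairings produce exactly $\match{M}_L$ and $\match{M}_R$. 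By Corollary~\ref{corollary: standard means perfect plus T then B} the leftover string is $T^{t-k_L}B^{b-k_L}T^{(n-t)-k_R}B^{(N-n-b)-k_R}$, whose remaining $BT$-pairings match last off-arc positions of $\match{M}_L$ to first off-arc positions of $\match{M}_R$ and hence create only arcs straddling $i$, as claimed.
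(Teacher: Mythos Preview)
Your argument is correct and follows essentially the same route as the paper: both proofs first read off from the block structure of $\chi$ that $\{B,T\}$-words concatenate, then verify that the word of $\match{M}_L$ is the prefix $m_1\cdots m_i$ by handling arc positions directly and using Corollary~\ref{corollary: standard means perfect plus T then B} plus standardness to pin down the pattern on the remaining positions, with the mirror argument for $\match{M}_R$. Your treatment of the converse, running the matching algorithm of Proposition~\ref{proposition: bijection perm matching BT} block-internally first and reading off the leftover $T^{t-k_L}B^{b-k_L}T^{(n-t)-k_R}B^{(N-n-b)-k_R}$, is slightly more explicit than the paper's but uses the same idea; one small quibble is that erasures in one block \emph{can} create a cross-block adjacency, but what you actually need (and have) is order-independence of the algorithm, so this does not affect the argument.
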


\begin{proof}
First note that Lemma~\ref{lemma: block matrix maps work} showed that the permutation $\chi(w_{\match{M}_L},w_{\match{M}_R})$ has pivots in the first $n$ rows in exactly those columns where $w_{\match{M}_L}$ has pivot in the first $t$ rows or $w_{\match{M}_R}$ has pivot in the first $n-t$ rows. All other pivots are in the last $N-n$ rows.  Thus the $\{B, T\}$-word of the image is the concatenation of the $\{B, T\}$-words of $\match{M}_L$ and $\match{M}_R$.

Now we confirm the converse is true.  By construction, all arc-ends in $\match{M}_L$ are also arc-ends in the first $i$ integers of $\match{M}$ so the corresponding columns of $w_{\match{M}_L}$ and $w_{\match{M}}$ all have pivots in the first $t$ rows (respectively arc-starts and bottom block).  If an integer $j \leq i$ is not on an arc in $\match{M}$ then also $j$ is not on an arc in $\match{M}_L$.  The substring that corresponds to the integers not on arcs in the $\{B, T\}$-word for a matching is $T^{\ell_1}B^{\ell_2}$ by Corollary~\ref{corollary: standard means perfect plus T then B}.  The only possibility within the first $i$ integers in $\match{M}$ are  arcs that start in the first $i$ integers and end in the last $N-i$ integers. In $\match{M}_L$ none of these integers are on arcs.  Since $\match{M}$ is standard, all integers not on arcs in $\match{M}$ occur before the first arc over $i$ in $\match{M}$.  Thus the substring of the $\{B, T\}$-word for $\match{M}$ corresponding to integers not on arcs in $\match{M}_L$ is $T^{\ell_1}B^{\ell_2+\ell_3}$ where $\ell_1+\ell_2$ is the number of integers not on arcs in the first $i$ integers of $\match{M}$ and $\ell_3$ is the number of arcs starting before $i+1$ and ending after $i$. Also $\ell_1 = t - |\match{M}_L|$ is the same in both $\match{M}$ and $\match{M}_L$ so both have $T$s in the same positions in the first $i$ letters of their $\{B, T\}$-words.  Since all the rest of the letters must be $B$, we conclude that the $\{B, T\}$-word for $\match{M}_L$ is the same as the first $i$ letters of that for $\match{M}$.  A similar argument holds when comparing arc-starts and arc-ends in $\match{M}_R$ and the last $N-i$ integers in $\match{M}$. 

In summary, we showed that the pivots in the first $n$ rows of $\chi(w_{\match{M}_L}, w_{\match{M}_R})$ and $w_{\match{M}}$ are in the same columns, and similarly for the last $N-n$ rows. Lemma~\ref{lemma: pivots of matching perm increase in rows} shows that the pivots increase in each block for the permutation flags in Springer fibers, so as desired we have
\[\chi(w_{\match{M}_L}, w_{\match{M}_R}) = w_{\match{M}}.\]
\end{proof}

Finally we identify the image of $\chi$ within the Springer Schubert cells of $\mathcal{S}_X$.

\begin{corollary} \label{corollary: restricting to left partition gives same f}
Fix $i=t+b$ and let $U$ be as in Lemma~\ref{lemma: block matrix maps work}. Let $X$ be a nilpotent matrix in Jordan canonical form of Jordan type $(n, N-n)$ and also denote by $X$ the induced maps on $U$ and on the quotient $\mathbb{C}^N/U$ as in Section~\ref{section: subspace and quotient maps}, with $\mathcal{S}_X^i \subseteq GL_i(\mathbb{C})/B$ and $\mathcal{S}_X^{N-i} \subseteq GL_{N-i}(\mathbb{C})/B$ denoting the respective Springer fibers.  Then $\chi$ induces a homeomorphism
\[\chi: \mathcal{S}_X^i \times \mathcal{S}_X^{N-i} \rightarrow \mathcal{S}_X \cap \mathcal{Y}.\]
Suppose $\match{M}_L, \match{M}_R$ are standard noncrossing matchings on $i, N-i$ that index $w_{\match{M}_L}B \in \mathcal{S}_X^i$ and $w_{\match{M}_R}B \in \mathcal{S}_X^{N_i}$ respectively.  Then the following map commutes:
\[\begin{tikzcd}
    \mathbb{C}^{k_L} \times \mathbb{C}^{k_R} \arrow[r, hook] 
    \arrow[d, "f_{\match{M}_{L}} \times f_{\match{M}_{R}}" ']  & \mathbb{C}^k  \arrow[d, "f_\match{M}"]  \\ 
    \cellX{\match{M}_L} \times \cellX{\match{M}_R} \arrow[r, hook, "\chi"] & \cellX{\match{M}}
\end{tikzcd}\]
where $k_L$ is the number of arcs in $\match{M}_L$ (respectively $k_R, \match{M}_R$ and $k, \match{M}$).

Moreover the image $\chi(\cellX{\match{M}_L}, \cellX{\match{M}_R})$ is the subspace of $\cellX{\match{M}}$ defined by setting $v_{\init(\alpha)} = 0$ whenever $\alpha$ is an arc in $\match{M}$ that starts at or before $i$ and ends to the right of $i$.
\end{corollary}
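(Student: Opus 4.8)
The plan is to deduce the whole statement from Lemma~\ref{lemma: block matrix maps work} and Corollary~\ref{corollary: restricting to left partition gives same BT sequence}, reducing commutativity of the square to a single matrix identity checked column by column. \emph{First, the homeomorphism of Springer fibers.} Lemma~\ref{lemma: block matrix maps work} already gives a homeomorphism $\chi\colon GL_i(\mathbb{C})/B \times GL_{N-i}(\mathbb{C})/B \to \mathcal{Y}$ under which the factors are the full flags in $U$ and in $\mathbb{C}^N/U$. A homeomorphism restricts to a homeomorphism onto the image of any subspace, so it suffices to check that $\chi(V_\bullet^L, V_\bullet^R) \in \mathcal{S}_X$ exactly when $V_\bullet^L \in \mathcal{S}_X^i$ and $V_\bullet^R \in \mathcal{S}_X^{N-i}$. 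Since $U$ is $X$-invariant and every flag in $\mathcal{Y}$ has $U = V_i \subseteq V_j$ for $j \ge i$, the condition $XV_j \subseteq V_j$ decouples at $j = i$: for $j \le i$ it is the Springer condition for the flag $V_1 \subseteq \cdots \subseteq V_i = U$ under $X|_U$, and for $j > i$ quotienting by the $X$-invariant subspace $U$ shows it is equivalent to the Springer condition for $V_{i+1}/U \subseteq \cdots \subseteq V_N/U$ under the induced nilpotent. By Section~\ref{section: subspace and quotient maps} these induced nilpotents are in Jordan form of types $(t,b)$ and $(n-t,N-n-b)$, so this is exactly membership in $\mathcal{S}_X^i$ and $\mathcal{S}_X^{N-i}$, giving $\chi(\mathcal{S}_X^i \times \mathcal{S}_X^{N-i}) = \mathcal{S}_X \cap \mathcal{Y}$.

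\emph{The commuting square.} Fix $(\vec v_L, \vec v_R)$ and let $\vec v \in \mathbb{C}^k$ be its image under the inclusion, so $v_\beta = (v_L)_\beta$ for $\beta \in \match{M}_L$, $v_\beta = (v_R)_\beta$ for $\beta \in \match{M}_R$, and $v_\beta = 0$ for $\beta \in \match{M}_i$, where $\match{M}_i$ is the set of arcs with $\init(\beta) \le i < \term(\beta)$. Writing $f_{\match{M}}(\vec v) = w_{\match{M}} + \Im(\vec v)$ and using both that $\chi$ is linear on matrices and that $\chi(w_{\match{M}_L}, w_{\match{M}_R}) = w_{\match{M}}$ by Corollary~\ref{corollary: restricting to left partition gives same BT sequence}, the square commutes if and only if $\Im(\vec v) = \chi(\Im(\vec v_L), \Im(\vec v_R))$. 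I would verify this columnwise. A column not indexed by an arc start of $\match{M}$ vanishes on both sides. For $\beta \in \match{M}_i$, column $\init(\beta) \le i$ vanishes in $\Im(\vec v)$ because every ancestor of $\beta$ in $\match{M}$ again straddles $i$, so all coefficients $v_{\anc{\beta}{j-1}}$ are $0$; it vanishes on the right because, by Corollary~\ref{corollary: restricting to left partition gives same BT sequence}, the $\{B,T\}$-word of $\match{M}_L$ is the length-$i$ prefix of that of $\match{M}$, and in $\match{M}_L$ position $\init(\beta)$ lies on no arc (its $B$ has no partner among the first $i$ letters, by standardness and the recursive matching algorithm of Proposition~\ref{proposition: bijection perm matching BT}), so column $\init(\beta)$ of $\Im(\vec v_L)$ is zero. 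Finally, for $\alpha \in \match{M}_L$ the chain of ancestors of $\alpha$ in $\match{M}$ consists of its ancestors inside $\match{M}_L$ followed, from the first one that straddles $i$ onward, by arcs of $\match{M}_i$ with coefficient $0$ in $\vec v$; since nesting of two arcs of $\match{M}_L$ is the same read in $\match{M}_L$ or in $\match{M}$, the nonzero coefficient sequences of $\iota_{\alpha,\match{M}}(\vec v)$ and of $\iota_{\alpha,\match{M}_L}(\vec v_L)$ coincide. It then remains to check that the two-block row reindexing built into $\chi$ carries the support rows of $\iota_{\alpha,\match{M}_L}(\vec v_L)$ onto the support rows of $\iota_{\alpha,\match{M}}(\vec v)$; concretely, that the offset $r_0$ of Definition~\ref{definition: ancestor labels and matrix} for $\match{M}$ equals the one for $\match{M}_L$ after reindexing (and similarly with $\match{M}_R$ and the columns $\chi$ reads from $g_R$). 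This comes down to comparing $\jend(\init(\alpha)) + \max\{\jnot(\init(\alpha)), n-k\}$ computed in $\match{M}$ with the analogous quantity in $\match{M}_L$, using the prefix/suffix structure of the words from Corollary~\ref{corollary: restricting to left partition gives same BT sequence} and the fact from Corollary~\ref{corollary: standard means perfect plus T then B} that the not-on-arc letters of a standard matching form an initial block $T^aB^c$.

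\emph{Conclusion, and the main obstacle.} Granting the matrix identity, Theorem~\ref{theorem: fM is a bijection onto Springer Schubert cell} identifies the images of $f_{\match{M}_L}$ and $f_{\match{M}_R}$ with $\cellX{\match{M}_L}$ and $\cellX{\match{M}_R}$, so $\chi(\cellX{\match{M}_L}, \cellX{\match{M}_R}) = f_{\match{M}}(\{\vec v : v_\beta = 0 \textup{ for all } \beta \in \match{M}_i\})$, which is precisely the subspace of $\cellX{\match{M}}$ cut out by $v_{\init(\alpha)} = 0$ for every arc $\alpha$ of $\match{M}$ that starts at or before $i$ and ends after $i$; by construction such flags have $V_i = U$, so the image lies in $\mathcal{Y}$. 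I expect the hard part to be the index bookkeeping in the commuting-square step: lining up the offsets $r_0$, the pivot positions $w_{\match{M}_L}(\init(\alpha))$ versus $w_{\match{M}}(\init(\alpha))$, and the $(n-t)$-shift $\chi$ applies to the lower block. Everything else is a direct appeal to Lemma~\ref{lemma: block matrix maps work} and Corollary~\ref{corollary: restricting to left partition gives same BT sequence} or a formal manipulation.
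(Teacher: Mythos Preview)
Your proposal is correct and follows essentially the same route as the paper's proof: both reduce the commuting square to a column-by-column comparison of ancestor sequences, using Corollary~\ref{corollary: restricting to left partition gives same BT sequence} to match pivots and then observing that the extra ancestors of $\alpha\in\match{M}_L$ inside $\match{M}$ are exactly the straddling arcs in $\match{M}_i$, whose coefficients vanish. Your treatment of the first claim is in fact more complete than the paper's, which asserts the restriction is a homeomorphism ``as long as $\chi$ is well-defined and surjective'' without spelling out the $X$-invariance argument you give; and you are right that the residual work is the offset bookkeeping for $r_0$, which the paper also leaves largely implicit (it writes both column formulas with the same $r_0$ and then notes only that the nonzero entries stay within the first $t$ rows).
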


\begin{proof}
Both $\mathcal{S}_X$ and $\mathcal{Y}$ are closed in $GL_N(\mathbb{C})/B$ so the homeomorphism of Lemma~\ref{lemma: block matrix maps work} restricts to a homeomorphism onto $\mathcal{S}_X \cap \mathcal{Y}$ as long as $\chi$ is well-defined and surjective.  

Corollary~\ref{corollary: restricting to left partition gives same BT sequence} showed that the map $\chi$ surjects onto the permutation flags $w_{\match{M}}B \in \mathcal{S}_X \cap \mathcal{Y}$.   

Now we identify the image $\chi(\cellX{\match{M}_L} \times \cellX{\match{M}_R})$ inside $\cellX{\match{M}}$.  In fact, since we showed $\chi$ sends pivots to pivots within these cells, we only need to compare the ordered sequence of ancestors of each arc in each matching.  We already observed that $\match{M}$ is the disjoint union
\[\match{M} = \match{M}_L \cup \match{M}_R \cup \{\alpha \in \match{M}: \init(\alpha) \leq i \textup{ and } \term(\alpha) > i\}.\]
If $\init(\alpha) \leq i$ and $\term(\alpha)>i$ and $\beta$ is an ancestor of $\alpha$ then by definition of ancestors, we also know $\init(\beta) \leq i$ and $\term(\beta)>i$.  Suppose the ordered sequence of ancestors of an arc $\alpha \in \match{M}$ is
\[\init(\alpha), \anc{\init(\alpha)}{}, \anc{\init(\alpha)}{2}, \ldots, \anc{\init(\alpha)}{k-1}\]
and suppose $\alpha \in \match{M}_L$ as well, where it has $j$ ancestors.  Then $j \leq k-1$ and the last $k-1-j$ ancestors of $\alpha$ in $\match{M}$ all end to the right of $i$. In other words, corresponding nonzero columns of $f_{\match{M}_L}(\vec{v}) - {w}_{\match{M}_L}$ and $f_{\match{M}} - w_{\match{M}}$ are
\[ \sum_{\ell = 1}^{j} v_{\anc{\alpha}{\ell-1}} \vec{e}_{r_0+j} \hspace{0.2in} \textup{ and } \hspace{0.2in} \left( \sum_{\ell = 1}^{j} v_{\anc{\alpha}{\ell-1}} \vec{e}_{r_0+j}\right) + \sum_{\ell > j} v_{\anc{\alpha}{\ell-1}} \vec{e}_{r_0+j}.\]
In particular if $\beta$ is one of the last $k-1-j$ ancestors of $\alpha$ in $\match{M}$ then the variable $v_{\beta}=0$ everywhere in the image $\chi(\cellX{\match{M}_L}, \cellX{\match{M}_R})$. A similar argument holds for $\match{M}_R$.  

These are the only relations imposed on the image because if $\alpha \in \match{M}_L$ has $j$ ancestors in $\match{M}_L$ then also there are at least $j$ arc-ends in the interval $[\init(\alpha)+1, \ldots, i]$.  Thus all entries that are not identically zero in column $\init(\alpha)$ of the matrix $f_{\match{M}_L}-w_{\match{M}_L}$ are within the first $t$ rows, and similarly for $\match{M}_R$.  This completes the proof.
\end{proof}

\subsection{Decomposing Springer Schubert cells when $\match{M}$ contains $(1,N)$}
We can extend this argument slightly when $V_i$ is not (necessarily) the span of basis vectors, more specifically for the case $i=1$.  Our argument follows a similar outline: identify an explicit matrix map and use it to confirm that the projection $\pi_1$ restricts to a fiber bundle on the closure of the Springer Schubert cells.  

\begin{lemma} \label{lemma: restricting fiber bundle to project to line}
Suppose that $\match{M}$ is a standard noncrossing matching on $\{1, 2, \ldots, N\}$ with arc $(1, N)$ and $X$ is nilpotent of Jordan type $(n,N-n)$.  Then 
\begin{itemize}
    \item the matching $\match{M}$ is a perfect noncrossing matching,
    \item the projection $\pi_1: \overline{\cellX{\match{M}}} \rightarrow V_1$ surjects onto the collection of lines in the kernel of $X$,
    \item the preimage over every $V_1 \subseteq \ker X$ is contained in the collection of full flags in the quotient space
    \[X^{-(N-2)/2}(V_1)/V_1\]
    that are in the Springer fiber induced by $X$ as in Section~\ref{section: subspace and quotient maps}.
\end{itemize} 
\end{lemma}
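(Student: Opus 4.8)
The three bullets are handled in turn; the first two are short, and the content is in the third, where the key is to chain Lemma~\ref{lemma: closure condition under arcs} over the arcs immediately nested below $(1,N)$ to peel off exactly one subspace, then transport the resulting containment to the closure and pin down $V_{N-1}'$ by a dimension count.

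\textbf{First bullet and a reduction.} If $(1,N)\in\match{M}$ then every integer in $\{2,\dots,N-1\}$ lies strictly between the endpoints of $(1,N)$, so by standardness it lies on an arc, while $1$ and $N$ lie on $(1,N)$; hence every integer of $\{1,\dots,N\}$ is covered, so $\match{M}$ has $N/2$ arcs and is perfect (and $N$ is even). By Theorem~\ref{theorem: fM is a bijection onto Springer Schubert cell} the cell is nonempty only when $N/2\le n$ and $N/2\le N-n$, i.e.\ $n=N/2$; otherwise $\overline{\cellX{\match{M}}}=\varnothing$ and there is nothing to prove. So we assume $n=N/2$, and in particular $X^{n}=X^{N/2}=0$ while $\operatorname{im}X^{n-1}=\ker X$ is $2$-dimensional.

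\textbf{Second bullet.} Any $V_\bullet\in\mathcal{S}_X$ satisfies $XV_1\subseteq V_1$ with $\dim V_1=1$, and since $X$ is nilpotent this forces $V_1\subseteq\ker X$; as $\mathcal{S}_X$ is closed, $\overline{\cellX{\match{M}}}\subseteq\mathcal{S}_X$, so $\pi_1(\overline{\cellX{\match{M}}})$ consists of lines in $\ker X$. Conversely, using Definitions~\ref{definition: matching perm} and~\ref{definition: ancestor labels and matrix}, column $\init((1,N))=1$ of a matrix in the image of $f_{\match{M}}$ is $\vec{e}_{n+1}+v_{(1,N)}\vec{e}_1$, so $\pi_1(\cellX{\match{M}})=\{[\vec{e}_{n+1}+c\vec{e}_1]:c\in\mathbb{C}\}$, which is dense in $\mathbb{P}(\ker X)$. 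Since $\pi_1$ is the closed continuous projection onto the Grassmannian of lines of Proposition~\ref{proposition: projection facts} and $\overline{\cellX{\match{M}}}$ is compact, $\pi_1(\overline{\cellX{\match{M}}})$ is closed and contains this dense set, hence equals $\mathbb{P}(\ker X)$.

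\textbf{Third bullet.} I first claim $X^{(N-2)/2}V_{N-1}\subseteq V_1$ for every flag in the open cell $\cellX{\match{M}}$. Let $\alpha_1,\dots,\alpha_j$ be the arcs immediately nested below $(1,N)$, ordered by start point. Standardness and the noncrossing property force these to partition $\{2,\dots,N-1\}$ into consecutive intervals, so $\init(\alpha_1)=2$, $\init(\alpha_{\ell+1})=\term(\alpha_\ell)+1$, $\term(\alpha_j)=N-1$, and if $k_\ell$ denotes the number of arcs with an endpoint in $[\init(\alpha_\ell),\term(\alpha_\ell)]$ then $k_1+\cdots+k_j=|\match{M}|-1=(N-2)/2$. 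Applying Lemma~\ref{lemma: closure condition under arcs} to each $\alpha_\ell$ gives $X^{k_\ell}V_{\term(\alpha_\ell)}\subseteq V_{\init(\alpha_\ell)-1}$, and, since $\init(\alpha_1)-1=1$ and $\init(\alpha_\ell)-1=\term(\alpha_{\ell-1})$, an induction on $\ell$ yields $X^{k_1+\cdots+k_\ell}V_{\term(\alpha_\ell)}\subseteq V_1$; at $\ell=j$ this is $X^{(N-2)/2}V_{N-1}\subseteq V_1$. The indices $N-1$ and $1$ are fixed, so Lemma~\ref{lemma: limit point satisfies image condition} gives $X^{(N-2)/2}V_{N-1}'\subseteq V_1'$, i.e.\ $V_{N-1}'\subseteq X^{-(N-2)/2}(V_1')$, for every $V_\bullet'\in\overline{\cellX{\match{M}}}$. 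Since $V_1'\subseteq\ker X=\operatorname{im}X^{n-1}$ and $\dim\ker X^{n-1}=2(n-1)=N-2$, we have $\dim X^{-(N-2)/2}(V_1')=N-1=\dim V_{N-1}'$, so in fact $V_{N-1}'=X^{-(N-2)/2}(V_1')$. Then $V_2'/V_1'\subseteq\cdots\subseteq V_{N-1}'/V_1'$ is a complete flag in the $(N-2)$-dimensional quotient $X^{-(N-2)/2}(V_1')/V_1'$; and because $\overline{\cellX{\match{M}}}\subseteq\mathcal{S}_X$ each $V_i'$ is $X$-invariant while $X^{-(N-2)/2}(V_1')$ is $X$-invariant (as $X^{N/2}=0$), so the induced nilpotent on $X^{-(N-2)/2}(V_1')/V_1'$ preserves this flag, placing it in the Springer fiber induced by $X$, as claimed.

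\textbf{Main obstacle.} The delicate point is the combinatorial step: identifying the children $\alpha_1,\dots,\alpha_j$ of $(1,N)$ and chaining the inclusions from Lemma~\ref{lemma: closure condition under arcs} with exactly the right exponents, so that precisely one ``layer'' is peeled off and one lands on $V_{N-1}$ (rather than the vacuous $X^{N/2}V_N\subseteq V_0$). Once this is set up, everything else — passing to the closure via Lemma~\ref{lemma: limit point satisfies image condition}, the dimension count, and the $X$-invariance — is routine, though the dimension count does crucially use that the Jordan type is balanced, which is exactly the condition guaranteeing the cell is nonempty.
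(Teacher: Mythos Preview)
Your proof is correct and follows essentially the same approach as the paper's: standardness forces perfection (hence $n=N/2$), closedness of $\pi_1$ together with the density of the image of the open cell gives surjection onto $\mathbb{P}(\ker X)$, and the containment $X^{(N-2)/2}V_{N-1}\subseteq V_1$ plus a dimension count pins down $V_{N-1}$, from which the quotient flag lands in the induced Springer fiber. Your argument is in fact more explicit than the paper's at the key step---the paper simply asserts that Lemma~\ref{lemma: closure condition under arcs} yields $X^{(N/2)-1}V_{N-1}\subseteq V_1$ for flags in the closure, whereas you correctly spell out the chaining over the children of $(1,N)$ (mirroring the proof of Corollary~\ref{corollary: parent of arc to end of child}) and invoke Lemma~\ref{lemma: limit point satisfies image condition} to pass to the closure.
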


\begin{proof}
Since $\match{M}$ is standard, every integer under the arc $(1,N)$ is itself on an arc.  Thus the matching has $N/2$ arcs and is perfect as well as noncrossing.  This proves the first claim.

Next note that $V_1 \subseteq \ker X$ by definition of Springer fibers.  Since $X$ has Jordan type $(N/2, N/2)$ the kernel is the $2$-dimensional subspace $sp \langle \vec{e}_1, \vec{e}_{N/2+1} \rangle$.  The image $\pi_1(\cellX{\match{M}})$ is the open dense set consisting of all lines except the line spanned by $\vec{e}_1$.  The projection $\pi_1$ is a closed map by Proposition~\ref{proposition: projection facts}.  Thus the image $\pi_1(\overline{\cellX{\match{M}}})$ must contain all lines in $\ker N$.  This is homeomorphic to $\mathbb{P}^1$ as claimed.

Now we analyze the fibers of the map $\pi_1$. Since $\match{M}$ is a perfect matching with arc $(1, N)$ there must be $(N/2)-1$ arcs under $(1,N)$.  By construction of $X$ this further implies that $X$ has Jordan type $(N/2, N/2)$. Lemma~\ref{lemma: closure condition under arcs} implies that every flag $V_{\bullet} \in \overline{\cellX{\match{M}}}$ satisfies
\[X^{(N/2)-1}V_{N-1} \subseteq V_1\]
Hence $V_{N-1} \subseteq X^{-(N-2)/2}(V_1)$.  Since $X$ has two Jordan blocks of the same size, we know that $\dim X^{-(N-2)/2}(V_1)$ is $(N-2+1)$-dimensional.  Comparing dimensions, we have
\[V_{N-1} = X^{-(N-2)/2}(V_1)\]
for every flag $V_{\bullet}$ in the closure of the Springer Schubert cell $\cellX{\match{M}}$.  Any full flag in the Springer fiber of the linear operator $X$ induced on the $N-2$-dimensional vector space 
\[X^{-(N-2)/2}(V_1)/V_1\]
as in Section~\ref{section: subspace and quotient maps} gives rise to a flag in $\cellX{\match{\match{M}}}$ via the invertible map that sends each subspace $V_i'$ in the quotient to the $(i+1)$-dimensional subspace $V_i'+V_1$ in the full flag variety, and inserts the fixed subspaces $V_1$ and $V_{N-1}=X^{-(N-2)/2}(V_1)$ at start and end.  So the preimage of each flag in the image of $\pi_1(\overline{\cellX{\match{M}}})$ is contained in the Springer fiber of Jordan type $(N/2-1, N/2-1)$ in the full flag variety $GL_{N-2}(\mathbb{C})/B$. This completes the proof.
\end{proof}

Our next goal is to identify explicitly the fiber over each $V_1 \subseteq \ker X$ in the total space $\overline{\cellX{\match{M}}}$ with respect to the map $\pi_1$.  We do this by writing and analyzing a map of matrices.

\begin{lemma} \label{lemma: explicit matrices for proj to v1}
For each $a \in \mathbb{C}$ let $T_a$ be the $N \times N$-matrix that sends
\begin{itemize}
    \item $\vec{e}_j \mapsto \vec{e}_j$ for all $j \leq N/2$
    \item $\vec{e}_{j+N/2} \mapsto \vec{e}_{j+N/2} + a\vec{e}_{j}$ for all $1 \leq j \leq N/2$
\end{itemize}
namely $T_a$ multiplies on the left by the $N \times N$ block matrix $\left( \begin{array}{c|c} I_{N/2} & a I_{N/2} \\ \cline{1-2} 0 & I_{N/2} \end{array} \right)$.  Identify $\ker X$ with $\mathbb{P}^1 = \mathbb{C} \cup \infty$ by the rule that the line spanned by $\vec{e}_{N/2+1}+a\vec{e}_1 \mapsto a$ and the line spanned by $\vec{e}_1 \mapsto \infty$. Define a map on matrices $\varphi: \mathbb{P}^1 \times M_{N-2} \rightarrow M_N$ by the rule that for $a \in \mathbb{C}$
\[\varphi(a, g) = T_a \left( \begin{array}{c|c|c} 0 & \textup{first $N/2-1$ } & 0 \\
 0 & \textup{rows of } g & 0 \\
\cline{1-3} 0 & 0 & 1 \\
\cdashline{1-3} 1 & 0 & 0 \\
\cline{1-3} 0 & \textup{last }N/2-1 & 0 \\
0 & \textup{ rows of }g & 0
\end{array} \right) \hspace{0.5in} \textup{ and } \hspace{0.5in} \varphi(\infty, g) = \left( \begin{array}{c|c|c}  1 & 0 & 0 \\
\cline{1-3} 0 & g & 0 \\
\cline{1-3} 0 & 0 & 1\end{array} \right)\]
is the block diagonal matrix with blocks of size $1, N-2, 1$ along the diagonal.

Let $\mathcal{Y}' \subseteq GL_N(\mathbb{C})/B$ denote the collection of flags satisfying
\[\mathcal{Y}'= \{ \widetilde{g}B: sp \langle \vec{g}_1, X^{N/2-1}\vec{g}_N \rangle = \ker X\}\]
where $\vec{g}_i$ denotes the $i^{th}$ column of the matrix $\widetilde{g}$. Moreover, suppose $\mathcal{S}_X^{N-2}$ denotes the Springer fiber of type $(N/2-1, N/2-1)$  in $GL_{N-2}(\mathbb{C})/B$ respectively $\mathcal{S}_X^N$ and $(N/2,N/2)$ and $GL_N(\mathbb{C})/B$.  Then 
\[\varphi: \mathbb{C}^1 \times \mathcal{S}_X^{N-2} \rightarrow \mathcal{S}_X^N \cap \mathcal{Y}'\]
is a local homeomorphism that commutes with the fiber bundle $\pi_1: GL_N(\mathbb{C})/B \rightarrow \{\textup{lines in } \mathbb{C}^N\}$.
\end{lemma}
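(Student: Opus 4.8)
The plan is to verify that $\varphi$ descends to a continuous map on flags, that its image lies in $\mathcal{S}_X^N\cap\mathcal{Y}'$, and that it is a bijection onto the relevant slice of $\mathcal{S}_X^N\cap\mathcal{Y}'$ intertwining the two projections to $\mathbb{P}^1\cong\ker X$; the local homeomorphism statement then follows from the closedness of $\pi_1$ in Proposition~\ref{proposition: projection facts}. The first structural observation is that $T_a$ commutes with $X$ for every $a$, which is immediate by checking both sides on the basis $\mathcal{T}\cup\mathcal{B}$ using Equation~\eqref{equation: nilpotent formula}. Consequently left multiplication by $T_a$ preserves $\mathcal{S}_X^N$ (since $(T_a g)^{-1}X(T_a g)=g^{-1}Xg$) and commutes with every power of $X$, and because $\varphi(a,g)=T_a\,\varphi(0,g)$ this reduces the Springer and $\mathcal{Y}'$ checks to the single model $\varphi(0,g)$ (and, near $a=\infty$, to $\varphi(\infty,g)$).

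Next I would check the Borel-equivariance needed to pass to flags: $T_a$ acts on the left, while right multiplication of $g$ by an upper-triangular $b$ only rescales columns $2,\dots,N-1$ of $\varphi(a,g)$, so $\varphi(a,gb)=\varphi(a,g)\,\mathrm{diag}(1,b,1)$ with the right-hand factor upper triangular. Thus $\varphi$ induces a map on flags, continuous on each of the two charts, and the rank computation in the statement (the inner matrix has columns $\vec e_{N/2+1}$, $\vec e_{N/2}$ together with an invertible $g$-block on complementary rows) shows the image lies in $GL_N(\mathbb{C})/B$. Membership in $\mathcal{Y}'$ is then a one-line column computation: the first column of $\varphi(a,g)$ is $\vec e_{N/2+1}+a\vec e_1$ and the last column is $\vec e_{N/2}$, with $X^{N/2-1}\vec e_{N/2}=\vec e_1$, so $sp\langle\vec g_1,X^{N/2-1}\vec g_N\rangle=sp\langle\vec e_1,\vec e_{N/2+1}\rangle=\ker X$ (and similarly with $\vec e_1,\vec e_N$ when $a=\infty$).

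The main work is to identify the image precisely. For the Springer condition I would invoke Lemma~\ref{lemma: restricting fiber bundle to project to line}: every flag of $\mathcal{S}_X^N$ whose $\pi_1$-image is the line $\ell$ has $V_1=\ell$ and $V_{N-1}=X^{-(N-2)/2}(\ell)$ forced, and passing to the quotient $X^{-(N-2)/2}(\ell)/\ell$ sets up a bijection between such flags and flags in the Springer fiber of the operator induced by $X$ on that $(N-2)$-dimensional space, with inverse $V_i'\mapsto V_i'+V_1$. I would then recognize $\varphi(a,g)$ as exactly this reconstruction applied to $g$: for $a=0$ the space $X^{-(N-2)/2}(\langle\vec e_{N/2+1}\rangle)/\langle\vec e_{N/2+1}\rangle$ equals $sp\langle\vec e_1,\dots,\vec e_{N/2-1},\vec e_{N/2+2},\dots,\vec e_N\rangle$, on which $X$ is in Jordan form of type $(N/2-1,N/2-1)$ with respect to that ordered basis — matching the block placement of $g$ in $\varphi(0,g)$ — and for $a=\infty$ one gets the complementary basis $sp\langle\vec e_2,\dots,\vec e_{N-1}\rangle$ and the block-diagonal placement, the general $a$ case being conjugate by $T_a$. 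Hence $\varphi(a,g)B\in\mathcal{S}_X^N$ if and only if $gB\in\mathcal{S}_X^{N-2}$, and for fixed $a$ the map $g\mapsto\varphi(a,g)$ is a bijection onto $\pi_1^{-1}(\ell)\cap\mathcal{S}_X^N\cap\mathcal{Y}'$, with inverse given by stripping $T_a$ and passing to the middle flag.

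Finally I would assemble the conclusion. Since $\pi_1(\varphi(a,g))=\langle\vec g_1\rangle$ equals $\langle\vec e_{N/2+1}+a\vec e_1\rangle$ (resp.\ $\langle\vec e_1\rangle$ when $a=\infty$), which is $a$ under the given identification $\ker X\cong\mathbb{P}^1$, the map $\varphi$ intertwines the projection to the $\mathbb{P}^1$-factor with $\pi_1$. Over the chart $\mathbb{C}\subseteq\mathbb{P}^1$ it is a continuous bijection from $\mathbb{C}\times\mathcal{S}_X^{N-2}$ onto the open subset $\pi_1^{-1}(\mathbb{C})\cap\mathcal{S}_X^N\cap\mathcal{Y}'$; since $\pi_1$ is closed and continuous on $GL_N(\mathbb{C})/B$ by Proposition~\ref{proposition: projection facts}, hence on the closed subvariety $\mathcal{S}_X^N\cap\mathcal{Y}'$, the inverse is continuous and $\varphi$ is a homeomorphism there. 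Running the identical argument over a chart around $\infty$, using the automorphism of the construction that interchanges the two Jordan blocks ($\vec e_j\leftrightarrow\vec e_{j+N/2}$), gives the local homeomorphism statement. \emph{The main obstacle} is the bookkeeping of the third paragraph: checking that the operator induced by $X$ on $X^{-(N-2)/2}(\ell)/\ell$ is genuinely in standard Jordan form of type $(N/2-1,N/2-1)$ with respect to the basis carrying $g$, and that the index shifts line up so the Springer condition for $g$ is equivalent to that for $\varphi(a,g)$.
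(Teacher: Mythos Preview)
Your plan is close to the paper's and the ingredients (Borel equivariance, $T_aX=XT_a$, identifying the middle $N-2$ columns with a smaller Springer fiber, closedness of $\pi_1$) are the right ones.  Two differences are worth noting.  For the chart near $a=\infty$ the paper does \emph{not} use the block–swap automorphism $\vec e_j\leftrightarrow\vec e_{j+N/2}$; instead it writes down a second explicit family $T'_{a'}=\left(\begin{smallmatrix}I&0\\a'I&I\end{smallmatrix}\right)$ with $a'=1/a$ and checks that $T'_{a'}\,\mathrm{diag}(1,g,1)$ and $T_a\cdot(\text{inner matrix})$ represent the same flag on the overlap, which directly exhibits continuity at $\infty$.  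Your symmetry idea can be made to work, but conjugating the $a=0$ picture by the block swap does not literally yield $\varphi(\infty,g)$: it yields $\varphi(\infty,\sigma' g)$ where $\sigma'$ swaps the two halves of the $(N-2)\times(N-2)$ matrix, so you would still owe the observation that $\sigma'$ preserves $\mathcal{S}_X^{N-2}$ and that this matches the stated formula for $\varphi(\infty,\cdot)$.

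There is also a genuine gap in your surjectivity step.  You invoke Lemma~\ref{lemma: restricting fiber bundle to project to line} to say that every flag of $\mathcal{S}_X^N$ over a line $\ell$ has $V_{N-1}=X^{-(N-2)/2}(\ell)$, but that lemma only proves this for flags in $\overline{\cellX{\match{M}}}$ with $(1,N)\in\match{M}$, and the statement is false for arbitrary flags of $\mathcal{S}_X^N$ (for $N=4$ the flag $\langle e_3\rangle\subset\langle e_1,e_3\rangle\subset\langle e_1,e_2,e_3\rangle\subset\mathbb{C}^4$ is a counterexample).  The paper instead argues directly from the $\mathcal{Y}'$ condition together with canonical form: in canonical form $\vec g_N$ is a single basis vector, and $\mathcal{Y}'$ forces its pivot to be in row $N/2$ (respectively $N$) once the first pivot is in row $N/2+1$ (respectively $1$); only then does one conclude that the middle $N-2$ columns live in the desired subquotient.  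So your third paragraph should use $\mathcal{Y}'$, not Lemma~\ref{lemma: restricting fiber bundle to project to line}, to pin down the last column before passing to the quotient.
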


\begin{proof}
We show the following things, in order: 1) the map $\varphi$ sends invertible matrices to invertible matrices; 2) the map $\varphi$ sends all matrices in the same $B$-coset to a single $B$-coset in the image; 3) when restricted to the open sets in the domain defined by $a \neq 0$ and $a \neq \infty$ the map $\varphi$ is injective and almost surjective; and 4) the map $\varphi$ is a homeomorphism from Springer fiber to $\mathcal{S}_X^N \cap \mathcal{Y}'$.

Suppose first that $a \in \mathbb{C}$.  Define the set of vectors
\[\mathcal{B}_a = \{\vec{e}_{N/2+1} + a\vec{e}_1, \vec{e}_{N/2+2}+a\vec{e}_2, \ldots, \vec{e}_N + a\vec{e}_N\}.\]
Note that $\mathcal{T} \cup \mathcal{B}_a$ form another basis with respect to which $X$ is in Jordan canonical form.  The operator $T_a$ changes basis between $\mathcal{T} \cup \mathcal{B}_a$ and $\mathcal{T} \cup \mathcal{B}$.    So by construction, if $g$ is invertible then $\varphi(a,g)$ is invertible for all $a \in \mathbb{P}^1$.  Thus $\varphi$ induces a map $\mathbb{P}^1 \times GL_{N-2}(\mathbb{C}) \rightarrow GL_N(\mathbb{C})$. 

Now suppose $b \in GL_{N-2}(\mathbb{C})$ is upper-triangular and let $\widetilde{b}$ be the block diagonal matrix in $GL_N(\mathbb{C})$ with diagonal blocks $1, b, 1$.  Since $T_a$ acts by left-multiplication, we have
\[\varphi(a,gb) = \varphi(a,g)\widetilde{b}\]
when $a \in \mathbb{C}$. The same equation holds for $a = \infty$ by direct computation.  Thus $\varphi$ induces a map on flag varieties.

If $\varphi(a,g)B$ is a flag with $a \neq 0$ then it can be described as
\[\varphi(a,g)B = T'_{a'}  \left( \begin{array}{c|c|c}  1 & 0 & 0 \\
\cline{1-3} 0 & g & 0 \\
\cline{1-3} 0 & 0 & 1\end{array} \right) \hspace{0.25in} \textup{ where } T'_{a'} = \left( \begin{array}{c|c} I_{N/2} & 0\\ \cline{1-2} a' I_{N/2}  & I_{N/2} \end{array} \right) \hspace{0.2in} \textup{ and } a'=\frac{1}{a}\]
Indeed, when $aa'=1$ the maps $T'_{a'}$ and $T_a$ agree and their image consists of flags with $V_1$ spanned by neither $\vec{e}_1, \vec{e}_{1+N/2}$. Multiplication by an invertible linear operator is injective so $\varphi$ is injective both on the open set $a \neq 0$ and on the open set $a \neq \infty$. 

We now show that the $\varphi(\mathbb{P}^1 \times \mathcal{S}_X^{N-1})$ is precisely $\mathcal{S}_X^N \cap \mathcal{Y}'$. First note that if $\widetilde{g}B=V_{\bullet}$ is a flag in $\mathcal{S}_X^N$ then $V_1$ is in $\ker X$ and so is either spanned by $\vec{e}_{1+N/2}$ or by $\vec{e}_1$.  If $\widetilde{g}$ is in canonical form then all entries in columns $2, \ldots, N$ of row $1+N/2$ respectively $1$ are zero.  If $\widetilde{g}B$ is also in $\mathcal{Y}'$ then the pivot in column $N$ of $\widetilde{g}$ is in the row $N/2$ or $N$ respectively. So the first and last columns of all matrices $\widetilde{g}$ agree with the image of $\varphi$. We must confirm that the entries in pivot rows also agree with $\varphi$ and so we analyze the Springer condition. 

Let $\vec{v}_1, \ldots, \vec{v}_N$ be linearly independent vectors in $\mathbb{C}^N$. Both $T_a$ and $X$ are linear so
\[X(T_a\left(\sum_{j=1}^{i-1} c_j \vec{v}_j\right) = \sum_{j=1}^{i-1} c_j XT_a \vec{v}_j.\]
So $X\vec{v}_i$ is a linear combination of the vectors $\vec{v}_1, \ldots, \vec{v}_{i-1}$ if and only if $XT\vec{v}_i$ is a linear combination of the vectors $T_a(\vec{v}_1), \ldots, T_a(\vec{v}_{i-1})$.  So if $a \in \mathbb{C}$ and $gB$ is in $\mathcal{S}_X^{N-2}$ then $\varphi(a,g)B$ is in $\mathcal{S}_X^N$ and -- by construction of $\varphi$ -- also in $\mathcal{Y}'$. Conversely if $\widetilde{g}$ is in canonical form with $\widetilde{g}B \in \mathcal{S}_X^N \cap \mathcal{Y}'$ and if the first column of $\widetilde{g}$ is not $\vec{e}_1$ then acting $\widetilde{g}$ by the inverse of $T_a$ changes basis to obtain a matrix with first column $\vec{e}_{N/2}$ and middle $N-2$ columns satisfying $X^{(N-2)/2}V_{N-1} \subseteq V_1$ by Lemma~\ref{lemma: closure condition under arcs}.  In particular, the middle $N-2$ columns are zero in row $N/2$.  In other words, the middle $N-2$ columns come from a flag $gB \in \mathcal{S}_X^{N-2}$ as desired.

This proves that for $a \in \mathbb{C}$ the flag $\varphi(a,g)B$ is in the Springer fiber $\mathcal{S}_X^N$ if and only if $gB$ is in $\mathcal{S}_X^{N-2}$.  Replacing $T_a$ by the identity matrix gives the analogous result for $a = \infty$. 

Finally, our description of $T_a$ and $T'_{a'}$ implies that $\varphi$ induces surjective invertible maps from the open sets $(a \neq 0) \times \mathcal{S}_X^{N-2}$ and $(a \neq \infty) \times \mathcal{S}_X^{N-2}$ onto their images in $\mathcal{S}_X^N \cap \mathcal{Y}'$.  Thus $\varphi$ is a bijection that commutes with the fiber bundle $\pi_1: GL_N(\mathbb{C})/B \rightarrow GL_1(\mathbb{C})/P_1$ by identifying the latter with $\mathbb{P}^1 \cong \ker X$ as above.  Since the fiber bundle is a closed continuous map, we conclude that in fact $\varphi$ is continuous and thus a homeomorphism.  This completes our proof. 
\end{proof}

\begin{corollary}\label{corollary: springer cell homeomorphism for (1,N)}
Denote the Springer fiber in $GL_{N-2}(\mathbb{C})/B$ for the Jordan canonical matrix $X$ of Jordan type $(N/2-1, N/2-1)$ by $\mathcal{S}_X^{N-2}$ and similarly for $\mathcal{S}_X^N$.  Identify $\mathbb{P}^1$ with $\ker X$ as in Lemma~\ref{lemma: explicit matrices for proj to v1}. Let $\match{M}'$ index a Springer Schubert cell $\cellX{\match{M}'} \subseteq \mathcal{S}_X^{N-2}$ with $\{B, T\}$-word $m_2'm_3'\cdots m_{N-1}'$.  Then for each such $\match{M}'$ the map $\varphi$ is a homeomorphism onto the cell
\[\varphi: \mathbb{P}^1 \times \cellX{\match{M}'} \rightarrow \cellX{\match{M}}\]
where if $a \in \mathbb{C}$ then the $\{B, T\}$-word for $\match{M}$ is $B m_2' \cdots m_{N-1}' T$ while if $a = \infty$ then it is $T m_2' \cdots m_{N-1}' B$. As matchings:
\begin{itemize}
    \item If $a = \infty$ then $\match{M} = \match{M}'$. 
    \item If $a \neq \infty$ and $\match{M}'$ is perfect then $\match{M} = \match{M}' \cup \{(1,N)\}$.
    \item If $a \neq \infty$ and the first and last integer not on arcs in $\match{M}'$ are $j_1, j_2$ respectively, then $\match{M} = \match{M}' \cup \{(1,j_1), (j_2,N)\}$.
\end{itemize}   
\end{corollary}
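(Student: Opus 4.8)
The plan is to derive the corollary from Lemma~\ref{lemma: explicit matrices for proj to v1}, which already furnishes the homeomorphism $\varphi\colon\mathbb{P}^1\times\mathcal{S}_X^{N-2}\to\mathcal{S}_X^N\cap\mathcal{Y}'$ commuting with $\pi_1$, together with the $\{B,T\}$-word bijection of Proposition~\ref{proposition: bijection perm matching BT}. First I would do the bookkeeping that places us inside that homeomorphism: since $\match{M}$ is standard and contains the arc $(1,N)$, Lemma~\ref{lemma: restricting fiber bundle to project to line} says $\match{M}$ is perfect and $X$ has Jordan type $(N/2,N/2)$, so the setup of Lemma~\ref{lemma: explicit matrices for proj to v1} applies. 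Then I would check $\cellX{\match{M}}\subseteq\mathcal{Y}'$: every flag of $\cellX{\match{M}}$ has $V_1\subseteq\ker X$ and $V_{N-1}=X^{-(N-2)/2}(V_1)$ by Lemma~\ref{lemma: restricting fiber bundle to project to line}, so for its column vectors $\vec g_1,\dots,\vec g_N$ one has $\vec g_1\in\ker X\setminus\{0\}$ while $\vec g_N\notin V_{N-1}$ forces $X^{(N-2)/2}\vec g_N\in\ker X\setminus V_1$ (using $X^{N/2}=0$); the two vectors therefore span the $2$-dimensional space $\ker X$, which is the defining condition of $\mathcal{Y}'$. The cell with word $Tm_2'\cdots m_{N-1}'B$ also lies in $\mathcal{Y}'$, since there $V_1=sp\langle\vec e_1\rangle$ and the last column is $\vec e_N$ with $X^{(N-2)/2}\vec e_N=\vec e_{N/2+1}$. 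Consequently the homeomorphism of Lemma~\ref{lemma: explicit matrices for proj to v1} restricts to a homeomorphism of the locally closed set $\mathbb{P}^1\times\cellX{\match{M}'}$ onto its image in $\mathcal{S}_X^N\cap\mathcal{Y}'$.

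The heart of the argument is to identify that image by computing the $\{B,T\}$-word of $\varphi(a,g)$ for $g=f_{\match{M}'}(\vec v)\in\cellX{\match{M}'}$. For $a=\infty$ the matrix $\varphi(\infty,g)$ is block diagonal with diagonal blocks of sizes $1,N-2,1$, hence already in the canonical form of Definition~\ref{definition: canonical representative and Schub cells}: its pivots are row $1$ in column $1$, row $N$ in column $N$, and the pivots of $g$ shifted by one row and column elsewhere, which reads as the word $Tm_2'\cdots m_{N-1}'B$. For $a\in\mathbb{C}$ one reads off $\varphi(a,g)$ directly from its defining formula: column $1$ is $a\vec e_1+\vec e_{N/2+1}$, whose pivot---the bottom-most normalizable entry---is in row $N/2+1$, a $B$; column $N$ is $\vec e_{N/2}$, a $T$; and left multiplication by $T_a$ only adds $a$ times the bottom-block entries of each middle column into rows $2,\dots,N/2$. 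Since for a middle column of index $i$ those added entries sit strictly above the bottom-most nonzero entry when the pivot of column $i-1$ of $g$ lies in the bottom block, and vanish when that pivot lies in the top block, the $i$-th pivot of $\varphi(a,g)$ is the shifted $(i-1)$-st pivot of $g$ for $2\le i\le N-1$, so the word is $Bm_2'\cdots m_{N-1}'T$. One then checks that the resulting matrix can be brought to canonical form by right multiplication by an upper-triangular matrix without moving any pivot, which is possible because $g$ is already in canonical form. Running this pivot bookkeeping in reverse---together with $\cellX{\match{M}}\subseteq\mathcal{Y}'$ from the first step and the fact that $m_1=B$ forces $V_1$ to have parameter $a\in\mathbb{C}$---shows that the image of $\mathbb{P}^1\times\cellX{\match{M}'}$ equals $\cellX{\match{M}}$ over $a\in\mathbb{C}$ and equals $\cellX{\match{M}'}$ over $a=\infty$.

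Finally I would translate the two words into matchings via Proposition~\ref{proposition: bijection perm matching BT}. By Corollary~\ref{corollary: standard means perfect plus T then B}, erasing from $m_2'\cdots m_{N-1}'$ the positions that lie on arcs of $\match{M}'$ leaves $T^sB^s$ with $s=N/2-1-|\match{M}'|$ (both exponents are equal because $\match{M}'$ lives on $N-2$ points with a block of size $N/2-1$). For the word $Bm_2'\cdots m_{N-1}'T$ the residual word is $B\,T^sB^s\,T$: if $\match{M}'$ is perfect then $s=0$ and this is $BT$, so the algorithm of Proposition~\ref{proposition: bijection perm matching BT} adds exactly the arc $(1,N)$; if $s\ge 1$ then position $1$ pairs with the first non-arc position $j_1$ (which carries $T$) and position $N$ pairs with the last non-arc position $j_2$ (which carries $B$), so it adds the two arcs $(1,j_1)$ and $(j_2,N)$. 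For the word $Tm_2'\cdots m_{N-1}'B$ the residual word is $T^{s+1}B^{s+1}$, which contains no $BT$-substring, so no new arc is created and the matching is $\match{M}'$. This establishes the three bullets. The main obstacle throughout is the pivot computation of the second paragraph: one must remember that in this canonical form the pivot of a column is its lowest normalizable entry rather than its leading entry, keep track of how the top and bottom blocks of $g$ interleave after being inserted into rows $1,\dots,N/2-1$ and $N/2+2,\dots,N$ and then mixed by $T_a$, and verify that the canonical-form normalization survives.
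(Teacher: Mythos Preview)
Your approach matches the paper's: restrict the homeomorphism of Lemma~\ref{lemma: explicit matrices for proj to v1}, track how $T_a$ moves pivots between the top and bottom blocks to read off the $\{B,T\}$-word of the image, and then convert words to matchings via Proposition~\ref{proposition: bijection perm matching BT} and Corollary~\ref{corollary: standard means perfect plus T then B}. The pivot analysis in your second paragraph and the residual-word argument for the three bullets are the same in substance as the paper's proof.

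There is one local gap. Your first paragraph opens by assuming $\match{M}$ contains the arc $(1,N)$ in order to invoke Lemma~\ref{lemma: restricting fiber bundle to project to line} and obtain $V_{N-1}=X^{-(N-2)/2}(V_1)$, which you then use to deduce $X^{(N-2)/2}\vec g_N\notin V_1$ from $\vec g_N\notin V_{N-1}$. But the corollary takes an arbitrary $\match{M}'$ as input; when $\match{M}'$ is not perfect, the target $\match{M}=\match{M}'\cup\{(1,j_1),(j_2,N)\}$ does \emph{not} contain $(1,N)$, so Lemma~\ref{lemma: restricting fiber bundle to project to line} does not apply and that step fails. The repair is to argue directly from the word $Bm_2'\cdots m_{N-1}'T$, exactly as you did for $Tm_2'\cdots m_{N-1}'B$: the terminal $T$ is the $(N/2)$-th one, so column $N$ is $\vec e_{N/2}$, whence $X^{(N-2)/2}\vec g_N=\vec e_1$, and this together with $\vec g_1=\vec e_{N/2+1}+a\vec e_1$ spans $\ker X$. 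Relatedly, when $\match{M}'$ is not perfect the image of $\mathbb{C}\times\cellX{\match{M}'}$ has dimension $1+|\match{M}'|<|\match{M}'|+2=|\match{M}|$, so it is a \emph{proper} subset of $\cellX{\match{M}}$; the paper's proof only asserts a homeomorphism onto the image and identifies the ambient cell, so your ``equals $\cellX{\match{M}}$'' at the end of the second paragraph should be weakened accordingly.
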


\begin{proof}
Restricting the homeomorphism $\varphi$ gives a homeomorphism on each Springer Schubert cell $\cellX{\match{M}'}$ by Lemma~\ref{lemma: explicit matrices for proj to v1}. Whether $a = \infty$ or $a \neq \infty$ determines the pivot in the first column and thus the first letter of the $\{B, T\}$-word for $\match{M}$.  We now determine the other $N-1$ letters. The map $T_a$ only changes entries in the top $N/2$ rows in columns whose pivots are in the bottom $N/2$ rows.  Otherwise $T_a$ shifts pivots from rows $N/2, \ldots, N-2$ to rows $N/2+2, \ldots, N$ in the lower block. So the pivot entry in the middle $N-2$ columns of the image $\varphi(\cdot, g)$ is in the top block if and only if the same is true for the corresponding column of $g$, and similarly for the bottom block.  In other words, the $\{B, T\}$-word for $\match{M}'$ consists of the middle $N-2$ letters of that for $\match{M}$ with first and last pivots coming from whether $a \in \mathbb{C}$ or $a = \infty$ as stated. 

Now we identify the matching $\match{M}$.  Recall that if $\match{M}'$ is not a perfect matching then the subword of letters associated to non-arcs is $T^jB^j$ for some $j$ by Corollary~\ref{corollary: standard means perfect plus T then B} together with our hypothesis that $N$ is even.  Apply Proposition~\ref{proposition: bijection perm matching BT} to construct the matching for $\match{M}$ from its $\{B, T\}$-word. The first place this matching can differ from that of $\match{M}'$ is in the first $T$ in $m_2'm_3' \cdots m_{N-1}'$ that the matching algorithm does not assign to an arc.  If $j=0$ then there is no such $T$ so the matching algorithm assigns to $\match{M}$ the arc $(1,N)$ from the first and last letters in its $\{B, T\}$-word.  Else suppose this $T$ is in position $j_1$.  Then $(1,j_1) \in \match{M}$.  By an analogous argument, there is a $B$ that is not assigned to an arc in $\match{M}'$ and the rightmost such, say in position $j_2$, is assigned the arc $(j_2,N) \in \match{M}$.  The matching algorithm then ends the same on both $\match{M}$ and $\match{M}'$.  This proves the claim.
\end{proof}

\subsection{Completing an inductive proof of the main theorem}

We now complete the proof of our main theorem.  Figure~\ref{figure: all closures n=4} gives an example with cell closures for $n=4$.

\begin{theorem} \label{theorem: closure main theorem}
Let $X$ be a nilpotent matrix in Jordan form of Jordan type $(n,N-n)$.  Suppose that $\match{M}$ is a standard noncrossing matching on $\{1, 2, \ldots, N\}$ with associated Springer Schubert cell $\cellX{\match{M}}$.  The closure $\overline{\cellX{\match{M}}}$ is the disjoint union
\[ \overline{\cellX{\match{M}}} = \bigcup_{\mathcal{A} \subseteq \match{M}} cut(\cellX{\match{M}}, \mathcal{A})\]
where $cut(\cellX{\match{M}},\mathcal{A})$ denotes the subspace of the Springer Schubert cell associated to the matching obtained by cutting the arcs $\mathcal{A}$ inside the matching $\match{M}$.
\end{theorem}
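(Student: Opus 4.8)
The plan is to prove the theorem by induction on $N$, with base case the matchings with no arcs, where $\cellX{\match{M}}$ is a single point and the statement is immediate. For the inductive step I would split into two cases according to whether $\match{M}$ contains the arc $(1,N)$. If it does not, there is an interior ``cut point'' $i$ and I would decompose $\overline{\cellX{\match{M}}}$ through the block map $\chi$ of Lemma~\ref{lemma: block matrix maps work}; if it does, then $\match{M}$ is forced to be perfect with $n=N/2$, and I would project through $\pi_1$ using the maps $\varphi$ of Section~\ref{section: subspace and quotient maps}.

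Assume first that $\match{M}\neq\emptyset$ and $(1,N)\notin\match{M}$. Then some $i$ with $0<i<N$ either is not on an arc or ends a parentless arc: take $i=\term(\alpha)$ if $1$ starts an arc $\alpha$, and $i=1$ otherwise. By Lemma~\ref{lemma: if no ancestor then span of basis vectors} every flag in $\cellX{\match{M}}$ has $V_i$ equal to a fixed coordinate subspace $U$, so by Lemma~\ref{lemma: if cell contains vectors then closure does too} this holds throughout $\overline{\cellX{\match{M}}}$; hence $\overline{\cellX{\match{M}}}$ lies in the closed set $\mathcal{Y}=\{V_i=U\}$. Since $i$ is a cut point, $\match{M}=\match{M}_L\sqcup\match{M}_R$, and Corollary~\ref{corollary: restricting to left partition gives same f} supplies a homeomorphism $\chi\colon\mathcal{S}_X^i\times\mathcal{S}_X^{N-i}\to\mathcal{S}_X\cap\mathcal{Y}$ carrying $\cellX{\match{M}_L}\times\cellX{\match{M}_R}$ onto $\cellX{\match{M}}$ (there are no straddling arcs to impose vanishing conditions). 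Because $\chi$ is a homeomorphism onto a closed subvariety and closure commutes with products,
\[\overline{\cellX{\match{M}}}=\chi\bigl(\overline{\cellX{\match{M}_L}}\times\overline{\cellX{\match{M}_R}}\bigr).\]
Applying the inductive hypothesis (the induced matchings satisfy the theorem's hypotheses for the induced Jordan types, by Corollary~\ref{corollary: restricting to left partition gives same BT sequence}) and the bijection $\mathcal{A}\mapsto(\mathcal{A}\cap\match{M}_L,\mathcal{A}\cap\match{M}_R)$ between subsets of $\match{M}$ and pairs of subsets, everything reduces to the combinatorial identity
\[\chi\bigl(cut(\cellX{\match{M}_L},\mathcal{A}_L)\times cut(\cellX{\match{M}_R},\mathcal{A}_R)\bigr)=cut(\cellX{\match{M}},\mathcal{A}_L\cup\mathcal{A}_R),\]
which I would isolate as a separate lemma.

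Now assume $(1,N)\in\match{M}$, so $\match{M}$ is perfect, $N$ is even, $n=N/2$, and $\match{M}=\{(1,N)\}\cup\match{M}''$ with $\match{M}''$ perfect on $\{2,\dots,N-1\}$; write $\match{M}'$ for its reindexing to $\{1,\dots,N-2\}$. By Lemmas~\ref{lemma: restricting fiber bundle to project to line} and~\ref{lemma: explicit matrices for proj to v1}, $\pi_1$ maps $\overline{\cellX{\match{M}}}$ onto $\mathbb{P}^1\cong\ker X$; over the chart $a\in\mathbb{C}$ the map $\varphi$ trivializes $\cellX{\match{M}}$ as $\mathbb{C}\times\cellX{\match{M}'}$, so the part of $\overline{\cellX{\match{M}}}$ over this chart is $\mathbb{C}\times\overline{\cellX{\match{M}'}}$, while the fiber over $a=\infty$ equals $\varphi(\infty,\overline{\cellX{\match{M}'}})$ --- the inclusion $\supseteq$ from the explicit degeneration $\varphi(a,g)B=T'_{1/a}\,\varphi(\infty,g)B\to\varphi(\infty,g)B$ as $a\to\infty$, and $\subseteq$ from the fiber bound of Lemma~\ref{lemma: restricting fiber bundle to project to line} together with compactness of the flag variety. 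Applying the inductive hypothesis to $\match{M}'$ and invoking Corollary~\ref{corollary: springer cell homeomorphism for (1,N)}, this identifies $\overline{\cellX{\match{M}}}$ with the union of the $cut(\cellX{\match{M}},\mathcal{A})$ with $(1,N)\notin\mathcal{A}$ (which live over $\mathbb{C}$, as $\mathbb{C}\times cut(\cellX{\match{M}'},\mathcal{A})$) together with the $cut(\cellX{\match{M}},\{(1,N)\}\cup\mathcal{A}')$ (which live over $\infty$, as $\varphi(\infty,cut(\cellX{\match{M}'},\mathcal{A}'))$, since cutting the parentless arc $(1,N)$ frees its endpoints). In either case the union is disjoint: distinct $\mathcal{A}$ give distinct $\{B,T\}$-words for $cut(\match{M},\mathcal{A})$ --- they disagree exactly at the endpoint-pairs of arcs in the symmetric difference --- hence distinct permutations $w_{cut(\match{M},\mathcal{A})}$ and distinct ambient Schubert cells containing the subspaces $cut(\cellX{\match{M}},\mathcal{A})$.

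I expect the main obstacle to be the combinatorial identity of Case~1 (and its $a=\infty$ analogue in Case~2): verifying that the free-variable pattern of $cut(\cellX{\match{M}},\mathcal{A}_L\cup\mathcal{A}_R)$, dictated by the composite relabeling $\lambda_{\alpha_1,\dots,\alpha_j,\match{M}}$ of Corollary~\ref{corollary: function labeling via unnesting}, coincides with the pattern obtained by cutting inside $\match{M}_L$ and $\match{M}_R$ separately and then imposing, as $\chi$ does, the vanishing $v_{\init(\alpha)}=0$ on whatever straddling arcs appear in $cut(\match{M},\mathcal{A}_L\cup\mathcal{A}_R)$. The structural fact that makes this work is that the nesting poset of $\match{M}$ is the disjoint union of those of $\match{M}_L$ and $\match{M}_R$, so a nesting-contravariant ordering of $\mathcal{A}$ may be chosen compatibly with the splitting; the delicate step is confirming, via Corollary~\ref{corollary: top to bottom gives unnesting}, that each straddling arc produced by the concatenation is assigned label $0$ rather than a surviving variable.
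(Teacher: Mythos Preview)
Your plan matches the paper's proof: induction on $N$, splitting on whether $(1,N)\in\match{M}$, and using the homeomorphisms $\chi$ (Corollary~\ref{corollary: restricting to left partition gives same f}) and $\varphi$ (Lemma~\ref{lemma: explicit matrices for proj to v1}, Corollary~\ref{corollary: springer cell homeomorphism for (1,N)}) in the two cases to reduce to smaller Springer fibers. The combinatorial identity you correctly isolate as the main obstacle---that $\chi$ and $\varphi$ carry $cut(\cellX{\match{M}_L},\mathcal{A}_L)\times cut(\cellX{\match{M}_R},\mathcal{A}_R)$ and $\mathbb{C}^d\times cut(\cellX{\match{M}'},\mathcal{A}')$ to $cut(\cellX{\match{M}},\mathcal{A})$ with the labelings $\lambda$ lining up---is exactly where the paper spends almost all of its effort, with the Case~2 verification for $(1,N)\notin\mathcal{A}$ requiring a page of column-by-column analysis of how $T_a$ interacts with the ancestor structure of $cut(\match{M}',\mathcal{A}')$.
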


\begin{proof}
Our proof proceeds by induction on $N$.  The base cases are when $N=1$ and $N=2$.  In the first case, the matching $\match{M} = \emptyset$ and the Springer fiber consists of a single flag so the claim is vacuously true.  In the second case, the matching is either $\emptyset$ or $\{(1,2)\}$.  As before, the closure of a Springer Schubert cell consisting of a single flag is itself so the claim holds for the Springer Schubert cell corresponding to $\match{M} = \emptyset$.  The other Springer Schubert cell is the open dense set consisting of all lines spanned by $\vec{e}_2 + a\vec{e}_1$ for $a \in \mathbb{C}$ inside the Springer fiber (which in this case is  homeomorphic to $\mathbb{P}^1$). Thus the base cases hold.

Now consider the Springer Schubert cell $\cellX{\match{M}}$ corresponding to the nilpotent matrix $X$ in Jordan form of Jordan type $(n,N-n)$.  For the inductive step, we have two cases.  First suppose that $\match{M}$ does not contain the arc $(1,N)$.  Then there exists an $i$ that is either the end of an arc without parent or not on an arc at all in $\match{M}$. Corollary~\ref{corollary: restricting to left partition gives same f} proved that the closure $\overline{\cellX{\match{M}}}$ is homeomorphic to the closure
\[\overline{\cellX{\match{M}_L} \times \cellX{\match{M}_R}} = \overline{\cellX{\match{M}_L}} \times \overline{\cellX{\match{M}_R}}\]
using the product topology.  The inductive hypothesis tells us that $\overline{\cellX{\match{M}_L}} = \bigcup_{\mathcal{A}_L} cut(\cellX{\match{M}_L}, \mathcal{A}_L)$ where the union is taken over all subsets $\mathcal{A}_L \subseteq \match{M}_L$ and similarly for $\match{M}_R$.  By the definition in Lemma~\ref{lemma: block matrix maps work}, the map $\chi$ sends every flag in
\[cut(\cellX{\match{M}_L}, \mathcal{A}_L) \times cut(\cellX{\match{M}_R}, \mathcal{A}_R) \]
to the flag in $\cellX{\chi(\match{M}_L, \match{M}_R)}$ with entries corresponding to each arc $\alpha$ over $i$ labeled zero and with all other entries labeled as indicated by $cut(\match{M}_L, \match{A}_L)$ and $cut(\match{M}_R, \match{A}_R)$.  Since $\match{M}$ has no arcs $\alpha$ with $\init(\alpha) \leq i$ and $\term(\alpha)>i$ we conclude that the image under $\chi$ for each $\mathcal{A}_L, \mathcal{A}_R$ satisfies
\[\chi(cut(\cellX{\match{M}_L}, \mathcal{A}_L) \times cut(\cellX{\match{M}_R}, \mathcal{A}_R)) = cut(\cellX{\match{M}}, \mathcal{A}_L \cup \mathcal{A}_R).\]
Every $\mathcal{A} \subseteq \match{M}$ decomposes as the disjoint union $(\mathcal{A} \cap \match{M}_L) \cup (\mathcal{A} \cap \match{M}_R)$ since $\match{M}$ is a disjoint union of arcs that end at or before $i$ and arcs that start after $i$.  Thus we have
\[\overline{\cellX{\match{M}}} = \bigcup_{(\mathcal{A}_L, \mathcal{A}_R)} \chi(cut(\cellX{\match{M}_L}, \mathcal{A}_L) \times cut(\cellX{\match{M}_R}, \mathcal{A}_R)) = \bigcup_{\mathcal{A} \subseteq \match{M}} cut(\cellX{\match{M}}, \mathcal{A})\]
which is the inductive claim in this case.

Now suppose that $\match{M}$ contains $(1,N)$.  Consider the projection $\pi_1: \overline{\cellX{\match{M}}} \rightarrow GL_N(\mathbb{C})/P_1$.  The image of the map consists of all lines in a flag in the closure of $\cellX{\match{M}}$ and thus contains all lines in the Springer Schubert cell itself, namely all lines spanned by vectors of the form $\vec{e}_{N/2+1}+a\vec{e}_1$ for $a \in \mathbb{C}$.  This is all of $\ker X$ except for the line spanned by $\vec{e}_1$ so in particular $\pi_1(\cellX{\match{M}})$ is an open dense set in $\ker X$.  The image $\pi_1(\mathcal{S}_X)$ of the entire Springer fiber is contained in the set $\ker X$.  The map $\pi_1$ is closed so the image $\pi_1(\overline{\cellX{\match{M}}})$ is closed.   We conclude that $\pi_1(\overline{\cellX{\match{M}}})=\ker X$.  

The map $\varphi$ is a homeomorphism that commutes with $\pi_1$ by Lemma~\ref{lemma: explicit matrices for proj to v1} so we have 
\begin{equation} \label{equation: cells in image of varphi} \overline{\varphi(\mathbb{C} \times \cellX{\match{M}'})} = \varphi(\overline{\mathbb{C} \times \cellX{\match{M}'}}) = \varphi(\bigcup_{(d,\mathcal{A}')} \mathbb{C}^d \times cut(\cellX{\match{M}'}, \mathcal{A}')) = \bigcup_{(d,\mathcal{A}')} \varphi(\mathbb{C}^d \times cut(\cellX{\match{M}'}, \mathcal{A}'))\end{equation}
where the union is taken over all pairs of a subset $\mathcal{A}' \subseteq \match{M}'$ and $d \in \{0, 1\}$ with $d=0$ corresponding to the case $a=\infty$.
Suppose that $cut(\match{M}', \mathcal{A}')$ has $\{B, T\}$-word $m_2''m_3''\cdots m_{N-1}''$. By Corollary~\ref{corollary: springer cell homeomorphism for (1,N)}, the image $\varphi(\mathbb{C} \times cut(\cellX{\match{M}'}, \mathcal{A}'))$ is in the Springer Schubert cell corresponding to $Bm_2''\cdots m_{N-1}''T$ while the image $\varphi(\{\infty\} \times cut(\cellX{\match{M}'}, \mathcal{A}'))$ is in the Springer Schubert cell corresponding to $T m_2''  \cdots m_{N-1}'' B$.  By definition of cutting arcs, the latter is the $\{B, T\}$-word corresponding to $cut(\match{M}, \mathcal{A}')$  while the former is the $\{B, T\}$-word corresponding to $cut(\match{M}, \mathcal{A}' \cup \{(1,N)\})$. Thus, each expression in the union on the far right of Equation~\eqref{equation: cells in image of varphi} is contained in a different Springer Schubert cell, explicitly
\[\bigcup_{(d,\mathcal{A}')} \varphi(\mathbb{C}^d \times cut(\cellX{\match{M}'}, \mathcal{A}')) \subseteq \bigcup_{\mathcal{A} \subseteq \match{M}} \cellX{cut(\match{M}, \mathcal{A})}.\]
We now confirm that for each $(d,\mathcal{A}')$ the image $\varphi(\mathbb{C}^d \times cut(\cellX{\match{M}'},\mathcal{A}'))$ is precisely $cut(\cellX{\match{M}}, \mathcal{A})$. If $(1,N)$ is cut then every ordering of the arcs in $\mathcal{A}$ starts with $(1,N)$.  Note that $\lambda_{(1,N), \match{M}}$ is the identity on all arcs $\beta \neq (1,N)$ and $0$ on $(1,N)$.  Since $cut(\match{M}, \mathcal{A})$ starts with a $T$ and ends with a $B$, we have for all $\beta \in cut(\match{M}', \mathcal{A}')$ the equality
\[\lambda_{(1,N), \alpha_2, \ldots, \alpha_j, \match{M}}(\beta) = \lambda_{\alpha_2, \ldots, \alpha_j, \match{M}'}(\beta)\]
which proves that $\varphi(\infty \times cut(\cellX{\match{M}'}, \mathcal{A}') = cut(\cellX{\match{M}}, \mathcal{A})$ as desired.

Now suppose instead that $(1,N) \not \in \mathcal{A}$.  The function $\lambda$ is defined recursively so that $\lambda(\beta)=\alpha$ only if $\alpha=\beta$ or if $\alpha$ is the parent of an arc that is cut.  Without loss of generality, assume that $\mathcal{A}$ is ordered so that $\alpha_1, \alpha_2, \ldots, \alpha_s$ are all of the arcs in $\mathcal{A}$ with parent $(1,N)$, or equivalently all the arcs in $\mathcal{A}'$ with no parent. By construction we have 
\[cut(\match{M}, \mathcal{A}) = cut(\match{M}', \mathcal{A}') \cup \{(1, \init(\alpha_1)), (\term(\alpha_s), N)\}.\]
There may be arcs in $\{1, 2, \ldots, \init(\alpha_1)\}$ that are cut, but they must all be nested \emph{under} the arc with parent $(1,N)$ so the arc $(1,\init(\alpha_1))$ survives to $cut(\match{M}, \mathcal{A})$ and similarly for all of the $(\term(\alpha_i), \init(\alpha_{i+1}))$ as well as $(\term(\alpha_j), N)$. A parallel argument also gives 
\[\lambda_{\alpha_1, \ldots, \alpha_j, \match{M}}(\beta) = \lambda_{\alpha_1, \ldots, \alpha_j, \match{M}'}(\beta)\]
for all $\beta \in cut(\match{M}',\mathcal{A}')$ except the arcs $(\term(\alpha_1), \init(\alpha_2)), \ldots, (\term(\alpha_{s-1}), \init(\alpha_s))$.  Those arcs have a parent in $\match{M}$ but not $\match{M}'$ so for those $\beta$ we have
\[\lambda_{\alpha_1, \ldots, \alpha_j, \match{M}}(\beta) = (1,N) \hspace{0.25in} \textup{ but } \hspace{0.25in} \lambda_{\alpha_1, \ldots, \alpha_j, \match{M}'}(\beta) = 0\]
just as we have $\lambda_{\alpha_1, \ldots, \alpha_j, \match{M}}(\beta)=(1,N)$ for $\beta = (1, \init(\alpha_1)), (\term(\alpha_s), N)$.  

By the previous paragraph, if $\beta$ is an arc in $cut(\match{M}',\mathcal{A}')$ other than $(\term(\alpha_i), \init(\alpha_{i+1}))$ for some $i=1, \ldots, s-1$ then $\lambda_{\alpha_1, \ldots, \alpha_s, \match{M}}(\beta)=\lambda_{\alpha_1, \ldots, \alpha_s, \match{M}'}(\beta)$.  The additional arcs in $cut(\match{M}, \mathcal{A})$ all have corresponding entry in $cut(\cellX{\match{M}}, \mathcal{A})$ given by $v_{(1,N)}$. These occur as the (new) lowest nonzero entry in columns with pivot in the last $N/2$ rows in the intervals 
\[[1,\init(\alpha_1)], [\term(\alpha_1),\init(\alpha_2)],\ldots,[\term(\alpha_{s-1}),\init(\alpha_s)], [\term(\alpha_s),N]\]
By construction, each of these intervals (as well as the intervals between them) has the same number of $B$s as $T$s.  Within these intervals, if a column has pivot in the bottom $N/2$ rows then its lowest nonzero entry in the top $N/2$ rows is $v_{(1,N)}$. So if $g$ is in canonical form within $cut(\match{M}',\mathcal{A}')$ and we set $a = v_{(1,N)}$ then $\varphi(a,g)$ agrees with the corresponding matrix in $cut(\match{M},\mathcal{A})$ in these columns.  

Outside of these intervals, no column has entry $v_{(1,N)}$. Suppose $i$ in one of the intervals $[\init(\alpha_j)+1,\term(\alpha_j)-1]$ corresponds to a $B$ in the $\{B, T\}$-word for $cut(\match{M}', \mathcal{A}')$ but not for $\match{M}'$. Then $i$ ends an arc $\alpha$ that is in $\match{M}'$ and in fact in $\mathcal{A}'$.  The $\{B,T\}$-word for $cut(\match{M}',\mathcal{A}')$ has the same number of $B$s as $T$s before $\init(\alpha_j)$ and in the intervals $[\init(\alpha_j)+1,\init(\alpha)-1]$ and $[\init(\alpha)+1,i-1]$ because it differs from the word for $\match{M}'$ only by flipping endpoints of arcs in $\mathcal{A}'$. Both $\init(\alpha_j)$ and $\init(\alpha)$ are $T$ in these words.  So the $i^{th}$ column for $cut(\match{M}',\mathcal{A}')$ is preceded by $\ell+2$ columns with pivot in the top half and has pivot $\vec{e}_{N/2+\ell}$.  The map $\varphi$ adds $a\vec{e}_{\ell+2}$ to this column vector; writing the corresponding flag in canonical form gives column vector $\vec{e}_{N/2+\ell+2}$ as desired, since in $cut(\match{M}, \mathcal{A})$ the integer $i$ must either not be on an arc or start an arc labeled $0$ by the map $\lambda$.  

Otherwise $i$ corresponds to a $B$ in both $cut(\match{M}',\mathcal{A}')$ and $\match{M}'$ which means $i$ starts an arc $\alpha \in \match{M}'$ that is not in $\mathcal{A}'$.  The definition of cutting arcs implies that $i$ starts an arc $\beta \in cut(\match{M}',\mathcal{A}')$ though the arc $\beta$ may end before $\term(\alpha)$.  Moreover, by construction of the map $\lambda$ we know that $\lambda_{\alpha_1,\ldots, \alpha_s,\match{M}'}(\beta) = \alpha$. If $\gamma \in cut(\match{M}',\mathcal{A}')$ is nested above $\beta$ then by definition of cutting arcs in standard noncrossing matchings, either $\init(\gamma)$ starts an arc in $\match{M}'$ that is nested above $\init(\alpha)$ or $\init(\gamma)$ ends an arc in $\mathcal{A}' \subseteq \match{M}'$ to the left of $\alpha$.  Furthermore we know $\term(\gamma) = \term(\gamma')$ for some arc $\gamma' \in \match{M}'$ that ends at or after $\term(\gamma)$.  If $\gamma'$ starts before $\beta$ then  $\lambda_{\alpha_1, \ldots, \alpha_s,\match{M}'}(\gamma) = \gamma'$ and $\gamma'$ is an ancestor of $i$ in $\match{M}'$ and hence of $\alpha$.  Otherwise  $\gamma'$ starts after $\beta$ and so $\lambda_{\alpha_1, \ldots, \alpha_s,\match{M}'}(\gamma) = 0$. So the ancestors of $\beta \in cut(\match{M}',\mathcal{A}')$ are labeled by a  subset of the ancestors of $\alpha \in \match{M}'$ without repetition.  This is in fact a proper subset because $\alpha_j \in \mathcal{A}'$ and no cut arc can be one of these labels by Corollary~\ref{corollary: top to bottom gives unnesting}. Any ancestor of $\alpha$ that is cut contributes a $T$ to the interval $[1,i-1]$ in $cut(\match{M}',\mathcal{A}')$ that is $B$ in $\match{M}'$.  Every arc that ends before $\alpha$ in $\match{M}'$ contributes a $B$ and a $T$ to the interval $[1,i-1]$ in both $cut(\match{M}',\mathcal{A}')$ and $\match{M}'$ regardless of whether or not it's cut.  Also $\match{M}'$ is a perfect matching so every arc either ends before $i$ or is an ancestor of $i$.  So the $i^{th}$ column of each matrix in $cut(\match{M}',\mathcal{A}')$ has the form $\vec{e}_{N/2+j_1}+\sum_{\ell'} v_{\gamma_{\ell'}} \vec{e}_{j_2+\ell'}$ where $j_1$ is the number of $B$s before $i$ in $cut(\match{M}',\mathcal{A}')$, $j_2$ is the number of $T$s before $i$ in $cut(\match{M}',\mathcal{A}')$, and the sum is taken over ancestors $\beta_{\ell'}$ of $\alpha$ that are not in $\mathcal{A}$.  Since every ancestor of $\alpha$ that is in $\mathcal{A}$ contributes an extra $T$ before $i$ in $cut(\match{M}',\mathcal{A}')$ we conclude that the lowest nonzero nonpivot term is  $v_{\beta''} \vec{e}_{\ell''}$ where $\ell''$ is the number of $B$s before $i$ in $\match{M}'$ and $v_{\beta''}$ is not identically zero.  Thus applying $\varphi$ adds $a$ to either one of the rows for which a pivot appears earlier (in which case we can put the matrix in canonical form as we argued previously) or to one of these $v_{\beta''}$ (in which case the expression $v_{\beta''}+a$ is independent of all other $v_{\gamma}, a$ as all $v_{\beta''}, v_{\gamma}, a$ are free in $\mathbb{C}$).  

In all cases, we have shown that the image of $\varphi$ on the cut arcs $cut(\match{M}', \mathcal{A}')$ is precisely $cut(\match{M},\mathcal{A})$.  By induction, the claim holds.
\end{proof}


\def\cprime{$'$}

\end{document}